\newtheorem{theorem}{\bf Theorem}[section]
\newtheorem{proposition}[theorem]{\bf Proposition}
\newtheorem{lemma}[theorem]{\bf Lemma}
\newtheorem{corollary}[theorem]{\bf Corollary}
\newtheorem{remark}[theorem]{\bf Remark}
\def\field{K}
\def\sepbeta{\beta_{\mathrm{sep}}}
\title{The separating Noether number of small groups}
\author[M. Domokos]{M\'aty\'as Domokos}
\address{HUN-REN Alfr\'ed R\'enyi Institute of Mathematics,
Re\'altanoda utca 13-15, 1053 Budapest, Hungary,
ORCID iD: https://orcid.org/0000-0002-0189-8831}
\email{domokos.matyas@renyi.hu}
\author[B. Schefler]{Barna Schefler} 
\address{E\"otv\"os Lor\'and University, 
P\'azm\'any P\'eter s\'et\'any 1/C, 1117 Budapest, Hungary} 
\email{scheflerbarna@yahoo.com}
\thanks{Partially supported by the Hungarian National Research, Development and Innovation Office,  NKFIH K 138828.}
\subjclass[2020]{Primary 13A50; Secondary 13P15, 20C15}
\keywords{polynomial invariants, separating sets, degree bounds, finite groups}
\begin{document}
\maketitle

\begin{abstract}
The present paper completes the computation of the separating Noether numbers for the groups with order strictly less than $32$.
Most of the results are proved for the case of a general (possibly finite) base field containing 
an element whose multiplicative order equals the size of the group. 
\end{abstract}

\section{Introduction}
Throughout this paper $G$ is a finite group, and $K$ is a field whose characteristic does not divide $|G|$. 
Given a linear action of $G$ on an $n$-dimensional $K$-vector space $V$ (i.e. $V$ is a $KG$-module), 
denote by $K[V]^G$ the subalgebra of the polynomial ring $K[V]=K[x_1,\dots,x_n]$ consisting of $G$-invariant polynomials. 
Since the action of $G$ on $K[V]$ preserves the standard grading, the algebra $K[V]^G$ is generated by homogeneous elements.  
Write $\beta(G,V)$ for the minimal positive integer $d$ such that $K[V]^G$ is generated by elements of degree at most $d$, and denote by $\beta^K(G)$ the supremum of the numbers $\beta(G,V)$, as $V$ ranges over all finite dimensional $KG$-modules. 
A classical theorem of Noether \cite{noether} asserts that $\beta^\mathbb{C}(G)\le|G|$, and therefore 
$\beta^K(G)$ is called \emph{the Noether number of $G$ (over $K$)}.  
The systematic study of $\beta^K(G)$ was initiated in \cite{schmid}, and continued in \cite{domokos-hegedus}, \cite{sezer:1}, 
\cite{CzD:1}, \cite{cziszter-domokos:indextwo}, \cite{cziszter:C7rtimesC3}. As far as the exact value of $\beta^K(G)$ is concerned, the state of the art is summarized in \cite{cziszter-domokos-szollosi}. 
Note that for an abelian group $G$, the Noether number $\beta^\mathbb{C}(G)$ equals the Davenport constant of $G$, which has an extensive literature, see for example \cite{zhao} for a recent work. 

Here we investigate a similar quantity, called the separating Noether number. 
It is related to one of the main motivations for studying $K[V]^G$, namely to the desire to distinguish the $G$-orbits in $V$. 
Denote by $G\cdot v$ the $G$-orbit of $v\in V$. For $v,w\in V$ with $G\cdot v\neq G\cdot w$, 
there exists an invariant $f\in \field[V]^G$ with $f(v)\neq f(w)$ (see for example \cite[Theorem 3.12.1]{derksen-kemper}, 
\cite[Theorem 16]{kemper} or \cite[Lemma 3.1]{draisma-kemper-wehlau}). 
Therefore we call a subset $S$ of $K[V]^G$ \emph{a separating set of $G$-invariants} (separating set for short) if whenever $G\cdot v\neq G\cdot w$, then there exists an $f\in S$ with $f(v)\neq f(w)$. 

The \emph{separating Noether number} $\sepbeta(G,V)$ is the minimal positive integer $d$ such that 
the elements of degree at most $d$ in $K[V]$ form a separating set, 
and $\sepbeta^K(G)$ is the supremum of $\sepbeta(G,V)$, as $V$ ranges over all finite dimensional $KG$-modules 
(these quantities were defined in \cite{kohls-kraft}, \cite{kemper}). 
Clearly, any generating set is a separating set, hence we have the obvious inequalitites 
\begin{equation}\label{eq:sepbeta<beta}
\sepbeta(G,V)\le \beta(G,V) \text{ and }\sepbeta^K(G)\le \beta^K(G).
\end{equation}
The study of separating sets of polynomial invariants became a popular topic in the past decades, 
see \cite{dufresne}, \cite{dufresne-elmer-kohls}, \cite{kemper},  \cite{draisma-kemper-wehlau}, 
\cite{neusel-sezer}, \cite{kohls-kraft}, 
\cite{lopatin-reimers}, \cite{kemper-lopatin-reimers} for a sample of papers on this subject. A recent interest in separating invariants arose 
in machine learning and signal processing, see for example \cite{bandeira-blumsmith-kileel-niles-weed-perry-wein}, 
\cite{cahill-contreras-hip}, \cite{cahill-iverson-mixon-packer}, \cite{fan-lederman-sun-wang-xu}, 
\cite{blumsmith-garcia-hidalgo-rodriguez}.  

In \cite{domokos-schefler:16} we initiated the following program: determine the separating Noether number for all the 
non-abelian groups whose Noether number is known (i.e. the groups covered by \cite{cziszter-domokos-szollosi}). 
The present paper completes this task. 
The case of abelian groups is covered by \cite{domokos:abelian}, 
\cite{schefler_c_n^r}, \cite{schefler_rank2}, \cite{schefler-zhao-zhong}.  
Interestingly, the range of finite abelian groups whose separating Noether number is known goes far beyond the range of finite abelian  groups with known Noether number (Davenport constant). 
The non-abelian groups of order at most $16$, as well as the groups with a cyclic subgroup of index $2$ and the direct products of dihedral groups with the $2$-element group are dealt with in \cite{domokos-schefler:16}. 
It remains to handle the non-abelian groups $G$ with $16< |G|<32$ which do not fit into the infinite series discussed in \cite{domokos-schefler:16}. This is the content of the present paper. 

\subsection{The main result}\label{sec:main results}

\begin{theorem}\label{thm:sepbeta(<32)}  
Let $G$ be a non-cyclic group with $16<|G|<32$, 
and assume that $\field$ contains an element of multiplicative order $|G|$. 
The value of $\sepbeta^\field(G)$ is given in Table~\ref{table:main}, in some cases under the additional assumption 
that  
$\mathrm{char}(\field)=0$ (this is indicated in the corresponding line of the table).   
\end{theorem}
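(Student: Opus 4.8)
The proof is a case analysis, but only a short list of groups has to be handled directly. The plan is first to quote the literature for the majority of the groups in the range. The non-cyclic abelian groups $G$ with $16<|G|<32$ are $C_3\times C_6$, $C_2\times C_{10}$, $C_{12}\times C_2$, $C_2\times C_2\times C_6$, $C_3\times C_9$, $C_3\times C_3\times C_3$, $C_5\times C_5$ and $C_2\times C_{14}$, and for each of these $\sepbeta^\field(G)$ is recorded in \cite{domokos:abelian}, \cite{schefler_c_n^r}, \cite{schefler_rank2}, \cite{schefler-zhao-zhong}. The non-abelian groups in the range that have a cyclic subgroup of index $2$, or are a direct product of a dihedral group with $C_2$, are covered by \cite{domokos-schefler:16}; this accounts for all dihedral and all dicyclic groups of order less than $32$, as well as several mixed direct products such as $C_3\times D_8$, $C_4\times S_3$, $C_3\times Q_8$, $C_3\rtimes C_8$, $C_3\times D_{10}$ and $C_5\times S_3$ (here $D_m$ denotes the dihedral group of order $m$). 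What remains are the groups appearing in Table~\ref{table:main} with no literature reference attached — among them $C_3\times S_3$ and $(C_3\times C_3)\rtimes C_2$ of order $18$, the Frobenius group $C_5\rtimes C_4$ of order $20$, the group $C_7\rtimes C_3$ of order $21$, the two extraspecial groups of order $27$, and $S_4$, $\mathrm{SL}(2,3)$, $C_2\times A_4$ together with the few other non-abelian groups of order $24$ that are neither index-$2$-cyclic nor of the form (dihedral)$\times C_2$ — and for each of these one value has to be established.

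For the upper bounds the starting point is \eqref{eq:sepbeta<beta} combined with the known values of $\beta^\field(G)$ collected in \cite{cziszter-domokos-szollosi}: for a number of the remaining groups this already meets the entry in the table, so only a matching lower bound is needed. When it does not, I will sharpen the estimate using the general tools available from \cite{domokos-schefler:16} and the abelian papers cited above: the monotonicity of $\sepbeta^\field$ under passage to subgroups and to quotients, and the product-type inequalities relating $\sepbeta^\field(G)$ to $\sepbeta^\field(N)$ and $\sepbeta^\field(G/N)$ for a normal subgroup $N$. For the groups with a large normal abelian subgroup $A$ — the copy of $C_3\times C_3$ normal in each of the order-$27$ groups and in the two order-$18$ groups, the normal $C_5$ in $C_5\rtimes C_4$, the normal $C_7$ in $C_7\rtimes C_3$, the Klein four subgroup of $A_4$ — the efficient strategy is to view an arbitrary $\field G$-module first as an $\field A$-module, separate the $A$-orbits by explicit low-degree $A$-invariants (a zero-sum statement over $A$), and then absorb the residual $G/A$-action, which here is cyclic or nearly so; this reduces the required upper bound to combinatorics of zero-sum sequences over $A$.

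For the lower bounds I will exhibit, for each remaining group $G$, a finite-dimensional $\field G$-module $V$ and two points $v,w\in V$ lying in distinct $G$-orbits such that every invariant in $\field[V]^G$ of degree at most $\sepbeta^\field(G)-1$ takes equal values at $v$ and at $w$. By the module reductions of \cite{domokos-schefler:16} it suffices to take $V$ a direct sum of copies of a short list of small representations; choosing $V$ so as to saturate the multiplicities of the representations responsible for the Noether number of $G$, and choosing $v$ and $w$ generic on a suitable coordinate subtorus, turns the required equality of all low-degree invariants into a finite linear-algebra computation — again essentially a zero-sum/weight count — which then has to be carried out for each $G$.

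The main obstacle is the cluster of genuinely nonabelian groups with no large abelian normal subgroup and a strict gap $\sepbeta^\field(G)<\beta^\field(G)$, above all $S_4$, $\mathrm{SL}(2,3)$ and $C_2\times A_4$ (and the order-$24$ groups built on $Q_8$ or on $A_4$): for these the crude bound from $\beta^\field$ is not tight, the abelian reduction does not apply, and one must actually decide which invariants are needed to separate orbits rather than to generate the algebra, which calls for an ad hoc analysis of separating sets in the relevant three- and four-dimensional representations. Finally, the characteristic-zero hypothesis appearing in some lines of the table enters precisely in those arguments — certain lower-bound constructions, and one or two structural facts about invariant rings quoted from the older literature — that can be justified only in characteristic zero, for instance via polarization of invariants, even though $\mathrm{char}(\field)\nmid|G|$; in the other lines the argument goes through over any field containing an element of multiplicative order $|G|$.
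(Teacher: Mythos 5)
Your overall architecture is the right one (quote the literature for the abelian groups, the index-two-cyclic groups and the $\mathrm{D}_{2n}\times\mathrm{C}_2$'s; for the rest give a lower bound by two explicitly non-separated points and an upper bound starting from $\sepbeta\le\beta$), but the proposal misreads where the difficulty lies and offers no workable method for the genuinely hard half. First, you classify $\mathrm{S}_4$ and $\mathrm{SL}_2(\mathbb{F}_3)$ among the groups with a strict gap $\sepbeta^\field(G)<\beta^\field(G)$; in fact both satisfy $\sepbeta^\field(G)=\beta^\field(G)$ ($9$ and $12$ respectively), so for them only a lower-bound construction is needed and the upper bound is free from \eqref{eq:sepbeta<beta}. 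The groups that actually require a new upper bound strictly below $\beta^\field(G)$ are $\mathrm{S}_3\times\mathrm{C}_3$, $\mathrm{C}_5\rtimes\mathrm{C}_4$, $\mathrm{Dic}_{12}\times\mathrm{C}_2$, $\mathrm{A}_4\times\mathrm{C}_2$ and $\mathrm{M}_{27}$.

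Second, for these five groups your proposed mechanism --- separate the orbits of a large normal abelian subgroup $A$ by low-degree $A$-invariants, then ``absorb'' the $G/A$-action via ``product-type inequalities relating $\sepbeta^\field(G)$ to $\sepbeta^\field(N)$ and $\sepbeta^\field(G/N)$'' --- does not exist as a tool: no such product upper bound for the separating Noether number is available in the cited literature (only subgroup monotonicity is), and even the analogous bound for $\beta$ could not produce, say, $\sepbeta^\field(\mathrm{S}_3\times\mathrm{C}_3)=6$ when $\beta^\field=8$. What is actually required, and what the paper does, is an orbit-separation analysis on the multiplicity-free module $V=W\oplus U$ guaranteed by the Helly-type reduction: compute (partly by computer, which is the real source of the $\mathrm{char}(\field)=0$ hypothesis in three of the lines --- not polarization or the lower bounds, as you suggest) generating systems for $\field[W]^G$ in low degree, analyse the stabilizers of points of $W$, and use relative invariants of every weight $\chi\in\widehat G$ of controlled degree not vanishing at $(w_1,\dots)$ to recover the coordinates on $U$, with separate arguments when the stabilizer is nontrivial. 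None of this is reducible to zero-sum combinatorics over $A$, and without it the upper bounds for the five gap groups remain unproved.
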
 

\begin{table}
\caption{Separating Noether numbers for $16<|G|<32$}
\begin{center}\label{table:main}
$\begin{array}{c|c|c|c|c}
\textrm{GAP} & G   & \beta^\field(G)  & \sepbeta^\field(G)  &\text{reference for }\sepbeta^\field(G) \\ \hline
(18,1) & \mathrm{Dih}_{18} & 10& 10 & \text{\cite[Theorem 2.1]{domokos-schefler:16}} \\
(18,3) &S_3 \times \mathrm{C}_3  & 8 & 6 & \mathrm{Theorem~\ref{thm:sepbeta(S3xC3)}} \\
(18,4) & (\mathrm{C}_3\times \mathrm{C}_3) \rtimes_{-1} \mathrm{C}_2  & 6 & 6 & \mathrm{Theorem~\ref{thm:sepbeta((C3xC3)rtimesC2)}}  \\
(18,5) &\mathrm{C}_3 \times \mathrm{C}_6  & 8 & 7 & 
\text{\cite[Theorem 1.1]{schefler_rank2}}\\
(20,1) & \mathrm{Dic}_{20}   & 12 & 12 & \text{\cite[Theorem 2.1]{domokos-schefler:16}} \\
(20,3)& \mathrm{C}_5 \rtimes \mathrm{C}_4 \qquad \mathrm{char}(\field)=0 & 8 & 6 & \mathrm{Theorem~\ref{thm:sepbeta(C5rtimesC4)}} \\
(20,4) & \mathrm{Dih}_{20} & 11& 11 & \text{\cite[Theorem 2.1]{domokos-schefler:16}} \\
(20,5) & \mathrm{C}_2 \times \mathrm{C}_{10} &  11 & 11 & \text{\cite[Theorem 3.10]{domokos:abelian}} \\
(21,1)& \mathrm{C}_7 \rtimes \mathrm{C}_3  & 9 & 8 & \text{\cite[Theorem 4.1]{cziszter:C7rtimesC3}} \\
(22,1) & \mathrm{Dih}_{22} & 12& 12 & \text{\cite[Theorem 2.1]{domokos-schefler:16}} \\
(24,1) & \mathrm{C}_3 \rtimes \mathrm{C}_8  & 13 & 13 & \text{\cite[Theorem 2.1]{domokos-schefler:16}} \\
(24,3)& \mathrm{SL}_2(\mathbb{F}_3) = \tilde{A}_4&  12 & 12 & \mathrm{Theorem~\ref{thm:sepbeta(A4tilde)}}  \\
(24,4) & \mathrm{Dic}_{24} = \mathrm{C}_3 \rtimes \mathrm{Dic}_8  &14 & 14 & \text{\cite[Theorem 2.1]{domokos-schefler:16}} \\
(24,5) & \mathrm{Dih}_6\times \mathrm{C}_4  &  13 & 13 & \text{\cite[Theorem 2.2]{domokos-schefler:16}}  \\
(24,6) & \mathrm{Dih}_{24} & 13& 13 & \text{\cite[Theorem 2.1]{domokos-schefler:16}} \\
(24,7) & \mathrm{Dic}_{12} \times \mathrm{C}_2 \qquad \mathrm{char}(\field)=0 & 9 & 8 & \mathrm{Theorem~\ref{thm:sepbeta(Dic12xC2)}} \\
(24,8)& \mathrm{C}_3 \rtimes \mathrm{Dih}_8 = (\mathrm{C}_6 \times \mathrm{C}_2) \rtimes_{\gamma} \mathrm{C}_2  & 9 & 9 & \mathrm{Theorem~\ref{thm:sepbeta((C6xC2)rtimesC2)}} \\
(24,9) & \mathrm{C}_2 \times \mathrm{C}_{12} &  13 & 13 & \text{\cite[Theorem 3.10]{domokos:abelian}} \\
(24,10)& \mathrm{Dih}_{8} \times \mathrm{C}_3 & 13 & 13 & \text{\cite[Theorem 2.1]{domokos-schefler:16}} \\
(24,11)& \mathrm{Dic}_8 \times \mathrm{C}_3 & 13 & 13 & \text{\cite[Theorem 2.1]{domokos-schefler:16}} \\
(24,12)& S_4  & 9  &9 & \mathrm{Theorem~\ref{thm:sepbeta(S4)}} \\
(24,13) & A_4 \times \mathrm{C}_2  & 8 & 6 & \mathrm{Theorem~\ref{thm:sepbeta(A4xC2)}} \\
(24,14) & \mathrm{Dih}_{12} \times \mathrm{C}_2 = (\mathrm{C}_ 6 \times \mathrm{C}_2)\rtimes_{-1} \mathrm{C}_2 &8  & 8& \text{\cite[Theorem 2.2]{domokos-schefler:16}} \\
(24,15) &\mathrm{C}_2 \times \mathrm{C}_2 \times \mathrm{C}_6  & 8 & 8 & 
\text{\cite[Theorem 3.10]{domokos:abelian}}\\
(25,2) &\mathrm{C}_5 \times \mathrm{C}_5 & 9 & 6 & 
\text{\cite[Theorem 1.2]{schefler_c_n^r}}\\
(26,1) & \mathrm{Dih}_{26}  &  14 & 14 & \text{\cite[Theorem 2.1]{domokos-schefler:16}} \\
(27,2) &\mathrm{C}_3 \times \mathrm{C}_9 & 11 & 10 & 
\text{\cite[Theorem 1.1]{schefler_rank2}}\\
(27,3)& \mathrm{H}_{27}=\mathrm{UT}_3(\mathbb{F}_3)  & 9 & 9 & 
\mathrm{Theorem~\ref{thm:sepbeta(H27)}} \\
(27,4)& \mathrm{M}_{27} = \mathrm{C}_9 \rtimes \mathrm{C}_3, \qquad \mathrm{char}(\field)=0  & 11 & 10 & 
\mathrm{Theorem~\ref{thm:sepbeta(M27)}}\\
(27,5) &\mathrm{C}_3 \times \mathrm{C}_3 \times \mathrm{C}_3 & 7 & 6 & 
\text{\cite[Theorem 1.2]{schefler_c_n^r}}\\
(28,1) & \mathrm{Dic}_{28}=\mathrm{C}_7\rtimes \mathrm{C}_4   & 16 & 16 & \text{\cite[Theorem 2.1]{domokos-schefler:16}} \\ 
(28,3) & \mathrm{Dih}_{28} &  15 & 15 & \text{\cite[Theorem 2.1]{domokos-schefler:16}} \\
(28,4) & \mathrm{C}_2 \times \mathrm{C}_{14} &  15 & 15 & \text{\cite[Theorem 3.10]{domokos:abelian}} \\
(30,1) & \mathrm{Dih}_6\times \mathrm{C}_5  &   16 & 16 & \text{\cite[Theorem 2.1]{domokos-schefler:16}} \\
(30,2) & \mathrm{Dih}_{10}\times \mathrm{C}_3 &  16 & 16& \text{\cite[Theorem 2.1]{domokos-schefler:16}} \\ 
(30,3) & \mathrm{Dih}_{30}  &  16 & 16 & \text{\cite[Theorem 2.1]{domokos-schefler:16}} \\
\hline 
\end{array}$ 
\end{center}
\end{table}
The new parts of Theorem~\ref{thm:sepbeta(<32)} are the values of $\sepbeta(G)$ for the 
$10$ non-abelian groups in Table~\ref{table:main}  that do not contain a cyclic subgroup of index $2$, and are not isomorphic to 
$\mathrm{D}_{2n}\times \mathrm{C}_2$ or to $\mathrm{C}_7\rtimes \mathrm{C}_3$. 
The first column of Table~\ref{table:main} contains the identifier of $G$ in the Small Groups Library of GAP (see \cite{GAP4}); that is, a pair of the form \texttt{(order, i)}. The GAP command $\mathsf{SmallGroup(id)}$ returns the $\texttt{i}$-th group of order $\texttt{order}$ in the catalogue. The second column gives $G$ in a standard notation or indicates its structure as a direct product or semidirect product.  
The symbol $\rtimes$ always stands for a semidirect product that is not a direct product 
($\gamma$ in the row $(24,8)$ stands for an involutive 
automorphisms of $\mathrm{C}_6\times \mathrm{C}_2$ different from $g\mapsto g^{-1}$). 
The third column of the table is filled in with the values of $\beta^\field(G)$ (taken from \cite{cziszter-domokos-szollosi}). The fourth column contains 
$\sepbeta^\field(G)$. In the last column one can find the references for the theorems discussing the value of $\sepbeta^\field(G)$. 

\subsection{Comments on the base field, on computer usage} 
By Lemma~\ref{lemma:base field} below, the value in the column of $\sepbeta^\field(G)$ 
in Table~\ref{table:main} is an upper bound for the separating Noether number under 
the assumption that "$\mathrm{char}(\field)$ does not divide $|G|$" 
(which is weaker than the assumption that "$\field$ contains an element of multiplicative order $|G|$"  in 
Theorem~\ref{thm:sepbeta(<32)}).   

Standard computer algebra packages can compute a minimal system of generators 
for rings of invariants of finite groups. We made a limited use of the online CoCalc platform \cite{CoCalc} for this purpose. 
Namely, we verified by it some desired upper bounds for $\beta(G,V)$ for certain representations of the groups $\mathrm{C}_5\rtimes\mathrm{C}_4$, $\mathrm{Dic}_{12}\times \mathrm{C}_2$, and $\mathrm{M}_{27}$, 
in order to avoid a significant increase in the length of the paper. In these cases we did the calculations 
only in the most classical case of a characteristic zero base field 
(we could not settle all the infinitely many non-modular characteristics by computer anyway),  
that is why for these groups we need to assume $\mathrm{char}(\field)=0$. 

We are not aware of a computer algebra package that computes the separating Noether number for a finite group. 
A simple algorithm for algebraically closed base fields is sketched in \cite{domokos-schefler:16}, and we used 
an implementation of it on \cite{CoCalc}  to verify over the complex base field the results of 
\cite{domokos-schefler:16} on the groups of order at most $16$. The same computational power 
turned out to be insufficient to deal with the groups studied in the present paper. 
Moreover, the results of the present paper in full power (valid in all but finitely many characteristics, for infinitely many different fields that are not necessarily algebraically closed)  
could not be obtained by computer calculations, regardless of the available computational power. 

In Section~\ref{sec:prel}  we introduce notation and 
mention general facts used throughout the paper. 
In Section~\ref{sec:easy groups} we deal with those groups from Theorem \ref{thm:sepbeta(<32)} for which the separating Noether number equals 
the Noether number. By the inequality \eqref{eq:sepbeta<beta} in these cases it suffices to give a representation and 
two points with distinct orbits that can not be separated by 
invariants of strictly smaller degree than the Noether number. 
The cases when the separating 
Noether number is strictly smaller than the Noether number require   
more elaborate work, and this is done in the remaining Sections: 
we need a thorough analysis of 
concrete representations, discussions of stabilizers, 
group automorphisms, 
construction of 
invariants and relative invariants, as well as  
several ad hoc ideas that help to understand the set of solutions of systems of 
polynomial equations. 
What makes these computations feasible is that general principles imply 
that
it is sufficient to deal with representations having a small 
number of irreducible direct summands (see \cite[Lemma 6.2]{domokos-schefler:16} about the Helly dimension). 

\section{Preliminaries} \label{sec:prel}

\subsection{Notational conventions.} \label{subsec:convention} 
In the sequel we shall typically define an irreducible $\field G$-module $V$ for a group $G$ given by generators and relations as follows. We specify a group homomorphism $\psi:G\to \mathrm{GL}_n(K)$ by giving the images of the generators. 
Then $V=K^n$ is the space of column vectors, and $g\cdot v:=\psi(g)v$ (matrix multiplication on the right hand side). 
The induced action of $G$ on the variables $x_1,\dots,x_n$ (which is a basis in the dual space $V^*$ of $V$) is given by   
\[g\cdot x_j=\sum_{i=1}^n\psi(g^{-1})_{ji}x_i.\] 

Let $\widehat G=\mathrm{Hom}(G,\field^\times)$ be the character group of $G$. 
We denote by $U_\chi$ the $1$-dimensional 
vector space corresponding to the element $\chi\in \widehat G$; 
that is, 
$g\cdot \lambda=\chi(g)\lambda$ for $g\in G$ and $\lambda\in \field=U_\chi$. 
Denote by $t_\chi$ the coordinate function on $U_\chi$. Therefore we have $g\cdot t_\chi=\chi(g^{-1})t_\chi$.
 
Usually we shall consider a $\field G$-module $V$ with a direct sum decomposition 
\begin{equation}\label{eq:V+U} 
V=W\oplus U, \qquad W=W_1\oplus\cdots\oplus W_l, \qquad U= U_{\chi_1} \oplus \cdots \oplus U_{\chi_m}, 
\end{equation}   
where the $W_i$ are pairwise non-isomorphic irreducible $\field G$-modules of dimension at least $2$, and $\chi_1,\dots,\chi_m$ are distinct characters of $G$. The coordinate functions on $W_1,W_2$ respectively $W_3$, etc, will be denoted by $x_1,x_2,\dots$, $y_1,y_2,\dots$, respectively $z_1,z_2,\dots$, etc. 

Associated to the above direct sum decomposition \eqref{eq:V+U} 
of $V$ there is 
an $\mathbb{N}_0^{l+m}$-grading on $\field[V]$. Namely, 
the component of multidegree $\alpha=(\alpha_1,\dots,\alpha_{l+m})$ 
of $\field[V]$ consists of the polynomials homogeneous 
of degree $\alpha_j$ on the $j$th direct summand of  
$V=W_1\oplus\cdots\oplus U_{\chi_m}$ for each $j=1,\dots,l+m$. 
This multigrading is induced 
by the following 
action of the torus $T:=\field^\times\times \cdots \times \field^\times$ ($l+m$ direct factors) 
on $V$: for $\lambda=(\lambda_1,\dots,\lambda_{l+m})\in T$ and $v=(w_1,\dots,w_l,u_1,\dots,u_m)\in V$ set 
$\lambda\cdot v:=(\lambda_1 w_1,\dots,\lambda_{l+m} u_m)$. 
This action commutes with the $G$-action on $V$, and therefore 
$\field[V]^G$ is spanned by multihomogeneous elements. 
Moreover, for $v,v'\in V$ we have that 
\begin{equation}\label{eq:rescaling} 
G\cdot v=G\cdot v' \text{ if and only if }G\cdot (\lambda v)=
G\cdot (\lambda v'). 
\end{equation} 
When we say in the text that 'after rescaling, we may assume that $v$ has some special form', then we mean 
that we replace $v$ by an appropriate element in its $T$-orbit, 
and we refer to \eqref{eq:rescaling}. 
The phrase `multihomogeneous' will refer to this multigrading in the text.

\subsection{Relative invariants} \label{sec:Davenport}

For a $\field G$-module $V$, denote by $\field[V]^G_+$ the sum of the positive degree homogeneous components of $\field[V]^G$. A homogeneous invariant $f\in \field[V]^G$ is \emph{indecomposable} if $f\notin (\field[V]^G_+)^2$. Clearly, $\field[V]^G$ is generated as an algebra by the positive degree homogeneous indecomposable invariants. The \emph{Hilbert ideal} $\mathcal{H}(G,V)$ is the ideal in the polynomial algebra $\field[V]$ generated by the positive degree homogeneous $G$-invariants. 

We say that an element $f\in \field[V]$ is a \emph{relative invariant of weight $\chi\in \widehat G$} if 
$g\cdot f=\chi(g^{-1})f$ for all $g\in G$ and set 
$\field[V]^{G,\chi}:=\{f\in \field[V]\mid f \text{ is a relative invariant of weight }\chi\}$.  
For example, the coordinate function $t_\chi$ on $U_\chi$ is a relative invariant of weight $\chi$. 
Observe that for a sequence $\chi^{(1)},\dots,\chi^{(k)}$ of elements of $\widehat G$, 
$t_{\chi^{(1)}}\cdots t_{\chi^{(k)}}$ is an indecomposable $G$-invariant if and only if $\chi^{(1)},\dots,\chi^{(k)}$  is an \emph{irreducible product-one sequence over $\widehat G$}, i.e.
$\chi^{(1)}\cdots \chi^{(k)}=1\in \widehat G$  
and there is no 
$\ell<k$, $1\le i_1<\cdots<i_{\ell}\le k$ with $\chi^{(i_1)}\cdots \chi^{(i_\ell)}=1\in \widehat G$. Here $k$ is the \emph{length} of the sequence $\chi^{(1)},\cdots, \chi^{(k)}$.  
We say that $\chi^{(1)},\dots,\chi^{(k)}$ is a \emph{product-one free sequence over $\widehat G$} if there is no $1\le i_1<\dots <i_\ell\le k$ with 
 $\chi^{(i_1)}\cdots \chi^{(i_\ell)}=1\in \widehat G$.  
The maximal length of an irreducible product-one sequence over $G$ is called the \emph{Davenport constant} $\mathsf{D}(G)$ of the group $G$. 
For a finite abelian group $G$, the maximal possible length of a product-one free sequence is $\mathsf{D}(G)-1$.  
For more information about the Davenport constant and its relevance for 
the invariant theory of abelian groups see for example \cite{cziszter-domokos-geroldinger}. 

In some cases we will compute a homogeneous generating system of 
$\field[V]^G$ by using the following observation:
\begin{lemma}\label{lemma:V+U} 
Let $V=W\oplus U$ be a $\field G$-module as in \eqref{eq:V+U}. 
Let $A$ be a homogeneous generating system of $\field[W]^G$, and for 
$\chi\in \{\chi_1,\dots,\chi_m\}$ let 
$A_{\chi}\subset \field[W]^{G,\chi}$ be a set of homogeneous relative invariants of weight $\chi$ 
that span a vector space direct complement in $\field[W]^{G,\chi}$ of 
$\mathcal{H}(G,W)\cap \field[W]^{G,\chi}=\field[W]^G_+ \field[W]^{G,\chi}$. 
Set 
\[B:=\{t_{\chi^{(1)}}\cdots t_{\chi^{(k)}}\mid \chi^{(1)},\dots, \chi^{(k)} 
\text{ is an irreducible product-one sequence over }\widehat G\},\]  
\begin{align*}C:=\{ht_{\chi^{(1)}}\cdots t_{\chi^{(k)}}\mid \chi^{(1)},\dots, \chi^{(k)} 
\text{ is a product-one free sequence over }\widehat G, \\
\quad h\in A_\chi \text{ where }\chi^{-1}=\chi^{(1)}\cdots \chi^{(k)}\}.\end{align*}
Then $A\cup B\cup C$ is a homogeneous generating system of 
$\field[V]^G$. 
\end{lemma}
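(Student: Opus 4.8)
The plan is to prove that $A\cup B\cup C$ generates $\field[V]^G$ by decomposing an arbitrary multihomogeneous invariant according to its degree in the coordinate functions $t_{\chi_1},\dots,t_{\chi_m}$ of the summand $U$. First I would fix a multihomogeneous $f\in\field[V]^G$ and write it as $f=\sum_{\mathbf{a}} f_{\mathbf{a}}\, t_{\chi_1}^{a_1}\cdots t_{\chi_m}^{a_m}$, where $\mathbf{a}=(a_1,\dots,a_m)\in\mathbb{N}_0^m$ and $f_{\mathbf{a}}\in\field[W]$. Since the $T$-action used for the multigrading is compatible with singling out the $U$-variables, each summand $f_{\mathbf{a}}\, t_{\chi_1}^{a_1}\cdots t_{\chi_m}^{a_m}$ is again $G$-invariant; as $g\cdot t_{\chi_i}=\chi_i(g^{-1})t_{\chi_i}$, invariance forces $f_{\mathbf{a}}\in\field[W]^{G,\chi}$ where $\chi=\chi_1^{a_1}\cdots\chi_m^{a_m}$. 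So it suffices to show that each monomial of the form $h\,t_{\chi_1}^{a_1}\cdots t_{\chi_m}^{a_m}$ with $h\in\field[W]^{G,\chi}$ lies in the algebra generated by $A\cup B\cup C$.

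Next I would handle the combinatorics of the exponent vector $\mathbf{a}$. Viewing $\mathbf{a}$ as the multiset consisting of $a_i$ copies of $\chi_i$, I would extract from it a maximal collection of disjoint sub-sequences each of which is an irreducible product-one sequence over $\widehat G$; the corresponding products of $t$-variables are exactly elements of $B$. Splitting these off, $h\,t_{\chi_1}^{a_1}\cdots t_{\chi_m}^{a_m}$ becomes a product of elements of $B$ times a term $h\,t_{\chi^{(1)}}\cdots t_{\chi^{(k)}}$ where $\chi^{(1)},\dots,\chi^{(k)}$ is a product-one \emph{free} sequence over $\widehat G$ (by maximality of the extracted collection). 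For invariance we still need $\chi^{(1)}\cdots\chi^{(k)}=\chi^{-1}$ with $h\in\field[W]^{G,\chi}$. Now I would use the hypothesis on $A_\chi$: write $h$ modulo $\mathcal{H}(G,W)\cap\field[W]^{G,\chi}=\field[W]^G_+\cdot\field[W]^{G,\chi}$ as a $\field$-linear combination of elements of $A_\chi$ plus a term $\sum_j g_j h_j'$ with $g_j\in\field[W]^G_+$ and $h_j'\in\field[W]^{G,\chi}$. The $A_\chi$-part, multiplied by $t_{\chi^{(1)}}\cdots t_{\chi^{(k)}}$, lies in $C$; the $g_j$ factors lie in $\field[W]^G_+$, hence in the algebra generated by $A$; and each $g_jh_j' t_{\chi^{(1)}}\cdots t_{\chi^{(k)}}$ is a $G$-invariant of the same type but with $h_j'$ of strictly smaller degree than $h$.

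This sets up a decreasing induction. The induction is on the degree of the relative invariant factor $h\in\field[W]^{G,\chi}$ (with the exponent vector $\mathbf{a}$ and the weight $\chi$ fixed at each stage of the outer reduction): the base case is $\deg h=0$, i.e. $h\in\field$, in which case $\chi=1$, the sequence $\chi^{(1)},\dots,\chi^{(k)}$ is product-one free with $\chi^{(1)}\cdots\chi^{(k)}=1$, which forces $k=0$, so the monomial is a scalar and there is nothing to prove; the inductive step is exactly the rewriting in the previous paragraph, where the leftover terms $g_jh_j't_{\chi^{(1)}}\cdots t_{\chi^{(k)}}$ have relative-invariant factor $h_j'$ of smaller degree and can be handled by the induction hypothesis (after possibly re-running the $B$-extraction, which only decreases the relevant length). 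Conversely, one should note that $A\cup B\cup C\subseteq\field[V]^G$: elements of $A$ are invariants of $\field[W]\subseteq\field[V]$, elements of $B$ are invariants because an irreducible product-one sequence yields $\prod\chi^{(i)}(g^{-1})=1$, and elements of $C$ are invariants because $g\cdot(h\,t_{\chi^{(1)}}\cdots t_{\chi^{(k)}})=\chi(g^{-1})\cdot\chi^{-1}(g^{-1})\,h\,t_{\chi^{(1)}}\cdots t_{\chi^{(k)}}=h\,t_{\chi^{(1)}}\cdots t_{\chi^{(k)}}$.

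The main obstacle I anticipate is bookkeeping rather than a genuine conceptual difficulty: one must be careful that the decomposition $f=\sum_{\mathbf{a}}f_{\mathbf{a}}t_{\chi_1}^{a_1}\cdots t_{\chi_m}^{a_m}$ really does split $f$ into $G$-invariant pieces (this uses that the $U_{\chi_i}$ are $G$-stable one-dimensional summands, so the $\mathbb{N}_0^m$-grading by degrees in the $t_{\chi_i}$ is a grading by $G$-submodules of $\field[V]$), and that the interleaving of the two reduction steps — peeling off irreducible product-one subsequences to produce $B$-factors, and reducing the degree of the relative invariant factor modulo the Hilbert ideal to produce $A$- and $C$-factors — terminates. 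Making the induction hypothesis precise enough to absorb both reductions simultaneously (e.g. inducting on the pair $(\deg h,\ \text{length of }\mathbf{a})$ in lexicographic order, or just on $\deg f$) is the only subtle point, and I would state it carefully at the outset.
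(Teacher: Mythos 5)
Your proof is correct and follows essentially the same route as the paper's: reduce to multihomogeneous invariants of the form $h\cdot(\text{monomial in the }t_{\chi_i})$, split off product-one subsequences to obtain $B$-factors so that the remaining weight sequence is product-one free, and reduce $h$ modulo $\field[W]^G_+\field[W]^{G,\chi}$ using $A_\chi$. The only difference is one of bookkeeping: where you run an explicit decreasing induction on $\deg h$ to absorb the Hilbert-ideal remainder, the paper restricts attention to indecomposable multihomogeneous invariants (whose $t$-part is automatically product-one free) and simply observes that the remainder $(h-a)t$ lies in $(\field[V]^G_+)^2$, which suffices because a graded algebra is generated by homogeneous elements representing generators modulo decomposables.
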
 
\begin{proof}
As we pointed out above, the algebra $\field[V]^G$ is generated by indecomposable 
multihomogeneous invariants. A multihomogeneous invariant $f$ belongs to 
$\field[W]^G$ or $\field[U]^G$ or is of the form 
$ht$, where $t$ is a monomial in $\field[U]$ of positive degree, 
and $h$ is a multihomogeneous element of positive degree in $\field[W]$.  
In the first two cases $f$ belongs to the $\field$-subalgebra 
of $\field[V]$ generated by $A\cup B$. 
In the third case $t=t_{\chi^{(1)}}\cdots t_{\chi^{(k)}}$, 
where $\chi^{(1)},\dots, \chi^{(k)}$ is a product-one free sequence over 
$\widehat G$. 
Then $t$ is a relative $G$-invariant of some weight $\chi$, and 
hence necessarily, $h\in \field[V]^{G,\chi^{-1}}$. 
There exists a linear combination $a$ of elements from 
$A_{\chi^{-1}}$ with $h-a\in \field[W]^G_+\field[W]^{G,\chi^{-1}}$. 
Then $at$ belongs to the $\field$-subspace of $\field[V]$ generated by 
$C$, and $f-at=(h-a)t\in (\field[V]^G_+)^2$. So we showed that any indecomposable 
multihomogenous element of $\field[V]^G$ can be reduced modulo 
$(\field[V]^G_+)^2$ to an element in the $\field$-subalgebra of $\field[V]$ 
generated by $A\cup B\cup C$. This clearly implies our statement. 
\end{proof}

Given a set $S\subset \field[V]$ of polynomials write 
$\mathcal{V}(S):=\{v\in V\mid \forall f\in S\colon f(v)=0\}$ 
for the common zero locus in $V$ of the elements of $S$. 
Next we restate \cite[Lemma 4.3]{cziszter:C7rtimesC3} in a sharp form: 

\begin{lemma}\label{lemma:common zero locus}
    We have the equality 
    $\mathcal{V}(\field[V]^{G,\chi})=\{v\in V\mid 
    \mathrm{Stab}_G(v)\nsubseteq \ker(\chi)\}$.
\end{lemma}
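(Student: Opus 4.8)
\textbf{Proof plan for Lemma~\ref{lemma:common zero locus}.}

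The plan is to prove the two inclusions separately, treating the easy one first. If $v\in V$ satisfies $\mathrm{Stab}_G(v)\subseteq\ker(\chi)$, I want to produce a relative invariant of weight $\chi$ that does not vanish at $v$; equivalently, $v\notin\mathcal{V}(\field[V]^{G,\chi})$. The natural candidate is a ``relative Reynolds operator'' applied to a suitable polynomial: for any $f\in\field[V]$, the element $R_\chi(f):=\frac{1}{|G|}\sum_{g\in G}\chi(g)\,(g\cdot f)$ lies in $\field[V]^{G,\chi}$ (here I use that $\mathrm{char}(\field)\nmid|G|$, which is standing hypothesis, and that $\field$ contains enough roots of unity so $\chi$ takes values in $\field$). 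Evaluating at $v$ and using $g\cdot f(v)=f(g^{-1}v)$, one gets $R_\chi(f)(v)=\frac{1}{|G|}\sum_{g\in G}\chi(g)f(g^{-1}v)$. I would choose $f$ to be a coordinate-type function that separates the points of the orbit $G\cdot v$ as far as the stabilizer allows, then group the sum over cosets of $H:=\mathrm{Stab}_G(v)$: writing $g=g_i h$ with $h\in H$, the inner sum over $H$ contributes $\sum_{h\in H}\chi(g_i h)=\chi(g_i)\sum_{h\in H}\chi(h)=\chi(g_i)\cdot|H|$, since $\chi|_H$ is trivial by assumption. So $R_\chi(f)(v)=\frac{|H|}{|G|}\sum_i \chi(g_i)f(g_i^{-1}v)$, a sum over the distinct points of the orbit, and a generic choice of linear $f$ makes this nonzero (the values $\chi(g_i)$ are fixed nonzero scalars, so one only needs $f$ not orthogonal to the corresponding vector in the span of the orbit points — possible since the orbit points are distinct).

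For the reverse inclusion, suppose $\mathrm{Stab}_G(v)\nsubseteq\ker(\chi)$, i.e. there is $h\in\mathrm{Stab}_G(v)$ with $\chi(h)\neq 1$. Let $f\in\field[V]^{G,\chi}$ be arbitrary. Then $f(v)=f(h^{-1}\cdot(h\cdot v))$, but $h\cdot v=v$ so $f(v)=(h\cdot f)(v)\cdot$(correcting for the action convention); more precisely, from $h\cdot f=\chi(h^{-1})f$ we get $f(h^{-1}v)=\chi(h^{-1})^{-1}f(v)=\chi(h)f(v)$ — wait, I should be careful with the convention, but in any case one obtains $f(v)=\chi(h)^{\pm1}f(v)$, and since $\chi(h)\neq 1$ this forces $f(v)=0$. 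As $f$ was arbitrary, $v\in\mathcal{V}(\field[V]^{G,\chi})$. This direction is essentially formal and is the routine half.

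The main obstacle is the first inclusion: ensuring the non-vanishing of $R_\chi(f)(v)$ for a well-chosen $f$. The coset-collapsing computation reduces the problem to showing that the linear functional $f\mapsto\sum_i\chi(g_i)f(g_i^{-1}v)$ on $V^*$ is not identically zero, which holds because the points $g_i^{-1}v$ (one per coset $g_iH$) are pairwise distinct — two cosets $g_iH\neq g_jH$ give $g_i^{-1}v\neq g_j^{-1}v$ since $H$ is exactly the stabilizer — and distinct vectors cannot all be annihilated by every linear form with a fixed nonzero coefficient pattern. One subtlety to check is that $\chi$ indeed takes values in $\field^\times$: this is where the hypothesis that $\field$ contains an element of multiplicative order $|G|$ (hence all $|G|$-th roots of unity, so in particular all values of characters of $G$) is used, and it is worth remarking that the weaker hypothesis $\mathrm{char}(\field)\nmid|G|$ together with $\widehat G=\mathrm{Hom}(G,\field^\times)$ already makes the statement meaningful and the argument go through verbatim. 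I would also note that this sharpens \cite[Lemma 4.3]{cziszter:C7rtimesC3} precisely because the reverse inclusion pins down the zero locus exactly, rather than merely bounding it.
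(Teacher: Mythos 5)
Your second inclusion (stabilizer not contained in $\ker(\chi)$ forces every weight-$\chi$ relative invariant to vanish at $v$) is correct and is exactly the half that the paper proves directly; note that for the other half the paper does not give an argument at all but cites \cite[Lemma 4.3]{cziszter:C7rtimesC3}, so you are attempting strictly more. Your strategy for that harder half --- apply the twisted transfer $R_\chi(f)=\frac{1}{|G|}\sum_{g\in G}\chi(g)(g\cdot f)$ and make it nonzero at $v$ --- is the right one, but the execution has a genuine gap: the claim that a \emph{generic linear} $f$ works is false. After the coset collapse, non-vanishing of $R_\chi(f)(v)$ for linear $f$ is equivalent to $\sum_i \chi(g_i)\,g_i^{-1}v\neq 0$ in $V$, and this weighted sum of distinct orbit points can perfectly well vanish. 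Concretely, let $G=\mathrm{C}_3=\langle a\rangle$ act on $V=\field^2$ by $\mathrm{diag}(\omega,\omega^2)$ and take $v=[1,0]^T$, $\chi(a)=\omega^2$: the stabilizer of $v$ is trivial, yet the degree-one part of $\field[V]^{G,\chi}$ is spanned by $x_2$, which vanishes at $v$ (so $R_\chi(f)(v)=0$ for every linear $f$), while the quadratic relative invariant $x_1^2$ of weight $\chi$ does not vanish there. "Distinct vectors cannot all be annihilated by every linear form with a fixed nonzero coefficient pattern" is simply not true.

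The standard repair is to abandon linear $f$ and instead choose an interpolating polynomial: since the orbit $G\cdot v$ is a finite set of distinct points, there is $f\in\field[V]$ with $f(v)=1$ and $f(v')=0$ for all $v'\in G\cdot v\setminus\{v\}$. Then $R_\chi(f)(v)=\frac{1}{|G|}\sum_{g\in G}\chi(g)f(g^{-1}v)=\frac{1}{|G|}\sum_{h\in \mathrm{Stab}_G(v)}\chi(h)=\frac{|\mathrm{Stab}_G(v)|}{|G|}\neq 0$, using $\chi|_{\mathrm{Stab}_G(v)}=1$ and $\mathrm{char}(\field)\nmid |G|$; no coset decomposition is needed. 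While fixing this, also correct the bookkeeping in your coset computation: with $g=g_ih$, $h\in H$, one gets $f(g^{-1}v)=f(h^{-1}g_i^{-1}v)$, which is not constant on the coset; you need $g=hg_i$ so that $g^{-1}v=g_i^{-1}h^{-1}v=g_i^{-1}v$. Your hesitation about conventions in the easy direction resolves harmlessly (either way one gets $f(v)=\chi(h)^{\pm1}f(v)$ with $\chi(h)\neq 1$), and your remarks on where the hypotheses on $\field$ enter are accurate.
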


\begin{proof} 
Suppose that $v\in V$, $g
\in G$ with $g\cdot v=v$. 
For any $f\in \field[V]^{G,\chi}$ we have 
\[f(v)=f(g\cdot v)=\chi(g)f(v),\]
therefore $\chi(g)\neq 1$ implies $f(v)=0$. 
This shows the inclusion 
$\{v\in V\mid 
    \mathrm{Stab}_G(v)\nsubseteq \ker(\chi)\}\subseteq 
    \mathcal{V}(\field[V]^{G,\chi})$. 
The reverse inclusion is stated and proved in
\cite[Lemma 4.3]{cziszter:C7rtimesC3}. 
    \end{proof}

\subsection{Some reductions}
Both the Noether number and the separating Noether number are monotone for subgroups as well; 
that is, 
for any subgroup $H$ of a finite group $G$ we have 
\begin{equation}\label{eq:sepbeta(H)}
\beta^\field(G)\ge \beta^\field(H) 
\mbox{ (see \cite{schmid}, \cite{cziszter-domokos:lower bound}) and }
\sepbeta^\field(G)\ge \sepbeta^\field(H) 
\mbox{ (see  \cite[Theorem B]{kohls-kraft}).} 
\end{equation} 

\begin{lemma}\label{lemma:base field}\cite[Lemma 2.4 (iii)]{domokos-schefler:16}\label{lemma:spanning invariants} 
For any subfield $\field$ of a field $L$ we have the inequality 
$\sepbeta^\field(G)\le \sepbeta^L(G)$. 
\end{lemma}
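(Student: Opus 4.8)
The plan is to extend scalars from $\field$ to $L$, produce a separating invariant of small degree over $L$, and then push it back down to $\field$. Fix a finite-dimensional $\field G$-module $V$ and set $d:=\sepbeta^L(G)$; it suffices to show $\sepbeta(G,V)\le d$, since taking the supremum over all $V$ then yields the claim. Form the $LG$-module $V_L:=L\otimes_\field V$. Because the $G$-action on $V$ is given by matrices with entries in $\field$, the natural inclusion $V\hookrightarrow V_L$ is $G$-equivariant, and for $v,w\in V$ the relation $g\cdot v=w$ computed in $V_L$ is the same as the one computed in $V$. Consequently, if $G\cdot v\ne G\cdot w$ in $V$, then $G\cdot v\ne G\cdot w$ in $V_L$ as well.

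Next I would record that forming invariants commutes with the field extension: the natural map $\field[V]^G\otimes_\field L\to L[V_L]^G$ is an isomorphism of graded algebras. Indeed, $\field[V]^G=\bigcap_{g\in G}\ker(\sigma_g-\mathrm{id})$, where $\sigma_g$ denotes the $\field$-linear action of $g$ on $\field[V]$, and since $L$ is flat (even free) over $\field$, the base change $-\otimes_\field L$ commutes with this finite intersection of kernels of $\field$-linear maps; alternatively one may invoke the Reynolds operator, which is available because $\mathrm{char}(\field)\nmid|G|$. In particular every homogeneous element of $L[V_L]^G$ of degree $e$ is an $L$-linear combination of homogeneous elements of $\field[V]^G$ of degree $e$.

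Now take $v,w\in V$ with $G\cdot v\ne G\cdot w$. By the first paragraph their $G$-orbits in $V_L$ are distinct, so by definition of $d=\sepbeta^L(G)$ there exists $f\in L[V_L]^G$ of degree at most $d$ with $f(v)\ne f(w)$. Using the second paragraph, write $f=\sum_i c_i f_i$ with the $c_i\in L$ linearly independent over $\field$ and the $f_i\in\field[V]^G$ homogeneous of degree at most $d$. Since $f_i(v),f_i(w)\in\field$ and $\sum_i c_i\bigl(f_i(v)-f_i(w)\bigr)=f(v)-f(w)\ne 0$, the $\field$-linear independence of the $c_i$ forces $f_i(v)\ne f_i(w)$ for some $i$. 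Hence the elements of $\field[V]^G$ of degree at most $d$ already separate $G\cdot v$ from $G\cdot w$, so $\sepbeta(G,V)\le d$, as desired.

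The only input here that is not pure bookkeeping is the base-change compatibility $\field[V]^G\otimes_\field L\cong L[V_L]^G$ of the second paragraph; the rest is elementary manipulation with the grading and with $\field$-linear independence. I do not anticipate any real obstacle — which fits the fact that this is a restatement of \cite[Lemma 2.4 (iii)]{domokos-schefler:16}.
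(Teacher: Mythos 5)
Your proof is correct and is the standard argument behind the cited Lemma 2.4(iii) of \cite{domokos-schefler:16} (note the paper only quotes that lemma rather than reproving it): distinct $G$-orbits of points of $V$ remain distinct in $V_L$, the base-change isomorphism $\field[V]^G\otimes_\field L\cong L[V_L]^G$ holds degree by degree, and the linear-independence trick transfers a separating $L$-invariant of degree at most $d$ to a separating $\field$-invariant of the same degree. No gaps.
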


A consequence of the results of \cite{draisma-kemper-wehlau} is that if $\field$ is a field with sufficiently many elements, one can compute compute $\sepbeta^\field(G)$ by dealing only with multiplicity-free representations. 

\begin{lemma}\cite[Lemma 6.1]{domokos-schefler:16} \label{lemma:multfree} 
Let $V_1,\dots,V_q$ be a complete irredundant list of representatives of the isomorphism classes of simple $\field G$-modules. 
Assume that 
\[|K|>(\max\{\dim(V_j)\mid j=1,\dots,q\}-1)|G|.\] 
Then we have the equality 
$\sepbeta^\field(G)=\sepbeta(G,V_1\oplus\cdots
\oplus V_q)$. 
\end{lemma}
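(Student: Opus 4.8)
The plan is to reduce the computation of $\sepbeta^\field(G)$ to a single, concrete module by showing that multiplicity does not help for separation when the base field is large enough. Concretely, write $V_1,\dots,V_q$ for a complete irredundant list of simple $\field G$-modules, set $M:=V_1\oplus\cdots\oplus V_q$, and note that $\sepbeta(G,M)\le \sepbeta^\field(G)$ is trivial from the definition. The substance is the reverse inequality: for an arbitrary finite dimensional $\field G$-module $W$ we must show $\sepbeta(G,W)\le \sepbeta(G,M)$. Any such $W$ is isomorphic to a direct sum $V_{i_1}^{\oplus c_1}\oplus\cdots$ of simple modules, so it suffices to bound $\sepbeta(G,W)$ for $W$ a direct sum of copies of the $V_j$ with arbitrarily high multiplicities, in terms of the multiplicity-free module $M$.

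The key tool is the main result of \cite{draisma-kemper-wehlau}: over a sufficiently large field, a separating set for the diagonal action of $G$ on a large number of copies $W^{\oplus r}$ is obtained by \emph{polarization} from a separating set for the action on a bounded number of copies, and in fact separating invariants of $W^{\oplus r}$ can be taken to be polarizations of separating invariants of $W^{\oplus s}$ for $s$ at most $\dim W$ (or the relevant Helly-type dimension bound); moreover polarization does not raise degree. The precise quantitative statement in \cite{draisma-kemper-wehlau} gives exactly the field-size hypothesis $|\field|>(\max_j\dim V_j-1)|G|$ appearing in the lemma. First I would invoke that result to replace an arbitrary $W$ by a module in which each simple $V_j$ occurs with multiplicity at most $\dim V_j$, without increasing the separating degree; then I would iterate or directly apply the polarization/restitution mechanism once more to collapse the multiplicities down to one, again preserving degree, landing inside $M$. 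Since $G$-orbits in $W^{\oplus r}$ are separated by invariants of degree $d$ precisely when the same holds after polarization/restitution between $M$ and $W^{\oplus r}$, we conclude $\sepbeta(G,W)\le\sepbeta(G,M)$ for every $W$, hence $\sepbeta^\field(G)=\sepbeta(G,M)$.

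The main obstacle — and the only place where real care is needed — is checking that the field-size hypothesis in \cite{draisma-kemper-wehlau} is met with the stated bound and that the polarization argument genuinely preserves (does not increase) the degree of separating invariants, uniformly over all $W$. The degree-preservation is standard: polarization and restitution are linear and multidegree-respecting operations, so a separating invariant of degree $\le d$ produces separating invariants of degree $\le d$. The field-size bookkeeping requires matching the Helly-type dimension estimate for the family of $G$-orbit closures (the number of simple summands one must retain) against $|\field|$; this is precisely the content of \cite[Lemma 6.1]{domokos-schefler:16}, which the excerpt permits us to cite. Since the lemma is quoted verbatim from that reference, the intended proof here is simply to recall this reduction and point to \cite{draisma-kemper-wehlau} together with \cite[Lemma 6.1]{domokos-schefler:16}; no new argument beyond assembling these ingredients is needed.
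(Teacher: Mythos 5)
The paper offers no proof of this lemma: it is quoted verbatim from \cite[Lemma 6.1]{domokos-schefler:16}, whose justification is precisely the polarization machinery of \cite{draisma-kemper-wehlau} that you describe. Your sketch of the reduction (every $\field G$-module is a direct summand of finitely many copies of the multiplicity-free module, and cheap polarization over a field satisfying the stated size bound collapses multiplicities to one without raising the degree of separating invariants) matches the intended argument, so citing these two sources is exactly what the paper does and nothing further is required.
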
 
 
Given a group homomorphism $\rho$ from $G$ into the group $\mathrm{GL}(V)$ of invertible linear transformations of $V$, 
we write $(V,\rho)$ for the $\field G$-module $V$ with $g\cdot v:=\rho(g)(v)$ for $g\in G$, $v\in V$. 
We make use of the natural action of the automorphism group of $G$ on the isomorphism classes 
of representations of $G$ through the following statement: 

\begin{lemma}\cite[Lemma 6.4]{domokos-schefler:16}\label{lemma:auto}
Let $(V,\rho)$ be a $\field G$-module and $\alpha$ an automorphism of the group $G$. 
\begin{itemize} 
\item[(i)] For any $\chi\in \widehat G$ we have 
$\field[(V,\rho)]^{G,\chi}= 
\field[(V,\rho\circ\alpha)]^{G,\chi\circ\alpha}$.  
\item[(ii)] We have 
$\beta(G,(V,\rho))=\beta(G,(V,\rho\circ\alpha))
\text{ and }\sepbeta(G,(V,\rho))=\sepbeta(G,(V,\rho\circ\alpha))$.
\item[(iii)] Assume that $(V,\rho)\cong (V,\rho\circ\alpha)$ as $\field G$-modules, so there exists a 
$\field$-vector space isomorphism $T:V\to V$ with $T\circ \rho(g)=(\rho\circ\alpha)(g)\circ T$ 
for all $g\in G$. 
Then the map $f\mapsto f\circ T$ induces a $\field$-vector space isomorphism 
\[\field[(V,\rho)]^{G,\chi}\cong \field[(V,\rho)]^{G,\chi\circ\alpha}.\] 
\end{itemize}
\end{lemma}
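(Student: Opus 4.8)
\textbf{Proof plan for Lemma~\ref{lemma:auto}.}
The three parts are essentially unwinding of definitions, and I would treat (i) as the engine that drives the rest. For (i), let $f\in\field[(V,\rho)]^{G,\chi}$, meaning $f(\rho(g)^{-1}v)=\chi(g^{-1})f(v)$ for all $g\in G$, $v\in V$ (recall the convention $g\cdot x_j=\sum_i\psi(g^{-1})_{ji}x_i$, so the action on functions uses $\rho(g)^{-1}$). I want to show that the \emph{same polynomial} $f$ is a relative invariant of weight $\chi\circ\alpha$ for the twisted module $(V,\rho\circ\alpha)$. Indeed, for $g\in G$ the action of $g$ on $f$ in the twisted module replaces $\rho(g)$ by $\rho(\alpha(g))$; since $\alpha$ is an automorphism, $\alpha(g)$ ranges over all of $G$ as $g$ does, and $f(\rho(\alpha(g))^{-1}v)=\chi(\alpha(g)^{-1})f(v)=(\chi\circ\alpha)(g^{-1})f(v)$. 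Hence $f\in\field[(V,\rho\circ\alpha)]^{G,\chi\circ\alpha}$, giving one inclusion; applying the same argument with $\alpha^{-1}$ in place of $\alpha$ (and $\chi\circ\alpha$ in place of $\chi$) gives the reverse inclusion, so the two spaces of relative invariants coincide as subsets of the single polynomial ring $\field[V]=\field[x_1,\dots,x_n]$.

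For (ii): the ordinary invariants are the relative invariants of weight $\chi=1$, and $1\circ\alpha=1$, so part (i) with $\chi=1$ gives $\field[(V,\rho)]^G=\field[(V,\rho\circ\alpha)]^G$ as subalgebras of $\field[V]$ — literally the same graded subalgebra. Since $\beta$ is defined purely in terms of the degrees of a minimal homogeneous generating system of this subalgebra, the two Noether numbers agree. For the separating Noether numbers, I would observe that the orbit partitions coincide: $G$-orbits in $(V,\rho)$ and in $(V,\rho\circ\alpha)$ are the same subsets of $V$ (the orbit of $v$ is $\{\rho(g)v\}=\{\rho(\alpha(g))v\}$ since $\alpha$ is onto). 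A set $S\subseteq\field[V]_{\le d}$ separates the $(V,\rho)$-orbits if and only if it separates the $(V,\rho\circ\alpha)$-orbits, because these orbits are identical and the polynomials considered are identical; hence $\sepbeta(G,(V,\rho))=\sepbeta(G,(V,\rho\circ\alpha))$.

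For (iii): here $(V,\rho)\cong(V,\rho\circ\alpha)$ via a linear isomorphism $T$ with $T\circ\rho(g)=\rho(\alpha(g))\circ T$ for all $g$. The composition-with-$T$ map $f\mapsto f\circ T$ is a $\field$-linear (in fact algebra) automorphism of $\field[V]$ preserving degree. I would check it carries $\field[(V,\rho\circ\alpha)]^{G,\chi\circ\alpha}$ onto $\field[(V,\rho)]^{G,\chi}$: if $f$ satisfies $f(\rho(\alpha(g))^{-1}w)=(\chi\circ\alpha)(g^{-1})f(w)$, then $(f\circ T)(\rho(g)^{-1}v)=f(T\rho(g)^{-1}v)=f(\rho(\alpha(g))^{-1}Tv)=(\chi\circ\alpha)(g^{-1})f(Tv)=\chi(\alpha(g)^{-1})(f\circ T)(v)$; substituting $h=\alpha(g)$ and letting $h$ range over $G$ shows $f\circ T\in\field[(V,\rho)]^{G,\chi}$. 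Combining this isomorphism with the set equality from part (i) (which identifies $\field[(V,\rho\circ\alpha)]^{G,\chi\circ\alpha}$ with $\field[(V,\rho)]^{G,\chi\circ\alpha}$) yields the claimed $\field$-vector space isomorphism $\field[(V,\rho)]^{G,\chi}\cong\field[(V,\rho)]^{G,\chi\circ\alpha}$. None of these steps presents a real obstacle; the only point requiring a little care is bookkeeping the inverses — keeping straight that the action on coordinate functions involves $\rho(g)^{-1}$ and that surjectivity of $\alpha$ is what lets one re-index the group element — so I would state the convention explicitly at the outset and then let the substitutions run.
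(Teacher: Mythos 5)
The paper gives no proof of this lemma --- it is quoted from \cite[Lemma 6.4]{domokos-schefler:16} --- so there is nothing internal to compare against; your direct unwinding of the definitions is the natural argument, and parts (i) and (ii) are correct as written. In (i) the key point, which you state, is that $f$ itself (the same polynomial, since the underlying space and hence $\field[V]$ are unchanged) satisfies the weight-$(\chi\circ\alpha)$ condition for $\rho\circ\alpha$ precisely because the weight-$\chi$ condition for $\rho$, evaluated at $\alpha(g)$, is that condition; (ii) then follows because the invariant subalgebras and the orbit partitions literally coincide.

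In (iii), however, your bookkeeping goes wrong at exactly the point you flagged as delicate. Your chain of equalities
\[(f\circ T)(\rho(g)^{-1}v)=f(\rho(\alpha(g))^{-1}Tv)=(\chi\circ\alpha)(g^{-1})\,(f\circ T)(v)\]
is correct, but what it exhibits is the relative-invariance of $f\circ T$ of weight $\chi\circ\alpha$ with respect to $\rho$, \emph{not} weight $\chi$: the proposed substitution $h=\alpha(g)$ is not available, because $g$ occurs on the left inside $\rho(g)^{-1}$ as well as on the right inside $(\chi\circ\alpha)(g^{-1})$, so you cannot re-index one side without changing the other (doing it on both sides would land you in $\field[(V,\rho\circ\alpha^{-1})]^{G,\chi}$). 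Likewise, part (i) with $\chi'=\chi$ identifies $\field[(V,\rho\circ\alpha)]^{G,\chi\circ\alpha}$ with $\field[(V,\rho)]^{G,\chi}$, not with $\field[(V,\rho)]^{G,\chi\circ\alpha}$ as you state. The two slips cancel: your computation, read correctly, shows that $f\mapsto f\circ T$ sends $\field[(V,\rho\circ\alpha)]^{G,\chi\circ\alpha}=\field[(V,\rho)]^{G,\chi}$ into $\field[(V,\rho)]^{G,\chi\circ\alpha}$, which is precisely the assertion of (iii); injectivity is clear from the invertibility of $T$, and surjectivity follows from the analogous computation for $f\mapsto f\circ T^{-1}$ using $T^{-1}\rho(\alpha(g))=\rho(g)T^{-1}$. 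So the argument is salvageable essentially verbatim, but as written the conclusions drawn from the two key steps are both mislabelled, and you should redo the final identification of weights rather than rely on the re-indexing trick.
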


\section{Groups $G$ with $\sepbeta^\field(G)=\beta^\field(G)$}\label{sec:easy groups} 
\subsection{The group $(\mathrm{C}_3\times \mathrm{C}_3)\rtimes_{-1}\mathrm{C}_2$}\label{sec:C3xC3rtimesC2} 
In this subsection 
\[G:=(\mathrm{C}_3\times \mathrm{C}_3)\rtimes_{-1}\mathrm{C}_2=\langle a,b,c\mid 1=a^3=b^3=c^2,\ ab=ba,\ 
cac^{-1}=a^{-1},\ cbc^{-1}=b^{-1}\rangle.\] 
Assume that $\field$ contains an element $\xi$ of multiplicative order $6$. So $\omega:=\xi^2$ has multiplicative order $3$.
Consider the following irreducible $2$-dimensional representations of $G$:  
\[ \psi_1: a\mapsto 
\begin{bmatrix} 
      \omega & 0 \\
      0 & \omega^2 \\
   \end{bmatrix}, \quad 
   b\mapsto \begin{bmatrix} 
      1 & 0 \\
      0 & 1 \\
   \end{bmatrix}, \quad 
   c\mapsto \begin{bmatrix} 
      0 & 1 \\
      1 & 0 \\
   \end{bmatrix}, \]
\[\psi_2: a\mapsto 
\begin{bmatrix} 
      1 & 0 \\
      0 & 1 \\
   \end{bmatrix}, \quad 
   b\mapsto \begin{bmatrix} 
      \omega & 0 \\
      0 & \omega^2 \\
   \end{bmatrix}, \quad 
   c\mapsto \begin{bmatrix} 
      0 & 1 \\
      1 & 0 \\
   \end{bmatrix}. 
\]
Denote by $W_j$ the vector space $\field^2$ endowed with the  representation $\psi_j$ for $j=1,2$ 
(see Section~\ref{subsec:convention}). 

\begin{proposition}\label{prop:C3xC3rtimesC2} 
We have $\sepbeta(G,W_1\oplus W_2)\ge 6$. 
\end{proposition}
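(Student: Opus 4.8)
The plan is to exhibit two points $v,v'\in W_1\oplus W_2$ lying in distinct $G$-orbits that cannot be separated by any invariant of degree at most $5$, thereby forcing $\sepbeta(G,W_1\oplus W_2)\ge 6$. Since $\psi_1\oplus\psi_2$ gives coordinates $x_1,x_2$ on $W_1$ and $y_1,y_2$ on $W_2$, I would first work out the $G$-action on these four coordinate functions from the formula $g\cdot x_j=\sum_i\psi(g^{-1})_{ji}x_i$, and then determine the low-degree multihomogeneous invariants. The relevant building blocks are the ``abelian'' monomial invariants coming from the $\mathrm{C}_3\times\mathrm{C}_3$-action (e.g. $x_1^3, x_2^3, y_1^3, y_2^3$, $x_1x_2$, $y_1y_2$, $x_1y_2$, $x_2y_1$ and products thereof), together with their symmetrizations under $c$, which swaps the two coordinates in each $W_j$. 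The key point will be to argue that every invariant of degree $\le 5$ is a polynomial in a short explicit list of such low-degree invariants, and that this list fails to separate the chosen pair.

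Concretely, I would choose $v$ and $v'$ to have a common nonzero value on all the genuinely low-degree invariants but to differ only on some degree-$6$ invariant such as $x_1^3y_1^3+x_2^3y_2^3$ (or $x_1^3y_2^3+x_2^3y_1^3$), which is the natural candidate since $3+3=6$ matches $\beta^\field(G)=6$. A clean way to produce the pair: take $v=(x_1,x_2,y_1,y_2)=(1,1,1,1)$ and $v'=(1,1,\zeta,\zeta^{-1})$ or similar, chosen so that all invariants of the form $x_1^ax_2^b y_1^c y_2^d$ (with the monomial invariant under $\mathrm{C}_3\times\mathrm{C}_3$ and total degree $\le 5$) take equal values, using a rescaling from \eqref{eq:rescaling} to normalize. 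I would then check the two candidate points are in different $G$-orbits (the stabilizer/orbit structure of $\mathrm{C}_3\times\mathrm{C}_3$ acting diagonally makes this a finite check), and verify that the smallest invariant distinguishing them has degree $6$.

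The main obstacle is the bookkeeping: one must be sure that \emph{no} degree-$\le 5$ invariant separates the pair, which requires either a complete (finite) enumeration of the multihomogeneous invariant monomials/symmetrizations in each multidegree $\alpha$ with $|\alpha|\le 5$, or a structural argument. I would organize this via the multigrading of Section~\ref{subsec:convention}: in each multidegree the space of $\mathrm{C}_3\times\mathrm{C}_3$-semiinvariants of a given weight is spanned by monomials, and the $c$-action is an involution on this finite set, so the invariants are spanned by $c$-orbit sums; checking equality of $v,v'$ on each such orbit sum of degree $\le 5$ is a bounded computation. I expect the degree-$5$ case to be the delicate one, since that is where the largest number of candidate monomials appears, but since \eqref{eq:rescaling} lets us reduce to finitely many normalized pairs, the verification is routine once set up. An alternative, possibly shorter route is to use Lemma~\ref{lemma:V+U} to write down an explicit generating system of $\field[W_1\oplus W_2]^G$ and read off that the only indecomposable generators of degree $6$ are essential for separating some orbit pair; I would pursue whichever of these two presentations is shorter.
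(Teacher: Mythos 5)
Your overall strategy is the same as the paper's: produce a pair of points in distinct orbits on which all invariants of degree at most $5$ agree, with a degree-$6$ ``mixed'' invariant (essentially $(x_1^3-x_2^3)(y_1^3-y_2^3)$, which equals your $x_1^3y_1^3+x_2^3y_2^3$ modulo products of lower-degree invariants) doing the separating. However, there are two concrete gaps. First, your candidate pair fails: for $v=(1,1,1,1)$ and $v'=(1,1,\zeta,\zeta^{-1})$, agreement on the degree-$3$ invariant $y_1^3+y_2^3\in\field[W_2]^G$ forces $\zeta^3=1$; but then, since $b$ acts trivially on $W_1$ and by $\mathrm{diag}(\omega,\omega^2)$ on $W_2$, we get $v'=b^{\pm1}\cdot v$, so the two points lie in the \emph{same} $G$-orbit and no invariant of any degree separates them. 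Rescaling via \eqref{eq:rescaling} cannot repair this, as it preserves orbit-equivalence. The paper's choice $v=([1,0]^T,[1,0]^T)$, $v'=([1,0]^T,[0,1]^T)$ avoids this because the two $W_2$-components are related by $c$ but the pair is still separated by the mixed degree-$6$ generator.

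Second, the step you defer as ``bookkeeping'' is actually the mathematical heart of the proof, and your proposed route to it is the hard way. The paper does not enumerate degree-$\le 5$ invariants at all: it chooses $w_1=w_1'$ and $w_2'=c\cdot w_2$, so every invariant lying in $\field[W_1]^G$ or $\field[W_2]^G$ agrees on $v,v'$ automatically, and then proves the structural fact that $\field[W_1\oplus W_2]^G$ is generated by $\field[W_1]^G$, $\field[W_2]^G$ and the single degree-$6$ element $(x_1^3-x_2^3)(y_1^3-y_2^3)$ (using that each $\psi_j$ factors through $\mathrm{S}_3$ and that $\field[W_j]/\mathcal{H}(G,W_j)$ is the regular representation of $\mathrm{S}_3$, so the only relevant relative invariant on which the complementary kernel acts trivially is $x_1^3-x_2^3$, resp.\ $y_1^3-y_2^3$). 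Without some such structural statement ruling out indecomposable mixed invariants of degree $\le 5$, your enumeration in each multidegree $\alpha$ with $|\alpha|\le 5$ would have to be carried out explicitly, and your proposal does not do so. Also note that Lemma~\ref{lemma:V+U}, which you suggest as an alternative, does not apply here: it requires the second summand to be a sum of one-dimensional modules, whereas $W_2$ is two-dimensional.
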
 

\begin{proof}
Note that $\ker(\psi_1)=\langle b\rangle$ and $\ker(\psi_2)=\langle a\rangle$. 
We have $G/\ker(\psi_1)\cong \mathrm{S}_3$ and $G/\ker(\psi_2)\cong \mathrm{S}_3$. 
Both these representations factor through the irreducible $2$-dimensional representation of $\mathrm{S}_3$.  
It is well known and easy to see that the corresponding algebra of invariants is generated by a degree $2$ and a degree $3$ invariant. Moreover, $\field[W_j]/\mathcal{H}(G,W_j)$ 
as an $\mathrm{S}_3=G/\ker(\psi_j)$-module is isomorphic to the regular representation of $\mathrm{S}_3$. 
Thus $x_1^3-x_2^3$ spans the only minimal non-trivial $G$-invariant subspace in a 
$\field G$-module direct complement of $\mathcal{H}(G,W_1)$ in $\field[W_1]$ on which $\ker(\psi_2)$ acts trivially, and $y_1^3-y_2^3$ spans the only minimal non-trivial $G$-invariant subspace in a 
$\field G$-module direct complement of $\mathcal{H}(G,W_2)$ in $\field[W_2]$ on which $\ker(\psi_1)$ acts trivially. 
It follows that $\field[W_1\oplus W_2]^G$ is generated by 
$\field[W_1]^G$, $\field[W_2]^G$, and $f:=(x_1^3-x_2^3)(y_1^3-y_2^3)$. 
Consider the points $v=(w_1,w_2)=([1,0]^T,[1,0]^T)$ and $v'=(w'_1,w'_2)=([1,0]^T,[0,1]^T)$. 
We claim that all invariants of degree at most $5$ agree on $v$ and on $v'$, 
but $f(v)\neq f(v')$. Indeed, $w_1=w'_1$ and $c\cdot w_2=w'_2$. Thus the invariants in $\field[W_1]^G$ and in $\field[W_2]^G$ agree on $v$ and on $v'$. 
On the other hand, $f(v)=1$ and $f(v')=-1$. The proof is finished. 
\end{proof}

\begin{theorem} \label{thm:sepbeta((C3xC3)rtimesC2)} 
Assume that $\field$ has an element of multiplicative order $6$. 
Then we have $\sepbeta^\field((\mathrm{C}_3\times \mathrm{C}_3)\rtimes_{-1} \mathrm{C}_2)=6$. 
\end{theorem}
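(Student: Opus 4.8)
The plan is to combine Proposition~\ref{prop:C3xC3rtimesC2}, which already gives the lower bound $\sepbeta^\field(G)\ge 6$, with a matching upper bound $\sepbeta^\field(G)\le 6$. Since $\beta^\field(G)=6$ for this group (this is the value recorded in Table~\ref{table:main}, taken from \cite{cziszter-domokos-szollosi}), the upper bound is immediate from the trivial inequality \eqref{eq:sepbeta<beta}: any homogeneous generating system of $\field[V]^G$ is a separating set, and $\field[V]^G$ is generated in degree at most $\beta^\field(G)=6$. Hence $\sepbeta(G,V)\le 6$ for every $\field G$-module $V$, so $\sepbeta^\field(G)\le 6$.

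Putting the two bounds together yields $\sepbeta^\field(G)=6$. In more detail: by \eqref{eq:sepbeta<beta} and the known value $\beta^\field(G)=6$ we get $\sepbeta^\field(G)\le 6$; by \eqref{eq:sepbeta(H)} (monotonicity under taking the submodule $W_1\oplus W_2$ of an arbitrary module, or rather directly by the definition of the supremum) together with Proposition~\ref{prop:C3xC3rtimesC2} we get $\sepbeta^\field(G)\ge \sepbeta(G,W_1\oplus W_2)\ge 6$. Therefore equality holds, which is exactly the assertion of Theorem~\ref{thm:sepbeta((C3xC3)rtimesC2)}.

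The only hypothesis needed is that $\field$ contains an element of multiplicative order $6$, which guarantees both that $\mathrm{char}(\field)\nmid|G|=18$ (so the whole invariant-theoretic framework applies) and that the element $\xi$ used to write down the representations $\psi_1,\psi_2$ in Proposition~\ref{prop:C3xC3rtimesC2} exists. There is essentially no obstacle here: the substantive work — exhibiting a generating system of $\field[W_1\oplus W_2]^G$ and the non-separated pair of points $v,v'$ — was already carried out in Proposition~\ref{prop:C3xC3rtimesC2}, and the value $\beta^\field(G)=6$ is quoted from the literature. The "hard part", such as it is, is merely assembling these ingredients and observing that the lower bound of $6$ forces equality with the a priori upper bound $\beta^\field(G)=6$; one should also note that this argument is why $(18,4)$ falls into the easy family of Section~\ref{sec:easy groups} where $\sepbeta^\field(G)=\beta^\field(G)$.

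\begin{proof}
By \cite{cziszter-domokos-szollosi} we have $\beta^\field(G)=6$, hence by the inequality \eqref{eq:sepbeta<beta}, $\sepbeta^\field(G)\le \beta^\field(G)=6$. On the other hand, Proposition~\ref{prop:C3xC3rtimesC2} gives $\sepbeta(G,W_1\oplus W_2)\ge 6$, and since $\sepbeta^\field(G)$ is the supremum of $\sepbeta(G,V)$ over all finite dimensional $\field G$-modules $V$, we conclude $\sepbeta^\field(G)\ge 6$. Therefore $\sepbeta^\field(G)=6$.
\end{proof}
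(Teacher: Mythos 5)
Your proof is correct and is essentially identical to the paper's: both obtain the upper bound from $\beta^\field(G)=6$ via \eqref{eq:sepbeta<beta} (the paper cites \cite[Corollary 5.5]{cziszter-domokos:indextwo} for this value) and the lower bound from Proposition~\ref{prop:C3xC3rtimesC2}. Nothing further is needed.
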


\begin{proof} 
By \cite[Corollary 5.5]{cziszter-domokos:indextwo} we have $\beta^\field(G)=6$. 
Thus the result follows from Proposition~\ref{prop:C3xC3rtimesC2}. 
\end{proof} 

\subsection{The group $\mathrm{C}_7\rtimes \mathrm{C}_3$}
The value of $\sepbeta^{\field}(\mathrm{C}_7\rtimes \mathrm{C}_3)$ is obtained as a special case of a result in \cite{cziszter:C7rtimesC3} under the assumption that $\field$ is algebraically closed. This can be generalized to the case when $\field$ contains an element of multiplicative order $|G|$.

\begin{proposition}\cite[Theorem 4.1]{cziszter:C7rtimesC3}
Suppose that $p\equiv 1\mod 3$ is a prime and $\field$ is algebraically closed. Then for the group $\mathrm{C}_p\rtimes \mathrm{C}_3
=\langle a,b\mid a^p=b^3=1_G,\quad bab^{-1}=a^2\rangle$ we have $\sepbeta^{\field}(\mathrm{C}_p\rtimes \mathrm{C}_3)=p+1$.
\end{proposition}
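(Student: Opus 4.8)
This proposition coincides with \cite[Theorem 4.1]{cziszter:C7rtimesC3} specialized to $q=3$, so in the paper it is quoted rather than reproved; still, I indicate how the two inequalities $\sepbeta^\field(G)\le p+1$ and $\sepbeta^\field(G)\ge p+1$ can be obtained. Write $G=\mathrm{C}_p\rtimes\mathrm{C}_3$, so that $\widehat G=\{1,\psi,\psi^2\}$ is cyclic of order $3$ with $\psi(a)=1$, and recall that the simple $\field G$-modules are the three $1$-dimensional modules $\field=U_1,U_\psi,U_{\psi^2}$ together with $(p-1)/3$ pairwise non-isomorphic faithful $3$-dimensional modules, on each of which, in suitable coordinates $x_1,x_2,x_3$, the generator $a$ acts diagonally by a triple of $p$-th roots of unity forming one orbit of the order-$3$ automorphism and $b$ cyclically permutes $x_1,x_2,x_3$. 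Since $\field$ is algebraically closed, Lemma~\ref{lemma:multfree} allows us to replace $\sepbeta^\field(G)$ by $\sepbeta(G,V)$, where $V$ is the direct sum of all these simple modules.

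For the lower bound I would fix one faithful $3$-dimensional summand $W$, with coordinates labelled so that $b$ induces the cyclic shift $x_1\mapsto x_2\mapsto x_3\mapsto x_1$, and work in $W\oplus U_{\psi^2}$. With $\omega$ a fixed primitive cube root of unity, one of the $\mathrm{C}_3$-symmetrizations of the indecomposable $\mathrm{C}_p$-invariant monomial $x_1^p$, say $h=x_1^p+\omega x_2^p+\omega^2x_3^p$, is a relative invariant of weight $\psi$ and degree $p$, so $h\,t_{\psi^2}\in\field[W\oplus U_{\psi^2}]^{G}$ is an honest invariant of degree $p+1$. Let $e_3\in W$ be the third standard basis vector: its $G$-stabilizer is trivial (as $a$ scales it by a nontrivial $p$-th root of unity and $b$ moves it to another basis vector), and every positive-degree $\mathrm{C}_p$-invariant monomial of degree $<p$ vanishes at $e_3$ (such a monomial cannot be a pure power of $x_3$). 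Take $v=(e_3,1)$ and $v'=(e_3,\omega)$. Since $\mathrm{Stab}_G(e_3)=1$, no $g\in G$ satisfies $g\cdot v=v'$, so $G\cdot v\ne G\cdot v'$. On the other hand every multihomogeneous invariant of degree $\le p$ has the form $t_{\psi^2}^{\,k}g$ with $g\in\field[W]$ a relative invariant of weight $\psi^{k}$ and degree $\le p-k$; it takes the value $g(e_3)$ at $v$ and $\omega^{k}g(e_3)$ at $v'$, and these agree because either $3\mid k$ (so $\omega^k=1$) or $3\nmid k$ (so $1\le\deg g<p$ and $g(e_3)=0$). Hence the invariants of degree $\le p$ do not separate $v$ from $v'$, whereas $h\,t_{\psi^2}$ takes the values $h(e_3)=\omega^2$ and $\omega\,h(e_3)=1$, which differ. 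Therefore $\sepbeta^\field(G)\ge p+1$.

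For the upper bound one must show that any two points $v,v'$ of $V$ in distinct $G$-orbits are separated by an invariant of degree $\le p+1$. Using the products of $t_\psi,t_{\psi^2}$ with symmetrized $\mathrm{C}_p$-invariant monomials on the faithful summands, one first reduces to the situation where $v$ and $v'$ have the same pattern of vanishing among the $U_{\psi^i}$-coordinates and, after rescaling as in \eqref{eq:rescaling}, the same values there, so that only the faithful-summand parts remain, to be separated under the subgroup fixing those coordinates. On each faithful summand the $\mathrm{C}_p$-invariant monomials correspond to zero-sum sequences over $\mathbb{F}_p$, of length at most the Davenport constant $\mathsf{D}(\mathrm{C}_p)=p$; combining this with the $\mathrm{C}_3$-symmetrization and with Lemma~\ref{lemma:common zero locus} to handle points whose stabilizer is not contained in a given $\ker\psi^i$, one argues that invariants of degree $\le p+1$ already recover, orbit by orbit, all the information carried by a full generating system, which by \cite{cziszter:C7rtimesC3} lies in degrees $\le\beta^\field(G)=p+2$. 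I expect the genuinely hard point to be precisely this last step — showing that the extremal indecomposable generators of degree $p+2$, the ones that are products of two relative invariants of nontrivial weight, are dispensable for separation. This requires a case analysis according to which coordinates of $v$ and $v'$ vanish (equivalently, according to the stabilizers, via Lemma~\ref{lemma:common zero locus}) and according to how the $\mathrm{C}_3$-action twists the $\mathrm{C}_p$-orbit data across the faithful summands; the interplay between irreducible product-one sequences over $\widehat G\cong\mathrm{C}_3$ and zero-sum sequences over $\mathbb{F}_p$ is what makes the answer come out to $p+1$ rather than $p$ or $p+2$.
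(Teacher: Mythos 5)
The paper does not prove this proposition at all: it is imported verbatim as \cite[Theorem 4.1]{cziszter:C7rtimesC3} and only the subsequent corollary (removing the hypothesis that $\field$ be algebraically closed) gets an argument. You correctly identify this, so there is no paper proof to compare against; what remains is to judge your sketch on its own terms.

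Your lower bound is correct and complete. The relative invariant $h=x_1^p+\omega x_2^p+\omega^2x_3^p$ has nontrivial weight, so the appropriate product $h\,t_{\psi^{\pm 1}}$ is an honest invariant of degree $p+1$; the point $e_3$ has trivial stabilizer, so $G\cdot v\neq G\cdot v'$; and since every character of $G$ kills $G'=\mathrm{C}_p$, every relative invariant of nontrivial weight is a linear combination of $\mathrm{C}_p$-invariant monomials, none of which is a pure power of $x_3$ of degree $<p$, whence $g(e_3)=0$ for all such $g$ of positive degree $<p$. This is exactly the style of lower-bound argument the paper uses elsewhere (compare the final paragraph of the proof of Theorem~\ref{thm:sepbeta(M27)}). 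The upper bound, however, is a programme rather than a proof: the decisive claim --- that the indecomposable generators of degree $p+2$ (products of two relative invariants of nontrivial weight) are dispensable for separation --- is precisely the content of Cziszter's theorem and is asserted, not established; the reduction you describe (matching the $U_{\psi^i}$-coordinates first, then separating the faithful summands under the residual stabilizer) is plausible but carries the entire difficulty. As a gloss on a quoted result this is acceptable; as a standalone proof it has a genuine gap at exactly that point.
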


\begin{corollary}
Consider the group $\mathrm{C}_7\rtimes \mathrm{C}_3
=\langle a,b\mid a^7=b^3=1_G,\quad bab^{-1}=a^2\rangle$ and suppose that $\field$ contains an element of multiplicative order $21$. 
Then $\sepbeta^\field(\mathrm{C}_7\rtimes \mathrm{C}_3)=8$. 
\end{corollary}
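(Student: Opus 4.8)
The plan is to derive the corollary from the cited Proposition, which gives $\sepbeta^{\field}(\mathrm{C}_7\rtimes\mathrm{C}_3)=8$ whenever $\field$ is algebraically closed (here $p=7$, so $p+1=8$). First I would separate the statement into an upper bound and a lower bound. For the upper bound, I would pass to the algebraic closure $\overline{\field}$ of $\field$: by Lemma~\ref{lemma:base field} (the monotonicity $\sepbeta^{\field}(G)\le\sepbeta^{L}(G)$ for $\field\subseteq L$, applied with $L=\overline{\field}$), and since $\overline{\field}$ has the same characteristic as $\field$ (which does not divide $21$, as $\field$ contains an element of multiplicative order $21$), the Proposition applies over $\overline{\field}$ and yields $\sepbeta^{\field}(\mathrm{C}_7\rtimes\mathrm{C}_3)\le\sepbeta^{\overline{\field}}(\mathrm{C}_7\rtimes\mathrm{C}_3)=8$.

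For the lower bound, the point is that the construction in \cite[Theorem 4.1]{cziszter:C7rtimesC3} witnessing $\sepbeta\ge 8$ already works over any field containing an element of multiplicative order $21$. Concretely, I would recall that the extremal example consists of a representation built from the $1$-dimensional characters of $G/\langle a\rangle\cong\mathrm{C}_3$ together with the $3$-dimensional irreducible representation on which $a$ acts diagonally with eigenvalues $\zeta,\zeta^2,\zeta^4$ for a primitive $7$th root of unity $\zeta$ and $b$ permutes the coordinates cyclically — all of which is defined over $\field$ precisely because $\field$ contains an element $\xi$ of order $21$, whence $\xi^3$ has order $7$ and $\xi^7$ has order $3$. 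One then checks that a specific pair of points $v,v'$ with distinct $G$-orbits cannot be separated by invariants of degree $\le 7$, while an explicit invariant of degree $8$ (a product of relative invariants, the $8$th power-type term) does separate them. I would either cite the relevant points and invariant from \cite{cziszter:C7rtimesC3} verbatim, noting that the field-of-definition hypothesis is met, or invoke the general principle that lower-bound examples realized over the prime field (or over $\mathbb{Q}(\xi)$) descend along any inclusion of fields, together with \eqref{eq:sepbeta<beta} being irrelevant here since we want a lower bound not an upper one.

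The main obstacle — really the only subtlety — is confirming that the extremal representation and the separating invariant of degree $8$ in \cite[Theorem 4.1]{cziszter:C7rtimesC3} are genuinely defined over $\field$ rather than requiring the full algebraic closure: the eigenvalues needed are seventh and third roots of unity, and one must verify the argument does not secretly use, say, quadratic irrationalities or transcendence of the base field. Since $\field$ is assumed to contain an element of multiplicative order $21=|G|$, all the requisite roots of unity lie in $\field$, so this check goes through; the remaining verifications (that the two chosen points have distinct orbits, that all lower-degree invariants agree on them, and that the degree-$8$ invariant does not) are purely polynomial identities that hold over any field. Hence $\sepbeta^{\field}(\mathrm{C}_7\rtimes\mathrm{C}_3)\ge 8$, and combined with the upper bound we get equality.
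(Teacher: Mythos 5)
Your proposal is correct and follows essentially the same route as the paper: the upper bound is obtained by passing to the algebraic closure via Lemma~\ref{lemma:spanning invariants}, and the lower bound by observing that the extremal representation and separating invariants of \cite[Theorem 4.1]{cziszter:C7rtimesC3} are already defined over any field containing an element of multiplicative order $3p=21$. No gaps.
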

\begin{proof}
Actually,  \cite{cziszter:C7rtimesC3} works over an algebraically closed base field, 
but the representation providing the exact lower bound for the separating Noether number 
is defined over any field containing an element of multiplicative order $3p$, 
and in order to prove that $p+1$ is also an upper bound, 
we may pass to the algebraic closure using 
Lemma~\ref{lemma:spanning invariants}. As the special case $p=7$, we are done. 
\end{proof}

\subsection{The binary tetrahedral group} 

In this subsection 
$G:=\widetilde{\mathrm{A}}_4$ is the binary tetrahedral group: 
the special orthogonal group $\mathrm{SO}(3,\mathbb{R})$ has a subgroup isomorphic 
to $\mathrm{A}_4$ (the group of rotations mapping a regular tetrahedron into itself), and 
$G$ is isomorphic to the preimage of $\mathrm{A}_4$ under the classical two-fold covering 
$\mathrm{SU}(2,\mathbb{C})\to \mathrm{SO}(3,\mathbb{R})$. So $G$ can be  embedded as a subgroup into the special unitary group $\mathrm{SU} (2,\mathbb{C})$, acting via its defining representation on $V:=\mathbb{C}^2$. Thus $V$ is a $\field G$-module, and it is well known that $\mathbb{C}[V]^G$ is generated by three homogeneous invariants $f_6$, $f_8$, $f_{12}$, having degree $6$, $8$, and $12$ 
(see \cite[Lemma 4.1]{huffman} or \cite[Section 0.13]{popov-vinberg}). Since $G$ (as a subgroup of $\mathrm{GL}(V)$) is not generated by pseudo-reflections, $\mathbb{C}[V]^G$ does not contain an algebraically independent 
separating set by \cite{dufresne}, so 
the invariants $f_6$ and $f_8$ can not separate all the $G$-orbits in $V$.   
This implies that $\sepbeta^\mathbb{C}(G,V)\ge 12$. 
A second proof valid for more general base fields is given below.  

\begin{theorem} \label{thm:sepbeta(A4tilde)}
Suppose in addition that $\field$ contains an element 
$\xi$ of multiplicative order $8$. 
Then we have $\sepbeta^\field(\widetilde{\mathrm{A}}_4)=12$. 
\end{theorem}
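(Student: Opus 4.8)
The inequality $\sepbeta^\field(\widetilde{\mathrm{A}}_4)\geq 12$ has essentially been established already via the defining $2$-dimensional module $V$: since $\beta^\field(G)=12$ is attained by $V$ (the invariant $f_{12}$ is indecomposable), I would recover the lower bound over our more general base field by exhibiting two explicit points $v,v'\in V=\field^2$ with $G\cdot v\neq G\cdot v'$ that cannot be separated by invariants of degree $\leq 11$. The natural choice is a pair lying on the zero locus of $f_6$ and $f_8$ but not in the same orbit; concretely one takes $v$ and $v'$ in the (common) vanishing set of $f_6,f_8$, which is a finite union of $G$-orbits of "exceptional" vectors, and checks that $f_{12}$ distinguishes them while every invariant of smaller degree (necessarily a polynomial in $f_6,f_8$ together with lower-degree relative invariants, of which there are none in degrees below $6$) does not. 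After rescaling, \eqref{eq:rescaling} lets me normalize $v$; the key computation is to verify that $f_6(v)=f_8(v)=0$ forces $v$ to lie in one of finitely many orbits, on which $f_{12}$ takes two distinct nonzero values. This part is short and requires only the explicit shape of $f_6,f_8,f_{12}$ recalled from \cite{huffman}.

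For the upper bound $\sepbeta^\field(G)\leq 12$, the plan is to invoke \eqref{eq:sepbeta<beta} after reducing to the determination of $\beta^\field(G)$ over fields containing an element of multiplicative order $8$. The cleanest route is: by Lemma~\ref{lemma:multfree} (or directly by Lemma~\ref{lemma:base field}) it suffices to bound $\sepbeta$ on a multiplicity-free module; then pass to the algebraic closure via Lemma~\ref{lemma:spanning invariants}, where $\beta^{\bar\field}(\widetilde{\mathrm{A}}_4)=12$ is known (it equals $\beta^\mathbb{C}$, since the character table and the relevant invariant-theoretic data of $\widetilde{\mathrm{A}}_4$ do not depend on the non-modular characteristic — the representation ring and the degrees of basic invariants of the $2$-dimensional reflection-like module are characteristic-independent once a primitive $8$th root of unity is available). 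Hence $\sepbeta^\field(G)\leq \sepbeta^{\bar\field}(G)\leq \beta^{\bar\field}(G)=12$.

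\emph{Main obstacle.} The delicate point is the upper bound, specifically justifying that $\beta^\field(\widetilde{\mathrm{A}}_4)=12$ continues to hold over an arbitrary (possibly finite, not algebraically closed) field $\field$ containing a primitive $8$th root of unity, rather than only over $\mathbb{C}$. This is not quite automatic: one must be sure the Reynolds operator (available since $\mathrm{char}(\field)\nmid 24$) behaves well, that the simple $\field G$-modules are exactly the "same" ones as over $\mathbb{C}$ (all three $1$-dimensional characters, one more $1$-dimensional... rather: $\widetilde{\mathrm{A}}_4$ has irreducibles of dimensions $1,1,1,2,2,2,3$ — all realizable once $\xi$ is present), and that the known generation degree $12$ for $\mathbb{C}[V]^G$ transfers. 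I expect this to follow from a base-change argument for the Hilbert ideal together with Lemma~\ref{lemma:multfree}, but writing it carefully — or alternatively citing \cite{cziszter-domokos-szollosi} for $\beta^\field(G)=12$ in this generality — is where the real work lies. If the cited literature already records $\beta^\field(G)=12$ under our hypothesis, the proof collapses to the lower-bound construction plus \eqref{eq:sepbeta<beta}.
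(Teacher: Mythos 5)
Your overall architecture matches the paper's: the upper bound comes from $\beta^\field(\widetilde{\mathrm{A}}_4)=12$ (which is indeed recorded in the literature in this generality, namely \cite[Corollary 3.6]{CzD:1}) together with \eqref{eq:sepbeta<beta}, and the lower bound comes from two points of the $2$-dimensional module $V$ that only the degree-$12$ generator can tell apart. The upper-bound half of your plan is therefore sound.

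The lower-bound construction as you describe it, however, would fail. You propose to take $v,v'$ in the common vanishing set of $f_6$ and $f_8$. That set is just the origin: the ring of invariants of the binary tetrahedral group on $V$ is the $E_6$ hypersurface $\field[u,v,w]/(u^4+v^3+w^2)$ with $\deg u=6$, $\deg v=8$, $\deg w=12$, so $f_{12}^2$ lies in the ideal generated by $f_6$ and $f_8$; hence $f_6(v)=f_8(v)=0$ forces $f_{12}(v)=0$, i.e.\ all positive-degree invariants vanish at $v$, and for a finite group this means $v=0$. (One can also see this directly from the explicit generators: the paper's $h_6=x_1x_2(x_1^2+x_2^2)(x_1^2-x_2^2)$ and $h_8$ have no common nonzero root.) So there is no pair of distinct orbits in $\mathcal{V}(f_6,f_8)$ for $f_{12}$ to distinguish. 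The correct choice — and what the paper does — is to take $v$ with $f_6(v)=0$ but $f_8(v)\neq 0$ and $f_{12}(v)\neq 0$ (concretely $v=[1,\xi^2]^T$), and set $v':=\xi v$. Since $\xi^8=1$ and $\xi^{12}=-1$, one gets $f_6(v)=f_6(v')=0$, $f_8(v')=f_8(v)$, while $f_{12}(v')=-f_{12}(v)\neq f_{12}(v)$; as every invariant of degree at most $11$ is a polynomial in $f_6,f_8$, the two points are inseparable below degree $12$. Note also that your appeal to \cite{huffman} only gives the generators over $\mathbb{C}$; to run the degree-$\le 11$ argument over an arbitrary $\field$ with a primitive $8$th root of unity one needs to know that $\field[V]^G$ has no invariants in degrees $1$–$5$, $7$, $9$–$11$ other than those coming from $f_6,f_8$, which the paper secures by an explicit two-step computation ($\mathrm{Dic}_8$-invariants first, then $\mathrm{C}_3$-invariants) rather than by citation.
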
 

\begin{proof} 
An alternative way to think of $G=\widetilde{\mathrm{A}_4}$ is that 
$G\cong \mathrm{Dic}_8\rtimes \mathrm{C}_3$, the non-abelian semidirect product 
of the quaternion group of order $8$ and the $3$-element group. 
Denote by $a,b$ generators of $G$, where $a^4=1$, $b^3=1$, and 
$\langle a,bab^{-1}\rangle\cong \mathrm{Dic}_8$, such that 
$a,bab^{-1},b^2ab^{-2}$ correspond to the elements of the quaternion group 
traditionally denoted by $\mathrm{i}$, $\mathrm{j}$, $\mathrm{k}$. 
Then $G$ has the representation  

\[\psi:a\mapsto
\begin{bmatrix} 
      \xi^2 & 0 \\
      0 & -\xi^2 \\
   \end{bmatrix}, \quad    
   b\mapsto 
   \begin{bmatrix} 
      \frac 12(-1-\xi^2) & \frac 12(1+\xi^2) \\
      \frac 12(-1+\xi^2) &   \frac 12(-1+\xi^2) \\
   \end{bmatrix} 
   \]
(indeed, $\psi(b)^3$ is the identity matrix, and 
$\psi(b)\psi(a)\psi(b)^{-1}=
\begin{bmatrix} 
      0 & -1 \\
      1 & 0 \\
   \end{bmatrix}$ together with $\psi(a)$ generate a subgroup in 
   $\mathrm{GL}(\field^2)$ isomorphic to the quaternion group). 
Write $V$ for $\field^2$ endowed with the representation $\psi$. 
It is easy to see that $\field[V]^{\mathrm{Dic}_8}$ is generated by 
$f_1:=(x_1x_2)^2$, $f_2:=(x_1^2+x_2^2)^2$, $f_3:=x_1x_2(x_1^4-x_2^4)$. 
The generator $f_3$ is fixed by $b$ as well, hence $f_3$ is $G$-invariant, 
whereas $f_1$ and $f_2$ span a $\langle b\rangle$-invariant subspace 
in $\field[x_1,x_2]$.  The matrix of the linear transformation $b$ on 
the space $\mathrm{Span}_\field\{f_1,f_2\}$ with respect to the basis 
$4f_1,f_2$ is 
$\begin{bmatrix} 
      0 & 1 \\
      -1 & -1 \\
   \end{bmatrix}.$  
Endow the $3$-variable polynomial algebra $\field[s_1,s_2,s_3]$ with the action of 
$\langle b\rangle\cong \mathrm{C}_3$ given by 
$s_1\mapsto -s_2$, $s_2\mapsto s_1-s_2$, $s_3\mapsto s_3$. 
The $\field$-algebra homomorphism given by 
$s_1\mapsto 4f_1$, $s_2\mapsto f_2$, $s_3\mapsto f_3$ restricts to a surjective 
$\field$-algebra homomorphism from $\field[s_1,s_2,s_3]^{\langle b\rangle}$ onto 
$\field[V]^G$. 
Knowing that $\field[s_1,s_2,s_3]^{\langle b\rangle}$ is generated in degree 
at most $\mathsf{D}(\mathrm{C}_3)=3$, an easy direct computation yields 
\[\field[s_1,s_2,s_3]^{\langle b\rangle}=
\field[s_1^2 - s_1s_2 + s_2^2, s_1^2s_2 - s_1s_2^2, s_1^3 - 3s_1s_2^2 + s_2^3, s_3].\]
This shows that $\field[V]^G$ is generated by  
\begin{align*} 
h_8&:=(4f_1)^2 - 4f_1f_2 + f_2^2 
\\ h_6&:=x_1x_2(x_1^2+x_2^2)(x_1^2-x_2^2) 
\\ h_{12}^{(1)}&:=(4f_1)^2f_2 - 4f_1f_2^2 
\\ h_{12}^{(2)}&:=(4f_1)^3 - 12f_1f_2^2 + f_2^3
\end{align*}
having degrees  
$4,6,12,12$. In fact $h_{12}^{(1)}$ can be omitted, because we have 
$h_{12}^{(1)}=-4h_6^2$. 
Set $v:=[1,\xi^2]^T$ and $v':=\xi v$. Then $(x_1^2+x_2^2)(v)=0=(x_1^2+x_2^2)(v')$, 
hence $f_2(v)=0=f_2(v')$. It follows that $h_6(v)=0=h_6(v')$. 
Moreover, $h_8(v)=16(\xi^4)^2=16=16(\xi^8)^2=h_8(v')$. 
Thus no invariants of degree less than $12$ separate $v$ and $v'$. 
However, they have different $G$-orbit, since 
$h_{12}^{(2)}(v)=4^3(\xi^4)^3=-64$, whereas 
$h_{12}^{(2)}(v')=4^3(\xi^8)^3=64$. 
Thus we proved the inequality 
$\sepbeta(G,V)\ge 12$, implying in turn $\sepbeta^\field(G)\ge 12$.
On the other hand, 
by \cite[Corollary 3.6]{CzD:1} we know that $\beta^\field(G)=12$, 
implying the reverse inequality 
$\sepbeta^\field(G)\le 12$. 
\end{proof}

\subsection{The group $\mathrm{C}_3\rtimes \mathrm{D}_8\cong (\mathrm{C}_6\times \mathrm{C}_2)\rtimes \mathrm{C}_2$.} 
In this subsection 
\[G=\mathrm{C}_3\rtimes \mathrm{D}_8=(\mathrm{C}_6\times \mathrm{C}_2)\rtimes \mathrm{C}_2=\langle a,b,c\mid a^6=b^2=c^2=1,\ ba=ab,\ 
cac=a^{-1},\ cbc=a^3b\rangle.\]  
Assume that $\field$ contains an element $\omega$ of multiplicative order $6$, 
and consider the following irreducible $2$-dimensional representation  of $G$:  
\[ \psi: a\mapsto 
\begin{bmatrix} 
      \omega & 0 \\
      0 & \omega^{-1} \\
   \end{bmatrix}, \quad 
   b\mapsto \begin{bmatrix} 
      1 & 0 \\
      0 & -1 \\
   \end{bmatrix}, \quad 
   c\mapsto \begin{bmatrix} 
      0 & 1 \\
      1 & 0 \\
   \end{bmatrix}. \]
The commutator subgroup of $G$ is $\langle a\rangle$.
Denote by $\chi$ the $1$-dimensional representation of $G$ given by 
\[\chi:a\mapsto 1,\ b\mapsto -1,\ c\mapsto -1\] 
Denote by $W$ the vector space $\field^2$ endowed with the  representation $\psi$, and denote by $U_{\chi}$ the vector space $\field$ 
endowed with the  representation $\chi$. 

\begin{proposition} \label{prop:mingen(C6xC2)rtimesC2)} 
The algebra $\field[W\oplus U_\chi]^G$ is minimally generated by 
$(x_1x_2)^2$, $x_1^6+x_2^6$, $t^2$, $(x_1^6-x_2^6)x_1x_2t$. 
\end{proposition}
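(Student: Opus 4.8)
The plan is to apply Lemma~\ref{lemma:V+U} with $W$ the $2$-dimensional module carrying $\psi$ and $U=U_\chi$, so that the generating system of $\field[W\oplus U_\chi]^G$ is built from a generating system $A$ of $\field[W]^G$, the product-one monomials $B$ in $t_\chi$, and the correction terms $C$ coming from relative invariants of weight $\chi$. Since $\chi^2=1$ and $\chi\neq 1$, the only irreducible product-one sequence over $\widehat G=\langle\chi\rangle\times(\text{something})$ restricted to the relevant part is $\chi,\chi$, giving $B=\{t^2\}$ (writing $t=t_\chi$); and the only product-one free sequence of weight-$\chi$ type with nonempty tail is the single-term sequence $\chi$, so $C=\{h\,t \mid h\in A_\chi\}$ where $A_\chi$ spans a complement of $\field[W]^G_+\field[W]^{G,\chi}$ in $\field[W]^{G,\chi}$.

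First I would nail down $\field[W]^G$. The subgroup $H=\langle a,b\rangle\cong \mathrm{C}_6\times\mathrm{C}_2$ acts diagonally: $x_1,x_2$ are weight vectors, and $c$ swaps them. A short computation (or the standard description of invariants of a diagonal abelian group followed by the $\mathrm{C}_2$-symmetrization) shows $\field[W]^H$ is generated by $x_1^6, x_2^6, (x_1x_2)^2, x_1^3x_2^3$ subject to obvious relations, and then taking $c$-invariants gives that $\field[W]^G$ is generated by $p:=(x_1x_2)^2$ and $q:=x_1^6+x_2^6$ (note $x_1^3x_2^3 = p\cdot x_1x_2$ is not itself $b$-invariant, and $x_1^6-x_2^6$ is not $c$-invariant; one checks $(x_1^3x_2^3)^2=p^3$ so nothing new in even bidegree). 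So $A=\{p,q\}$, confirming the first two listed generators.

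Next I would compute $\field[W]^{G,\chi}$ and the needed complement. A monomial $x_1^ix_2^j$ has weight $\chi$ under the $H$-action iff $i\equiv j \pmod 6$ (so that $a$ acts trivially, matching $\chi(a)=1$) and $i+j$ is odd (so that $b$ acts by $-1$, matching $\chi(b)=-1$); the relative-invariant condition under $c$ then forces the $c$-anti-symmetrized combination, and the lowest-degree such element is $(x_1^6-x_2^6)x_1x_2$ in bidegree... actually in degree $8$. One checks $\field[W]^{G,\chi}=\field[W]^G\cdot\{(x_1^6-x_2^6)x_1x_2\}$ as a $\field[W]^G$-module is generated by this single element (every weight-$\chi$ monomial has $x_1x_2$ dividing it, and dividing out $p$-powers and $q$ one reduces to it), hence $\field[W]^G_+\field[W]^{G,\chi}$ has codimension $1$ and $A_\chi=\{(x_1^6-x_2^6)x_1x_2\}$. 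This yields $C=\{(x_1^6-x_2^6)x_1x_2\cdot t\}$, the fourth generator, and $B=\{t^2\}$, the third. Finally I would verify \emph{minimality}: one shows none of the four generators lies in the subalgebra generated by the others, most easily by a bidegree/multidegree argument — $t^2$ is the unique generator involving $t$ to an even power with no $x$'s, $(x_1^6-x_2^6)x_1x_2t$ is the unique one odd in $t$, and among $p,q$ a relation would force a polynomial identity between $(x_1x_2)^2$ and $x_1^6+x_2^6$ which fails since they are algebraically independent.

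The main obstacle I anticipate is the bookkeeping in the two module-theoretic computations: precisely identifying $\field[W]^{G,\chi}$ and proving that it is a cyclic $\field[W]^G$-module generated in degree $8$ (equivalently that $A_\chi$ can be taken to be a single element), and — relatedly — making sure the application of Lemma~\ref{lemma:V+U} does not produce extra product-one sequences over all of $\widehat G$ that would contribute further $t$-monomials or correction terms. Since $\widehat G = G/[G,G] = G/\langle a\rangle \cong \mathrm{C}_2\times\mathrm{C}_2$ has Davenport constant $3$, I would need to check which characters actually occur among the weights of nonzero relative invariants on $W\oplus U_\chi$ — but $W$ itself contributes no nonzero relative invariants of weight outside $\{1,\chi\}$ in a way that interacts with $t_\chi$, because the only character realized by the one $1$-dimensional summand present is $\chi$, so the sequences over $\widehat G$ that matter all use only $\chi$ (and possibly $1$, which is irrelevant). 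Once that is pinned down the rest is routine verification.
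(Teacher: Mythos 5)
Your overall route is sound and does differ from the paper's: you invoke Lemma~\ref{lemma:V+U} for the full group $G$, splitting the computation into $\field[W]^G$ plus a module generator of $\field[W]^{G,\chi}$ modulo the Hilbert ideal, whereas the paper first computes the invariants of the abelian normal subgroup $\langle a,b\rangle$ on all of $W\oplus U_\chi$ (these are the monomials $x_1^6$, $x_2^6$, $(x_1x_2)^2$, $t^2$, $x_1x_2t$) and then symmetrizes over $c$, discarding $(x_1^6-x_2^6)^2=(x_1^6+x_2^6)^2-4(x_1^2x_2^2)^3$. Both arguments reduce to the same weight bookkeeping for the diagonal subgroup; the paper's version is slightly more economical because it never needs to identify a complement of $\mathcal{H}(G,W)\cap\field[W]^{G,\chi}$, while yours isolates the relative invariant $(x_1^6-x_2^6)x_1x_2$ conceptually, which is the object that actually matters later in the separation argument.

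Two intermediate claims need correcting, though neither affects your conclusion. First, $x_1^3x_2^3$ is not $\langle a,b\rangle$-invariant (indeed $b$ negates it, as you half-notice), so it must not appear in your generating list for $\field[W]^{H}$; the correct list is $x_1^6$, $x_2^6$, $(x_1x_2)^2$. Second, your weight condition for $\field[W]^{H,\chi|_H}$ is wrong: since $b\cdot x_1^ix_2^j=(-1)^jx_1^ix_2^j$ and $\chi(b)=-1$, the condition is ``$i\equiv j\pmod 6$ and $j$ odd'' (hence both exponents odd); your condition ``$i\equiv j\pmod 6$ and $i+j$ odd'' is vacuous, as the congruence forces $i$ and $j$ to have the same parity. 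With the corrected condition, every relevant monomial is $(x_1x_2)^j(x_1^{6k}-x_2^{6k})$ after antisymmetrizing over $c$, and your reduction to the single module generator $x_1x_2(x_1^6-x_2^6)$ in degree $8$, hence $A_\chi=\{x_1x_2(x_1^6-x_2^6)\}$, $B=\{t^2\}$, $C=\{(x_1^6-x_2^6)x_1x_2t\}$, goes through; the minimality argument by multidegree and parity of total degree is fine.
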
 

\begin{proof} 
The abelian subgroup $\langle a,b\rangle$ preserves the $1$-dimensional subspaces $\field x_1$, $\field x_2$, $\field t$ in $\field[W\oplus U_\chi]$, hence $\field[W\oplus U_\chi]^{\langle a,b\rangle}$ is generated by $\langle a,b\rangle$-invariant monomials (see Lemma~\ref{lemma:V+U}). So $\field[W\oplus U_\chi]^{\langle a,b\rangle}$ is generated by 
$x_1^6$, $x_2^6$, $(x_1x_2)^2$, $t^2$, $x_1x_2t$. 
The element $c\in G$ fixes $(x_1x_2)^2$ and $t^2$, 
interchanges $x_1^6$ and $x_2^6$,  and $c\cdot (x_1x_2t)=-x_1x_2t$. 
It follows that $\field[W\oplus U_\chi]^G$ is generated by 
$(x_1x_2)^2$, $t^2$, $x_1^6+x_2^6$, $(x_1^6-x_2^6)x_1x_2t$, and 
$(x_1^6-x_2^6)^2$. The latter generator can be omitted, since 
we have $(x_1^6-x_2^6)^2=(x_1^6+x_2^6)^2-4(x_1^2x_2^2)^3$. 
\end{proof} 

\begin{theorem}\label{thm:sepbeta((C6xC2)rtimesC2)} 
Assume that $\field$ contains an element $\xi$ of multiplicative order $12$. 
Then we have $\sepbeta^\field((\mathrm{C}_6\times \mathrm{C}_2)\rtimes \mathrm{C}_2)=9$. 
\end{theorem}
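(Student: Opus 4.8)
The plan is to prove the two inequalities $\sepbeta^\field(G) \ge 9$ and $\sepbeta^\field(G) \le 9$ separately, where $G = (\mathrm{C}_6 \times \mathrm{C}_2) \rtimes \mathrm{C}_2$. Since the Noether number $\beta^\field(G)$ is $9$ (this is recorded in Table~\ref{table:main}, taken from \cite{cziszter-domokos-szollosi}), the upper bound $\sepbeta^\field(G) \le 9$ follows immediately from the obvious inequality \eqref{eq:sepbeta<beta}. So the entire content of the theorem lies in the lower bound $\sepbeta(G, V) \ge 9$ for a suitable $\field G$-module $V$.

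For the lower bound I would work with the module $V = W \oplus U_\chi$ from Proposition~\ref{prop:mingen(C6xC2)rtimesC2)}, whose invariant ring is minimally generated by $p_1 := (x_1 x_2)^2$ (degree $4$), $p_2 := x_1^6 + x_2^6$ (degree $6$), $p_3 := t^2$ (degree $2$), and $p_4 := (x_1^6 - x_2^6) x_1 x_2 t$ (degree $9$). The strategy is to exhibit two points $v, v' \in V$ with $G \cdot v \ne G \cdot v'$ such that $p_1, p_2, p_3$ all agree on $v$ and $v'$ (so that every invariant of degree $\le 8$, being a polynomial in $p_1, p_2, p_3, p_4$ with no contribution from $p_4$ in degrees below $9$, agrees on the two points), while $p_4(v) \ne p_4(v')$. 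A natural candidate, echoing the construction in the proof of Theorem~\ref{thm:sepbeta(A4tilde)}, is to pick $v = (w, \lambda)$ with $w = [x_1 : x_2]$ chosen so that $x_1^6 - x_2^6$ does not vanish but the combination $x_1^6 + x_2^6$ and $(x_1 x_2)^2$ take controllable values, and then rescale using a primitive $12$th root of unity: setting $v' = \xi^k \cdot v$ on appropriate coordinates, one arranges that $p_1, p_2, p_3$ (which are multihomogeneous of degrees $4, 6, 2$) are left invariant while $p_4$ (degree $9$, with $t$-degree $1$) is multiplied by a nontrivial power of $\xi$. One must also verify that $v$ and $v'$ genuinely lie in different $G$-orbits — here the invariant $p_4$ itself does the job, since it separates them.

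Concretely I expect something like $v = [1 : \zeta]^T$ on the $W$-factor with $\zeta$ a primitive $12$th root of unity (or a sixth root) times $1$ on the $U_\chi$-factor, chosen so that $x_1 x_2 \ne 0$ and $x_1^6 \ne x_2^6$; then $v' = (\xi \cdot w, \,\xi^j t)$ for exponents $j$ tuned so that the total degree-$4$, degree-$6$, degree-$2$ monomials are fixed but the degree-$9$ monomial $(x_1^6 - x_2^6) x_1 x_2 t$ acquires a factor $\ne 1$. The bookkeeping is: $p_1$ scales by $\xi^4$ (on $W$), $p_2$ by $\xi^6$, $p_3$ by $\xi^{2j}$, and $p_4$ by $\xi^{6+2+j} = \xi^{8+j}$; choosing the $W$-scaling factor to be $\xi^3$ (order $4$) kills $p_1$ and $p_2$ but multiplies $p_4$ by $\xi^{9}$, and one can take $j$ so $p_3$ is also fixed — this is exactly the kind of elementary check I would relegate to a short computation in the actual proof.

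The main obstacle, and the only genuinely non-routine part, is checking that $v$ and $v'$ do \emph{not} lie in the same $G$-orbit, because $G$ has order $24$ and acts on $W$ by a concrete but not entirely transparent collection of $2 \times 2$ matrices; one needs to rule out that some $g \in G$ sends $v$ to $v'$. Here the cleanest approach is precisely to use the invariant $p_4$: since $p_4 \in \field[V]^G$, if $p_4(v) \ne p_4(v')$ then automatically $G \cdot v \ne G \cdot v'$, so no separate orbit computation is needed — the only thing to confirm is $p_4(v) \ne p_4(v')$, i.e.\ that the scaling factor $\xi^9 \ne 1$ (true since $\xi$ has order $12$) and that $p_4(v) \ne 0$, i.e.\ $x_1 x_2 \ne 0$ and $x_1^6 \ne x_2^6$ at $v$. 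Once the point $v$ is chosen to make $p_4(v) \ne 0$ while $p_2(v)$ and the degree-$2$, degree-$4$ invariants take values fixed under the rescaling, the proof closes by combining $\sepbeta(G, V) \ge 9$ with $\sepbeta^\field(G) \le \beta^\field(G) = 9$.
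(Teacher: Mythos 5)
Your proposal follows essentially the same route as the paper: upper bound from $\sepbeta^\field(G)\le\beta^\field(G)=9$, lower bound from the module $W\oplus U_\chi$ of Proposition~\ref{prop:mingen(C6xC2)rtimesC2)}, exhibiting two points on which the three generators of degree $<9$ agree while the degree-$9$ generator $p_4=(x_1^6-x_2^6)x_1x_2t$ differs, and using $p_4$ itself to certify that the orbits are distinct. The strategy is sound, but your deferred ``bookkeeping'' contains arithmetic slips: scaling $W$ by $\xi^3$ multiplies $p_2=x_1^6+x_2^6$ by $\xi^{18}=-1$ (so it is \emph{not} fixed unless $p_2(v)=0$, which your choice $w=[1,\xi]^T$ does happen to arrange since $\xi^6=-1$), and it multiplies the $W$-part of $p_4$ (which has $W$-degree $8$) by $\xi^{24}=1$, so $p_4$ can only acquire the factor $\nu$ coming from $t\mapsto\nu t$; since fixing $p_3=t^2$ forces $\nu=\pm1$, the factor on $p_4$ is $\pm1$, never $\xi^9$ as you claim. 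The correct (and simplest) instantiation inside the family you describe is exactly the paper's: leave $w=[1,\xi]^T$ untouched and flip only the sign of $t$, i.e.\ $v=([1,\xi]^T,1)$, $v'=([1,\xi]^T,-1)$; then $p_1,p_2,p_3$ trivially agree, while $p_4(v)=2\xi\ne-2\xi=p_4(v')$ because $\mathrm{char}(\field)\nmid 24$. With that correction your argument closes.
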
 
\begin{proof} 
Set
$v:=([1,\xi]^T,1)\in W\oplus U_\chi$ and $v':=([1,\xi]^T,-1)\in W\oplus U_\chi$. 
The invariant $(x_1^6-x_2^6)x_1x_2t$ has different values on $v$ and $v'$.  
By Proposition~\ref{prop:mingen(C6xC2)rtimesC2)} we see that all $G$-invariants of degree 
less than $9$ agree on $v$ and $v'$. This shows that 
$\sepbeta^\field((\mathrm{C}_6\times \mathrm{C}_2)\rtimes \mathrm{C}_2)\ge 9$. 
On the other hand, $\beta^\field((\mathrm{C}_6\times \mathrm{C}_2)\rtimes \mathrm{C}_2)=9$ by 
\cite[Proposition 3.5]{cziszter-domokos-szollosi}, implying the reverse inequality 
$\sepbeta^\field((\mathrm{C}_6\times \mathrm{C}_2)\rtimes \mathrm{C}_2)\le 9$. 
\end{proof} 

\subsection{The symmetric group $\mathrm{S}_4$ of degree $4$.} 

\begin{theorem}\label{thm:sepbeta(S4)} 
Assume that $\field$ has an element of multiplicative order 
$24$ or $\mathrm{char}(\field)\neq 5$. Then we have 
$\sepbeta^\field(\mathrm{S}_4)=9$. 
\end{theorem}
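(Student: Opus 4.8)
The plan is to follow the same two-sided strategy used for the other ``hard'' groups in this paper: first establish the lower bound $\sepbeta^\field(\mathrm{S}_4)\ge 9$ by exhibiting a concrete module and a pair of points with distinct orbits that no invariant of degree $\le 8$ separates, and then invoke the known equality $\beta^\field(\mathrm{S}_4)=9$ (from \cite{cziszter-domokos-szollosi}) together with \eqref{eq:sepbeta<beta} to get the reverse inequality $\sepbeta^\field(\mathrm{S}_4)\le 9$. Since $\mathrm{S}_4$ has a cyclic subgroup generated by a $4$-cycle but no cyclic subgroup of index $2$, the work is genuinely in the lower bound, and by Lemma~\ref{lemma:multfree} (valid once $\field$ is large enough, which the hypothesis on the element of order $24$ guarantees) it suffices to work with the multiplicity-free module. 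The simple $\field\mathrm{S}_4$-modules are the trivial, the sign character $\varepsilon$, the $2$-dimensional module $N$ (inflated from $\mathrm{S}_3$ via $\mathrm{S}_4\twoheadrightarrow \mathrm{S}_3$), the $3$-dimensional standard module $W$, and $W\otimes\varepsilon$; so I would analyze $V=W\oplus (W\otimes\varepsilon)\oplus N\oplus U_\varepsilon$, or a suitable summand of it, identifying $9$ as the degree of an indecomposable invariant that cannot be bypassed.

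Concretely, I would first pin down $\field[W]^{\mathrm{S}_4}$: the standard representation of $\mathrm{S}_4$ has ring of invariants generated in degrees $2,3,4$ (the elementary symmetric functions in the coordinates summing to zero), so $\beta(\mathrm{S}_4,W)=4$, and separating invariants of degree $\le 4$ suffice on $W$ alone. The relative invariants $\field[W]^{\mathrm{S}_4,\varepsilon}$ are the skew-invariants, generated over $\field[W]^{\mathrm{S}_4}$ by the Vandermonde-type polynomial $\Delta$ of degree $6$. Then, by Lemma~\ref{lemma:V+U}, the module $W\oplus U_\varepsilon$ contributes the invariant $\Delta\cdot t_\varepsilon$ of degree $7$, which already forces $\sepbeta\ge 7$; to push to $9$ I would instead combine $W$ (or $W\otimes\varepsilon$) with the module $N$, whose relative-invariant structure under $\mathrm{S}_4$ — really the $\mathrm{S}_3$-structure, where the skew cubic has degree $3$ — supplies a building block that, multiplied against $\Delta$, yields an indecomposable degree $9$ invariant $\Delta\cdot q$ with $q\in\field[N]^{\mathrm{S}_4,\varepsilon}$ of degree $3$. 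I would then choose $v,v'$ lying in a common $\langle(12)\rangle$-orbit on the $W$-component and in a common $\langle(123)\rangle$-orbit on the $N$-component but with $\Delta q$ taking opposite signs, arranged so that after rescaling every lower-degree generator agrees on $v$ and $v'$ while $G\cdot v\ne G\cdot v'$.

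The reverse inequality is immediate: $\sepbeta^\field(\mathrm{S}_4)\le\beta^\field(\mathrm{S}_4)=9$ by \eqref{eq:sepbeta<beta} and \cite[Proposition 3.5]{cziszter-domokos-szollosi} (or the relevant entry of \cite{cziszter-domokos-szollosi}). The alternative hypothesis $\mathrm{char}(\field)\ne 5$ (without an element of order $24$ present) is handled exactly as the base-field remarks in the paper suggest: the lower-bound module is defined over the prime field once the needed roots of unity are available, so one passes to the algebraic closure via Lemma~\ref{lemma:spanning invariants} for the lower bound, while the upper bound $\beta^\field(\mathrm{S}_4)=9$ holds whenever $\mathrm{char}(\field)\ne 2,3$, and a separate short argument (the computer-verified generic behavior, or the rationality of the generators of $\field[V]^{\mathrm{S}_4}$) covers $\mathrm{char}(\field)=2,3$; the only excluded case is $\mathrm{char}(\field)=5$, which is where the degree-$9$ phenomenon could in principle degenerate.

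The main obstacle I expect is the lower-bound construction: verifying that the degree-$9$ invariant $\Delta\cdot q$ is genuinely indecomposable (i.e., not expressible via products of the degree $\le 4$ invariants of $W$, the degree $\le 3$ invariants of $N$, the degree $7$ invariant $\Delta t_\varepsilon$, and lower cross terms) and, simultaneously, that the two test points cannot be separated by \emph{any} invariant of degree $\le 8$ — this requires a careful enumeration, via Lemma~\ref{lemma:V+U}, of all indecomposable multihomogeneous invariants of $V$ in degrees up to $8$, which is where an \emph{ad hoc} analysis of the skew-invariant modules of $W$ and $N$ and of the product-one sequences over $\widehat{\mathrm{S}_4}=\{1,\varepsilon\}$ does the real work.
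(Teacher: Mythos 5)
There is a genuine gap: your lower bound is a plan, not a proof. You propose to find a degree-$9$ indecomposable invariant $\Delta\cdot q$ on the full multiplicity-free module $W\oplus(W\otimes\varepsilon)\oplus N\oplus U_\varepsilon$ and a pair of points separated only by it, but you explicitly defer the two essential verifications — that $\Delta q$ is indecomposable and that \emph{no} invariant of degree $\le 8$ of that (rather large) module separates your points. Without an explicit description of the generators of the invariant ring in degrees $\le 8$ and an explicit pair $(v,v')$, nothing is established. The paper avoids this entirely by working with a single $4$-dimensional module, namely $\field^4$ with the permutation action twisted by the sign character (i.e.\ $U_\varepsilon\oplus(W\otimes\varepsilon)$), whose invariant ring is already known to be minimally generated by $\sigma_1^2,\sigma_2,\sigma_1\sigma_3,\sigma_4,\sigma_3^2,\sigma_1\Delta,\sigma_3\Delta$: taking $v'=-v$ makes every even-degree invariant agree on $v$ and $v'$ automatically, the only odd-degree generator below degree $9$ is $\sigma_1\Delta$, which is killed by choosing $\sigma_1(v)=0$, and $\sigma_3\Delta$ (degree $9$) separates provided $\sigma_3(v)\ne 0\ne\Delta(v)$. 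That reduces the whole lower bound to exhibiting one vector with $\sigma_1=0$, $\sigma_3\ne 0$, $\Delta\ne 0$.

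Your reading of the field hypotheses is also off. Since the standing assumption is that $\mathrm{char}(\field)\nmid 24$, characteristics $2$ and $3$ are excluded throughout, so no ``separate short argument'' for them is needed; and the role of the hypothesis ``order-$24$ element or $\mathrm{char}\ne 5$'' is not that the degree-$9$ phenomenon degenerates in characteristic $5$, but simply that over $\mathbb{F}_5$ one cannot find four distinct elements with $\sigma_1=0$ and $\sigma_3\ne 0$ (e.g.\ $[0,1,2,-3]$ acquires a repeated entry), whereas the order-$24$ hypothesis forces $\mathbb{F}_{25}\subseteq\field$, where such a quadruple exists as the roots of $(x^2+x+1)(x^2-x+2)$. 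The upper bound $\sepbeta^\field(\mathrm{S}_4)\le\beta^\field(\mathrm{S}_4)=9$ is handled correctly in your proposal and matches the paper.
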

\begin{proof} 
We have $\beta^\field(\mathrm{S}_4)=9$ by \cite[Example 5.3]{cziszter-domokos-geroldinger}, 
implying the inequality $\sepbeta^\field(\mathrm{S}_4)\le 9$. 

In order to show the reverse inequality consider $V=\field^4$ endowed with the tensor product  of the sign representation of $\mathrm{S}_4$ and the standard action of $\mathrm{S}_4$ on $\field^4$ via permuting the coordinates. 
Thus $\pi \in \mathrm{S}_4$ maps the variable $x_j$ to 
$\mathrm{sign}(\pi)x_{\pi(j)}$. Denote by $\sigma_j$ ($j=1,2,3,4)$ the elementary symmetric polynomial of degree $j$, and set $\Delta:=\prod_{1\le i<j\le 4}(x_i-x_j)$. 
The algebra $\field[V]^{\mathrm{S}_4}$ is minimally generated by  
$\sigma_1^2$, $\sigma_2$,  $\sigma_1\sigma_3$, $\sigma_4$, $\sigma_3^2$, 
$\sigma_1\Delta$, $\sigma_3\Delta$ (see \cite[Example 5.3]{cziszter-domokos-geroldinger}). 

We shall show that there exists a $v\in V$ with 
\begin{equation}\label{eq:S4 good v} 
\sigma_1(v)=0, \quad \Delta(v)\neq 0 \quad \text{ and }
\sigma_3(v)\neq 0.
\end{equation} 
Then setting $v'=-v$ we have that all 
the even degree invariants agree on $v$ and $v'$. 
The only odd degree invariant of degree less than $9$ is $\sigma_1\Delta$ 
(up to non-zero scalar multiples), and it vanishes both on $v$ and $v'$ by 
$\sigma_1(v)=0=\sigma_1(v')$. 
On the other hand, $\Delta(v)=\Delta(v')\neq 0$, $\sigma_3(v)=-\sigma_3(v')\neq 0$ 
imply that $(\sigma_3\Delta)(v)\neq (\sigma_3\Delta)(v')$. 
This shows that $\sepbeta^\field(\mathrm{S}_4)\ge 9$. 

 It remains to prove the existence of $v$ 
 satisfying \eqref{eq:S4 good v}. 
If $\mathrm{char}(\field)\neq 5$,  
then $v:=[0,1,2,-3]^T\in V$ satisfies \eqref{eq:S4 good v}  
(recall that by the running assumption $\mathrm{char}(\field)\notin \{2,3\}$). 
If $\mathrm{char}(\field)=5$ and $\field$ contains an element 
of multiplicative order $24$, then $\field$ contains $\mathbb{F}_{25}$, 
the field of $25$ elements. Let $v_1,v_2$ be the roots 
of the polynomial $x^2+x+1$ in $\mathbb{F}_{25}\subseteq \field$, 
and let $v_3,v_4$ be the roots of 
$x^2-x+2$ in $\mathbb{F}_{25}\subseteq \field$. 
Set $v:=[v_1,v_2,v_3,v_4]^T$. 
Note that 
\[(x^2+x+1)(x^2-x+2)=x^4+2x^2+x+2\in \field[x], \]
showing that $\sigma_1(v)=0$, $\sigma_3(v)=-1$. 
Moreover, $v_1,v_2,v_3,v_4$ are distinct, so $\Delta(v)\neq 0$, 
and thus \eqref{eq:S4 good v} holds. 
\end{proof} 

\subsection{The Heisenberg group $\mathrm{H}_{27}$}
In this subsection 
\[G=\mathrm{H}_{27}=\langle a,b,c \mid a^3=b^3=c^3=1,\ a^{-1}b^{-1}ab=c,\ ac=ca,\ bc=cb \rangle\] 
is the \emph{Heisenberg group} of order $27$. 
It is generated by $a,b$, and its commutator subgroup 
$\langle c\rangle\cong \mathrm{C}_3$, whereas 
$G/G'\cong \mathrm{C}_3\times \mathrm{C}_3$. 

Assume that $\field$ has an element  $\omega$ of multiplicative order $3$, 
and consider the following irreducible $3$-dimensional representation of $G$: 
\[ \psi: a\mapsto 
\begin{bmatrix} 
      1 & 0 & 0 \\ 0 & \omega & 0  
      \\ 0 & 0 & \omega^2   
   \end{bmatrix}, \quad 
   b\mapsto \begin{bmatrix} 
     0 & 0 & 1 \\ 1 & 0 & 0 \\ 0 & 1 & 0   
       \end{bmatrix}, \quad 
   c\mapsto \begin{bmatrix} 
     \omega & 0 & 0 \\ 0 & \omega & 0 \\ 0 & 0 & \omega   
       \end{bmatrix}.\]
Write $V$ for the vector space $\field^3$ endowed with the  representation $\psi$. 

\begin{proposition}\label{prop:H27} 
We have $\sepbeta(G,V)\ge 9$ if $|\field|\neq 4$, whereas   $\sepbeta(G,V)=6$ if $|\field|=4$.
\end{proposition}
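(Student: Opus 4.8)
The plan is to begin by computing $\field[V]^G$ explicitly and then to exploit the degrees of a minimal homogeneous generating system. Since $a$ and $c$ act by diagonal matrices and $b$ cyclically permutes $x_1,x_2,x_3$, a monomial $x_1^ix_2^jx_3^k$ is $\langle a,c\rangle$-invariant if and only if $i\equiv j\equiv k\pmod 3$; writing $y_i:=x_i^3$ and $q_3:=x_1x_2x_3$ we get $\field[V]^{\langle a,c\rangle}=\field[y_1,y_2,y_3]\oplus q_3\field[y_1,y_2,y_3]\oplus q_3^2\field[y_1,y_2,y_3]$ with $q_3^3=y_1y_2y_3$. Taking $\langle b\rangle\cong\mathrm{C}_3$-invariants (the cyclic permutation of $y_1,y_2,y_3$ fixing $q_3$), a direct computation then gives that $\field[V]^G$ is generated by $p_3:=x_1^3+x_2^3+x_3^3$ and $q_3$ (degree $3$), by $p_6:=x_1^3x_2^3+x_2^3x_3^3+x_3^3x_1^3$ (degree $6$), and by $\theta:=x_1^6x_2^3+x_2^6x_3^3+x_3^6x_1^3$ (degree $9$); the element $\theta$ is indecomposable because it is not symmetric in $x_1,x_2,x_3$, whereas every product of generators of smaller degree is symmetric. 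In particular every $G$-invariant of degree at most $8$ is a polynomial in $p_3,q_3,p_6$.

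For the inequality $\sepbeta(G,V)\ge 9$ in the case $|\field|\neq 4$ I would choose $v=(v_1,v_2,v_3)\in V$ with $v_1^3,v_2^3,v_3^3$ pairwise distinct and put $v':=(v_2,v_1,v_3)$. Such a $v$ exists unless $\field=\mathbb{F}_4$: as $\field$ contains $\omega$, every fibre of $t\mapsto t^3$ on $\field^\times$ has three elements, so the cube map on $\field$ has image of size greater than $2$ unless $|\field^\times|=3$. The invariants $p_3,q_3,p_6$ are symmetric in the coordinates, hence agree on $v$ and $v'$, and therefore so do all invariants of degree at most $8$; on the other hand $\theta(v)-\theta(v')=(v_1^3-v_2^3)(v_2^3-v_3^3)(v_1^3-v_3^3)\neq 0$. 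It remains to verify $G\cdot v\neq G\cdot v'$: the image $\psi(G)$ consists of monomial matrices whose underlying permutation is $\mathrm{id}$ or one of the two $3$-cycles (the diagonal matrices form a normal subgroup with quotient $\langle b\rangle$), and since the scalar entries are powers of $\omega$, a relation $g\cdot v=v'$ would force that permutation to carry $(v_1^3,v_2^3,v_3^3)$ to $(v_2^3,v_1^3,v_3^3)$; as the $v_i^3$ are distinct, this permutation would have to be the transposition $(1\,2)$, which does not occur in $\psi(G)$. So $v$ and $v'$ lie in distinct orbits not separated in degree $<9$.

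For $|\field|=4$ I would prove $\sepbeta(G,V)=6$ directly. The lower bound $\sepbeta(G,V)\ge 6$ is witnessed by $v=(0,0,0)$ and $v'=(1,1,0)$, on which $p_3$ and $q_3$ — the only generators of degree at most $5$ — both vanish (note $1+1=0$), whereas $G\cdot v=\{0\}\neq G\cdot v'$. For the reverse inequality the key observation is that $t^6=t^3$ for every $t\in\mathbb{F}_4$, so $\theta$ and $p_6$ define the same function on $\mathbb{F}_4^3$; consequently every $G$-invariant, viewed as a function on $V=\mathbb{F}_4^3$, is a polynomial in $p_3,q_3,p_6$. Since $\field[V]^G$ separates the $G$-orbits, so does the set $\{p_3,q_3,p_6\}$ of invariants of degree at most $6$, whence $\sepbeta(G,V)\le 6$.

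I expect the main work to be the explicit determination of $\field[V]^G$ together with the careful bookkeeping of the degrees of the generators — in particular the indecomposability of $\theta$ and the fact that no new generator appears in degree $7$ or $8$. Once this is in hand the two separation arguments are short; the only genuinely delicate point is the verification that $\psi(G)$ contains no monomial matrix inducing the transposition $(1\,2)$, which is precisely what puts $v$ and $v'$ in different orbits in the generic case, and which has no effect over $\mathbb{F}_4$ because there the cube map has too small an image for the construction to be carried out at all.
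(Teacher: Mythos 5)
Your proof is correct and follows essentially the same route as the paper: reduce to $\field[V]^{\langle a,c\rangle}=\field[x_1^3,x_2^3,x_3^3,x_1x_2x_3]$, take $\langle b\rangle$-invariants to see that every invariant of degree at most $8$ is symmetric in $x_1,x_2,x_3$, separate a transposed pair of points with three distinct cubes by a degree-$9$ cyclic (non-symmetric) invariant, and use $t^6=t^3$ on $\mathbb{F}_4$ for the exceptional case. The only cosmetic differences are that you work out the full generating system (the paper only needs that the invariants of degree at most $6$ lie in $\field[p_3,q_3,p_6]$) and that the paper uses the specific points $[1,\lambda,0]^T$, $[\lambda,1,0]^T$ with $\lambda^3\neq 1$ and the companion invariant $x_1^3x_2^6+x_1^6x_3^3+x_2^3x_3^6$ in place of your $\theta$.
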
 

\begin{proof} 
Denote by $H$ the subgroup of $G$ generated by $a$ and $c$. 
The algebra $\field[V]^H$ is generated by the monomials $x_1^3$, $x_2^3$, $x_3^3$, $x_1x_2x_3$. 
It follows that each homogeneous $G$-invariant has degree divisible by $3$. 
The element $b$ fixes $x_1x_2x_3$, and permutes cyclically 
$x_1^3$, $x_2^3$, $x_3^3$. So $\field[V]^G$ is generated by $x_1x_2x_3$ and the generators of $\field[x_1^3,x_2^3,x_3^3]^{\langle b\rangle}$.
One easily deduces that the $\langle b\rangle$-invariants 
in $\field[V]^H$ (i.e. the $G$-invariants in $\field[V]$) of degree at most $6$ are contained in the subalgebra generated by 
$f_1:=x_1x_2x_3$, $f_2:=x_1^3+x_2^3+x_3^3$, $f_3:=x_1^3x_2^3+x_1^3x_3^3+x_2^3x_3^3$. 
If $|\field|\neq 4$ then there exists a non-zero $\lambda\in \field$ with $\lambda^3\neq 1$. 
Now $f_1$, $f_2$, and $f_3$ are all symmetric in the variables $x_1,x_2,x_3$, 
therefore they agree on $v:=[1,\lambda,0]^T$ and $v':=[\lambda,1,0]^T$. 
On the other hand, $v$ and $v'$ have different $G$-orbits. 
Indeed, the algebra 
$\field[V]^G$ contains also 
$f_4:=x_1^3x_2^6 + x_1^6x_3^3 + x_2^3x_3^6$. 
We have  
$f_4(v)=\lambda^6$, whereas 
$f_4(v')=\lambda^3$, 
so $f_4(v)\neq f_4(v')$. 
Thus  $G\cdot v\neq G\cdot v'$, and $v$, $v'$ can not be separated by invariants of degree $<9$.

If $|\field|= 4$, then for any positive integer $n$, the polynomial $x_j^{3n}$ 
induces the same function on  $V$ as $x_j^3$ ($j=1,2,3$). 
Therefore all functions on $V$ that are induced by some element of 
$\field[x_1^3,x_2^3,x_3^3]$ can be induced by a polynomial in 
\[\mathrm{Span}_\field\{1,x_1^3,x_2^3,x_3^3,x_1^3x_2^3,x_1^3x_3^3,x_2^3x_3^3, 
x_1^3x_2^3x_3^3\}.\]
Consequently, $x_1x_2x_3$ together with the $G$-invariants of degree at most $6$ 
in the above space of polynomials   
 form a separating set in $\field[V]^G$. 
In particular, we have 
$\sepbeta(G,V)\le 6$. 
To see the reverse inequality, note that the degree $3$ invariants are linear combinations of 
$x_1x_2x_3$ and $x_1^3+x_2^3+x_3^3$, and they agree on $v:=[0,0,0]^T$ and $v':=[1,1,0]^T$. However, $v$ and $v'$ have different $G$-orbits, 
since $f_3(v)=0\neq 1=f_3(v')$. Thus  $\sepbeta(G,V)=6$ when $|\field|=4$.
\end{proof} 

\begin{theorem}\label{thm:sepbeta(H27)} 
Assume that $\field$ contains an element of multiplicative order $3$ 
and $|\field|\neq 4$. 
Then we have $\sepbeta^\field(\mathrm{H}_{27})=9$. 
\end{theorem}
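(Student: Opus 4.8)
The plan is a short two-sided estimate resting entirely on Proposition~\ref{prop:H27} together with the known value of the Noether number. First, for the lower bound, I note that the hypothesis that $\field$ contains an element of multiplicative order $3$ forces $\mathrm{char}(\field)\neq 3$, hence $\mathrm{char}(\field)\nmid 27$, and we are also assuming $|\field|\neq 4$; thus Proposition~\ref{prop:H27} applies verbatim to the three-dimensional $\field G$-module $V=(\field^3,\psi)$ introduced in this subsection and gives $\sepbeta(G,V)\ge 9$. Since $\sepbeta^\field(G)$ is by definition the supremum of $\sepbeta(G,V')$ over all finite-dimensional $\field G$-modules $V'$, this already yields $\sepbeta^\field(\mathrm{H}_{27})\ge 9$.

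For the reverse inequality I would invoke the general bound $\sepbeta^\field(G)\le\beta^\field(G)$ from \eqref{eq:sepbeta<beta}, combined with the value $\beta^\field(\mathrm{H}_{27})=9$ recorded in the third column of Table~\ref{table:main} and taken from \cite{cziszter-domokos-szollosi}. Putting the two estimates together gives $9\le\sepbeta^\field(\mathrm{H}_{27})\le\beta^\field(\mathrm{H}_{27})=9$, which is the assertion; in particular $\mathrm{H}_{27}$ belongs to the family of groups with $\sepbeta^\field(G)=\beta^\field(G)$ treated in this section.

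There is no genuine obstacle remaining here. The substantive content — producing a representation together with an explicit pair of points $v,v'$ in distinct $G$-orbits that no invariant of degree below $9$ separates — is exactly what Proposition~\ref{prop:H27} supplies (its mechanism being that every $G$-invariant on $V$ of degree at most $6$ is symmetric in $x_1,x_2,x_3$, so takes equal values on $v=[1,\lambda,0]^T$ and $v'=[\lambda,1,0]^T$, while the degree-$9$ invariant $x_1^3x_2^6+x_1^6x_3^3+x_2^3x_3^6$ already separates their orbits). The only point requiring a moment's attention is that the imported equality $\beta^\field(\mathrm{H}_{27})=9$ is used in the full non-modular generality of the statement — any field of characteristic prime to $27$ containing a primitive cube root of unity — rather than only over $\mathbb{C}$; this is precisely the range of validity of \cite{cziszter-domokos-szollosi}, so nothing further needs to be checked.
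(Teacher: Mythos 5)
Your argument is correct and coincides with the paper's proof: the lower bound comes from Proposition~\ref{prop:H27} applied to the $3$-dimensional module $V$, and the upper bound from $\sepbeta^\field(G)\le\beta^\field(G)=9$ (the paper cites \cite[Corollary 18]{cziszter:p-group} for this value rather than the table, but the content is the same).
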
 

\begin{proof} 
By \cite[Corollary 18]{cziszter:p-group} we have $\beta^\field(G)=9$, therefore the result 
follows by Proposition~\ref{prop:H27}. 
\end{proof} 

\section{Some groups $G=H\times \mathrm{C}_2$ with $\sepbeta^\field(G)=\sepbeta^\field(H)$}
\label{sec:HxC2}

\subsection{The group $\mathrm{Dic}_{12}\times \mathrm{C}_2$.} 
In this subsection 
\[G=\mathrm{Dic}_{12}\times \mathrm{C}_2=\langle a,b\mid a^6=1,\ b^2=a^3,\ ba=a^{-1}b\rangle \times 
\langle c\mid c^2=1\rangle.\]  
Assume that $\field$ has an element $\xi$ of multiplicative order $12$; 
set $\omega:=\xi^2$ and $\mathrm{i}:=\xi^3$, so 
$\omega$ has multiplicative order $6$ and $\mathrm{i}$ has multiplicative order $4$. 
Consider the following irreducible $2$-dimensional representations of $G$:  
\[ \psi_1: a\mapsto 
\begin{bmatrix} 
      \omega & 0 \\
      0 & \omega^ {-1}\\
   \end{bmatrix}, \quad 
   b\mapsto \begin{bmatrix} 
      0 & 1 \\
      -1 & 0 \\
   \end{bmatrix}, \quad 
   c\mapsto \begin{bmatrix} 
      -1 & 0 \\
      0 & -1 \\
   \end{bmatrix}\]
\[\psi_2: a\mapsto 
\begin{bmatrix} 
      \omega & 0 \\
      0 & \omega^{-1} \\
   \end{bmatrix}, \quad 
   b\mapsto \begin{bmatrix} 
      0 & 1 \\
      -1 & 0 \\
   \end{bmatrix}, \quad 
   c\mapsto \begin{bmatrix} 
      1 & 0 \\
      0 & 1 \\
   \end{bmatrix} 
\]
\[\psi_3: a\mapsto 
\begin{bmatrix} 
      \omega^2 & 0 \\
      0 & \omega^{-2} \\
   \end{bmatrix}, \quad 
   b\mapsto \begin{bmatrix} 
      0 & 1 \\
      1 & 0 \\
   \end{bmatrix}, \quad 
   c\mapsto \begin{bmatrix} 
      -1 & 0 \\
      0 & -1 \\
   \end{bmatrix} 
\]
\[\psi_4: a\mapsto 
\begin{bmatrix} 
      \omega^2 & 0 \\
      0 & \omega^{-2} \\
   \end{bmatrix}, \quad 
   b\mapsto \begin{bmatrix} 
      0 & 1 \\
      1 & 0 \\
   \end{bmatrix}, \quad 
   c\mapsto \begin{bmatrix} 
      1 & 0 \\
      0 & 1 \\
   \end{bmatrix}. 
\]
These are pairwise non-isomorphic (have distinct characters). 
The other irreducible representations of $G$ are $1$-dimensional, and can be labelled by 
the group $\widehat G=\{\pm \mathrm{i}, \pm 1\}\times \{\pm 1\}\le \field^\times \times \field^\times$, where  $\chi=(\chi_1,\chi_2)\in \widehat G$ is identified with the representation 
\[\chi:a\mapsto \chi_1^2,\ b\mapsto \chi_1,\ c\mapsto \chi_2\] 
(note that $\langle a^2 \rangle$ is the commutator subgroup of $G$, so $a^2$ is in the kernel of any $1$-dimensional representation of $G$). 
For $j=1,2,3,4$ denote by $W_j$ the vector space $\field^2$ endowed with the  representation $\psi_j$, and for $\chi\in \widehat G$ denote by $U_\chi$ the vector space $\field$ 
endowed with the  representation $\chi$, and set 
$U:=\bigoplus_{\chi\in \widehat G}U_\chi$. 

The following result can be easily obtained using the CoCalc platform \cite{CoCalc}: 

\begin{proposition}\label{prop:Dic12xC2,V3+V4+U} 
Assume that $\field$ has characteristic $0$. 
Then we have the equality $\beta(G,V)=8$ if $V$ is any of the $\field G$-modules 
\begin{align*}W_1\oplus W_4 \oplus U,\ W_2\oplus W_4\oplus U,\  W_3\oplus W_4\oplus U, \\
\ W_1\oplus W_2\oplus W_4,\ W_1\oplus W_3\oplus W_4, \ W_2\oplus W_3\oplus W_4. 
\end{align*}  
\end{proposition}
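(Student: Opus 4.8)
The plan is to treat Proposition~\ref{prop:Dic12xC2,V3+V4+U} as a finite computation, organised so that it can be carried out on a computer and double-checked by hand. Since $\mathrm{char}(\field)=0$, the Reynolds operator $\field[V]\to\field[V]^G$ is available, so for any one of the six modules $V$ a minimal homogeneous generating system of $\field[V]^G$ can be produced degree by degree: compute the Molien series to obtain the Hilbert series of $\field[V]^G$, and in each degree $d$ apply the Reynolds operator to the degree-$d$ monomials, keeping those that do not already lie in the subalgebra generated by the previously found generators. Noether's bound guarantees termination by degree $|G|=24$, and one reads off that the top degree of an indecomposable invariant equals $8$ in each case.

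To keep the computation small I would first apply Lemma~\ref{lemma:V+U} to the decomposition $V=W\oplus U$, where $W$ is the sum of the (two or three) two-dimensional summands of $V$ and $U=\bigoplus_\chi U_\chi$ runs over the one-dimensional summands occurring in $V$. Then $\field[V]^G$ is generated by $A\cup B\cup C$, with $A$ a generating system of $\field[W]^G$, $B$ the $t_\chi$-monomials attached to irreducible product-one sequences over $\widehat G\cong\mathrm{C}_4\times\mathrm{C}_2$, and $C$ the products of a product-one-free sequence in the $t_\chi$'s with a suitable homogeneous relative invariant of $\field[W]$. Since $\mathsf{D}(\widehat G)=2+4-1=5$, every element of $B$ has degree at most $5$, and every product-one-free sequence over $\widehat G$ has length at most $4$; hence $\beta(G,V)=\max\bigl(\beta(G,W),\,d_C\bigr)$, where $d_C$ is at most $4$ plus the largest degree of a relative invariant of $\field[W]$ that is forced to appear as an $\field[W]^G$-module generator of some $\field[W]^{G,\chi}$. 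The problem thus reduces to understanding $\field[W]^G$ and the modules $\field[W]^{G,\chi}$ for $W$ a $\field G$-module of dimension $4$ or $6$.

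For that reduced problem I would use that the abelian subgroup $\langle a,c\rangle\cong\mathrm{C}_6\times\mathrm{C}_2$ has index $2$ in $G$ and that in each $\psi_j$ it acts diagonally while $b$ acts anti-diagonally; hence $\field[W]^{\langle a,c\rangle}$ is spanned by monomials on which $b$ acts by an explicit finite-order substitution, and $\field[W]^G$ is its $\langle b\rangle$-fixed subring, computable in low degree by hand or on CoCalc. Running this for $W_1\oplus W_4$, $W_2\oplus W_4$, $W_3\oplus W_4$, $W_1\oplus W_2\oplus W_4$, $W_1\oplus W_3\oplus W_4$, $W_2\oplus W_3\oplus W_4$ gives in each case a generating set of top degree $8$. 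For the three modules carrying a $U$-part the degree-$8$ generator can be displayed explicitly, as a degree-$6$ relative invariant coming from one $W_j$ times a degree-$2$ relative invariant coming from $W_4$ times the appropriate $t_\chi$'s; this simultaneously yields $\beta(G,V)\ge 8$, while the matching upper bound $\beta(G,V)\le 8$ is precisely the statement that no generator of degree $9$ occurs --- which is what places this value strictly below $\beta^\field(G)=9$.

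The hard part is the case-by-case book-keeping rather than any single conceptual point: one has to assemble the relative-invariant modules $\field[W]^{G,\chi}$ for all eight $\chi\in\widehat G$ and check, for each of the six modules, that an $\field[W]^G$-module generator of $\field[W]^{G,\chi}$ of degree $e$ is only ever multiplied by a product-one-free sequence of length at most $8-e$, so that $C$ never produces a degree-$9$ generator. This is where the computer does the real work; the reductions above merely shrink the ambient space from dimension $12$ or $14$ down to $4$ or $6$, making the machine computation fast and its output checkable by hand.
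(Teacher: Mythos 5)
Your proposal is correct and is essentially the paper's own approach: the paper offers no hand proof of Proposition~\ref{prop:Dic12xC2,V3+V4+U} at all, stating only that the result "was obtained using the CoCalc platform", i.e.\ a characteristic-zero computation of a minimal homogeneous generating system for each of the six modules, exactly as you describe. Your additional scaffolding (Lemma~\ref{lemma:V+U} together with $\mathsf{D}(\widehat G)=\mathsf{D}(\mathrm{C}_4\times\mathrm{C}_2)=5$ to shrink the computation to $\field[W]^G$ and the modules $\field[W]^{G,\chi}$) is sound and only makes the machine verification smaller; the one loose spot is your parenthetical description of the degree-$8$ generator as a product of a degree-$6$ and a degree-$2$ relative invariant "times the appropriate $t_\chi$'s", which would overshoot degree $8$ if any $t_\chi$ actually appears, but this does not affect the argument since the exact generators are read off from the computation in any case.
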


\begin{remark} 
\begin{itemize}
\item[(i)] Proposition~\ref{prop:Dic12xC2,V3+V4+U} implies in particular that 
$\beta(W_i\oplus W_j)\le 8$ for all $1\le i<j\le 3$.
On the other hand, also using computer we found that $\beta(G,W_1\oplus W_2\oplus W_3)=9$, 
and Proposition~\ref{prop:Dic12xC2,V3+V4+U} can not yield immediately a better upper bound for 
$\sepbeta(G,W_1\oplus W_2\oplus W_3)$. 
Indeed, consider $v=(w_1,w_2,w_3):=([1,0]^T,[1,0]^T,[1,1]^T)\in W_1\oplus W_2\oplus W_3$ and 
$v'=(w'_1,w'_2,w'_3):=([1,0]^T,[1,0]^T,[-1,-1]^T)\in W_1\oplus W_2\oplus W_3$.  
Then $(w_1,w_2)=(w'_1,w'_2)$, $b^2c\cdot (w_1,w_3)=(w'_1,w'_3)$, $c\cdot (w_2,w_3)=(w'_2,w'_3)$. 
So all pairs $(w_i,w_j)$ and $(w'_i,w'_j)$ have the same $G$-orbit. 
However, $v$ and $v'$ have different $G$-orbits, because the $G$-invariant 
$x_2y_2z_1+x_1y_1z_2$ separates them.  
\item[(ii)] Note that $|\ker(\psi_4)|=4$ and $|\ker(\psi_j)|=2$ for $j=1,2,3$.  
This may be an explanation for the lack of symmetry in the roles of 
the four $2$-dimensional representations above. 
\end{itemize} 
\end{remark}

\begin{proposition}\label{prop:Dic12xC2,V1+V2+V3} 
Assume that $\field$ has characteristic $0$. 
For $V:=W_1\oplus W_2\oplus W_3$ we have the inequality $\sepbeta(G,V)\le 8$. 
\end{proposition}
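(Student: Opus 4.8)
The strategy is to reduce the separation problem to finitely many cases using the multigrading, and then to verify in each case that a pair of points with distinct $G$-orbits can already be distinguished by an invariant of degree at most $8$. First I would describe $\field[V]^G$ concretely enough to identify which invariants can have degree exactly $9$; by Proposition~\ref{prop:Dic12xC2,V3+V4+U} we know $\beta(G,W_i\oplus W_j)\le 8$ for $1\le i<j\le 3$, so any degree-$9$ indecomposable invariant on $V=W_1\oplus W_2\oplus W_3$ must genuinely involve all three summands, i.e. be multihomogeneous of multidegree $(\alpha_1,\alpha_2,\alpha_3)$ with all $\alpha_i>0$ and $\alpha_1+\alpha_2+\alpha_3=9$. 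Using that each $\field[W_i]$ is spanned by $\langle a,b\rangle$-eigenvectors and that the relevant eigenvalue bookkeeping on $\langle a^2\rangle$ and on $b$ forces strong divisibility constraints (the $a$-weights on $W_1,W_2$ have order $6$ and on $W_3$ order $3$, while $c$ acts as $-1$ on $W_1,W_3$ and trivially on $W_2$), I would enumerate the finitely many multidegrees $(\alpha_1,\alpha_2,\alpha_3)$ in which a degree-$9$ indecomposable invariant can occur, and pin down a small explicit spanning set of such invariants (expected to be essentially one or two invariants, analogous to the $x_2y_2z_1+x_1y_1z_2$ appearing in the Remark).

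Next, for the lower-degree part: I would take $A$ = a generating set of $\field[V]^G$ in degrees $\le 8$ (which exists on each pair by the Proposition and on the full $V$ up to the degree-$9$ generators just enumerated), and argue that $A$ is separating. The mechanism is the one already illustrated in Remark~(i): given $v=(w_1,w_2,w_3)$ and $v'=(w_1',w_2',w_3')$ with $f(v)=f(v')$ for all $f\in A$, one first uses the separating property on each pair of summands $W_i\oplus W_j$ (valid in degree $\le 8$ by Proposition~\ref{prop:Dic12xC2,V3+V4+U}, which covers $W_1\oplus W_2\oplus W_4\supset W_1\oplus W_2$, etc., hence $\sepbeta(G,W_i\oplus W_j)\le 8$) to conclude that for each pair $\{i,j\}$ there is $g_{ij}\in G$ with $g_{ij}\cdot(w_i,w_j)=(w_i',w_j')$. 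Then one must show that these three partial matchings can be amalgamated into a single $g\in G$ with $g\cdot v=v'$, OR that if they cannot, then one of the degree-$9$ invariants identified above still fails to separate $v$ from $v'$ — meaning $v$ and $v'$ actually do lie in the same orbit, or else some degree-$\le 8$ invariant we overlooked distinguishes them. After rescaling (invoking \eqref{eq:rescaling}) one may normalize the $w_i$ to a short list of canonical forms (zero, or a coordinate vector, or $[1,*]^T$), which makes the amalgamation a finite check on stabilizers.

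The main obstacle I expect is precisely this amalgamation step: knowing that $(w_1,w_2)$, $(w_1,w_3)$, $(w_2,w_3)$ are each individually $G$-equivalent to their primed counterparts does not formally force $v\sim v'$, because the group elements realizing the three equivalences may be incompatible. The resolution should come from a careful stabilizer analysis — for a generic point in $W_i\oplus W_j$ the stabilizer is small (often trivial or of order $2$, generated by something like $a^2c$-type elements sitting in $\ker\psi_4$), so the $g_{ij}$ are nearly unique, and the ambiguity group is small enough to enumerate; one checks that in every configuration either the $g_{ij}$ can be chosen to coincide, or $v,v'$ are separated by an invariant of degree $\le 8$ after all (for instance by exploiting that $\ker(\psi_j)$ for $j=1,2,3$ has order $2$, which constrains how the $W_j$-components can differ). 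I would organize this as a case distinction on which of $w_1,w_2,w_3$ are zero versus nonzero, handling the generic all-nonzero case first and then the degenerate faces, where the presence of extra zero coordinates only makes separation easier. The computation of $\field[V]^G$ up to degree $9$ via CoCalc (as already used for Proposition~\ref{prop:Dic12xC2,V3+V4+U}) would underpin the enumeration of candidate degree-$9$ generators and keep the bookkeeping honest.
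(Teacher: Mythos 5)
Your overall framing is sound and you correctly put your finger on the real difficulty: pairwise orbit-equivalence of the $(w_i,w_j)$ does not amalgamate to orbit-equivalence of the triples (Remark~(i) in the paper is exactly a witness to this). Your stabilizer reduction is also on the right track: after normalizing $w_1'=w_1$, $w_2'=w_2$ with both nonzero, $\mathrm{Stab}_G(w_1)=\langle b^2c\rangle$ and $\mathrm{Stab}_G(w_2)=\langle c\rangle$ intersect trivially, and since both $b^2c$ and $c$ act on $W_3$ as $-\mathrm{id}$, the only residual ambiguity left by the pairwise data is $w_3'=\pm w_3$. So the whole proposition comes down to: whenever $w_1,w_2$ are both nonzero and $w_3\neq 0$, some $G$-invariant of degree $\le 8$ that is odd in the $z$-variables must be nonzero at $v$. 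This is the step you defer to ``a finite check,'' and it is the entire mathematical content of the proof; as written, your proposal does not establish it, and it is not a routine verification --- one must actually produce enough such invariants and show they cannot all degenerate simultaneously.

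The paper resolves this by exhibiting five explicit invariants of degree $\le 5$ that are \emph{linear} in $z_1,z_2$ (e.g.\ $f_1=x_2y_2z_1+x_1y_1z_2$, $f_3=x_1^3y_1z_1+x_2^3y_2z_2$, $f_4=x_2^3y_1z_1-x_1^3y_2z_2$, $f_5=x_1y_2^3z_1-x_2y_1^3z_2$, $f_2=y_1y_2(x_2y_2z_1-x_1y_1z_2)$), arranging them into three $2\times2$ linear systems in $(z_1,z_2)$ whose coefficient matrices depend only on $(w_1,w_2)$, and then showing by a short case analysis that these three determinants cannot all vanish when $w_1,w_2\neq 0$. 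This recovers $w_3$ exactly (not merely up to sign) by linear algebra, so no amalgamation or case split on which components vanish is needed beyond the trivial reduction to two summands when some $w_i=0$. Also note that your first paragraph --- enumerating the degree-$9$ indecomposable generators --- is not needed for the separation statement: one never has to know what the degree-$9$ generators are, only that the degree-$\le 8$ invariants already separate. To close your gap you would have to supply invariants playing the role of $f_1,\dots,f_5$ and prove the non-degeneracy of the resulting linear system; at that point your argument essentially becomes the paper's.
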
 

\begin{proof} 
Assume that for $v=(w_1,w_2,w_3)\in V$, $v'=(w'_1,w'_2,w'_3)\in V$ we have 
that $f(v)=f(v')$ for all $f\in \field[V]^G$ with $\deg(f)\le 8$. We need to show that 
$G\cdot v=G\cdot v'$. By Proposition~\ref{prop:Dic12xC2,V3+V4+U} we know that 
$\beta(G,W_i\oplus W_j)\le 8$ for all $i,j\in \{1,2,3,4\}$. In particular, 
$G\cdot (w_1,w_2)=G\cdot (w'_1,w'_2)$. So replacing $v'$ by an appropriate element in its orbit 
we may assume that $w'_1=w_1$, $w'_2=w_2$, and moreover, both $w_1$ and $w_2$ are non-zero. We shall show that necessarily $w_3=w'_3$.
Consider the following $G$-invariants in $\field[V]^G$: 
\begin{align*} 
f_1:=x_2y_2z_1+x_1y_1z_2,
\quad f_2:=y_1y_2(x_2y_2z_1-x_1y_1z_2), 
\quad f_3:=x_1^3y_1z_1+x_2^3y_2z_2,
\\ \ f_4:=x_2^3y_1z_1-x_1^3y_2z_2,  
\qquad f_5:=x_1y_2^3z_1-x_2y_1^3z_2. 
\end{align*} 
For $w\in V$ consider the matrices 
\begin{align*}M_1(w)&:=\begin{bmatrix} 
      x_2(w)y_2(w)& x_1(w)y_1(w) \\
       x_2(w)y_1(w)y_2^2(w) &  -x_1(w)y_1^2(w)y_2(w) 
   \end{bmatrix}
\\ M_2(w)&:=\begin{bmatrix} x_2(w)y_2(w)& x_1(w)y_1(w) \\ x_1^3(w)y_1(w) & x_2^3(w)y_2(w)
   \end{bmatrix} \qquad   
 M_3(w):=\begin{bmatrix} x_2^3(w)y_1(w) & -x_1^3(w)y_2(w) \\ x_1(w)y_2^3(w) & -x_2(w)y_1^3(w)
   \end{bmatrix}. 
\end{align*} 
The definition of the $f_j$ implies that for any $w\in V$ we have  the matrix equalities 
\begin{align*} 
M_1(w)\cdot \begin{bmatrix} z_1(w)\\ z_2(w)\end{bmatrix}=
\begin{bmatrix} f_1(w) \\ f_2(w) \end{bmatrix} 
\quad M_2(w)\cdot \begin{bmatrix} z_1(w)\\ z_2(w)\end{bmatrix}=
\begin{bmatrix} f_1(w) \\ f_3(w) \end{bmatrix}  
\qquad M_3(w)\cdot \begin{bmatrix} z_1(w)\\ z_2(w)\end{bmatrix}=
\begin{bmatrix} f_4(w) \\ f_5(w) \end{bmatrix} 
\end{align*} 
Note that $M_j(v)=M_j(v')$ for $j=1,2,3$ 
(because $(w_1,w_2)=(w'_1,w'_2)$), and since $\deg(f_j)\le 5$, by assumption, we have 
$f_j(v)=f_j(v')$ for $j=1,2,3,4,5$. 
By basic linear algebra we conclude that 
$\begin{bmatrix}z_1(v) \\ z_2(v)\end{bmatrix}
=\begin{bmatrix} z_1(v') \\ z_2(v')\end{bmatrix}$, unless   the matrix $M_j(v)$ has zero determinant for all $j\in \{1,2,3\}$. 
We claim that this is not the case. Suppose to the contrary that $\det M_j(v)=0$ for $j=1,2,3$. 
Then $\det M_1(v)=0$  says that one of $x_1(v)$, $x_2(v)$, $y_1(v)$, 
$y_2(v)$ equals zero. Suppose for example that $x_1(v)=0$. Then $x_2(v)\neq 0$ (as $w_2\neq 0$), and $\det M_2(v)=0$  yields $y_2(v)=0$, implying in turn that 
$y_1(v)\neq 0$. Then $\det M_3(v) \neq 0$, a contradiction. The cases when 
$x_2(v)$, $y_1(v)$, or $y_2(v)$ is zero can be dealt with similarly. 
\end{proof} 

\begin{proposition}\label{prop:Dic12xC2,3  summands}
Assume that $\field$ has characteristic $0$.
Let $i,j\in \{1,2,3\}$, $i\neq j$, and $\chi\in \widehat G$. 
Then $\sepbeta(G,W_i\oplus W_j\oplus U_\chi)\le 8$.    
\end{proposition}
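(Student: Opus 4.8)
The strategy is to reduce the three-summand case $W_i\oplus W_j\oplus U_\chi$ to the already-handled two-summand case $W_i\oplus W_j$ (covered by Proposition~\ref{prop:Dic12xC2,V3+V4+U}, which gives $\beta(G,W_i\oplus W_j)\le 8$) together with a short analysis of how the $U_\chi$-coordinate $t:=t_\chi$ enters. Concretely, suppose $v=(w,s)$ and $v'=(w',s')$ in $W_i\oplus W_j\oplus U_\chi$ agree on all $G$-invariants of degree at most $8$. First I would use the multihomogeneous structure: since $\beta(G,W_i\oplus W_j)\le 8$, the invariants in $\field[W_i\oplus W_j]$ of degree $\le 8$ separate the $G$-orbits of $w$ and $w'$, so after replacing $v'$ by a point in its $G$-orbit we may assume $w'=w$. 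It then remains to show $s=s'$, i.e. that the $t$-coordinate is recovered. If $s=0$, then $t$ vanishes on $v$; here I would invoke Lemma~\ref{lemma:common zero locus}: $v\in\mathcal V(\field[V]^{G,\chi})$ forces $\mathrm{Stab}_G(w)\nsubseteq\ker\chi$, and the same for $v'$, so $t(v')=0=t(v)$ as well (using $w'=w$, so they have the same stabilizer), hence $s=s'=0$.

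**The main case and the obstacle.** The substantive case is $s\neq 0$. Here I would produce, for each $\chi\in\widehat G$, an invariant of the form $h\,t^k$ with $h\in\field[W_i\oplus W_j]$ multihomogeneous, $k\in\{1,2\}$ small, $\deg(h\,t^k)\le 8$, and with $h(w)\neq 0$ — this is exactly where Lemma~\ref{lemma:V+U} enters, producing generators $t_{\chi^{(1)}}\cdots t_{\chi^{(k)}}$ (here just powers of $t$ since there is a single character summand) and products $h\,t^k$ with $h$ a relative invariant of the complementary weight. Since $\widehat G=\{\pm\mathrm i,\pm1\}\times\{\pm1\}$ has exponent $4$, the relevant $k$ is $1$, $2$, or $4$; as $\chi^4=1$ always, $t^4$ is an invariant of degree $4$, so in the worst case we still have the degree-$4$ invariant $t^4$ and the degree-$8$ invariants $h\,t^2$ and $h t^4$, $h t$ available. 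From $h(w)=h(w')\neq 0$ and $(h t^k)(v)=(h t^k)(v')$ we get $s^k=(s')^k$; combining the information from $k=1$ (when such an $h$ of degree $\le 7$ exists) pins down $s=s'$, and otherwise $s^2=(s')^2$ together with a degree-$\le 8$ invariant linear in $t$ — or, failing that, a direct argument on the finitely many sign ambiguities — finishes it. \textbf{The hard part} will be confirming, for each of the $8$ characters $\chi$ and each pair $i\neq j$, that the needed low-degree relative invariant $h\in\field[W_i\oplus W_j]^{G,\chi^{-1}}$ with $h(w)\neq 0$ actually exists, i.e. that the degrees genuinely stay $\le 8$ and that one cannot get stuck on a locus where every available $h$ vanishes at $w$; this requires knowing the relative-invariant generators of $\field[W_i\oplus W_j]$ for each weight, which is where I would lean on the explicit description of $\field[W_i\oplus W_j]^G$ and a stabilizer discussion (points $w$ with large stabilizer are again handled by Lemma~\ref{lemma:common zero locus}, forcing $s=0$, already done).

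**Streamlining.** In practice I expect the cleanest route is a case split on $\chi$. For the four characters $\chi=(\chi_1,\chi_2)$ with $\chi_1\in\{\pm1\}$ (so $\chi$ has order $\le 2$), $t^2$ has degree $2$ and one needs a degree-$\le 6$ relative invariant of weight $\chi$ in $\field[W_i\oplus W_j]$, which should be readily available from quadratic and quartic building blocks; for the four characters with $\chi_1=\pm\mathrm i$ (order $4$), the minimal product-one-free length is $3$ (e.g. $\chi,\chi,\chi$ with $\chi^3=\chi^{-1}\neq1$), so $t$, $t^2$, $t^3$ are all relevant and one needs relative invariants of weights $\chi^{-1},\chi^{-2},\chi^{-3}$ of degrees $\le 7,6,5$ respectively. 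Assembling these, using that $w\neq0$ forces some coordinate monomial to be nonzero at $w$, yields $s=s'$ and hence $v=v'$, completing the proof. I would close by remarking that no characteristic-$0$ hypothesis beyond that of Proposition~\ref{prop:Dic12xC2,V3+V4+U} is used — it is inherited solely through the appeal to that proposition.
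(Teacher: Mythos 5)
Your overall skeleton matches the paper's: use Proposition~\ref{prop:Dic12xC2,V3+V4+U} to reduce to the case $w'=w$ with both $w_i,w_j$ nonzero, then recover the $U_\chi$-coordinate from invariants of the form $ht^k$ with $h\in\field[W_i\oplus W_j]^{G,\chi^{-k}}$, $\deg(ht^k)\le 8$ and $h(w)\neq 0$. But the proposal stops exactly where the proof begins. The entire content of the proposition is the verification that such low-degree nonvanishing $h$ exist for every pair $(i,j)$ and every $\chi$, and you explicitly defer this as ``the hard part.'' In the paper this is done in two steps: a computer calculation determines that $\beta(G,W_i\oplus W_j\oplus U_\chi)\le 8$ already holds except for six explicitly listed modules (so for all other $(i,j,\chi)$ nothing further is needed), and for each of the six exceptions three explicit relative invariants $f_1,f_2,f_3$ of weight $\chi^{-1}$ and degree $\le 6$ are written down, for which one checks directly that they have no common zero on the locus where both $w_i$ and $w_j$ are nonzero. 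Without either the computation or the explicit invariants, the claim that the required $h$ ``should be readily available from quadratic and quartic building blocks'' is unsubstantiated --- and the existence of degree-$9$ generators in the six exceptional cases shows the degrees do not automatically stay within bounds.

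Two further steps in your plan are incorrect as written. First, your fallback for the locus where every candidate $h$ vanishes at $w$: Lemma~\ref{lemma:common zero locus} then tells you $\mathrm{Stab}_G(w)\nsubseteq\ker\chi$, but this does not ``force $s=0$''; it means $s$ is determined only up to multiplication by $\chi(\mathrm{Stab}_G(w))$, and one must then argue separately that $s'\in\chi(\mathrm{Stab}_G(w))\,s$ to conclude $G\cdot v=G\cdot v'$ (the paper avoids this entirely by checking the common zero locus of its $f_1,f_2,f_3$ is contained in $\{w_i=0\}\cup\{w_j=0\}$, a case it disposes of first via the submodules $W_j\oplus U_\chi\subset W_j\oplus W_4\oplus U$ of Proposition~\ref{prop:Dic12xC2,V3+V4+U}). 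Second, Lemma~\ref{lemma:common zero locus} gives no degree bound on the nonvanishing relative invariant it produces, so it cannot substitute for the explicit constructions; and your treatment of the sub-case $s=0$ is circular --- the vanishing of the single function $t$ at $v$ does not place $v$ in $\mathcal{V}(\field[V]^{G,\chi})$, so no conclusion about $s'$ follows from that lemma. The correct (and standard) way to get $s=0\Rightarrow s'=0$ is from $h(w)s^k=h(w)(s')^k$ for the very invariants whose existence is the open point.
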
 

\begin{proof} 
Using the CoCalc platform \cite{CoCalc} we verified that for a $\field G$-module of the form 
$V=W_i\oplus W_j\oplus U_\chi$ we have $\beta(G,V)>8$ if and only if  $V$ is as in the table below: 
\[\begin{array}{c|c}
V & tf_1,tf_2,tf_3\in \field[V]^G
\\ \hline 
W_2\oplus W_3\oplus U_{(\mathrm{i},-1)} & (y_1z_1+\mathrm{i}y_2z_2)t,\ y_1y_2(y_1z_1-\mathrm{i}y_2z_2)t,\ (y_2z_1^5-\mathrm{i}y_1z_2^5)t
\\ 
W_2\oplus W_3\oplus U_{(-\mathrm{i},-1)} & (y_1z_1-\mathrm{i}y_2z_2)t,\ y_1y_2(y_1z_1+\mathrm{i}y_2z_2)t,\ (y_2z_1^5+\mathrm{i}y_1z_2^5)t
\\ 
W_1\oplus W_3\oplus U_{(\mathrm{i},1)} & (x_1z_1+\mathrm{i}x_2z_2)t,\ x_1x_2(x_1z_1-\mathrm{i}x_2z_2)t,\ (x_2z_1^5-\mathrm{i}x_1z_2^5)t 
\\ 
W_1\oplus W_3\oplus U_{(-\mathrm{i},1)} & (x_1z_1-\mathrm{i}x_2z_2)t,\ x_1x_2(x_1z_1+\mathrm{i}x_2z_2)t,\ (x_2z_1^5+\mathrm{i}x_1z_2^5)t 
\\ 
W_1\oplus W_2\oplus U_{(-1,-1)} & (x_2y_1+x_1y_2)t,\ x_1x_2(x_2y_1-x_1y_2)t,\ (x_1y_1^5-x_2y_2^5)t 
\\  
W_1\oplus W_2\oplus U_{(1,-1)} & (x_2y_1-x_1y_2)t,\ x_1x_2(x_2y_1+x_1y_2)t,\ (x_1y_1^5+x_2y_2^5)t 
\end{array} 
\]
We shall show that $\sepbeta(G,V)\le 8$ for $V=W_2\oplus W_3\oplus U_{(\mathrm{i},-1)}$; 
the argument for the other $V$ in the table above is the same. 
Take $v=(w_2,w_3,u)\in V$ and $v'=(w'_2,w'_3,u')\in V$, and assume that 
$f(v)=f(v')$ holds for all $f\in \field[V]^G$ with $\deg(f)\le 8$. 
We need to show that then $G\cdot v=G\cdot v'$. Recall that the coordinate functions on $W_2$ are denoted by $y_1,y_2$,  on $W_3$ by $z_1,z_2$. Write $t:=t_{(\mathrm{i},-1)}$ for the coordinate function on $U_{(\mathrm{i},-1)}$.  
By Proposition~\ref{prop:Dic12xC2,V3+V4+U} we know that if $w_2=0$, then $w'_2=0$, 
and $(w_3,u)$ and $(w'_3,u')$ have the same $G$-orbit, Similarly, if $w_3=0$, then 
$G\cdot v=G\cdot v'$. So we may assume that $w_2\neq 0$ and $w_3\neq 0$, and again by 
Proposition~\ref{prop:Dic12xC2,V3+V4+U}, by replacing $v'$ by an appropriate element 
in its $G$-orbit, we may assume that $w_2=w'_2$ and $w_3=w'_3$. It remains to show that $u=u'$. 
Denote by $tf_1$, $tf_2$, $tf_3$ the $G$-invariants on $V$ given in the table.  
All have degree less than $8$, so $(tf_j)(v)=(tf_j)(v')$ holds for $j=1,2,3$. 
One can easily deduce from $w_2\neq 0$ and $w_3\neq 0$ that $f_j(w_2,w_3)\neq 0$ for some $j\in \{1,2,3\}$, hence $t(v)=t(v')$, i.e. $u=u'$. 
\end{proof} 

\begin{theorem}~\label{thm:sepbeta(Dic12xC2)} 
Assume that $\field$ has characteristic $0$ and it has an element of multiplictive order $12$. Then we have the equality $\sepbeta^\field(\mathrm{Dic}_{12}\times \mathrm{C}_2)=8$. 
\end{theorem}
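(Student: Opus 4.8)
The plan is to combine the lower bound coming from the subgroup $\mathrm{Dic}_{12}$ with an upper bound obtained by analyzing the multiplicity-free representation of $G=\mathrm{Dic}_{12}\times\mathrm{C}_2$ one pair of $2$-dimensional summands at a time. For the lower bound, recall from \cite{domokos-schefler:16} that $\sepbeta^\field(\mathrm{Dic}_{12})=8$; since $\mathrm{Dic}_{12}$ is a subgroup of $G$, the monotonicity \eqref{eq:sepbeta(H)} gives $\sepbeta^\field(G)\ge\sepbeta^\field(\mathrm{Dic}_{12})=8$. (Alternatively one can produce an explicit pair $v,v'$ in $W_4\oplus U$ or in one of the $W_i\oplus U_\chi$ from Proposition~\ref{prop:Dic12xC2,3  summands} that is separated only in degree $8$.)

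For the upper bound, the key point is Lemma~\ref{lemma:multfree}: since $\field$ has characteristic $0$, it has enough elements, so $\sepbeta^\field(G)=\sepbeta(G,V_{\mathrm{all}})$ where $V_{\mathrm{all}}=W_1\oplus W_2\oplus W_3\oplus W_4\oplus U$ is the sum of all simple modules. One must show $\sepbeta(G,V_{\mathrm{all}})\le 8$. Here I would invoke the Helly-dimension principle referenced in the introduction (\cite[Lemma 6.2]{domokos-schefler:16}): it suffices to check, for every pair $v,v'$ that agree on all invariants of degree $\le 8$, that their images already have the same orbit after projecting to a bounded number of irreducible summands — concretely, it is enough to separate on every subrepresentation of $V_{\mathrm{all}}$ having at most three irreducible direct summands. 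These subrepresentations are: (a) those containing $W_4$, for which Proposition~\ref{prop:Dic12xC2,V3+V4+U} gives $\beta(G,V)\le 8$ and hence $\sepbeta(G,V)\le 8$ via \eqref{eq:sepbeta<beta}; (b) $W_1\oplus W_2\oplus W_3$, handled by Proposition~\ref{prop:Dic12xC2,V1+V2+V3}; (c) $W_i\oplus W_j\oplus U_\chi$ with $i,j\in\{1,2,3\}$, handled by Proposition~\ref{prop:Dic12xC2,3  summands}; and (d) the remaining cases with fewer $2$-dimensional summands, which are dominated by the previous ones (e.g. $W_i\oplus W_j\le W_1\oplus W_2\oplus W_3$, and $U$ alone or $W_i\oplus U_\chi\oplus U_{\chi'}$ reduce to abelian-quotient computations or are covered by Proposition~\ref{prop:Dic12xC2,V3+V4+U} applied with $W_4$ absorbed). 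Assembling these, every $v,v'$ agreeing on invariants of degree $\le 8$ have the same orbit, so $\sepbeta^\field(G)\le 8$, completing the equality.

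The main obstacle is bookkeeping rather than a single hard estimate: one has to be sure that the Helly-dimension reduction genuinely covers all subrepresentations of $V_{\mathrm{all}}$ with the multiplicities that occur (in particular that adding the $1$-dimensional summands $U_\chi$ beyond one copy never raises the separating degree — this is where Proposition~\ref{prop:Dic12xC2,V3+V4+U}, which already includes the full $U$, does the heavy lifting), and that the degenerate cases where $w_i=0$ for some $i$ are folded back into lower-summand cases correctly. I would organize the write-up as: first state the lower bound via \eqref{eq:sepbeta(H)}; then quote the Helly-dimension lemma to reduce to $\le 3$ summands; then dispatch the four families (a)--(d) above, citing Propositions~\ref{prop:Dic12xC2,V3+V4+U}, \ref{prop:Dic12xC2,V1+V2+V3}, and \ref{prop:Dic12xC2,3  summands}; and conclude.
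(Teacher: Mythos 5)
Your lower bound is fine (the paper derives $\sepbeta^\field(G)\ge\sepbeta^\field(\mathrm{Dic}_{12})=8$ from $\mathrm{Dic}_{12}$ being a quotient rather than a subgroup, but for a direct product both routes work), and you have correctly identified Propositions~\ref{prop:Dic12xC2,V3+V4+U}, \ref{prop:Dic12xC2,V1+V2+V3} and \ref{prop:Dic12xC2,3  summands} as the inputs for the upper bound. The gap is in how you glue them together. You assert that, by the Helly-dimension principle, it suffices to check that the projections of $v,v'$ to every subrepresentation with at most \emph{three} irreducible summands have the same orbit. That reduction is only valid if the Helly dimension $\kappa(\mathrm{Dic}_{12}\times\mathrm{C}_2)$ is at most $3$, and you neither compute nor cite this; it is not an innocuous bookkeeping point but the load-bearing step of your argument. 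The paper conspicuously does \emph{not} invoke such a reduction for this group.

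What the paper does instead is a direct stabilizer argument. If two of $w_1,w_2,w_3$ are non-zero, say $w_i,w_j$ with $i\ne j\in\{1,2,3\}$, then $\mathrm{Stab}_G(w_i,w_j)$ is shown to be \emph{trivial} (using $\ker\psi_1=\langle b^2c\rangle$, $\ker\psi_2=\langle c\rangle$, and that no non-zero vector of $W_3$ is fixed by these elements). Proposition~\ref{prop:Dic12xC2,V3+V4+U} lets one normalize $w'_i=w_i$, $w'_j=w_j$, and then for each remaining component $w$ the three-summand results (Propositions~\ref{prop:Dic12xC2,V3+V4+U}, \ref{prop:Dic12xC2,V1+V2+V3}, \ref{prop:Dic12xC2,3  summands}) give $G\cdot(w_i,w_j,w)=G\cdot(w_i,w_j,w')$, whence $w=w'$ because the stabilizer of $(w_i,w_j)$ is trivial; so $v=v'$. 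This uses agreement only on triples \emph{containing a fixed rigid pair}, not on all triples, and it is the triviality of the stabilizer — not a Helly property of the group — that lets one conclude. In the complementary case, where at most one of $w_1,w_2,w_3$ is non-zero, $v$ and $v'$ lie in a submodule $X\subseteq W_i\oplus W_4\oplus U$ with as many as ten irreducible summands, and the paper concludes via $\beta(G,X)\le 8$ from Proposition~\ref{prop:Dic12xC2,V3+V4+U}; note this case is not reached by a "$\le 3$ summands" reduction at all. To repair your write-up you should either prove $\kappa(G)\le 3$ (and then still handle the one-dimensional summands carefully), or replace the Helly appeal by the stabilizer-based case split above.
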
 

\begin{proof}
Since $\mathrm{Dic}_{12}$ is a homomorphic image of $G$, we have the obvious inequality 
$\sepbeta^\field(G)\ge \sepbeta^\field(\mathrm{Dic}_{12})$, and by \cite[Theorem 2.1]{domokos-schefler:16}
we have $\sepbeta^\field(\mathrm{Dic}_{12})=8$. By Lemma~\ref{lemma:multfree} 
it remains to prove that $\sepbeta(G,V)\le 8$, where    
$V$ is the $\field G$-module $V:=W_1\oplus W_2\oplus W_3\oplus W_4\oplus U$. 
Let 
$v=(w_1,w_2,w_3,w_4,u)\in V$, 
$v'=(w'_1,w'_2,w'_3,w'_4,u')\in V$, 
and assume that 
\begin{equation} \label{eq:Dec12xC2,f(v)=f(v')}
f(v)=f(v') \text{ for all }f\in \field[V]^G\text{ with }\deg(f)\le 8.
\end{equation}
We shall show that $v$ and $v'$ have the same $G$-orbit. 

\emph{Case 1:} There exists some $i,j\in \{1,2,3\}$, $i\neq j$, such that $w_i\neq 0$ and $w_j\neq 0$. 

If $w_1\neq 0$ then $\mathrm{Stab}_G(w_1)=\ker(\psi_1)=\langle b^2c\rangle$ has order $2$. 
If $w_2\neq 0$ then $\mathrm{Stab}_G(w_2)=\ker(\psi_2)=\langle c\rangle$ has order $2$. 
Since $\langle b^2c\rangle\cap \langle c\rangle=\{1_G\}$ and no non-zero element of $W_3$ is fixed by $b^2c$ or $c$, we have that 
$\mathrm{Stab}_G(w_i,w_j)$ is trivial.  
By Proposition~\ref{prop:Dic12xC2,V3+V4+U} we know that $G\cdot (w_i,w_j)=G\cdot (w'_i,w'_j)$, 
so replacing $v'$ by an appropriate element in its $G$-orbit, we may assume 
that $w'_i=w_i$ and $w'_j=w_j$. Take a component 
$w\in\{w_1,w_2,w_3,w_4,u_\chi\mid \chi\in \widehat G\}$ of $v$ different from $w_i$ and $w_j$, 
and let $w'$ be the corresponding component of $v'$. 
By Proposition~\ref{prop:Dic12xC2,V3+V4+U}, Proposition~\ref{prop:Dic12xC2,V1+V2+V3} and 
Proposition~\ref{prop:Dic12xC2,3  summands} we conclude that 
$G\cdot (w_i,w_j,w)=G\cdot (w_i,w_j,w')$, 
so $w'$ is in the orbit of $w$ with respect to the stabilizer of $(w_i,w_j)$, 
implying in turn that $w=w'$. Since this holds for all components $w$ and $w'$ of $v$ and $v'$, 
we have $v=v'$. 

\emph{Case 2:} At most one of $w_1,w_2,w_3$ is non-zero, so there exists an $i\in \{1,2,3\}$ such that $w_j=0$ for each $j\in \{1,2,3\}\setminus \{i\}$. 
As $\sepbeta(W_j)\le 8$ for all $j\in \{1,2,3\}\setminus \{i\}$, we conclude that also 
$w'_j=0$ for all $j\in \{1,2,3\}\setminus \{i\}$. Thus $v$ and $v'$ belong to a submodule $X$ of $V$ for which $\beta(G,X)\le 8$ by Proposition~\ref{prop:Dic12xC2,V3+V4+U}, and hence \eqref{eq:Dec12xC2,f(v)=f(v')} implies $G\cdot v=G\cdot v'$.   
\end{proof}

\subsection{The group $\mathrm{A}_4\times \mathrm{C}_2$.} 
In this subsection 
\[G=\mathrm{A}_4\times \mathrm{C}_2=\langle a,b,c\mid a^2=b^2=c^3=1,\ ab=ba,\ cac^{-1}=b,
\ cbc^{-1}=ab \rangle \times 
\langle d\mid d^2=1\rangle.\]  
It has the following two non-isomorphic irreducible $3$-dimensional representations:  
\[ \psi_1: a\mapsto 
\begin{bmatrix} 
      -1 & 0 & 0 \\ 0 & -1 & 0 \\ 0 & 0 & 1 
   \end{bmatrix}, \quad 
   b\mapsto \begin{bmatrix} 
      1 & 0 & 0 \\ 0 & -1 & 0 \\ 0 & 0 & -1 
   \end{bmatrix}, \quad 
   c\mapsto \begin{bmatrix} 
      0 & 0 & 1 \\ 1 & 0 & 0 \\ 0 & 1 & 0
   \end{bmatrix}, 
   \quad d \mapsto \begin{bmatrix} -1 & 0 & 0 \\ 0 & -1 & 0 \\ 0 & 0 & -1
   \end{bmatrix}\]
\[\psi_2: a\mapsto 
\begin{bmatrix} 
      -1 & 0 & 0 \\ 0 & -1 & 0 \\ 0 & 0 & 1 
   \end{bmatrix}, \quad 
   b\mapsto \begin{bmatrix} 
      1 & 0 & 0 \\ 0 & -1 & 0 \\ 0 & 0 & -1 
   \end{bmatrix}, \quad 
   c\mapsto \begin{bmatrix} 
      0 & 0 & 1 \\ 1 & 0 & 0 \\ 0 & 1 & 0
   \end{bmatrix}, 
   \quad d \mapsto \begin{bmatrix} 1 & 0 & 0 \\ 0 & 1 & 0 \\ 0 & 0 & 1
   \end{bmatrix}.\]
   For $j=1,2$ denote by $W_j$ the vector space $\field^3$ endowed with the  representation $\psi_j$. 
Assume in addition that $\field$ contains an element $\omega$ of multiplicative order $3$. 
Then the other irreducible representations of $G$ are $1$-dimensional, and can be labelled by 
the group $\widehat G=\{\omega, \omega^2, 1\}\times \{\pm 1\}\le \field^\times \times \field^\times$, where  $\chi=(\chi_1,\chi_2)\in \widehat G$ is identified with the representation 
\[\chi:a\mapsto 1,\ b\mapsto 1,\ c\mapsto \chi_1, \  d\mapsto \chi_2\] 
(note that $\langle a,b \rangle$ is the commutator subgroup of $G$). 
For $\chi\in \widehat G$ denote by $U_\chi$ the vector space $\field$ 
endowed with the  representation $\chi$, and set 
$U:=\bigoplus_{\chi\in \widehat G}U_\chi$.

\begin{proposition}\label{prop:A4xC2,Vi,U}
We have the equalities 
\begin{itemize} 
\item[(i)] $\beta(G,W_1)=6$. 
\item[(ii)] $\beta(G,W_2\oplus U)=6$.
\end{itemize}
\end{proposition}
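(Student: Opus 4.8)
The plan is to handle the two parts separately, in each case restricting to the subgroup $\mathrm{A}_4=\langle a,b,c\rangle\le G$ and using that $W_1|_{\mathrm{A}_4}$ and $W_2|_{\mathrm{A}_4}$ are both the standard $3$-dimensional irreducible representation of $\mathrm{A}_4$ (the matrices $\psi_1(a),\psi_1(b),\psi_1(c)$ and $\psi_2(a),\psi_2(b),\psi_2(c)$ coincide and generate a copy of $\mathrm{A}_4$ in $\mathrm{SO}_3$). First I would record the classical description of $\field[W_i]^{\mathrm{A}_4}$: decomposing $\field[x_1,x_2,x_3]$ under the Klein four-subgroup $\langle a,b\rangle$ (invariant ring $\field[x_1^2,x_2^2,x_3^2,x_1x_2x_3]$) and then under the quotient $\langle c\rangle\cong\mathrm{C}_3$ acting by cyclically permuting the $x_i^2$, one gets that $\field[W_i]^{\mathrm{A}_4}$ is minimally generated in degrees $2,3,4,6$, with an indecomposable degree $6$ generator (e.g.\ $x_1^4x_2^2+x_2^4x_3^2+x_3^4x_1^2$); a Molien-series computation confirms it cannot be lowered. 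In particular $\beta(G,W_2)=\beta(\mathrm{A}_4,W_2)=6$, since $d$ acts trivially on $W_2$.

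For (i), $d$ acts on $W_1$ as $-\mathrm{id}$, hence on $\field[W_1]$ by $f\mapsto(-1)^{\deg f}f$, so $\field[W_1]^G$ is the subalgebra of even-degree elements of $\field[W_1]^{\mathrm{A}_4}$. This is generated by the even-degree generators of $\field[W_1]^{\mathrm{A}_4}$ (degrees $2,4,6$) together with $(x_1x_2x_3)^2$; since both degree $6$ generators remain indecomposable and the degree $2,3$ parts produce no other degree $6$ decomposable, $\beta(G,W_1)=6$.

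The substantial part is (ii), which I would prove via Lemma~\ref{lemma:V+U} applied to $V=W_2\oplus U$ with $W=W_2$. The set $A$ of Lemma~\ref{lemma:V+U} has degrees $\le 6$ by the above, and since $\widehat G\cong\mathrm{C}_3\times\mathrm{C}_2\cong\mathrm{C}_6$ the set $B$ of monomials coming from irreducible product-one sequences over $\widehat G$ has degree $\le\mathsf{D}(\mathrm{C}_6)=6$. For the set $C$ I would first observe that, $d$ acting trivially on $W_2$, the relative invariants $\field[W_2]^{G,\chi}$ vanish unless $\chi$ is trivial on $d$, i.e.\ unless $\chi$ has order dividing $3$; and for $\chi$ of order $3$ the same decomposition (under $\langle a,b\rangle$ and then $\langle c\rangle$) shows $\field[W_2]^{G,\chi}$ is generated over $\field[W_2]^G$ by one relative invariant $m_\chi$ of degree $2$ and one of degree $4$, the latter of which may be chosen to be $m_{\chi^{-1}}^2$, where $m_{\chi^{-1}}$ is the degree $2$ relative invariant of weight $\chi^{-1}$. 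The key remaining ingredient is a small zero-sum computation in $\widehat G\cong\mathrm{C}_6$: a product-one free sequence $\chi^{(1)},\dots,\chi^{(k)}$ over $\widehat G$ whose product has order $3$ has $k\le 4$, and if $k\ge 3$ its index set splits as $\{1,\dots,k\}=I\sqcup I^{\mathrm c}$ with $\prod_{i\in I}\chi^{(i)}=\prod_{i\in I^{\mathrm c}}\chi^{(i)}=(\chi^{(1)}\cdots\chi^{(k)})^{-1}$ --- proved by counting the (necessarily even) number of $\chi^{(i)}$ lying outside the order-$3$ subgroup $\mathrm{C}_3\le\mathrm{C}_6$ and treating the cases $0,2,4$ by hand, the only length-$4$ example being four copies of a generator. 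Granting this, the $C$-generators $m_\chi t_{\chi^{(1)}}\cdots t_{\chi^{(k)}}$ have degree $2+k\le 6$, while those of the form $m_{\chi^{-1}}^2 t_{\chi^{(1)}}\cdots t_{\chi^{(k)}}$ have degree $4+k$, which is $\le 6$ for $k\le 2$ and, for $k\in\{3,4\}$, equals $\bigl(m_{\chi^{-1}}\prod_{i\in I}t_{\chi^{(i)}}\bigr)\bigl(m_{\chi^{-1}}\prod_{i\in I^{\mathrm c}}t_{\chi^{(i)}}\bigr)$, a product of two $G$-invariants of degree $\le 6$, hence decomposable in $\field[W_2\oplus U]^G$. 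Thus $\field[W_2\oplus U]^G$ is generated in degree $\le 6$; combined with $\beta(G,W_2\oplus U)\ge\beta(G,U)=\mathsf{D}(\mathrm{C}_6)=6$ (witnessed by the indecomposable invariant $t_\chi^6$ for $\chi$ of order $6$), this gives $\beta(G,W_2\oplus U)=6$.

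The hard part is exactly this last reduction. A naive application of Lemma~\ref{lemma:V+U} produces would-be generators of $\field[W_2\oplus U]^G$ of degree $7$ (namely $m_{\chi^{-1}}^2$ times a length-$3$ product-one free word whose product has order $3$), so one genuinely needs both the shortness and the splitting of such words in order to rewrite these as products of invariants of degree at most $6$. Everything else --- the explicit ring $\field[W_i]^{\mathrm{A}_4}$, the structure of the $\field[W_2]^{G,\chi}$, and the various indecomposability checks --- is routine.
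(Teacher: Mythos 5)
Your proposal is correct, and for part (i) and for the generation of $\field[W_2]^G$ it coincides with the paper's argument (pass to $\langle a,b,d\rangle$-, resp.\ $\langle a,b\rangle$-invariant monomials and then average over $\langle c\rangle$; the degree $6$ generator is indecomposable because everything in lower degree is symmetric). For part (ii) you likewise apply Lemma~\ref{lemma:V+U} with $A_\chi$ consisting of a degree $2$ relative invariant $m_\chi$ and the degree $4$ element $m_{\chi^{-1}}^2$ for the two order $3$ characters; note that, exactly like the paper, you are deferring to a computation the claim that these span a complement of $\mathcal{H}(G,W_2)\cap\field[W_2]^{G,\chi}$ (the paper checks this with a Gr\"obner basis). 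Where you genuinely diverge is the last combinatorial step. The paper pairs up the characters $\chi^{(i)}$ with $\chi^{(i)}_2=-1$ and argues that $f\in C$ is a product of at most $\mathsf{D}(\mathrm{C}_3)=3$ relative invariants with weights in $\langle(\omega,1)\rangle$, concluding $\deg(f)\le 6$ for every $f\in C$; as literally written this overlooks precisely the elements you single out, namely $m_{\chi^{-1}}^2t_{\chi^{(1)}}t_{\chi^{(2)}}t_{\chi^{(3)}}$ of degree $7$ (for instance $\chi^{(1)}=(\omega,-1)$, $\chi^{(2)}=(1,-1)$, $\chi^{(3)}=(\omega,1)$ is product-one free with product $(\omega^2,1)$), because when $h$ contributes two weight-$(\omega^2,1)$ factors the resulting product-one sequence of weights over $\mathrm{C}_3$ need not be irreducible. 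Your zero-sum analysis over $\widehat G\cong\mathrm{C}_6$ --- length at most $4$, and the splitting $I\sqcup I^{\mathrm c}$ with both partial products equal to $\chi$ once $k\ge 3$ --- is exactly what is needed to exhibit these degree $7$ and $8$ elements as products of two invariants of degree at most $5$, hence as decomposable and therefore dispensable; I checked your case analysis (zero, two or four characters outside the order-$3$ subgroup; the subcase of two such characters with $k=4$ is in fact vacuous, and four of them forces four copies of a generator) and it is sound. So your route costs a slightly longer combinatorial discussion but buys a complete justification of the one step where the paper's write-up is too terse.
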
 

\begin{proof} (i) The algebra $\field[W_1]^{\langle a,b,d\rangle}$ is generated by the monomials 
$x_1^2$, $x_2^2$, $x_3^2$. 
The group element $c$ permutes cyclically these monomials, hence 
the algebra $\field[W_1]^G$ is generated by $x_1^2+x_2^2+x_3^2$, $x_1^2x_2^2+x_2^2x_3^2+x_1^2x_3^2$,  
$x_1^2x_2^2x_3^2$, $x_1^4x_2^2+x_2^4x_3^2+x_1^2x_3^4$, and since the elements with degree less than $6$ are symmetric, they 
can not form a generating set. Therefore we have (i). 

(ii) By a similar reasoning,  
$\field[W_2]^G$ is generated by 
$y_1^2 + y_2^2 + y_3^2$,  $y_1y_2y_3$, $y_1^2y_2^2 + y_2^2y_3^2 + y_1^2y_3^2$, 
$y_1^4y_2^2 + y_2^4y_3^2 + y_3^4y_1^2$, hence $\beta(G,W_2)= 6$. 
The factor group $G/G'=G/\langle a,b\rangle$ is cyclic of order $6$, hence its Davenport constant is $6$, implying that $\beta(G,U)=6$.  
Following the notation of Lemma \ref{lemma:V+U}, write $A_\chi$ for a set of 
homogeneous elements in $\field[W_2]^{G,\chi}$ that span a vector space direct complement in $\field[W_2]^{G,\chi}$ of $\field[W_2]^{G,\chi}\cap \mathcal{H}(G,W_2)$. 
By Lemma \ref{lemma:V+U} it is 
sufficient to show that (for some choice of the sets $A_\chi$) the polynomials in the following set have degree at most $6$:
\begin{center}
    $C:=\{ht_{\chi^{(1)}}\cdots t_{\chi^{(k)}}\mid \chi^{(1)},\dots \chi^{(k)}$ 
 is a product-one free sequence over $\widehat G$, \\
$h\in A_\chi$ where $\chi^{-1}=\chi^{(1)}\cdots \chi^{(k)}\}.$
\end{center} 
As $d\in \ker(\psi_2)$, the space $\field[W_2]^{G,\chi}$ is non-zero only if $\chi_2=1$.  
Moreover, $a,b\in \ker(\chi)$ for any $\chi\in \widehat G$, hence 
$\field[W_2]^{G,\chi}$ is contained in $\field[W_2]^{\langle a,b\rangle}$. 
It is easy to see that $\field[W_2]^{\langle a,b\rangle}$ is generated by $y_1^2,y_2^2,y_3^2,y_1y_2y_3$, and 
 $\field[W_2]^G=\field[y_1^2,y_2^2,y_3^2,y_1y_2y_3]^{\langle c\rangle}$ is generated by $y_1^2+y_2^2+y_3^2$, $y_1^2y_2^2+y_2^2y_3^2+y_1^2y_3^2$,  $y_1^4y_2^2+y_2^4y_3^2+y_1^2y_3^4$, $y_1y_2y_3$. 
Set
 \[s_{\omega}:=y_1^2+\omega y_2^2+\omega^2 y_3^2,\quad s_{\omega^2}:=y_1^2+\omega^2 y_2^2+\omega y_3^2.\] 
 We may take $A_{(\omega,1)}:=\{s_{\omega}, s^2_{\omega^2}\}$ 
 and $A_{(\omega^2,1)}:=\{s_{\omega^2}, s^2_{\omega}\}$   
 (this can be verified by computing a Gr\"obner basis in $\mathcal{H}(G,W_2)$). 
 Now consider an element $f=ht_{\chi^{(1)}}\cdots t_{\chi^{(k)}}\in C$. 
Here $\deg(h)=2$, or $\deg(h)=4$ and $h$ is the product of two $\langle a,b,d\rangle$-invariants.  After a possible renumbering we may assume that 
$\chi^{(i)}_2=-1$ for each $i=1,...,\ell$ and $\chi^{(j)}_2=1$ for each $j=\ell+1,...,k$. Since $h$ is a $\langle d\rangle$-invariant, $\ell=2\ell'$ must be even. We have 
$f=h\prod_{i=1}^{\ell'}(t_{\chi^{(i)}}t_{\chi^{(i+\ell')}})\prod_{j=\ell+1}^{k}t_{\chi^{(j)}}$. 
So the $G$-invariant $f$ is written as a product of relative invariants of weight 
$\chi\in \langle (\omega,1)\rangle\cong\mathrm{C}_3$, where the number of factors is 
$k-\ell'+1$ or $k-\ell'+2$, depending on whether $\deg(h)=2$ or $\deg(h)=4$.
On the other hand, the number of factors is at most $\mathsf{D}(\mathrm{C}_3)=3$, since 
$ \chi^{(1)},\dots \chi^{(k)}$ is a product-one free sequence over $\widehat G$. 
Since $\deg(f)=\deg(h)+k$, we conclude in both cases that $\deg(f)\le 6$, and 
we are done.   
\end{proof} 

 \begin{lemma}\label{lemma:A4xC2,stabilizer} 
 For a non-zero $v\in W_1$ we have 
\begin{align*} 
|\mathrm{Stab}_G(v)|=\begin{cases} 3 &\text{ if } x_1^2(v)=x_2^2(v)=x_3^2(v)  \\ 
2 &\text{ if }x_j(v)=0 \text{ for a unique }j\in \{1,2,3\} \\
4 &\text{ if } x_j(v)\neq 0 \text{ for a unique }j\in \{1,2,3\} \\ 
1 &\text{ otherwise.}\end{cases}.   
\end{align*} 
 \end{lemma}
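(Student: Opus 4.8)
The plan is to work directly with the monomial representation $\psi_1$. Write $v=(v_1,v_2,v_3)$ for the coordinates of a vector $v\in W_1$, so that $v_i=x_i(v)$, and recall that $\mathrm{Stab}_G(v)=\{g\in G\mid \psi_1(g)v=v\}$. First I would pin down the shape of the image $\psi_1(G)\subset \mathrm{GL}_3(\field)$: the subgroup $\langle a,b\rangle\times\langle d\rangle$ is mapped isomorphically onto the group of all eight diagonal matrices $S=\mathrm{diag}(\epsilon_1,\epsilon_2,\epsilon_3)$ with $\epsilon_i\in\{1,-1\}$ (the generators $a,b$ produce the four sign matrices with an even number of $-1$'s, and $d\mapsto -I$ supplies the other four), while $\psi_1(c)$ cyclically permutes the three coordinates. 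Consequently $\psi_1(G)$ consists exactly of the $3\cdot 8=24$ matrices $\psi_1(c)^kS$ with $k\in\{0,1,2\}$ and $S$ a diagonal sign matrix; in particular $\psi_1$ is faithful and each element of $G$ is $\psi_1(c)^kS$ for a unique pair $(k,S)$.

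The core step is to solve $\psi_1(c)^kS\,v=v$ for fixed nonzero $v$, separately for $k=0,1,2$. For $k=0$ the equation $Sv=v$ holds precisely when $\epsilon_i=1$ for every index $i$ with $v_i\neq0$, so the number of solutions is $2^{z}$, where $z\in\{0,1,2\}$ denotes the number of vanishing coordinates of $v$. For $k=1$, writing $\psi_1(c)S\,v=v$ out coordinatewise gives a cyclic system of the form $\epsilon_3v_3=v_1$, $\epsilon_1v_1=v_2$, $\epsilon_2v_2=v_3$; this forces all three coordinates to be nonzero (otherwise $v=0$) and $v_1^2=v_2^2=v_3^2$, and conversely, when $v_1^2=v_2^2=v_3^2\neq0$ the signs $\epsilon_1,\epsilon_2,\epsilon_3$ are uniquely determined. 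The case $k=2$ is identical up to a cyclic relabelling of the indices, so it too contributes a single stabilising element exactly when $v_1^2=v_2^2=v_3^2$, and none otherwise.

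It remains to combine these counts over the four cases of the statement, which are mutually exclusive and exhaust the nonzero vectors. If $v_1^2=v_2^2=v_3^2$, the common value is nonzero (else $v=0$), so $z=0$ and the three values of $k$ contribute $1+1+1=3$. If exactly one coordinate of $v$ vanishes, then $v_1^2=v_2^2=v_3^2$ fails, so only $k=0$ contributes and $|\mathrm{Stab}_G(v)|=2^1=2$. If exactly one coordinate is nonzero, i.e. $z=2$, then again only $k=0$ contributes and $|\mathrm{Stab}_G(v)|=2^2=4$. In the remaining case, where $z=0$ but the squares $v_1^2,v_2^2,v_3^2$ are not all equal, only $k=0$ contributes and $|\mathrm{Stab}_G(v)|=1$. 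This is the asserted formula.

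I do not expect a real obstacle: since $\psi_1$ is monomial, the argument is elementary bookkeeping with sign vectors and a $3$-cycle. The two points that deserve a little care are that the image really contains all eight diagonal sign matrices — which is precisely where the direct factor $\langle d\rangle$ acting as $-I$ enters, as the generators $a,b$ alone yield only the ``even'' ones — and that the four cases genuinely partition the nonzero vectors, the subtlety being that the condition $v_1^2=v_2^2=v_3^2$ already rules out a vanishing coordinate.
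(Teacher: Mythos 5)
Your proof is correct and is essentially the paper's argument carried out in full: the paper simply asserts that one can directly find the nonzero vectors of $W_1$ fixed by some non-identity element, and your case analysis by the permutation part $\psi_1(c)^k$ ($k=0,1,2$) with the count $2^z$ for the diagonal part is exactly that verification, done carefully. All the details check out, including the faithfulness of $\psi_1$ and the disjointness of the four cases.
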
 
\begin{proof} 
One can easily find the non-zero elements of $W_1$ that occur as an eigenvector with eigenvalue $1$ for some non-identity element of $G$. 
\end{proof} 

\begin{proposition}\label{prop:A4xC2,V1+V2}
We have the inequality $\sepbeta(G,W_1\oplus W_2)\le 6$. 
\end{proposition}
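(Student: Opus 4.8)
The plan is to take $v=(v_1,v_2)$ and $v'=(v_1',v_2')$ in $W_1\oplus W_2$ with $f(v)=f(v')$ for all $f\in\field[W_1\oplus W_2]^G$ of degree at most $6$, and show $G\cdot v=G\cdot v'$. First I would dispose of the degenerate cases: if $v_1=0$, then since $\beta(G,W_2)=6$ (and the degree $\le 6$ invariants of $\field[W_2]^G$ separate orbits in $W_2$, in particular $\field[W_1\oplus W_2]^G$ sees $v_1=0$ via the degree $2$ invariant $x_1^2+x_2^2+x_3^2$ and the degree $6$ invariants), we get $v_1'=0$ and then $G\cdot v_2=G\cdot v_2'$ in $W_2$; symmetrically if $v_2=0$. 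So assume $v_1\neq 0$ and $v_2\neq 0$. By Proposition~\ref{prop:A4xC2,Vi,U}(i) we have $\beta(G,W_1)=6$, so $G\cdot v_1=G\cdot v_1'$; replacing $v'$ by a suitable element of its $G$-orbit, we may assume $v_1=v_1'$. It remains to show that $v_2'$ lies in the $\mathrm{Stab}_G(v_1)$-orbit of $v_2$.

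The key is Lemma~\ref{lemma:A4xC2,stabilizer}, which gives the four possibilities for $H:=\mathrm{Stab}_G(v_1)$: trivial, of order $2$, $3$, or $4$. When $H$ is trivial there is nothing to prove once we know $v_2=v_2'$, which I would extract from mixed invariants: I would exhibit enough $G$-invariants of degree $\le 6$ that are linear in the $W_2$-variables (or of the shape ``$W_1$-covariant times $W_2$-coordinate''), so that fixing $v_1$ pins down $v_2$ up to the action of $H$ on $W_2$. Concretely, from the $\langle a,b,d\rangle$-semiinvariant structure one builds invariants such as $x_j y_j$-type combinations summed over the cyclic $c$-action, giving bilinear invariants $\sum x_iy_i$-flavored objects of degree $2$, together with degree $4$ and degree $6$ mixed invariants, and one argues by linear algebra (as in the proof of Proposition~\ref{prop:Dic12xC2,V1+V2+V3}) that the values of these invariants, with $v_1$ fixed, determine $v_2$ modulo $H$. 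The stabilizer cases are handled one at a time: in each case $H$ is a concrete small subgroup ($\langle c\rangle$, a $\langle b\rangle$-type involution, or a Klein-type group of order $4$), and one checks that the available degree $\le 6$ invariants separate the $H$-orbits on the relevant subspace of $W_2$.

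The main obstacle I expect is the case where $|H|=3$, i.e. $x_1^2(v_1)=x_2^2(v_1)=x_3^2(v_1)\neq 0$: here $v_1$ is fixed (up to the $c$-action) by a $3$-cycle, and the bilinear invariants $\sum_i x_i(v_1)y_i$ and its $\omega$-twisted versions reduce, after rescaling $v_1$ to have all coordinates $\pm 1$, to symmetric-function data in $y_1,y_2,y_3$ that only determines $v_2$ up to the $\mathrm{C}_3$ permutation action — which is exactly what we want, provided one checks no further degeneration occurs (e.g. that the degree $6$ invariants of $\field[W_2]^G$ restricted appropriately, together with the mixed ones, leave no ambiguity beyond $H$). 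I would treat this by rescaling $v_1$ (using \eqref{eq:rescaling}) to a normal form with coordinates in $\{1,-1\}$, absorbing the signs into the $\langle a,b\rangle$-action, and then invoking that $\field[W_2]^G$ already has separating invariants of degree $\le 6$ away from the zero locus, combined with the $\mathrm{C}_3$-worth of freedom coming from $H$. The remaining cases $|H|\in\{1,2,4\}$ are more routine and follow the same pattern with fewer symmetries to track.
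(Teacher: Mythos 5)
Your overall strategy coincides with the paper's: reduce to $w_1=w_1'$ using $\beta(G,W_1)=6$ (Proposition~\ref{prop:A4xC2,Vi,U}), then show that $w_2'$ lies in the $\mathrm{Stab}_G(w_1)$-orbit of $w_2$ by a case analysis on the stabilizer via Lemma~\ref{lemma:A4xC2,stabilizer}. However, what you have written is a plan whose hard steps are exactly the ones deferred, and one of them is stated incorrectly. In the case $|\mathrm{Stab}_G(w_1)|=3$, after normalizing $w_1$ to $[1,1,1]^T$ the mixed invariants of degree at most $6$ that are symmetric in $y_1,y_2,y_3$ determine $w_2$ only up to the full group $\mathrm{S}_3$ of coordinate permutations, whereas the stabilizer $\langle c\rangle$ provides only the cyclic subgroup of even permutations. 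An odd permutation of $w_2$ has the same elementary symmetric functions but need not lie in $G\cdot(w_1,w_2)$, so the symmetric data is genuinely insufficient; the paper closes this with the non-symmetric invariant $f_{11}=x_2x_3y_1y_2^2+x_1x_2y_1^2y_3+x_1x_3y_2y_3^2$, which distinguishes even from odd permutations. Your assertion that the symmetric-function data ``only determines $v_2$ up to the $\mathrm{C}_3$ permutation action --- which is exactly what we want'' is therefore not correct as stated.

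The second substantive gap is the generic case. The degree $\le 6$ invariants quadratic in the $y$-variables (such as $\sum_i x_i^{2k}y_i^2$) only yield $y_i(v)^2=y_i(v')^2$, leaving a sign ambiguity $y_i(v)=\pm y_i(v')$; since the elements of $G$ acting diagonally on $W_2$ flip an even number of signs, one is left with the possibility that exactly two signs differ, and excluding this (or absorbing it into the stabilizer) requires the invariants of bidegree $(2,1)$ and $(4,1)$ in $(x,y)$ together with a verification that the relevant $2\times 2$ matrices are nonsingular under the hypotheses of each case ($x_1x_2x_3\neq 0$ with the $x_i^2$ pairwise distinct, versus two of them equal, versus some $x_i=0$). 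This is where most of the work in the paper's proof lies (its Cases I, II.a--c, III.a--b), and ``one argues by linear algebra'' does not yet establish that the needed nondegeneracy holds in every subcase. So the approach is the right one, but the proposal as written does not constitute a proof.
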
 

\begin{proof} 
Take $v=(w_1,w_2), \ v'=(w'_1,w'_2)\in  W_1\oplus W_2$ such that $f(v)=f(v')$ for all 
$f\in \field[W_1\oplus W_2]^G$ with $\deg(f)\le 6$. We need to show that $G\cdot v=G\cdot v'$. By Proposition~\ref{prop:A4xC2,Vi,U}, 
$w_1$ and $w'_1$ have the same $G$-orbit, so we assume that $w_1=w'_1$ 
(i.e. $x_1(v)=x_1(v')$, $x_2(v)=x_2(v')$, $x_3(v)=x_3(v')$). 
Moreover, we may assume that $G\cdot w_2=G\cdot w'_2$, and it is sufficient to 
deal with the case when both $w_1$ and $w_2$ are non-zero. 
By Lemma~\ref{lemma:A4xC2,stabilizer}, the stabilizer of $w_1$ in $G$ 
is non-trivial if and only if 
$(x_1x_2x_3)(v)=0$ or $x_1^2(v)=x_2^2(v)=x_3^2(v)$. 
This dictates the distinction of several cases below. 

\emph{Case I:} $(x_1x_2x_3)(v)\neq 0$ and $x_i^2(v)\neq x_j^2(v)$ if $i\neq j$. Consider the $G$-invariants 
\[f_1:=y_1^2+y_2^2+y_3^2,\qquad f_2:=x_1^2y_1^2+x_2^2y_2^2+x_3^2y_3^2, 
\qquad f_3:=x_1^4y_1^2+x_2^4y_2^2+x_3^4y_3^2.\] 
For $w\in V$ set 
\[M(w):=\begin{bmatrix} 1& 1& 1\\ x_1^2(w)& x_2^2(w)& x_3^2(w) \\ x_1^4(w)& x_2^4(w)& x_3^4(w)
\end{bmatrix}.\] 
We have $\det M(w)=(x_1^2(w)-x_2^2(w))(x_1^2(w)-x_3^2(w))(x_2^2(w)-x_3^2(w))$, 
so $\det M(v)\neq 0$ by assumption, and as $M(v)$ depends only on  $w_1=w'_1$, we have 
$M(v)=M(v')$. Using that $f_i(v)=f_i(v')$ for $i=1,2,3$ (since $\deg(f_i)\le 6$), we conclude 
\[\begin{bmatrix} y_1^2(v) \\ y_2^2(v)\\ y_3^2(v)\end{bmatrix} 
=M(v)^{-1}\cdot \begin{bmatrix} f_1(v)\\ f_2(v)\\ f_3(v)\end{bmatrix} 
=M(v')^{-1}\cdot \begin{bmatrix} f_1(v')\\ f_2(v')\\ f_3(v')\end{bmatrix} 
=\begin{bmatrix} y_1^2(v') \\ y_2^2(v')\\ y_3^2(v')\end{bmatrix}.\]
Thus $y_i(v)=\pm y_i(v')$ for $i=1,2,3$. Taking into account that $G\cdot w_2=G\cdot w'_2$, 
one can easily see that either $y_i(v)=y_i(v')$ for $i=1,2,3$, so $v=v'$, and we are done, 
or for some $j\in \{1,2,3\}$ we have that $y_j(v)=y_j(v')$  and $y_i(v)=-y_i(v')$ for all 
$i\in \{1,2,3\}\setminus \{j\}$. 
By symmetry we may assume that 
\begin{equation}\label{eq:y3(v)=y3(v')}
y_3(v)=y_3(v'), \qquad y_1(v)=-y_1(v'), \qquad y_2(v)=-y_2(v').
\end{equation}  
Consider the  $G$-invariants 
\[f_4:=x_2x_3y_1+x_1x_3y_2+x_1x_2y_3, \qquad f_5:=x_1x_2x_3(x_1y_1+x_2y_2+x_2y_3).\] 
Then $f_4$ and $f_5$ have degree less than $6$, hence $f_4(v)=f_4(v')$ and 
$f_5(v)=f_5(v')$, implying by $y_3(v)=y_3(v')$ (see \eqref{eq:y3(v)=y3(v')}) that 
\[\begin{bmatrix} (x_2x_3)(v) & (x_1x_3)(v) \\ x_1(v) & x_2(v)\end{bmatrix} 
\cdot \begin{bmatrix} y_1(v) \\ y_2(v)\end{bmatrix} = 
\begin{bmatrix} (x_2x_3)(v) & (x_1x_3)(v) \\ x_1(v) & x_2(v)\end{bmatrix} 
\cdot \begin{bmatrix} y_1(v') \\ y_2(v')\end{bmatrix}.\] 
 The assumptions for Case I guarantee that the determinant of the $2\times 2$ coefficient matrix above is non-zero, hence $\begin{bmatrix} y_1(v) \\ y_2(v)\end{bmatrix}=\begin{bmatrix} y_1(v') \\ y_2(v')\end{bmatrix}$, 
 and so $v=v'$. 
 
\emph{Case II:} $(x_1x_2x_3)(v)\neq 0$ and $|\{x_1^2(v),x_2^2(v),x_3^2(v)\}|=2$, say 
$x_1^2(v)=x_2^2(v)\neq x_3^2(v)$. Similarly to Case I, by basic linear algebra we conclude from $f_1(v)=f_1(v')$ and $f_2(v)=f_2(v')$ that 
\begin{equation}\label{eq:y3^2(v)=y3^2(v')}
(y_1^2+y_2^2)(v)=(y_1^2+y_2^2)(v') \text{ and } 
y_3^2(v)=y_3^2(v').\end{equation}  
Consider the $G$-invariant 
\[f_6:=x_3^2y_1^2+x_1^2y_2^2+x_2^2y_3^2.\] 
Now \eqref{eq:y3^2(v)=y3^2(v')}, $f_6(v)=f_6(v')$ imply that 
$y_1^2(v)=y_1^2(v')$ and $y_2^2(v)=y_2^2(v')$. 
So we have that 
\[y_1(v)=\pm y_1(v'),\quad y_2(v)=\pm y_2(v'),\quad y_3(v)=\pm y_3(v').\]
Taking into account that $G\cdot w_2=G\cdot w'_2$ we conclude 
that either $w_2=w'_2$, and we are done, or 
for some $j\in \{1,2,3\}$ we have that $y_j(v)=y_j(v')$  and $y_i(v)=-y_i(v')$ for all 
$i\in \{1,2,3\}\setminus \{j\}$. 
So we have to deal with the cases 
II.a, II.b, II.c below: 

\emph{Case II.a:} $y_1(v)=-y_1(v')$, $y_2(v)=-y_2(v')$, $y_3(v)=y_3(v')$. 
Consider the invariant 
\[f_7:=x_2x_3^3y_1+x_1^3x_3y_2+x_1x_2^3y_3.\] 
It has degree less than $6$, and from $f_4(v)=f_4(v')$, $f_7(v)=f_7(v')$ we conclude 
\[\begin{bmatrix} (x_2x_3)(v) & (x_1x_3)(v)\\ (x_2x_3^3)(v) & (x_1^3x_3)(v)\end{bmatrix} 
\cdot \begin{bmatrix} y_1(v) \\ y_2(v)\end{bmatrix}=
\begin{bmatrix} 0 \\ 0 \end{bmatrix}.\]
The determinant of the $2\times 2$ matrix above is 
$x_1(v)x_2(v)x_3(v)^2(x_1(v)^2-x_3(v)^2)$, which is non-zero by the assumptions for Case II. 
Consequently, $y_1(v)=y_2(v)=0$, implying in turn that $v=v'$. 

\emph{Case II.b:} $y_1(v)=-y_1(v')$, $y_2(v)=y_2(v')$, $y_3(v)=-y_3(v')$. 
Similar to Case II.a, using invariants $f_4$ and $f_7$. 

\emph{Case II.c:} $y_1(v)=y_1(v')$, $y_2(v)=-y_2(v')$, $y_3(v)=-y_3(v')$. 
Similar to Case II.a, but instead of $f_7$ we have to use the invariant 
$x_2^3x_3y_1+x_1x_3^3y_2+x_1^3x_2y_3$. 

\emph{Case III.a:} Two of $x_1(v)$, $x_2(v)$, $x_3(v)$ are zero, say 
$x_1(v)=x_2(v)=0$, $x_3(v)\neq 0$. So $\mathrm{Stab}_G(w_1)$ has order $4$ by 
Lemma~\ref{lemma:A4xC2,stabilizer}, in fact $\mathrm{Stab}_G(w_1)=\langle a,bd\rangle$. 
Then $f_2(v)=f_2(v')$ implies $y_3(v)^2=y_3(v')^2$, 
$f_6(v)=f_6(v')$ implies $y_1(v)^2=y_1(v')^2$, and then $f_1(v)=f_1(v')$ implies $y_2(v)^2=y_2(v')^2$. Taking into account that $G\cdot w_2=G\cdot w'_2$ we conclude 
that either $w_2=w'_2$, and we are done, or 
for some $j\in \{1,2,3\}$ we have that $y_j(v)=y_j(v')$  and $y_i(v)=-y_i(v')$ for all 
$i\in \{1,2,3\}\setminus \{j\}$. 
In the latter case we have $w_2$ and $w'_2$ belong to the same orbit under 
$\langle a,bd\rangle=\mathrm{Stab}_G(w_1)$, hence $G\cdot v=G\cdot v'$. 

\emph{Case III.b:} $x_i(v)=0$ for a unique $i\in \{1,2,3\}$, say $x_3(v)=0$. Then $f_4(v)=f_4(v')$ implies $y_3(v)=y_3(v')$. The $G$-invariant 
\[f_8:=x_1^2x_3^2y_1^2+x_1^2x_2^2y_2^2+x_2^2x_3^2y_3^2\] 
shows that $y_2^2(v)=y_2^2(v')$, hence by $f_1(v)=f_1(v')$ we get 
$y_1^2(v)=y_1^2(v')$.  Recall that $G\cdot w_2=G\cdot w'_2$, hence 
either $v=v'$ and we are done, or $y_1(v)=-y_1(v')$ and $y_2(v)=-y_2(v')$.  
Then $ad\cdot v=v'$, so $v$ and $v'$ have the same $G$-orbit.

\emph{Case IV:} $x_1^2(v)=x_2^2(v)=x_3^2(v)$. Applying an element of $G$ and a 
rescaling on $W_1$ (see \eqref{eq:rescaling}) we may assume that $w_1=[1,1,1]^T$.
We have the $G$-invariants 
\[f_9:=x_1x_2y_1y_2+x_1x_3y_1y_3+x_2x_3y_2y_3,\qquad 
f_{10}:=y_1y_2y_3.\] 
The multiset $\{f_4(v)=f_4(v'),f_9(v)=f_9(v'),f_{10}(v)=f_{10}(v')\}$ consists of the elementary symmetric polynomials of $y_1(v),y_2(v),y_3(v)$ (respectively of  
$y_1(v'),y_2(v'),y_3(v')$). Thus $w'_2$ is obtained from $w_2$ by permuting its coordinates. 
In fact $w_2$ can be taken to $w'_2$ by an even permutation of the coordinates, 
because $f_{11}(v)=f_{11}(v')$, where $f_{11}$ is the $G$-invariant 
\[f_{11}:=x_2x_3y_1y_2^2+x_1x_2y_1^2y_3+x_1x_3y_2y_3^2.\] 
It means that $w'_2$ belongs to the $\langle c\rangle$-orbit of $w_2$. 
Since $\mathrm{Stab}_G(w_1)=\langle c\rangle$, we conclude that $G\cdot v=G\cdot v'$. 
\end{proof} 

\begin{lemma}\label{lemma:A4xC2,stab(v1)trivial}
Let $w_1$ be a non-zero element in $W_1$, and $w_2\in W_2$. 
\begin{itemize} 
\item[(i)] If $|\mathrm{Stab}_G(w_1)|\neq 3$, then 
for any $\chi\in \{(\omega,1),(\omega^2,1),(1,1)\}$ 
there exists a homogeneous $f\in \field[W_1]^{G,\chi}$ 
with $\deg(f)\le 4$ such that $f(w_1)\neq 0$. 
\item[(ii)] If $|\mathrm{Stab}_G(w_1)|\notin \{2,4\}$, 
then for any $\chi\in \{(1,-1),(1,1)\}$ there exists a homogeneous 
$f\in \field[W_1]^{G,\chi}$ with $\deg(f)\le 3$ 
and $f(w_1)\neq 0$. 
\item[(iii)] If there exist $i,j\in\{1,2,3\}$ with 
$y_i^2(w_2)\neq y_j^2(w_2)$, then 
for any $\chi\in \{(\omega,1),(\omega^2,1),(1,1)\}$ 
there exists a homogeneous $f\in \field[W_2]^{G,\chi}$ 
with $\deg(f)\le 4$ such that $f(w_2)\neq 0$. 
\end{itemize}
\end{lemma}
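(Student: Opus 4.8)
The plan is to prove (i)--(iii) by writing down, for each admissible weight $\chi$, an explicit homogeneous relative invariant of degree at most the stated bound, and then verifying, via Lemma~\ref{lemma:A4xC2,stabilizer}, that the hypothesis on $w_1$ (respectively $w_2$) forces this invariant to be nonzero there. Conceptually nothing is lost: by Lemma~\ref{lemma:common zero locus} the existence of \emph{some} relative invariant of weight $\chi$ not vanishing at a point $w$ is equivalent to $\mathrm{Stab}_G(w)\subseteq\ker(\chi)$, and one checks that the stabilizer hypotheses in (i)--(iii) are exactly what is needed for the listed characters; the only extra content is the degree bound, so the real task is to realize the invariants concretely in small degree.

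The building blocks are two degree-$2$ relative invariants. Recall from the proof of Proposition~\ref{prop:A4xC2,Vi,U} that on $W_1$ the element $c$ cyclically permutes $x_1\to x_2\to x_3\to x_1$ while $a,b,d$ act by diagonal sign changes (with $d=-\mathrm{id}$), and similarly on $W_2$ with $y_j$ in place of $x_j$, except that $d$ acts trivially there. Put $s:=x_1^2+\omega x_2^2+\omega^2 x_3^2$ and $s':=x_1^2+\omega^2 x_2^2+\omega x_3^2$; a direct computation gives $c\cdot s=\omega^2 s$ and $a\cdot s=b\cdot s=d\cdot s=s$, so $s\in\field[W_1]^{G,(\omega,1)}$, and symmetrically $s'\in\field[W_1]^{G,(\omega^2,1)}$. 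Hence $(s')^2\in\field[W_1]^{G,(\omega,1)}$ and $s^2\in\field[W_1]^{G,(\omega^2,1)}$ have degree $4$. The same formulas in the $y_j$ give $s_\omega\in\field[W_2]^{G,(\omega,1)}$ and $s_{\omega^2}\in\field[W_2]^{G,(\omega^2,1)}$ of degree $2$. Finally $x_1x_2x_3$ is fixed by $a,b,c$ and sent to $-x_1x_2x_3$ by $d$, so $x_1x_2x_3\in\field[W_1]^{G,(1,-1)}$ has degree $3$.

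The combinatorial heart is the observation that $s(w)=s'(w)=0$ implies $x_1^2(w)=x_2^2(w)=x_3^2(w)$: subtracting the two equations and using $\omega\neq\omega^2$ gives $x_2^2(w)=x_3^2(w)$, and then $1+\omega+\omega^2=0$ applied to $s(w)=0$ gives $x_1^2(w)=x_2^2(w)$. For (i): if $|\mathrm{Stab}_G(w_1)|\neq 3$, then by Lemma~\ref{lemma:A4xC2,stabilizer} not all $x_j^2(w_1)$ agree, so at least one of $s(w_1),s'(w_1)$ is nonzero; take $f=s$ or $f=(s')^2$ for $\chi=(\omega,1)$ according to which is nonzero, symmetrically $f=s'$ or $f=s^2$ for $\chi=(\omega^2,1)$, and $f=1$ for $\chi=(1,1)$. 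Part (iii) is the same argument on $W_2$, the hypothesis ``$y_i^2(w_2)\neq y_j^2(w_2)$'' replacing ``not all $x_j^2$ agree''. For (ii): if $|\mathrm{Stab}_G(w_1)|\notin\{2,4\}$ then by Lemma~\ref{lemma:A4xC2,stabilizer} no coordinate of $w_1$ vanishes (the orders $2$ and $4$ are exactly the cases with one, respectively two, vanishing coordinates), so $f=x_1x_2x_3$ works for $\chi=(1,-1)$, and $f=1$ for $\chi=(1,1)$.

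I do not expect a serious obstacle: the difficulties here are bookkeeping rather than ideas. One point to watch is that staying in degree $2$ is impossible in general --- for one of the two weights $(\omega,1)$, $(\omega^2,1)$ the only degree-$2$ candidate may vanish at $w$, forcing one up to its square, which is why the bound is $4$. Another is that for the trivial weight $(1,1)$ the constant $1$ is the only uniform choice: each positive-degree invariant of degree $\le 4$ (namely $x_1^2+x_2^2+x_3^2$ and $\sum_{i<j}x_i^2x_j^2$, and on $W_2$ additionally $y_1y_2y_3$) can vanish at an admissible point, so the statement must be read with $f$ of degree $0$ permitted. Finally one must keep the character arithmetic straight, using $(\omega,1)^2=(\omega^2,1)$ and $(\omega^2,1)^2=(\omega,1)$ when deciding which square to take.
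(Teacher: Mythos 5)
Your proof is correct and follows essentially the same route as the paper: the same degree-$2$ relative invariants $x_1^2+\omega x_2^2+\omega^2 x_3^2$, $x_1^2+\omega^2 x_2^2+\omega x_3^2$ (and their squares to switch weight), the cube $x_1x_2x_3$ for weight $(1,-1)$, the observation that their common vanishing forces $x_1^2=x_2^2=x_3^2$, and Lemma~\ref{lemma:A4xC2,stabilizer} to rule that out under the stated stabilizer hypotheses. The paper likewise takes $f=1$ for the trivial character, so no discrepancy there.
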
 

\begin{proof} 
Set 
\begin{align*} 
r_{(\omega,1)}:=x_1^2+\omega x_2^2+\omega^2 x_3^2 
&\qquad \qquad s_{(\omega,1)}:=y_1^2+\omega y_2^2+\omega^2 y_3^2 
\\ 
r_{(\omega^2,1)}:=x_1^2+\omega^2 x_2^2+\omega x_3^2
&\qquad \qquad s_{(\omega^2,1)}:=y_1^2+\omega^2 y_2^2+\omega y_3^2 
\\ 
r_{(1,-1)}&:=x_1x_2x_3. 
\end{align*} 
Then $r_\chi$ and $s_\chi$ are relative $G$-invariants of weight $\chi$. 

(i) Take first $\chi=(\omega,1)$. 
Then $r_{(\omega,1)}$, 
$(r_{(\omega^2,1)})^2$ are both homogeneous 
relative invariants of degree at most $4$ and weight $\chi$,  
and their common zero locus in 
$W_1$ is $\{w\in W_1\mid x_1^2(w)=x_2^2(w)=x_3^2(w)\}$. 
Lemma~\ref{lemma:A4xC2,stabilizer} implies that $w_1$ 
does not belong to this common zero locus by 
the assumption on its  stabilizer. 
The proof for $\chi=(\omega^2,1)$ is similar, one uses  
the relative invariants 
$r_{(\omega^2,1)}$, 
$(r_{(\omega,1)})^2$. For $\chi=(1,1)$ we can take $f=1$. 

(ii) For $\chi=(1,-1)$ we can take $f=r_{(1,-1)}$, because 
Lemma~\ref{lemma:A4xC2,stabilizer} implies 
$(x_1x_2x_3)(v_1)\neq 0$ by the assumption on the stabilizer 
of $w_1$. For $\chi=(1,1)$ we can take $f=1$. 

(iii) Similarly to the proof of (i), 
the common zero locus of the weight $(\omega,1)$ 
relative invariants 
$s_{(\omega,1)}$, 
$(s_{(\omega^2,1)})^2$ is $\{w\in W_2\mid y_1^2(w)=y_2^2(w)=y_3^2(w)\}$. 
Now $w_2$ does not belong to this common zero locus by assumption. 
The case of $\chi=(\omega^2,1)$ is settled 
using the relative invariants 
$s_{(\omega^2,1)}$, 
$(s_{(\omega,1)})^2$. 
\end{proof}

\begin{theorem}\label{thm:sepbeta(A4xC2)}
Assume that $\field$ contains an element 
of multiplicative order $6$. Then we have the equality $\sepbeta^\field(\mathrm{A}_4\times \mathrm{C}_2)=6$. 
\end{theorem}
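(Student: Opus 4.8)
The plan. For the lower bound $\sepbeta^\field(G)\ge 6$ I would use that $G/G'\cong\mathrm{C}_6$ (recall $G'=\langle a,b\rangle$), so $\sepbeta^\field(G)\ge\sepbeta^\field(\mathrm{C}_6)$ since the separating Noether number does not increase under passage to a quotient; and $\sepbeta^\field(\mathrm{C}_6)\ge 6$: choosing a character $\chi_0\in\widehat G$ of order $6$, every positive-degree homogeneous element of $\field[U_{\chi_0}^{\oplus 6}]^G$ has degree divisible by $6$, whereas the points $(1,1,1,1,1,1)$ and $(\zeta,1,1,1,1,1)$ with $\zeta$ of multiplicative order $6$ have distinct $G$-orbits and hence are not separated by $G$-invariants of degree $<6$. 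For the upper bound, by Lemma~\ref{lemma:base field} we may pass to $\overline{\field}$, and then Lemma~\ref{lemma:multfree} reduces everything to showing $\sepbeta(G,V)\le 6$ for the multiplicity-free module $V=W_1\oplus W_2\oplus U$.

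So let $v=(w_1,w_2,u)$ and $v'=(w_1',w_2',u')$ in $V$ satisfy $f(v)=f(v')$ for all $f\in\field[V]^G$ with $\deg(f)\le 6$; I must show $G\cdot v=G\cdot v'$. By Proposition~\ref{prop:A4xC2,Vi,U}(i) the algebra $\field[W_1]^G$ is generated in degree $\le 6$, hence separates $G$-orbits on $W_1$, so $G\cdot w_1=G\cdot w_1'$; replacing $v'$ by a $G$-translate I may assume $w_1=w_1'$. If $w_1=0$, then $v,v'$ lie in $W_2\oplus U$ and $G\cdot v=G\cdot v'$ by Proposition~\ref{prop:A4xC2,Vi,U}(ii). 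Assume $w_1\ne 0$ and put $H:=\mathrm{Stab}_G(w_1)$; by Lemma~\ref{lemma:A4xC2,stabilizer} its order is $1$, $2$, $3$ or $4$, determined by the position of $w_1$. Proposition~\ref{prop:A4xC2,V1+V2} gives $G\cdot(w_1,w_2)=G\cdot(w_1,w_2')$, and the element realizing this must fix $w_1$, i.e. lies in $H$; translating $v'$ by such an element I may further assume $w_2=w_2'$. It now suffices to prove that $u$ and $u'$ lie in one orbit of $H':=\mathrm{Stab}_G((w_1,w_2))\le H$, since then $v'=h'\cdot v$ for the corresponding $h'\in H'$.

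To control $u$ I would exploit that for $\chi\in\widehat G$ and $k\ge 1$ any relative invariant $h\in\field[W_1\oplus W_2]^{G,\chi^{-k}}$ with $\deg(h)\le 6-k$ gives a $G$-invariant $h\,t_\chi^k$ of degree $\le 6$ on $V$, and since $w_1=w_1'$, $w_2=w_2'$, evaluating at $v,v'$ forces $h(w_1,w_2)\,t_\chi(u)^k=h(w_1,w_2)\,t_\chi(u')^k$; similarly the products $t_\chi t_{\chi'}$ get pinned down by relative invariants of weight $(\chi\chi')^{-1}$. The low-degree relative invariants needed I would supply explicitly: on $W_1$ the $r_{(\omega^{\pm1},1)}$ and $x_1x_2x_3$, on $W_2$ the $s_{(\omega^{\pm1},1)}$ and $y_1y_2y_3$ (as in the proofs of Lemma~\ref{lemma:A4xC2,stab(v1)trivial} and Proposition~\ref{prop:A4xC2,Vi,U}), and the ``mixed'' ones such as $x_1y_1+\omega x_2y_2+\omega^2x_3y_3$ (weight $(\omega,-1)$, degree $2$), $x_1x_2y_3+\omega x_2x_3y_1+\omega^2x_3x_1y_2$ (weight $(\omega,1)$, degree $3$) and $x_1y_2y_3+\omega^2x_2y_1y_3+\omega x_3y_1y_2$ (weight $(\omega^2,-1)$, degree $3$), together with their cyclic variants. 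Using Lemma~\ref{lemma:common zero locus} (which says a relative invariant of a given weight non-vanishing at $(w_1,w_2)$ exists exactly when $H'$ lies in the corresponding kernel) and Lemma~\ref{lemma:A4xC2,stab(v1)trivial}, a case analysis will show that for each $\chi$ with $H'\subseteq\ker(\chi^k)$ one of these explicit invariants of weight $\chi^{-k}$ and degree $\le 6-k$ is nonzero at $(w_1,w_2)$; when only $k=2$ or $k=3$ is available, $t_\chi(u)$ is recovered from $t_\chi(u)^2$, $t_\chi(u)^3$ and the cross-products. This determines every $t_\chi(u)$ for $\chi$ with $H'\subseteq\ker\chi$ and the remaining ones up to a common scalar $\chi(h')$, $h'\in H'$; a short argument with the products $t_\chi t_{\chi'}$ then forces a single $h'\in H'$ to work for all coordinates, giving $u'=h'\cdot u$ and $G\cdot v=G\cdot v'$.

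The main obstacle is precisely this last step: an exhaustive but elementary analysis distinguishing the four possibilities for $H$, the subgroups $H'\le H$, and the six characters in $\widehat G$, verifying in each configuration of $(w_1,w_2)$ that a relative invariant of the required weight and degree $\le 6-k$ is genuinely nonzero there and that the stabilizer-element choices dictated by the $W_2$- and $U$-coordinates are mutually compatible — exactly the kind of ad hoc examination of zero loci of explicit invariants anticipated in the introduction. The reduction to a multiplicity-free module, the lower bound, and the matching of the $W_1$- and $W_2$-components are routine given the earlier results.
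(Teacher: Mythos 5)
Your overall architecture is the same as the paper's: reduce via Lemmas~\ref{lemma:spanning invariants} and \ref{lemma:multfree} to $V=W_1\oplus W_2\oplus U$, normalize $w_1=w_1'$ and $w_2=w_2'$ using Propositions~\ref{prop:A4xC2,Vi,U} and \ref{prop:A4xC2,V1+V2}, and then recover $u$ up to the action of $\mathrm{Stab}_G(w_1,w_2)$ by multiplying coordinate functions $t_\chi^k$ with low-degree relative invariants on $W_1\oplus W_2$. Your lower bound via the quotient $G/G'\cong\mathrm{C}_6$ is correct and a bit more self-contained than the paper's appeal to $\sepbeta^\field(\mathrm{A}_4)\ge 6$.

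The genuine gap is that the step you yourself flag as ``the main obstacle'' is where essentially all of the content lies, and your sketch of it is not yet a proof. First, Lemma~\ref{lemma:common zero locus} gives a relative invariant of weight $\chi^{-k}$ not vanishing at $(w_1,w_2)$ whenever $\mathrm{Stab}_G(w_1,w_2)\subseteq\ker(\chi^k)$, but with \emph{no degree bound}; the assertion that one of your listed invariants of degree $\le 6-k$ works in every admissible configuration is exactly what must be checked, and as stated it is false: for example, when $x_3(w_1)=0$, $x_1(w_1)x_2(w_1)\neq 0$ and $y_1(w_2)y_2(w_2)\neq 0$, the degree-$2$ invariant $x_1y_1+\omega x_2y_2+\omega^2x_3y_3$ of weight $(\omega,-1)$ can vanish at $(w_1,w_2)$ even though the stabilizer is trivial, and one is forced to bring in further relative invariants such as $x_1y_1^3+\omega x_2y_2^3+\omega^2x_3y_3^3$ and $x_1^3y_1+\omega x_2^3y_2+\omega^2x_3^3y_3$ together with a dedicated argument that the three cannot vanish simultaneously (this is the paper's Case IV.c). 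Second, in several configurations (e.g.\ $w_1$ supported on one coordinate and $w_2$ on another, or $w_1,w_2$ both proportional to sign vectors of $[1,1,1]^T$) no relative invariant of the required weight is nonzero at $(w_1,w_2)$ at all, and the argument must switch to showing that a specific nontrivial subgroup $H'$ stabilizes $(w_1,w_2)$ and that all $H'$-invariant monomials in $\field[U]$ of degree up to $\mathsf{D}(H')$ take equal values on $u$ and $u'$, whence $u'\in H'\cdot u$; identifying these configurations, the correct $H'$, and verifying the degree arithmetic ($\deg(m)\le\mathsf{D}(H')$ plus the degree of the compensating relative invariant staying $\le 6$) is a nontrivial case-by-case analysis over the four stabilizer types of $w_1$ and the positions of $w_2$. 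Until that enumeration is actually carried out, the upper bound $\sepbeta(G,V)\le 6$ is not established.
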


\begin{proof} Since $\mathrm{A}_4$ is a homomorphic image of $G$, 
we have $\sepbeta^\field(G)\ge \sepbeta^\field(\mathrm{A}_4)$, 
and by \cite[Proposition 5.1]{domokos-schefler:16} 
we have $\sepbeta^\field(\mathrm{A}_4)\ge 6$. 
By Lemma~\ref{lemma:spanning invariants} (iii), 
to prove the reverse inequality 
$\sepbeta^\field(G)\le 6$ it is sufficient to show that $\sepbeta^L(G)\le 6$ for 
some extension $L$ of $\field$. Therefore we may assume that $\field$ is large enough 
to apply  Lemma~\ref{lemma:multfree}, and so it is sufficient to prove 
$\sepbeta(G,V)\le 6$ for $V:=W_1\oplus W_2\oplus U$. 
Take $v=(w_1,w_2,u)\in V$, $v'=(w'_1,w'_2,u')\in V$, 
and assume that $f(v)=f(v')$ for all 
$f\in \field[V]^G_{\le 6}$. We need to show that 
then $G\cdot v=G\cdot v'$. 
By Proposition~\ref{prop:A4xC2,V1+V2} 
we know that 
$G\cdot (w_1,w_2)=G\cdot (w'_1,w'_2)$, 
so replacing $v'$ by an appropriate 
element in its $G$-orbit 
we may assume that $w'_1=w_1$, $w'_2=w_2$. 
By Proposition~\ref{prop:A4xC2,Vi,U} (ii) 
it is sufficient to deal with the case 
$w_1\neq 0$. The Davenport constant of 
$G/G'$ is $6$, therefore $G\cdot u=G\cdot u'$. 
By Proposition~\ref{prop:A4xC2,V1+V2} 
it is sufficient to deal with the case when 
$u\neq 0$, moreover, when $u$ has a non-zero 
component $u_\chi$ for some non-identity character 
$\chi\in \widehat G$. 

\emph{Case I:} $\mathrm{Stab}_G(w_1)$ is trivial. 
We claim that $u_{\chi}=u'_{\chi}$ for all 
$\chi\in \widehat G$, so $v=v'$. 
Since $\chi^2\in \langle (\omega,1)\rangle$ 
(respectively, $\chi^3\in \langle (1,-1)\rangle$),  
by Lemma~\ref{lemma:A4xC2,stab(v1)trivial}  
there exists a relative invariant 
$f\in \field[W_1]^{G,\chi^{-2}}$ 
(respectively, $f\in \field[W_1]^{G,\chi^{-3}}$)
such that 
$ft_\chi^2$ (respectively, $ft_\chi^3$) 
is a homogeneous $G$-invariant 
of degree at most $6$, and $f(v)=f(w_1)\neq 0$. 
From 
$(ft_\chi^2)(v)=(ft_\chi^2)(v')$ 
(respectively, $(ft_\chi^3)(v)=(ft_\chi^3)(v')$)
we conclude that both 
$t_\chi^2(u)=t_\chi^2(u')$ 
and $t_\chi^3(u)=t_\chi^3(u')$. 
It follows that 
$t_\chi(u)=t_\chi(u')$, i.e. 
$u_\chi=u'_\chi$. 

\emph{Case II.a:} $|\mathrm{Stab}_G(w_1)|=3$ (consequently, 
by Lemma~\ref{lemma:A4xC2,stabilizer} we have 
$x_1^2(v)=x_2^2(v)=x_3^2(v)\neq 0$) and 
there exist $i,j\in \{1,2,3\}$ with $y_i^2(v)\neq y_j^2(v)$. 
By Lemma~\ref{lemma:A4xC2,stab(v1)trivial} (iii) for $\chi \in \langle (\omega,1)\rangle$ 
we conclude the existence of a homogeneous 
$f\in \field[W_2]^{G,\chi}$ of degree at most $4$ with $f(w_2)\neq 0$. 
Moreover, $x_1x_2x_3\in \field[W_1]^{G,(1,-1)}$ does not vanish at $v$. 
In the same way as in Case I we conclude that for all $\chi\in \widehat G$ we have 
$t_\chi^2(v)=t_\chi^2(v')$ and $t_\chi^3(v)=t_\chi^3(v')$, 
implying in turn $t_\chi(v)=t_\chi(v')$, i.e. $u_\chi=u'_\chi$. 
This holds for all $\chi$, thus $v=v'$ in this case. 

\emph{Case II.b:} $|\mathrm{Stab}_G(w_1)|=3$ (consequently, by Lemma~\ref{lemma:A4xC2,stabilizer} we have 
$x_1^2(v)=x_2^2(v)=x_3^2(v)\neq 0$, hence in particular, $(x_1x_2x_3)(v)\neq 0$) and $\mathrm{Stab}_G(w_1)\subseteq \mathrm{Stab}_G(w_2)$. 
Write $H:=\mathrm{Stab}_G(w_1)$. Then $H\cong HG'/G'$ (since $|H|=3$ is coprime to $|G'|=4$). 
It follows that an $H$-invariant monomial $h$ in $\field[U]$ is either $G$-invariant or is 
a relative $G$-invariant with weight $(1,-1)$. Hence 
$h$ or $x_1x_2x_3h$ is a $G$-invariant, implying by $(x_1x_2x_3)(v)\neq 0$ that 
$h(u)=h(u')$ holds for all $H$-invariant monomials in $\field[U]$ of degree 
at most $3=\mathsf{D}(\mathrm{C}_3)=\mathsf{D}(H)=\sepbeta^\field(H)$. Consequently, there exists an element $g\in H$ with 
$g\cdot u=u'$. Now $g\cdot (w_1,w_2,u)=(w_1,w_2,u')$, so $G\cdot v=G\cdot v'$. 

\emph{Case II.c:} $|\mathrm{Stab}_G(w_1)|=3$ (consequently, 
by Lemma~\ref{lemma:A4xC2,stabilizer} we have 
$x_1^2(v)=x_2^2(v)=x_3^2(v)\neq 0$) and 
$y_1^2(v)=y_2^2(v)=y_3^2(v)\neq 0$. Replacing the pair
$(v,v')$ by an appropriate element in its $G$-orbit 
we may assume that $w_1$ is a non-zero scalar multiple 
of $[1,1,1]^T$. If $w_2$ is also a scalar multiple 
of $[1,1,1]^T$, then we are in Case II.b. So we may assume 
that $w_2$ is not a scalar multiple of $[1,1,1]^T$. 
Then necessarily $w_2$ is a non-zero scalar multiple 
of one of $[-1,1,1]^T$, $[1,-1,1]^T$, $[1,1,-1]^T$. 
Then the relative invariants 
\begin{align*}
x_1y_1+x_2y_2+x_3y_3 &\in \field[W_1\oplus W_2]^{G,(1,-1)}, 
\\ x_1y_1+\omega x_2y_2+\omega^2 x_3y_3 &\in \field[W_1\oplus W_2]^{G,(\omega,-1)}, 
\\ x_1y_1+\omega^2 x_2y_2+\omega x_3y_3 &\in \field[W_1\oplus W_2]^{G,(\omega^2,-1)}
\end{align*}
do not vanish at $(w_1,w_2)$. Multiplying them by $x_1x_2x_3$ we get degree $5$ relative invariants 
of weight $(1,1)$, $(\omega,1)$, $(\omega^2,1)$. 
It follows that for any $\chi\in \widehat G$ 
there exists a homogeneous $f\in \field[W_1\oplus W_2]^{G,\chi^{-1}}$ of degree 
at most $5$ with $f(w_1,w_2)\neq 0$; then $(ft_\chi)(v)=(ft_\chi)(v')$, 
implying $u_\chi=u'_\chi$. This holds for all $\chi$, so $u=u'$ and thus  $v=v'$ in this case.  

\emph{Case III:} $|\mathrm{Stab}_G(w_1)|=4$. By Lemma~\ref{lemma:A4xC2,stabilizer} 
exactly one of $x_1(v)$, $x_2(v)$, $x_3(v)$ is non-zero; by symmetry we may assume 
that $x_1(v)\neq 0$ and $x_2(v)=x_3(v)=0$. By Lemma~\ref{lemma:A4xC2,stab(v1)trivial} (i) for any $\chi\in \langle (\omega,1)\rangle$ there exists a homogeneous 
$f\in \field[W_1\oplus W_2]^{G,\chi}$ with $\deg(f)\le 4$ and $f(w_1,w_2)\neq 0$. 
If we can find a homogeneous $h\in \field[W_1\oplus W_2]^{G,(1,-1)}$ with 
$\deg(h)\le 3$ and $h(w_1,w_2)\neq 0$ then we are done as in Case I. 
This happens in the following case: 

\emph{Case III.a:} $y_1(v)\neq 0$ or $(y_2y_3)(v)\neq 0$.  

Then we can take $h:=x_1y_1+x_2y_2+x_3y_3$ or 
$h:=x_1y_2y_3+x_2y_3y_1+x_3y_1y_2$. 

Otherwise we are in the following case: 

\emph{Case III.b:} $y_1(v)=0$ and $(y_2y_3)(v)=0$. 
If $y_3(v)=0$, 
then $H:=\langle ad\rangle \subseteq 
\mathrm{Stab}_G(w_1,w_2)$. Note that $H\cong HG'/G'$. 
A monomial $m\in \field[U]^H$ is a relative $G$-invariant 
with weight in $\langle (\omega,1)\rangle$. If $\deg(m)\le 2$, then 
by Lemma~\ref{lemma:A4xC2,stab(v1)trivial} (iii) 
we have a $G$-invariant $mf$ with $\deg(mf)\le 6$ and $f(w_1,w_2)\neq 0$. 
It follows that $m(u)=m(u')$ for all $H$-invariant monomials $m\in \field[U]$ with 
$\deg(m)\le 2=\mathsf{D}(H)=\sepbeta(H)$. Consequently, $H\cdot u=H\cdot u'$, and as 
$H$ stabilizes $(w_1,w_2)$, we conclude $G\cdot v=G\cdot v'$. 
The case $y_2(v)=0$ is similar, we just need to take 
$H:=\langle abd\rangle\subseteq \mathrm{Stab}_G(w_1,w_2)$ in the above argument.  

\emph{Case IV:} $|\mathrm{Stab}_G(w_1)|=2$.  By Lemma~\ref{lemma:A4xC2,stabilizer} 
exactly one of $x_1(v)$, $x_2(v)$, $x_3(v)$ is zero; by symmetry we may assume 
that $x_1(v)\neq 0$,  $x_2(v)\neq 0$, and $x_3(v)=0$. Note that 
this implies that $H:=\mathrm{Stab}_G(w_1)=\langle ad\rangle$ is not contained in 
$G'$, hence $H\cong HG'/G'$. 
By Lemma~\ref{lemma:A4xC2,stab(v1)trivial} (i) for any $\chi\in \langle (\omega,1)\rangle$ there exists a homogeneous 
$f\in \field[W_1\oplus W_2]^{G,\chi}$ with $\deg(f)\le 4$ and $f(w_1,w_2)\neq 0$. 
If we can find a homogeneous $h\in \field[W_1\oplus W_2]^{G,(1,-1)}$ with 
$\deg(h)\le 3$ and $h(w_1,w_2)\neq 0$ then we are done as in Case I, 
whereas if $H\subseteq \mathrm{Stab}_G(w_2)$ then we can finish as in Case III.b. 

\emph{Case IV.a:} $y_1(v)=y_2(v)=0$. Then $ad\cdot w_2=w_2$, hence 
$H\subseteq \mathrm{Stab}_G(w_2)$, so 
we are done, as we pointed out above. 

\emph{Case IV.b:} exactly one of $y_1(v)$, $y_2(v)$ is zero. 
Then $h:=x_1y_1+x_2y_2+x_3y_3$ is a degree $\le 3$ relative $G$-invariant 
with weight $(1,-1)$ and not vanishing at $(w_1,w_2)$, so we are done, as we explained in the beginning of Case IV. 

\emph{Case IV.c:}   Both of $y_1(v)$, $y_2(v)$ are non-zero. We claim that 
for any non-zero weight $\chi\in \widehat G$ there exists a 
a relative invariant $f\in \field[W_1\oplus W_2]^{G,\chi^{-1}}$ with 
$\deg(f)\le 4$ such that $f(w_1,w_2)\neq 0$. 
Then $ft_\chi$ is a $G$-invariant of degree $\le 5$, so $(ft_\chi)(v)=(ft_\chi)(v')$ 
implies $u_\chi=u'_\chi$ for all non-trivial $\chi\in \widehat G$, thus $v=v'$. 
For the weights $\chi\in \langle (\omega,1)\rangle$ 
the claim follows from Lemma~\ref{lemma:A4xC2,stab(v1)trivial} (i). 
For $\chi=(1,-1)$ we may take 
$f:=x_1^2x_2y_2+x_2^2x_3y_3+x_3^2x_1y_1$. 
For $\chi=(\omega,-1)$ consider the 
relative invariants 
\begin{align*} 
r_{(\omega,-1)}^{(1)}:=x_1y_1+\omega x_2y_2+\omega^2x_3y_3 
\\ r_{(\omega,-1)}^{(2)}:=x_1y_1^3+\omega x_2y_2^3+\omega^2 x_3y_3^3 
\\ r_{(\omega,-1)}^{(3)}:=x_1^3y_1+\omega x_2^3y_2+\omega^2x_3^3y_3  
\end{align*}
in $\field[W_1\oplus W_2]^{G,\chi}$. 
One of them does not vanish at $(w_1,w_2)$.
Indeed, suppose for contradiction that 
all of them vanish at $(w_1,w_2)$. 
From $r_{(\omega,-1)}^{(1)}(w_1,w_2)=0=r_{(\omega,-1)}^{(2)}(w_1,w_2)$ 
we deduce $y_1^2(w_2)=y_2^2(w_2)$. 
Thus $[y_1(v),y_2(v)]$ is a non-zero scalar multiple of 
$[1,1]$ or $[1,-1]$. After that, from 
$r_{(\omega,-1)}^{(1)}(w_1,w_2)=0=r_{(\omega,-1)}^{(3)}(w_1,w_2)$ 
we conclude 
$x_1^2(v)=x_2^2(v)$ and hence $x_1(v)=\pm x_2(v)$,  
which clearly contradicts to the assumption that 
$r_{(\omega,-1)}^{(1)}(w_1,w_2)=0$. 
The case of the weight $\chi=(\omega^2,-1)$ can be settled 
similarly, using the relative invariants 
$x_1y_1+\omega^2 x_2y_2+\omega x_3y_3$,  
$x_1y_1^3+\omega^2 x_2y_2^3+\omega x_3y_3^3$,  
$x_1^3y_1+\omega^2 x_2^3y_2+\omega x_3^3y_3$.
\end{proof}

\section{The group $\mathrm{S}_3\times \mathrm{C}_3$} 
\label{sec:S3xC3}

In this section
\[G=\mathrm{S}_3\times \mathrm{C}_3=\langle a,b\mid a^3=b^2=1,\ ba=a^2b\rangle \times 
\langle c\mid c^3=1\rangle.\]  
Assume in addition that $\field$ contains an element $\omega$ of multiplicative order $3$, 
and consider the following pairwise non-isomorphic irreducible $2$-dimensional representations of $G$:  
\[ \psi_1: a\mapsto 
\begin{bmatrix} 
      \omega & 0 \\
      0 & \omega^2 \\
   \end{bmatrix}, \quad 
   b\mapsto \begin{bmatrix} 
      0 & 1 \\
      1 & 0 \\
   \end{bmatrix}, \quad 
   c\mapsto \begin{bmatrix} 
      1 & 0 \\
      0 & 1 \\
   \end{bmatrix}, \]
\[\psi_2: a\mapsto 
\begin{bmatrix} 
      \omega & 0 \\
      0 & \omega^2 \\
   \end{bmatrix}, \quad 
   b\mapsto \begin{bmatrix} 
      0 & 1 \\
      1 & 0 \\
   \end{bmatrix}, \quad 
   c\mapsto \begin{bmatrix} 
      \omega & 0 \\
      0 & \omega \\
   \end{bmatrix}, 
\]
\[\psi_3: a\mapsto 
\begin{bmatrix} 
      \omega & 0 \\
      0 & \omega^2 \\
   \end{bmatrix}, \quad 
   b\mapsto \begin{bmatrix} 
      0 & 1 \\
      1 & 0 \\
   \end{bmatrix}, \quad 
   c\mapsto \begin{bmatrix} 
      \omega^2 & 0 \\
      0 & \omega^2 \\
   \end{bmatrix}. 
\]
The other irreducible representations of $G$ are $1$-dimensional, and can be labelled by 
the group $\widehat G=\{\pm 1\}\times \{1,\omega,\omega^2\}$, where  $\chi=(\chi_1,\chi_2)\in \widehat G$ is identified with the representation 
\[\chi:a\mapsto 1,\ b\mapsto \chi_1,\ c\mapsto \chi_2\] 
(note that $\langle a\rangle$ is the commutator subgroup of $G$, so $a$ is in the kernel of any $1$-dimensional representation of $G$). 
For $j=1,2,3$ denote by $W_j$ the vector space $\field^2$ endowed with the  representation $\psi_j$, and for $\chi\in \widehat G$ denote by $U_\chi$ the vector space $\field$ 
endowed with the  representation $\chi$, and set 
$U:=\bigoplus_{\chi\in \widehat G}U_\chi$. 

For $\xi,\eta,\zeta\in \{x,y,z\}$ set 
\[q_{\xi\eta}:=\frac 12 (\xi_1\eta_2+\xi_2\eta_1) \quad \text{ and } \quad 
p_{\xi\eta\nu}:=\xi_1\eta_1\zeta_1+\xi_2\eta_2\zeta_2.\] 
For example, $q_{xx}=x_1x_2$, $q_{yz}=\frac 12(y_1z_2+y_2z_1)$, 
$p_{xxx}=x_1^3+x_2^3$, $p_{xxy}=x_1^2y_1+x_2^2y_2$. 
It is well known (see for example \cite[Theorem 4.1]{hunziker}) that the elements $q_{\xi\eta}$, $p_{\xi\eta\zeta}$ 
minimally generate $\field[W_1\oplus W_2\oplus W_3]^{\mathrm{S}_3}$, where $\mathrm{S}_3$ is identified 
with the subgroup $\langle a,b\rangle$ of $G$. 
Set 
\begin{align*}A_{(1,1)}&:=\{q_{xx},q_{yz},p_{xxx},p_{yyy},p_{zzz},p_{xyz}\} \\
A_{(1,\omega)}&:=\{q_{xy},q_{zz},p_{xxy},p_{xzz},p_{yyz}\} \\
A_{(1,\omega^2)}&:=\{q_{xz},q_{yy},p_{xxz},p_{xyy},p_{yzz}\}.
\end{align*} 
For $\chi\in\{(1,1),(1,\omega),(1,\omega^2)\}\subset \widehat G$ the elements of $A_\chi$ are relative $G$-invariants of weight $\chi$. 

\begin{proposition}\label{prop:S3xC3,V1+V2+V3mingen} 
A homogeneous generating system of the algebra $\field[W_1\oplus W_2\oplus W_3]^G$ is 
\begin{align}\label{eq:S3xC3,V1+V2+V3mingen} 
B:=A_{(1,1)}\cup \{f_1f_2\mid f_1\in A_{(1,\omega)},f_2\in A_{(1,\omega^2)}\}
\cup\{q_{xy}^3,q_{xy}^2q_{zz},q_{xy}q_{zz}^2,q_{zz}^3\} \\ \notag 
\cup\{q_{xz}^3,q_{xz}^2q_{yy},q_{xz}q_{yy}^2,q_{yy}^3\} 
\cup \{q_{zz}^2p_{xzz},\ q_{yy}^2p_{xyy}\}.
\end{align}
\end{proposition}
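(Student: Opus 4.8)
The plan is to pass through the subgroup $\mathrm{S}_3=\langle a,b\rangle$ and compute $\field[V]^G=(\field[V]^{\mathrm{S}_3})^{\langle c\rangle}$, where $V:=W_1\oplus W_2\oplus W_3$. By \cite[Theorem 4.1]{hunziker} the algebra $\field[V]^{\mathrm{S}_3}$ is minimally generated by the $q_{\xi\eta}$ and $p_{\xi\eta\zeta}$, and these $16$ generators are precisely the elements of $A_{(1,1)}\cup A_{(1,\omega)}\cup A_{(1,\omega^2)}$. Each of them is a $\langle c\rangle$-eigenvector, so $\field[V]^{\mathrm{S}_3}$ is graded by the subgroup $\{(1,1),(1,\omega),(1,\omega^2)\}\cong\mathrm{C}_3$ of $\widehat G$, its three homogeneous components being the spaces of relative $G$-invariants of those three weights, and $\field[V]^G$ is the weight-$(1,1)$ component.

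Since $\field[V]^{\mathrm{S}_3}$ carries this $\mathrm{C}_3$-grading and is generated by the listed homogeneous elements, its weight-$(1,1)$ part $\field[V]^G$ is spanned by the $\langle c\rangle$-invariant monomials in these generators, hence generated as an algebra by the indecomposable such monomials. As in the proof of Lemma~\ref{lemma:V+U} (using $\mathsf{D}(\mathrm{C}_3)=3$ and the classification of irreducible product-one sequences over $\mathrm{C}_3$), these indecomposable monomials are: the elements of $A_{(1,1)}$; the products $f_1f_2$ with $f_1\in A_{(1,\omega)}$, $f_2\in A_{(1,\omega^2)}$; and the degree-$3$ monomials in $A_{(1,\omega)}$, respectively in $A_{(1,\omega^2)}$. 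The first two families lie in $B$, so it remains to show that every degree-$3$ monomial in $A_{(1,\omega)}$, and likewise in $A_{(1,\omega^2)}$, belongs to $\field[B]$.

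This is the core of the argument, and where the bulk of the work lies. It uses a short list of syzygies of $\field[V]^{\mathrm{S}_3}$, each verified by expanding both sides in the coordinates $x_1,\dots,z_2$ (or read off from a Gr\"obner basis of the relation ideal of the $q_{\xi\eta},p_{\xi\eta\zeta}$); representative ones are
\[p_{xxy}q_{xy}=\tfrac12 p_{xxx}q_{yy}+\tfrac12 q_{xx}p_{xyy},\qquad
p_{xxy}^2=p_{xxx}p_{xyy}-4q_{xx}q_{xy}^2+4q_{xx}^2q_{yy},\]
\[p_{xxy}p_{xyy}=p_{xxx}p_{yyy}-8q_{xy}^3+8q_{xx}q_{yy}q_{xy},\]
together with their analogues under the appropriate relabellings of $x,y,z$. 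Organising the degree-$3$ monomials in $A_{(1,\omega)}$ according to the number of factors among $p_{xxy},p_{xzz},p_{yyz}$ they contain, one uses the quadratic syzygies of the second and third types to reduce a monomial with two or three such factors, modulo $\field$-linear combinations of products of elements of $A_{(1,1)}\cup\{f_1f_2\}$, to monomials with at most one such factor together with the retained cubes $q_{xy}^aq_{zz}^b$; and one uses the syzygies of the first type (and the companion ones for $p\cdot q_{zz}$) to reduce a monomial with exactly one factor from $\{p_{xxy},p_{xzz},p_{yyz}\}$ to products of elements of $A_{(1,1)}$ with elements of $A_{(1,\omega^2)}$, to the retained cubes, and to $q_{zz}^2p_{xzz}$ — this last monomial being the only one that does not reduce further, and hence kept in $B$. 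The main obstacle is checking that this reduction terminates inside $\field[B]$ for all of the (roughly thirty) monomials involved, and in particular that no one-$p$-factor monomial other than $q_{zz}^2p_{xzz}$ needs to be retained.

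Finally, the automorphism $\alpha\colon a\mapsto a,\ b\mapsto b,\ c\mapsto c^{-1}$ of $G$ fixes $\psi_1$ and interchanges $\psi_2$ with $\psi_3$; accordingly the coordinate swap $y_i\leftrightarrow z_i$ is a graded $\field$-algebra automorphism of $\field[V]$ which, by Lemma~\ref{lemma:auto}, maps $\field[V]^G$ onto itself, carries $A_{(1,1)}$ to itself, $A_{(1,\omega)}$ to $A_{(1,\omega^2)}$, and the set $B$ bijectively onto $B$. Hence the reduction of the degree-$3$ monomials in $A_{(1,\omega^2)}$ follows from that of the ones in $A_{(1,\omega)}$, and we conclude $\field[V]^G=\field[B]$, which together with the evident inclusion $\field[B]\subseteq\field[V]^G$ completes the proof.
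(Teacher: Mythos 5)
Your first step coincides exactly with the paper's: the $\langle c\rangle$-grading of Hunziker's generators for $\field[V]^{\mathrm{S}_3}$ together with $\mathsf{D}(\mathrm{C}_3)=3$ yields the provisional generating set consisting of $A_{(1,1)}$, the products $f_1f_2$, and the triple products from $A_{(1,\omega)}$ (resp.\ $A_{(1,\omega^2)}$); and your use of the automorphism $c\mapsto c^{-1}$ to transfer the reduction from $A_{(1,\omega)}$ to $A_{(1,\omega^2)}$ is also exactly what the paper does. The two representative syzygies you display are correct.

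The gap is in the step you yourself flag as ``the main obstacle'': the reduction of the triple products of degree $7$, $8$ and $9$ is asserted, not carried out. Concretely, your scheme needs (a) all five relations expressing $qp$ for $q\in\{q_{xy},q_{zz}\}$, $p\in\{p_{xxy},p_{xzz},p_{yyz}\}$, $(q,p)\neq(q_{zz},p_{xzz})$, in the ideal generated by degree~$\le 3$ invariants (you exhibit only one), and (b) quadratic relations for all six products $pp'$ with $p,p'\in\{p_{xxy},p_{xzz},p_{yyz}\}$ in order to handle the ten degree-$9$ monomials $pp'p''$ (you exhibit only $p_{xxy}^2$; note that your second displayed relation concerns $p_{xxy}p_{xyy}$, whose factors lie in \emph{different} weight sets, so it does not reduce any element of $A\setminus B$). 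Moreover, because reducing $pp'p''$ produces terms such as $q_{xx}\cdot(q_{xy}^2p'')$ whose second factor is again a degree-$7$ triple product, one must also check that the recursion closes up inside $\field[B]$; none of this is verified. The paper avoids the entire degree-$9$ layer by invoking $\beta^\field(G)=8$ from \cite[Proposition 3.3]{cziszter-domokos-szollosi}, after which every remaining element of $A\setminus B$ of degree $7$ or $8$ contains a factor $qp\neq q_{zz}p_{xzz}$ or equals $q_{zz}p_{xzz}^2$, so only the five $qp$ relations plus the single relation $p_{xzz}^2=4q_{xx}q_{zz}^2-4q_{xz}^2q_{zz}+p_{zzz}p_{xxz}$ are needed. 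Either supply the missing syzygies and the termination argument, or insert the appeal to the known Noether number to collapse the degree-$9$ case; as written, the redundancy of the degree-$8$ and degree-$9$ triple products is unproven.
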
  

\begin{proof}  
If $\chi^{(1)},\dots,\chi^{(k)}$ is an irreducible product-one sequence over the subgroup 
$\langle (1,\omega)\rangle$ of $\widehat G$ and 
$f_j\in A_{\chi^{(j)}}$ for $j=1,\dots k$, then 
$f_1\dots f_k$ is a $G$-invariant, and the invariants of this form 
constitute a homogeneous generating system of $\field[W_1\oplus W_2\oplus W_3]^G$ 
(see the discussion in Section~\ref{sec:Davenport} about the relevance of the 
Davenport constant for our study). 
It follows that $\field[W_1\oplus W_2\oplus W_3]^G$ is generated by 
\begin{align}\label{eq:S3xC3,V1+V2+V3gens} 
A:=A_{(1,1)}\cup\{f_1f_2\mid f_1\in A_{(1,\omega)}, f_2\in A_{(1,\omega^2)}\}
\\ \notag \cup 
\{f_1f_2f_3\mid f_1,f_2,f_3\in A_{(1,\omega)} \text{ or } f_1,f_2,f_3\in A_{(1,\omega^2)}\}.
\end{align} 
By \cite[Proposition 3.3]{cziszter-domokos-szollosi} we have the inequality 
$\beta^\field(G)=8$, so we can omit the degree $9$ elements from the generating system 
$A$. Therefore 
in order to prove that $B$ generates $\field[W_1\oplus W_2\oplus W_3]^G$, 
it is sufficient to show 
 that all the elements of 
$A\setminus B$ of degree $7$ or $8$ 
are contained in the ideal of $\field[W_1\oplus W_2\oplus W_3]$ generated by the invariants of degree at most 
$6$. All elements of degree $7$ or $8$ in $A\setminus B$ are products of the form $f_1f_2f_3$ 
where $\deg(f_1)=2$, $\deg(f_2)=3$, and both of $f_1$ and $f_2$ belong either to $A_{(1,\omega)}$ or to $A_{(1,\omega^2)}$. 
The equalities 
\begin{align*}
2q_{xy}p_{xxy}&=p_{xxx}q_{yy}+q_{xx}p_{xyy} \\
q_{xy}p_{xzz}&=q_{xx}p_{yzz}+q_{yz}p_{xxz}-p_{xyz}q_{xz} \\
q_{xy}p_{yyz}&=p_{yyy}q_{xz}-q_{yz}p_{xyy}+p_{xyz}q_{yy} \\
q_{zz}p_{xxy}&=2p_{xyz}q_{xz}-q_{xx}p_{yzz} \\
q_{zz}p_{yyz}&=2q_{yz}p_{yzz}-p_{zzz}q_{yy}
\end{align*}
show that with the only exception of 
$q_{zz}p_{xzz}$, all 
products $f_1f_2$ with $f_1,f_2\in A_{(1,\omega)}$, $\deg(f_1)=2$, $\deg(f_2)=3$ are contained in the ideal generated by the $G$-invariants of degree at most $3$. 
Furthermore, we have 
\[p_{xzz}^2=4q_{xx}q_{zz}^2-4q_{xz}^2q_{zz}+p_{zzz}p_{xxz},\]
showing that $q_{zz}p_{xzz}p_{xzz}$ is contained in the ideal generated by the 
$G$-invariants of degree at most $4$. 
Thus the elements of $A\setminus B$ of degree $7$ or $8$ that are the product of three factors from $A_{(1,\omega)}$ are contained in the subalgebra of $G$-invariants of degree at most $6$.  
Similar considerations work for products of the form $f_1f_2f_3$ with $f_i\in A_{(1,\omega^2)}$, one just needs to interchange the roles of $y$ and $z$. 
This shows that $B$ is indeed a homogeneous generating system of 
$\field[W_1\oplus W_2\oplus W_3]^G$. 
\end{proof} 

\begin{proposition}\label{prop:S3xC3,V1+V2+V3} 
We have the inequality $\sepbeta(G,W_1\oplus W_2\oplus W_3)\le 6$. 
\end{proposition}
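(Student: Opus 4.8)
The plan is to exploit the explicit minimal generating system $B$ from Proposition~\ref{prop:S3xC3,V1+V2+V3mingen}. Since $B$ generates $\field[W_1\oplus W_2\oplus W_3]^G$ as an algebra, it is in particular a separating set, and its only elements of degree exceeding $6$ are $q_{zz}^2p_{xzz}$ and $q_{yy}^2p_{xyy}$, both of degree $7$. So, given $v=(w_1,w_2,w_3)$ and $v'=(w_1',w_2',w_3')$ with $f(v)=f(v')$ for every $G$-invariant $f$ of degree at most $6$, it will be enough to prove $G\cdot v=G\cdot v'$, which I will do by showing in each case either that $v$ and $v'$ agree on $q_{zz}^2p_{xzz}$ and $q_{yy}^2p_{xyy}$ as well (hence on all of $B$), or, in the residual degenerate cases, directly.

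The key observation is the family of polynomial identities
\[q_{zz}^2p_{xzz}\cdot q_{zz}h \;=\; q_{zz}^3\cdot p_{xzz}h\qquad(h\in A_{(1,\omega^2)}),\]
together with the mirror identities $q_{yy}^2p_{xyy}\cdot q_{yy}h'=q_{yy}^3\cdot p_{xyy}h'$ ($h'\in A_{(1,\omega)}$) obtained by interchanging $y$ and $z$. On the left, apart from the factor $q_{zz}^2p_{xzz}$, and on the right, every factor lies in $B$ and has degree at most $6$ (namely $q_{zz}^3$ and the products $q_{zz}h$, $p_{xzz}h$ with $h\in A_{(1,\omega^2)}$, cf.\ the set $\{f_1f_2\}$ in \eqref{eq:S3xC3,V1+V2+V3mingen}). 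Hence $(q_{zz}^2p_{xzz})(v)=(q_{zz}^2p_{xzz})(v')$ as soon as $(q_{zz}h)(v)\neq 0$ for some $h\in A_{(1,\omega^2)}$. A short computation shows that if $(q_{zz}h)(v)=0$ for all $h\in A_{(1,\omega^2)}$ then either $z_1(v)z_2(v)=0$---in which case $(q_{zz}^2p_{xzz})(v)=0$, and also $(q_{zz}^2p_{xzz})(v')=0$ because $q_{zz}^3(v)=q_{zz}^3(v')$ forces $z_1(v')z_2(v')=0$---or $w_2=0$ (use $q_{yy}(v)=q_{yy}(v)h$ with $h$ chosen so that $q_{yy}$ and $p_{yzz}$ both vanish at $v$). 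Therefore $v$ and $v'$ agree on $q_{zz}^2p_{xzz}$ whenever $w_2\neq 0$, and, by the symmetry interchanging $W_2$ and $W_3$ (realized by the automorphism $c\mapsto c^{-1}$ of $G$, see Lemma~\ref{lemma:auto}), they agree on $q_{yy}^2p_{xyy}$ whenever $w_3\neq 0$. Consequently, if $w_2\neq 0$ and $w_3\neq 0$, then $v$ and $v'$ agree on every element of $B$, so $G\cdot v=G\cdot v'$.

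It remains to treat the case $w_2=0$ (the case $w_3=0$ being symmetric). From $p_{yyy}(v)=0=q_{yy}^3(v)$ one gets $y_1(v')^3+y_2(v')^3=0$ and $y_1(v')^3y_2(v')^3=0$, hence $w_2'=0$, so both points lie in the $G$-submodule $W_1\oplus W_3$, and the restriction of $B$ to $W_1\oplus W_3$ is a separating set there. If $w_3=0$ too, then $v,v'\in W_1$ and we are done since $\beta(G,W_1)=3$; if $w_1=0$, then $v,v'\in W_3$ and $\field[W_3]^G=\field[p_{zzz},q_{zz}^3]$ is generated in degree at most $6$, so again $G\cdot v=G\cdot v'$. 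So assume $w_1\neq 0\neq w_3$. Running the identities above once more (now with $y=0$, so that only $h=q_{xz},p_{xxz}$ can give $(q_{zz}h)(v)\neq 0$), one sees that $(q_{zz}^2p_{xzz})(v)=(q_{zz}^2p_{xzz})(v')$ unless $z_1(v)z_2(v)\neq 0$ and $q_{xz}(v)=p_{xxz}(v)=0$; but in that residual configuration one checks that $p_{zzz}(v)=0$ and $x_1(v)x_2(v)\neq 0$ as well, and then the conditions ``$q_{xx},p_{xxx},p_{zzz},q_{zz}^3$ take the prescribed values and $q_{xz}(v')=0$'' force $(w_1',w_3')$ into a finite set of solutions which one verifies is precisely the $G$-orbit of $v$.

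The step I expect to be the main obstacle is this last one: on the small locus of $W_1\oplus W_3$ where $q_{xz}$ and $p_{xxz}$ (equivalently $q_{xz}$ and $p_{zzz}$) vanish---which is exactly where the degree-$7$ generator $q_{zz}^2p_{xzz}$ genuinely fails to be determined by the degree-$\le 6$ invariants, so that $\beta(G,W_1\oplus W_3)=7>6$---there is no general principle available, and one has to unravel the system $x_1'z_2'+x_2'z_1'=0$, $z_1'^3+z_2'^3=0$, $x_1'x_2'=x_1x_2$, $x_1'^3+x_2'^3=x_1^3+x_2^3$, $(z_1'z_2')^3=(z_1z_2)^3$ explicitly, count its solutions against $|G|/|\mathrm{Stab}_G(v)|$, and confirm that every solution is $G$-equivalent to $v$.
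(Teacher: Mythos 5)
Your proposal is correct in its overall strategy and it is essentially the same reduction the paper uses: by Proposition~\ref{prop:S3xC3,V1+V2+V3mingen} only the two degree-$7$ generators $q_{zz}^2p_{xzz}$ and $q_{yy}^2p_{xyy}$ need attention, and their values are recovered by dividing suitable degree-$\le 6$ invariants (your identity $q_{zz}^2p_{xzz}\cdot q_{zz}h=q_{zz}^3\cdot p_{xzz}h$ plays exactly the role that $q_{zz}q_{xz}$ and $p_{xzz}q_{xz}$ play in the paper after the normalization $w_3=w_3'$). Where you diverge is on the degenerate locus. The paper normalizes $w_3=w_3'$, notes that $q_{xz}(v)=0$ together with $w_1\neq 0$ and $z_1z_2\neq 0$ forces $w_1,w_1'\in\field^\times[-z_1,z_2]^T$, hence $w_1$ is an eigenvector of some $\psi_1(g)$, and checks $p_{xzz}=0$ on both points in the three resulting configurations; no solution counting is needed. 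You instead funnel the degenerate case into $w_2=0$, $q_{xz}(v)=p_{xxz}(v)=0$, $x_1x_2z_1z_2\neq 0$, and propose to solve the system explicitly -- the one step you leave unexecuted. It does go through, and more cheaply than you anticipate: the vanishing of $q_{xz}$ and $p_{xxz}$ forces $x_2=\nu x_1$, $z_2=-\nu z_1$ with $\nu^3=1$, whence $q_{zz}(v)=-\nu z_1^2$ and $p_{xzz}(v)=2x_1z_1^2$; the same holds for $v'$ with $x_1'=\mu x_1$ ($\mu^3=1$, from $p_{xxx}$), $\nu'=\nu\mu$ (from $q_{xx}$) and $z_1'^6=z_1^6$ (from $q_{zz}^3$), so $(q_{zz}^2p_{xzz})(v')=2\nu'^2x_1'z_1'^6=2\nu^2\mu^3x_1z_1^6=(q_{zz}^2p_{xzz})(v)$. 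Thus the degree-$7$ generator agrees automatically on all solutions and you never need to match the $18$ solutions against the orbit of $v$. Two small points you should make explicit: in the sub-case $w_3=0$ you need $w_3'=0$, which follows from $p_{zzz}(v')=0=q_{zz}^3(v')$ exactly as your argument for $w_2'=0$; and the restriction of $B$ to $W_1\oplus W_3$ is separating there because restriction of invariants to a $G$-module direct summand is surjective.
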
 

\begin{proof} 
Let $v=(w_1,w_2,w_3)$, $v'=(w'_1,w'_2,w'_3)\in W_1\oplus W_2\oplus W_3$, 
and suppose that 
$f(v)=f(v')$ for all $f\in \field[W_1\oplus W_2\oplus W_3]^G$ with $\deg(f)\le 6$.   
By Proposition~\ref{prop:S3xC3,V1+V2+V3mingen} it is sufficient to show that 
\begin{align}\label{eq:seven}(q_{zz}^2p_{xzz})(v)&=(q_{zz}^2p_{xzz})(v') \\ 
\label{eq:2seven} 
(q_{yy}^2p_{xyy})(v)&=(q_{yy}^2p_{xyy})(v').\end{align} 
We show \eqref{eq:seven}, the argument for \eqref{eq:2seven} is obtained by interchanging the roles of $y$ and $z$. 
By Proposition~\ref{prop:S3xC3,V1+V2+V3mingen} we know that $\field[W_3]^G$ is generated in degree $\le 6$. So by assumption $w_3$ and $w'_3$ belong to the same $G$-orbit. Replacing $w$ by an appropriate element in its orbit we may assume that $w_3=w'_3$. 
Proposition~\ref{prop:S3xC3,V1+V2+V3mingen} also shows that $\field[W_1]^G$ is generated in degree at most $3$, 
so $G\cdot w_1=G\cdot w'_1$. If $w_1=0$, then $w'_1=0$, and both sides of 
\eqref{eq:seven} are zero. From now on we assume that $w_1\neq 0$, 
and so equivalently, $w'_1\neq 0$. 
If $z_1(w_3)=0$ or $z_2(w_3)=0$, then $q_{zz}(v)=0=q_{zz}(v')$, hence 
$(q_{zz}^2p_{xzz})(v)=0=(q_{zz}^2p_{xzz})(v')$. In particular, 
\eqref{eq:seven} holds. 
From now on we assume in addition that 
\begin{equation}\label{eq:nonzero}
z_1(w_3)\neq 0 \text{ and }z_2(w_3)\neq 0.\end{equation}  
By assumption we have 
$(q_{zz}q_{xz})(v)=(q_{zz}q_{xz})(v')$,  
hence 
\[q_{xz}(v)=q_{xz}(v').\]
Again by assumption we have 
$(p_{xzz}q_{xz})(v)=(p_{xzz}q_{xz})(v')$.  
Therefore if $q_{xz}(v)\neq 0$, 
then 
we have 
\[p_{xzz}(v)=p_{xzz}(v'),\]
implying in turn the desired equality \eqref{eq:seven}. 
It remains to deal with the case 
\[q_{xz}(v)=0=q_{xz}(v').\]  
Now $q_{xz}(v)=0$ implies 
that 
\begin{equation}\label{eq:S3xC3,-z1,z2}w_1,w'_1\in \field^\times [-z_1(w_3),z_2(w_3)]^T. 
\end{equation}
Consequently, $w'_1$ is a non-zero scalar multiple of $w_1$. 
On the other hand, $w'_1\in G\cdot w_1$, thus $w_1$ 
is an eigenvector of some matrix in $\{\psi_1(g)\mid g\in G\}=\psi_1(G)$. If the corresponding 
eigenvalue is $1$, then $(w_1,w_3)=(w'_1,w'_3)$ and \eqref{eq:seven} holds.
If the eigenvalue is not $1$, then $w_1$ is the $-1$-eigenvector of some 
non-diagonal element of $\psi_1(G)$ (since the eigenvectors 
with eigenvalue different from $1$ of the diagonal elements 
in $\psi_1(G)$ are scalar multiples of $[1,0]^T$ or $[0,1]^T$, 
and $w_1$ is not of this form by \eqref{eq:S3xC3,-z1,z2} and 
\eqref{eq:nonzero}). 
After rescaling $W_1$ and $W_3$ 
(see Section~\ref{subsec:convention}, \eqref{eq:rescaling}), 
we have that 
\begin{align*} 
(w_1,w_3)=([1,-1]^T,[1,1]^T) \text{ and }
(w'_1,w'_3)=([-1,1]^T,[1,1]^T), \text{ or } 
\\ (w_1,w_3)=([\omega,-1]^T,[\omega,1]^T) \text{ and }
(w'_1,w'_3)=([-\omega,1]^T,[\omega,1]^T), \text{ or } 
\\ (w_1,w_3)=([\omega^2,-1]^T,[\omega^2,1]^T) \text{ and }
(w'_1,w'_3)=([-\omega^2,1]^T,[\omega^2,1]^T)
\end{align*}
Now one can directly check that 
$p_{xzz}(v)=0=p_{xzz}(v')$ in each of these cases 
(in fact, $(w'_1,w'_3)=g\cdot (w_1,w_3)$ for some $g\in \{b,ab,a^2b\}$ in these cases), 
and the desired 
\eqref{eq:seven} holds. 
\end{proof}

For $\xi,\eta,\zeta\in \{x,y,z\}$ set 
\[q^-_{\xi\eta}:=\frac 12 (\xi_1\eta_2-\xi_2\eta_1) \quad \text{ and } \quad 
p^-_{\xi\eta\nu}:=\xi_1\eta_1\zeta_1-\xi_2\eta_2\zeta_2,\] 
and define 
\begin{align*}A_{(-1,1)}&:=\{q^-_{yz},p^-_{xxx},p^-_{yyy},p^-_{zzz},p^-_{xyz}\} \\
A_{(-1,\omega)}&:=\{q^-_{xy},p^-_{xxy},p^-_{xzz},p^-_{yyz}\} \\
A_{(-1,\omega^2)}&:=\{q^-_{xz},p^-_{xxz},p^-_{xyy},p^-_{yzz}\}.
\end{align*} 
The elements of $A_\chi$ are relative invariants of weight $\chi$. 
(In fact  every relative $G$-invariant in $\field[W_1\oplus W_2\oplus W_3]$ is a linear combination of products of elements from $\bigcup_{\chi\in \widehat G}A_\chi$, 
but we do not need this.) 

\begin{proposition}\label{prop:S3xC3,U}
We have the equality $\beta(G,U)=5$. 
\end{proposition}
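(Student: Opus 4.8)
The plan is to pass to the abelianization of $G$. Since $U$ is a direct sum of one-dimensional $\field G$-modules, the commutator subgroup $G'=\langle a\rangle$ acts trivially on $U$, so $\field[U]^G=\field[U]^{G/G'}$, where $G/G'$ is canonically identified with the character group $\widehat G=\{\pm1\}\times\{1,\omega,\omega^2\}$, a group of order $6$. Applying the torus grading of Section~\ref{subsec:convention} in the degenerate case with empty "$W$-part", $\field[U]^G$ is spanned by the invariant monomials $\prod_{\chi\in\widehat G}t_\chi^{a_\chi}$, i.e. by those monomials whose weight sequence (the sequence over $\widehat G$ consisting of $a_\chi$ copies of $\chi$ for each $\chi$) is a product-one sequence over $\widehat G$; this is the special case $W=0$ of Lemma~\ref{lemma:V+U}. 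By multihomogeneity such a monomial is indecomposable precisely when its weight sequence is an \emph{irreducible} product-one sequence over $\widehat G$. Hence $\beta(G,U)$ equals the maximal length of an irreducible product-one sequence over $\widehat G$, i.e. the Davenport constant $\mathsf D(\widehat G)$.

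What is left is a short, elementary computation: enumerate the irreducible product-one sequences over $\widehat G$ — this list is also precisely what we shall feed into Lemma~\ref{lemma:V+U} later when describing $\field[W_1\oplus W_2\oplus W_3\oplus U]^G$. I would organise the enumeration by the orders of the elements that occur (the elements of $\widehat G$ have orders $1,2,3,6$), using the identification $\widehat G\cong\mathrm C_6$ to recognise zero-sum sequences and the $\mathrm{Aut}(G)$-symmetry of Lemma~\ref{lemma:auto}(ii) (equivalently, automorphisms of $\widehat G$) to cut down the number of cases. The upper bound $\beta(G,U)\le\mathsf D(\widehat G)$ is then automatic, the enumeration yields a concrete homogeneous generating system of $\field[U]^G$, and the matching lower bound is witnessed by a single explicit indecomposable invariant monomial — for instance $t_{(-1,\omega)}^4\,t_{(1,\omega^2)}$, whose weight sequence $((-1,\omega),(-1,\omega),(-1,\omega),(-1,\omega),(1,\omega^2))$ is product-one and has no proper product-one subsequence.

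There is no real obstacle here beyond the bookkeeping: once the reduction to $\widehat G$ is in place, the statement is a routine Davenport-constant-style calculation for a group of order $6$. The one point that requires a little care is that $\widehat G=\mathrm C_2\times\mathrm C_3$ should be handled via its (non-canonical) identification with a cyclic group of order $6$, so that the enumeration of minimal zero-sum sequences — hence the resulting generating system of $\field[U]^G$ and the value of $\beta(G,U)$ — is carried out over the correct group.
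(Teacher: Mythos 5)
Your reduction is essentially the same as the paper's: both arguments observe that the commutator subgroup $\langle a\rangle$ acts trivially on $U$, so that $\field[U]^G$ is the invariant ring of the abelian group $G/G'\cong\widehat G$ acting by its regular representation, and hence $\beta(G,U)$ equals the maximal length of an irreducible product-one sequence, i.e.\ $\mathsf{D}(\widehat G)$. Up to that point everything is fine, and you correctly identify $\widehat G=\{\pm1\}\times\{1,\omega,\omega^2\}\cong\mathrm{C}_2\times\mathrm{C}_3\cong\mathrm{C}_6$.

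The gap is in the step you dismiss as bookkeeping: you never actually compute $\mathsf{D}(\widehat G)$, and the length-$5$ irreducible product-one sequence you exhibit (four copies of $(-1,\omega)$ together with $(1,\omega^2)$) is not of maximal length, so it is not a ``matching'' lower bound. Since $\widehat G$ is cyclic of order $6$, we have $\mathsf{D}(\widehat G)=\mathsf{D}(\mathrm{C}_6)=6$: six copies of the generator $(-1,\omega)$ form a product-one sequence with no proper product-one subsequence, so $t_{(-1,\omega)}^6$ is an indecomposable invariant of degree $6$ in $\field[U]^G$. Consequently the enumeration you defer cannot terminate at $5$; your (correct) framework yields $\beta(G,U)=6$, which contradicts the equality you set out to prove. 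You should flag this rather than assert that a routine calculation will confirm the value $5$. (Note that the paper's own proof of this proposition rests on the claim $G/G'\cong\mathrm{C}_3\times\mathrm{C}_3$, whose Davenport constant is indeed $5$, but this is incompatible with $\widehat G$ containing the order-$6$ element $(-1,\omega)$; and the proof of Proposition~\ref{prop:S3xC3,V1+U} later uses $\beta(G,U)=\mathsf{D}(\mathrm{C}_6)=6$. So the discrepancy you would run into is real and originates in the statement itself, not in your method.)
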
 

\begin{proof} The action of $G$ on $U$ factors through the regular representation of $\mathrm{C}_3\times \mathrm{C}_3\cong G/G'$, the factor of $G$ modulo its commutator subgroup. 
Therefore 
$\beta(G,U)=\mathsf{D}(\mathrm{C}_3\times \mathrm{C}_3)=5$. 
\end{proof} 

\begin{proposition}\label{prop:S3xC3,V1+U}  
We have the inequality $\beta(W_1\oplus U)\le 6$. 
\end{proposition}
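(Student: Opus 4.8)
The plan is to read off the bound from Lemma~\ref{lemma:V+U}, applied with $W=W_1$ and $U=\bigoplus_{\chi\in\widehat G}U_\chi$. That lemma produces a homogeneous generating system $A\cup B\cup C$ of $\field[W_1\oplus U]^G$, so it suffices to check that each of $A$, $B$, $C$ consists of elements of degree at most $6$.

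First I would describe the invariants and relative invariants of $W_1$. Since $c$ acts as the identity on $W_1$, the $G$-action on $W_1$ factors through $G/\langle c\rangle\cong\mathrm S_3$ via its standard two-dimensional representation, so $\field[W_1]^G=\field[W_1]^{\mathrm S_3}$ is the polynomial ring $\field[x_1x_2,\ x_1^3+x_2^3]$, and one may take $A=\{x_1x_2,\ x_1^3+x_2^3\}$, of degrees $\le 3$. Again because $c$ acts trivially on $W_1$, the space $\field[W_1]^{G,\chi}$ vanishes unless $\chi(c)=1$, i.e. unless $\chi\in\{(1,1),(-1,1)\}$; here $(-1,1)$ is the pullback of the sign character of $\mathrm S_3$, and $\field[W_1]^{G,(-1,1)}$ is free of rank one over $\field[W_1]^G$ on $x_1^3-x_2^3$. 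So in Lemma~\ref{lemma:V+U} one may take $A_{(1,1)}=\{1\}$, $A_{(-1,1)}=\{x_1^3-x_2^3\}$ (degree $3$), and $A_\chi=\varnothing$ for the remaining $\chi$.

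Next, $\widehat G\cong\widehat{\mathrm S_3}\times\widehat{\mathrm C_3}\cong\mathrm C_2\times\mathrm C_3\cong\mathrm C_6$, so $\mathsf D(\widehat G)=6$ and every element of $B$ has degree at most $6$. An element of $C$ has the form $h\,t_{\chi^{(1)}}\cdots t_{\chi^{(k)}}$ with $\chi^{(1)},\dots,\chi^{(k)}$ a product-one free sequence over $\widehat G$ and $h\in A_\chi$, where $\chi^{(1)}\cdots\chi^{(k)}=\chi^{-1}$; since $A_\chi\neq\varnothing$ only for $\chi\in\{(1,1),(-1,1)\}$, and $\chi=(1,1)$ forces $k=0$ (contributing only the constant), the one case to control is $h=x_1^3-x_2^3$, in which $\chi^{(1)}\cdots\chi^{(k)}$ is the unique element of order $2$ in $\widehat G$. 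The crux of the argument — and the only step that is not routine bookkeeping — is the elementary combinatorial fact that a product-one free sequence over $\mathrm C_6$ whose product has order $2$ has length at most $3$: the product-one free sequences over $\mathrm C_6$ of lengths $4$ and $5$ can be listed explicitly and all have product of order $3$ or $6$, while length $3$ is realized by $(g,g,g)$ for a generator $g$. Granting this, $\deg\bigl(h\,t_{\chi^{(1)}}\cdots t_{\chi^{(k)}}\bigr)\le 3+3=6$, so $A\cup B\cup C$ generates $\field[W_1\oplus U]^G$ in degrees $\le 6$ and $\beta(W_1\oplus U)\le 6$. (In fact this shows $\beta(W_1\oplus U)=6$, attained already by $(x_1^3-x_2^3)\,t_\chi^3$ for a generator $\chi$ of $\widehat G$, though only the upper bound is needed.)
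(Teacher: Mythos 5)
Your proof is correct and follows essentially the same route as the paper: apply Lemma~\ref{lemma:V+U} with $A=\{x_1x_2,\,x_1^3+x_2^3\}$, note that the only nonzero relative-invariant space is the one of weight $(-1,1)$ with $A_{(-1,1)}=\{x_1^3-x_2^3\}$, bound $B$ by $\mathsf D(\mathrm C_6)=6$, and bound the sequences occurring in $C$ by $3$. The only cosmetic difference is at that last step: where you enumerate the product-one free sequences of lengths $4$ and $5$ over $\mathrm C_6$ to see their products have order $3$ or $6$, the paper deduces the length bound from $\mathsf D(\mathrm C_6/\mathrm C_2)=\mathsf D(\mathrm C_3)=3$; both verifications are valid.
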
 

\begin{proof} 
We have $\field[W_1]^{G,(-1,1)}=\field p^-_{xxx}\oplus (\mathcal{H}(G,W_1)\cap 
\field[W_1]^{G,(-1,1)})$, and $\field[W_1]^{G,\chi}=\{0\}$ for all $\chi\in \widehat G\setminus \{(\pm 1,1)\}$. Therefore by Lemma~\ref{lemma:V+U}, $\field[W_1\oplus U]^G$ is generated by 
$\field[W_1]^G$, $\field[U]^G$, and 
\begin{align*}
\{p^-_{xxx}t_{\chi^{(1)}}\cdots t_{\chi^{(k)}}\mid 
\chi^{(1)}, \dots, \chi^{(k)} \text{ is a product-one free sequence over }\widehat G, 
\\ \chi^{(1)}\cdots \chi^{(k)}=(-1,1)\}.
\end{align*}  
Since $\mathsf{D}(\mathrm{C}_6/\mathrm{C}_2)=\mathsf{D}(\mathrm{C}_3)=3$, for $k\ge 3$ the sequence 
$\chi^{(1)}, \dots \chi^{(k)}$ necessarily has a subsequence with product 
$(\pm 1,1)$, so the above generators of the form  
$p^-_{xxx}t_{\chi^{(1)}}\cdots t_{\chi^{(k)}}$ have degree at most $3+3$. 
We have $\field[W_1]^G=\field[q_{xx},p_{xxx}]$, and 
$\beta(G,U)=\mathsf{D}(\mathrm{C}_6)=6$. 
Thus each generator of $\field[W_1\oplus U]^G$ has degree at most $6$. 
\end{proof} 

\begin{lemma} \label{lemma:S3xC3stabilizer} 
For a non-zero $w_1\in W_1$ we have  $|\mathrm{Stab}_G(w_1)/\langle c\rangle|\in \{1,2\}$, and 
\begin{align*}|\mathrm{Stab}_G(w_1)/\langle c\rangle|=2\iff p^-_{xxx}(w_1)= 0\iff \mathrm{Stab}_G(w_1)\in 
\{\langle b,c\rangle, \langle ab,c\rangle, \langle a^2b,c\rangle\}.
\end{align*} 
For a non-zero $w_j\in W_j$ ($j\in \{2,3\}$) we have 
$|\mathrm{Stab}_G(w_j)|\in \{1,2,3\}$, 
and 
\begin{itemize} 
\item[(i)] for $j=2$:    
\begin{align*}|\mathrm{Stab}_G(w_2)|=2\iff p^-_{yyy}(w_2)=0\iff \mathrm{Stab}_G(w_2)\in 
\{\langle b\rangle, \langle ab\rangle, \langle a^2b\rangle\} \\  
|\mathrm{Stab}_G(w_2)|=3\iff q_{yy}(w_2)=0\iff 
\mathrm{Stab}_G(w_2)\in 
\{\langle ac \rangle, \langle ac^2\rangle\}.
\end{align*} 
\item[(ii)] for $j=3$:  
\begin{align*}|\mathrm{Stab}_G(w_3)|=2\iff p^-_{zzz}(w_3)= 0\iff \mathrm{Stab}_G(w_3)\in 
\{\langle b\rangle, \langle ab\rangle, \langle a^2b\rangle\} \\ 
 |\mathrm{Stab}_G(w_3)|=3\iff q_{zz}(w_3)= 0 
 \iff 
\mathrm{Stab}_G(w_3)\in 
\{\langle ac \rangle, \langle ac^2\rangle\}.
\end{align*} 
\end{itemize} 
\end{lemma}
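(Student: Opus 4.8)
The plan is to work directly with the explicit matrices $\psi_1,\psi_2,\psi_3$ and enumerate, for each group element $g\in G$, the eigenspace of $\psi_j(g)$ with eigenvalue $1$, since $g\in\mathrm{Stab}_G(w_j)$ precisely when $w_j$ lies in the $1$-eigenspace of $\psi_j(g)$. First I would record the structure of $G=\langle a\rangle\rtimes\langle b\rangle\times\langle c\rangle$: the $18$ elements are $a^ic^k$ and $a^ib c^k$ with $i,k\in\{0,1,2\}$. Under $\psi_1$, the central factor $\langle c\rangle$ acts trivially (as $c\mapsto I$), so $\mathrm{Stab}_G(w_1)\supseteq\langle c\rangle$ always, which already gives $\mathrm{Stab}_G(w_1)/\langle c\rangle\hookrightarrow \mathrm{S}_3$; under $\psi_2$ and $\psi_3$ the element $c$ acts as the nontrivial scalar $\omega I$ resp.\ $\omega^2 I$, so no nonzero vector is fixed by any element involving a nontrivial power of $c$ unless that power of $c$ is cancelled — this forces $\mathrm{Stab}_G(w_j)$ for $j=2,3$ to be a subgroup meeting $\langle c\rangle$ trivially, hence (by order considerations inside $\mathrm{S}_3\times\mathrm{C}_3$) cyclic of order $1$, $2$, or $3$.

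Next I would do the eigenvector computation case by case. The diagonal elements $\psi_j(a^i)=\mathrm{diag}(\omega^i,\omega^{2i})$ fix a nonzero vector iff $i=0$; for $i\neq 0$ they have no eigenvalue-$1$ eigenvector. The elements $\psi_j(a^ib)$ are anti-diagonal: $\psi_1(b)=\left(\begin{smallmatrix}0&1\\1&0\end{smallmatrix}\right)$, and $\psi_1(ab)$, $\psi_1(a^2b)$ are the other two anti-diagonal matrices with entries among $1,\omega,\omega^2$. A short computation shows $\psi_1(a^ib)$ has eigenvalues $\pm1$, with the eigenvalue-$1$ eigenvector being $[1,1]^T$, $[\omega,1]^T$, $[\omega^2,1]^T$ for $i=0,1,2$ respectively (up to scalar); these are exactly the lines on which $p^-_{xxx}=x_1^3-x_2^3$ vanishes — indeed $p^-_{xxx}$ vanishes precisely when $(x_1/x_2)^3=1$, i.e.\ $x_1/x_2\in\{1,\omega,\omega^2\}$. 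This yields the first assertion: $\mathrm{Stab}_G(w_1)\supsetneq\langle c\rangle$ iff $w_1$ lies on one of these three lines iff $p^-_{xxx}(w_1)=0$, in which case the stabilizer is the order-$6$ group $\langle a^ib,c\rangle$ for the appropriate $i$; otherwise $\mathrm{Stab}_G(w_1)=\langle c\rangle$. For $W_2$: $\psi_2(a^ib)$ equals $\psi_1(a^ib)$ (the $c$-component is absent from $b$), so it again fixes the line $[\omega^i,1]^T$, giving $|\mathrm{Stab}_G(w_2)|=2$ with stabilizer $\langle a^ib\rangle$ exactly when $p^-_{yyy}(w_2)=0$; and the order-$3$ elements other than powers of $c$ are $a^ic^k$ with $i,k\neq 0$: $\psi_2(ac)=\mathrm{diag}(\omega\cdot\omega,\omega^2\cdot\omega)=\mathrm{diag}(\omega^2,1)$ fixes $[0,1]^T$, while $\psi_2(ac^2)=\mathrm{diag}(1,\omega^2)$ wait — one checks that among $\{a c,ac^2,a^2c,a^2c^2\}$ exactly $ac$ and $a^2c^2$ (or $ac^2$ and $a^2c$, depending on normalization) have a $1$-eigenvector, and in each case the eigenvector is $[1,0]^T$ or $[0,1]^T$, i.e.\ $q_{yy}=y_1y_2=0$; so $|\mathrm{Stab}_G(w_2)|=3$ with stabilizer in $\{\langle ac\rangle,\langle ac^2\rangle\}$ iff $q_{yy}(w_2)=0$. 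The case of $W_3$ is identical with $\omega$ replaced by $\omega^2$ in the $c$-action, giving the same conclusion with $p^-_{zzz}$, $q_{zz}$.

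The only mildly delicate points — and the main obstacle — are (a) confirming that when $w_j$ ($j=2,3$) lies on a coordinate axis \emph{and} would also be fixed by some involution, no larger stabilizer (e.g.\ order $6$) can occur: this is ruled out because an order-$6$ subgroup of $\mathrm{S}_3\times\mathrm{C}_3$ containing an element of order $3$ outside $\langle c\rangle$ would have to contain all of $\langle c\rangle$ (as $\langle ac\rangle$ and $\langle c\rangle$ together generate $\langle a,c\rangle\cong\mathrm{C}_3\times\mathrm{C}_3$), contradicting that $c$ acts by a nontrivial scalar on $W_2,W_3$; so the order-$2$ and order-$3$ cases are genuinely mutually exclusive, consistent with the stated equivalences. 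And (b) checking the "iff" with the polynomial conditions is just the observation that $p^-_{\xi\xi\xi}(w)=0 \iff \xi_1(w)/\xi_2(w)$ is a cube root of unity (when $\xi_2(w)\neq 0$; the case $\xi_2(w)=0$ gives $p^-\neq 0$ unless also $\xi_1=0$, but $w\neq 0$), and $q_{\xi\xi}(w)=\xi_1(w)\xi_2(w)=0\iff w$ lies on a coordinate axis — and these are exactly the eigenvector lines found above. All of this is routine once the eigenvector enumeration is laid out; I would present it as a compact table of the nonidentity elements having a $1$-eigenvector together with that eigenvector, and read off all the equivalences from it.
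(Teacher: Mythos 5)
Your proposal is correct and follows exactly the route the paper takes: the paper's proof is literally the one-line remark that the statement ``can be verified by straightforward direct computation,'' and your eigenvector enumeration is that computation carried out. One small point to tidy up: on $W_2$ all four of $ac,\ ac^2,\ a^2c,\ a^2c^2$ have a $1$-eigenvector (the pair $\{ac,a^2c^2\}=\langle ac\rangle\setminus\{1\}$ fixes $[0,1]^T$ and $\{ac^2,a^2c\}=\langle ac^2\rangle\setminus\{1\}$ fixes $[1,0]^T$), which is consistent with --- and indeed needed for --- the stabilizers being the full order-$3$ subgroups $\langle ac\rangle$, $\langle ac^2\rangle$ as stated.
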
 

\begin{proof} The statement can be verified by straightforward direct computation. 
\end{proof}

\begin{lemma}\label{lemma:S3xC3,stab-inv} 
Let $W$ stand for $W_2$ or $W_3$.  
Let $w\in W$, $u,u'\in U$ such that 
$f(w,u)=f(w,u')$ holds  for all $f\in \field[W\oplus U]^G$ 
with $\deg(f)\le 6$. 
Then $u$ and $u'$ belong to the same $\mathrm{Stab}_G(w)$-orbit. 
\end{lemma}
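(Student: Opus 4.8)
The plan is to exploit the fact that the $G$-action on $W\oplus U$ behaves very well with respect to the torus rescaling and the character decomposition of $U$, so that the hypothesis $f(w,u)=f(w,u')$ for all invariants of degree $\le 6$ can be fed into suitable relative invariants to recover the coordinates $t_\chi(u)$ up to the action of $\mathrm{Stab}_G(w)$. First I would record that $\mathrm{Stab}_G(w)$ is governed by Lemma~\ref{lemma:S3xC3stabilizer}: for nonzero $w$ it is one of the groups $\{1\}$, $\langle c\rangle$ (or a conjugate $\langle gc\rangle$), $\langle b\rangle$ (or a conjugate), or $\langle ac\rangle$ (or $\langle ac^2\rangle$), and if $w=0$ then $\mathrm{Stab}_G(w)=G$ and the claim is $\beta(G,U)=\mathsf{D}(\mathrm C_3\times\mathrm C_3)=5\le 6$ by Proposition~\ref{prop:S3xC3,U}. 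So the substantive work is a case analysis over the possible stabilizers of a nonzero $w$.

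The key mechanism in each case is: for a weight $\chi\in\widehat G$ we look for a homogeneous relative invariant $f\in\field[W]^{G,\chi^{-1}}$ with $f(w)\neq 0$ and $\deg(f)+(\text{length of a product-one free/irreducible factorization of }\chi\text{ over }\widehat G)\le 6$; then $f\,t_{\chi^{(1)}}\cdots t_{\chi^{(k)}}$ is a $G$-invariant of degree $\le 6$, and comparing its values at $(w,u)$ and $(w,u')$ yields $t_{\chi^{(1)}}(u)\cdots t_{\chi^{(k)}}(u)=t_{\chi^{(1)}}(u')\cdots t_{\chi^{(k)}}(u')$. Running this over enough factorizations of enough characters pins down the monomials $t_{\chi^{(1)}}(u)\cdots t_{\chi^{(k)}}(u)$ for all product-one sequences over $\widehat G$ of length $\le \mathsf{D}(\mathrm C_3\times \mathrm C_3)=5$; by the theory of the Davenport constant this is exactly the data needed to conclude that $u$ and $u'$ lie in the same orbit under the quotient $G/\mathrm{Stab}_G(w)$ acting on $U$ — equivalently, that $u,u'$ lie in the same $\mathrm{Stab}_G(w)$-orbit, since $G$ acts on $U$ through $G/G'=\mathrm C_3\times\mathrm C_3$ and $\mathrm{Stab}_G(w)G'/G'$ is the relevant subgroup. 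Concretely: when $\mathrm{Stab}_G(w)\supseteq\langle c\rangle$ (the cases $|\mathrm{Stab}_G(w)|=1$ or $2$ with $p^-_{www}(w)=0$ do \emph{not} arise here since the relevant stabilizers all contain a conjugate of $c$ or are $\langle b\rangle$-type — I would check which stabilizers actually occur for $W_2,W_3$), I use $A_{(-1,1)},A_{(-1,\omega)},A_{(-1,\omega^2)}$ and the squares/products of the $q$'s and $p$'s in $W$ to produce, for each $\chi$ with nonzero relative-invariant space, a witness of degree $\le 3$ (for weights $(\pm1,1)$ one takes $p^-_{www}$ or $1$) or $\le 4$ (using $q_{ww}$ or its square when $\mathrm{Stab}_G(w)$ forces $q_{ww}(w)\neq 0$), so that multiplying by a length-$\le 3$ torus monomial stays in degree $\le 6$.

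The main obstacle I expect is the bookkeeping in the borderline subcase $|\mathrm{Stab}_G(w)|=2$ with stabilizer $\langle b\rangle$ (or a conjugate $\langle ab\rangle$, $\langle a^2b\rangle$), where $p^-_{www}(w)=0$ but $q_{ww}(w)\neq 0$: here a relative invariant of weight $(-1,\cdot)$ built purely from $W$ need not exist in low degree, so to separate the $t_\chi(u)$ with $\chi_1=-1$ one must instead argue that the \emph{only} degree-$\le 6$ invariant obstruction is exactly the one fixed by $\langle b\rangle$, i.e. show directly that the degree-$\le 6$ monomials in $t$'s that are $\langle b\rangle$-invariant (equivalently $(1,\cdot)$-weight products, since $b$ acts trivially on $U$ here... — one must be careful: $b$ acts trivially on $U$, so $\langle b\rangle\cap G'=1$ and $\mathrm{Stab}_G(w)$ acts trivially on $U$, making the claim "$u$ and $u'$ in the same $\langle b\rangle$-orbit" equivalent to "$u=u'$"). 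In that case one needs genuinely to recover $u=u'$ from degree-$\le 6$ data, which requires combining the $W$-relative invariants $p_{www}$, $q_{ww}$, $q^-$-type invariants and the mixed invariants $p_{w\cdot\cdot}$; I would handle it by picking, for each character $\chi$ occurring in $U$, an explicit product $(\text{relative invariant in }\field[W])\cdot t_\chi$ or $\cdot t_{\chi^{(1)}}t_{\chi^{(2)}}$ of degree $\le 6$ that does not vanish at $w$, mirroring the witness-construction in the proof of Proposition~\ref{prop:S3xC3,V1+U}. The remaining cases ($\mathrm{Stab}_G(w)$ of order $3$, generated by $ac$ or $ac^2$, where $q_{ww}(w)=0$ but $p^-_{www}(w)\neq 0$) are easier: then $\mathrm{Stab}_G(w)\cong\mathrm C_3$ maps isomorphically onto a $\mathrm C_3$-factor of $G/G'$, the $\mathrm{Stab}_G(w)$-invariant monomials in $\field[U]$ of degree $\le\mathsf D(\mathrm C_3\times\mathrm C_3)=5$ can each be promoted to a $G$-invariant of degree $\le 6$ by multiplying by $1$ or by $p^-_{www}$ (degree $3$), and then $\sepbeta^\field(\mathrm{Stab}_G(w))=\mathsf D(\mathrm C_3)=3\le 5$ gives the conclusion directly.
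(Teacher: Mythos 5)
Your overall strategy is the same as the paper's: for each weight $\chi$ occurring in $U$, find a low-degree relative invariant on $W$ that does not vanish at $w$, multiply it by $\mathrm{Stab}_G(w)$-invariant monomials in the $t_\chi$ of degree at most $\mathsf{D}(\mathrm{Stab}_G(w)G'/G')$ to get $G$-invariants of degree $\le 6$, and conclude via the separating bound for the stabilizer. The trivial-stabilizer case and the order-$3$ case ($\mathrm{Stab}_G(w)\in\{\langle ac\rangle,\langle ac^2\rangle\}$, where $p^-_{yyy}(w)\neq 0$ and one multiplies degree-$\le 3$ monomials by $1$ or $p^-_{yyy}$) are essentially the paper's argument, modulo a slip: you speak of promoting $H$-invariant monomials of degree $\le \mathsf{D}(\mathrm{C}_3\times\mathrm{C}_3)=5$ by a degree-$3$ factor, which would give degree $8$; only monomials of degree $\le\mathsf{D}(H)=3$ are needed, and $3+3=6$ is exactly the budget.

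The genuine gap is in the order-$2$ case $\mathrm{Stab}_G(w)\in\{\langle b\rangle,\langle ab\rangle,\langle a^2b\rangle\}$. Your premise that ``$b$ acts trivially on $U$'' is false: $b$ acts on $U_{(\chi_1,\chi_2)}$ by multiplication by $\chi_1\in\{\pm 1\}$, so $\langle b\rangle G'/G'$ is the order-$2$ subgroup of $G/G'\cong\mathrm{C}_6$ and acts nontrivially on half the summands of $U$. Consequently your plan to ``genuinely recover $u=u'$'' in this case is an attempt to prove a false statement: taking $u'=b\cdot u$ with some component of weight $(-1,\cdot)$ nonzero gives $(w,u')=b\cdot(w,u)$, so every invariant agrees on the two points although $u\neq u'$. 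The correct (and weaker) conclusion, which is what the lemma asserts and what the paper proves, is only that $u'\in\langle b\rangle\cdot u$; one gets it by noting that $q_{yy}(w)\neq 0$ in this case (Lemma~\ref{lemma:S3xC3stabilizer}), that every $\langle b\rangle$-invariant monomial $T$ in the $t_\chi$ has weight $(1,\nu)$, so one of $T$, $q_{yy}T$, $q_{yy}^2T$ is a $G$-invariant of degree $\le 6$ when $\deg T\le 2=\mathsf{D}(\mathrm{C}_2)$, and then invoking $\sepbeta^\field(\mathrm{C}_2)=2$. As written, your argument for this subcase cannot be completed. (A smaller inaccuracy: conjugates of $\langle c\rangle$ do not occur as stabilizers of nonzero vectors in $W_2$ or $W_3$; that possibility belongs to $W_1$.)
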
 

\begin{proof} 
Consider first the case $W=W_2$.  
Assume  that $\mathrm{Stab}_G(w)=\{1_G\}$. 
Then $q_{yy}(w)\neq 0$ and 
$p^-_{yyy}(w)\neq 0$ by Lemma~\ref{lemma:S3xC3stabilizer}. If $\chi\in\{(1,\omega),(-1,1),(1,\omega^2),(-1,\omega)\}$, then there is an $f\in \{q_{yy},p^-_{yyy},q_{yy}^2,q_{yy}p^-_{yyy}\}$ such that $ft_\chi$ is a $G$-invariant of degree at most $6$, hence $t_\chi(u)=t_\chi(u')$. If $\chi=(-1,\omega^2)$, then 
$q_{yy}t_\chi^2$ and $p^-_{yyy}t_\chi^3$ are $G$-invariants showing that $t_\chi(u)=t_\chi(u')$. 
Finally, if $\chi=(1,1)$, then $t_\chi$ is a $G$-invariant of degree $1$, so $t_\chi(u)=t_\chi(u')$. Summarizing, we have $u=u'$. 

Assume next that  $|\mathrm{Stab}_G(w)|=2$; this means that 
$H:=\mathrm{Stab}_G(w)$ is $\langle b\rangle$,  $\langle ab\rangle$, or $\langle a^2b\rangle$, and  $q_{yy}(w)\neq 0$. For any $H$-invariant monomial $T:=t_\chi\cdots t_{\chi'}$ with $\deg(T)\le 2$, either $T$, $q_{yy}T$, or $q_{yy}^2T$ is a $G$-invariant of degree at most $6$. It follows that  $T(u)=T(u')$ for any $H$-invariant 
monomial in $\field[U]$ of degree at most $2$. Therefore there exists an $h\in H$ with $h\cdot u=u'$ 
(note that $\sepbeta^\field(H)=\beta^\field(H)=2$). 

Assume next that $|\mathrm{Stab}_G(w)|=3$; this means that 
$H:=\mathrm{Stab}_G(w)$ is $\langle ac\rangle$ or $\langle ac^2\rangle$, and 
$p^-_{yyy}(w)\neq 0$. For any $H$-invariant monomial $T:=t_\chi\cdots t_{\chi'}$ with $\deg(T)\le 3$, either $T$ or $p^-_{yyy}T$ is a $G$-invariant of degree at most $6$. It follows that  $T(u)=T(u')$ for any $H$-invariant 
monomial in $\field[U]$ of degree at most $3$. Therefore there exists an $h\in H$ with $h\cdot u=u'$ 
(note that $\sepbeta^\field(H)=\beta^\field(H)=3$). 

By Lemma~\ref{lemma:S3xC3stabilizer}, the only remaining case is when $w=0$, and then the result follows from Proposition~\ref{prop:S3xC3,U}. 

The case $W=W_3$ follows by Lemma~\ref{lemma:auto} (ii). 
Indeed,  
the group $G$ has the automorphism $\alpha:a\mapsto a$, $b\mapsto b$, $c\mapsto c^{-1}$. We have $\psi_2=\psi_3\circ \alpha$, so $\alpha$ interchanges $W_2$ and $W_3$, and permutes the $1$-dimensional representations.  
\end{proof} 

\begin{lemma}\label{lemma:S3xC3,V2+V1+U} 
Let $v=(w_i,w_j)\in V=W_i\oplus W_j$, where $(i,j)\in \{(1,2),(1,3),(2,3)\}$. 
Suppose that $\mathrm{Stab}_G(w_i)$, $\mathrm{Stab}_G(w_j)$ are both nontrivial, and 
$\mathrm{Stab}_G(v)=\{1_G\}$. Then for any $\chi\in\widehat G$ there exists a 
relative invariant $f\in \field[V]^{G,\chi}$ with $\deg(f)\le 4$ and $f(v)\neq 0$. 
\end{lemma}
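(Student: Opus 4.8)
The plan is to exhibit, for each of the six characters $\chi\in\widehat G$, an explicit relative invariant of degree at most $4$ that is nonzero at $v$, organising the verification as a finite case analysis that we first cut down by a symmetry, and that in its degenerate subcases is forced through by the hypothesis $\mathrm{Stab}_G(v)=\{1_G\}$.

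\emph{Reduction of cases.} The map $\alpha\colon a\mapsto a,\ b\mapsto b,\ c\mapsto c^{-1}$ is an automorphism of $G$ with $\psi_1\circ\alpha=\psi_1$ and $\psi_2\circ\alpha=\psi_3$. Hence twisting by $\alpha$ identifies $(W_1\oplus W_2,\rho\circ\alpha)$ with $W_1\oplus W_3$, preserves the three hypotheses (both component stabilizers nontrivial, $\mathrm{Stab}_G(v)=\{1_G\}$), and by Lemma~\ref{lemma:auto}(i) turns a degree $\le 4$ relative invariant of weight $\chi\circ\alpha$ into one of weight $\chi$ of the same degree; so the case $(i,j)=(1,3)$ follows from $(i,j)=(1,2)$, and it suffices to treat $(i,j)\in\{(1,2),(2,3)\}$, with the further $W_2\leftrightarrow W_3$ symmetry (again from $\alpha$) available inside the case $(2,3)$. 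I also record at once that $w_i\ne 0\ne w_j$, since otherwise one of $\mathrm{Stab}_G(w_i),\mathrm{Stab}_G(w_j)$ equals $G$ and hence meets the other nontrivial one, contradicting $\mathrm{Stab}_G(v)=\{1_G\}$.

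\emph{The candidate invariants and the two driving facts.} A count of $\langle a\rangle$-weights shows that every relative invariant of $\field[W_i\oplus W_j]$ of degree at most $4$ is a linear combination of the quadratics $q_{\xi\eta}$ and $q^-_{\xi\eta}$, the cubics $p_{\xi\eta\zeta}$ and $p^-_{\xi\eta\zeta}$, and products of two quadratics; for a fixed $\chi$ this leaves only a short explicit list, which I would display in a table and, for each subcase (determined by which coordinates of $w_i,w_j$ vanish and by which $2$-dimensional summand carries the larger stabilizer), point to a member that is nonzero at $v$. Two facts handle the degenerate subcases. First, Lemma~\ref{lemma:S3xC3stabilizer} translates the vanishing of $q_{yy},q_{zz},p^-_{xxx},p^-_{yyy},p^-_{zzz}$ at a component into the statement that the stabilizer of that component is "large"; in particular a nontrivial $\mathrm{Stab}_G(w_j)$ forces the associated cubic $p_{yyy}$ (or $p_{zzz}$) to be nonzero at $v$, settling $\chi=(1,1)$ instantly, and forces a zero coordinate whenever that stabilizer has order $3$. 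Second, and crucially, $\psi_i$ and $\psi_j$ restrict to the \emph{same} representation of $\langle a,b\rangle$; hence if $w_i$ and $w_j$ are proportional as vectors of $\field^2$ (equivalently the mixed quadratic $q^-_{\xi\eta}$ vanishes at $v$), then every element of $\langle a,b\rangle$ fixing one of $w_i,w_j$ fixes both, so combined with $\mathrm{Stab}_G(v)=\{1_G\}$ the configuration $w_i\parallel w_j$ is impossible as soon as one summand has stabilizer of order $2$ (that stabilizer lying in $\langle a,b\rangle$). A typical run: for $\chi=(-1,\omega)$ on $W_1\oplus W_2$, if $q^-_{xy}(v)\ne 0$ we are done; otherwise $w_1=\mu w_2$ and $p^-_{xxy}(v)=\mu^2 p^-_{yyy}(w_2)$, which is nonzero if $|\mathrm{Stab}_G(w_2)|=3$ (zero coordinate), while $|\mathrm{Stab}_G(w_2)|=2$ would put its generator, a nonidentity element of $\langle a,b\rangle$, into $\mathrm{Stab}_G(v)$.

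\emph{The main obstacle.} There is no conceptual difficulty beyond the two observations above and the dictionary of Lemma~\ref{lemma:S3xC3stabilizer}; the work is bookkeeping over the twelve (character, module) pairs and their subcases. The delicate weights are those admitting no quadratic of the required weight, namely $(-1,\omega^2)$ on $W_1\oplus W_2$ and $(-1,\omega),(-1,\omega^2)$ on $W_2\oplus W_3$, where only a cubic $p^-$ and a single product of two quadratics survive; there one combines their simultaneous vanishing with the proportionality dichotomy, and for $(-1,\omega^2)$ on $W_1\oplus W_2$ one further subcase is closed off directly by the nontriviality of $\mathrm{Stab}_G(w_2)$ (the putative configuration would force $\mathrm{Stab}_G(w_2)$ to be trivial). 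The chief risk in writing this out is overlooking a subcase, which is why I would lay the proof out as an explicit table of relative invariants indexed by $(\chi, \text{module}, \text{subcase})$.
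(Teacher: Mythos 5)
Your plan is essentially the paper's proof: reduce $W_1\oplus W_3$ to $W_1\oplus W_2$ via the automorphism $c\mapsto c^{-1}$ together with Lemma~\ref{lemma:auto}, use Lemma~\ref{lemma:S3xC3stabilizer} to translate the nontriviality of the component stabilizers into the vanishing of exactly one of $q_{yy},p^-_{yyy}$ (resp.\ $q_{zz},p^-_{zzz}$, resp.\ $p^-_{xxx}$) and hence into an explicit normal form for $w_i,w_j$, rule out proportional components when an order-$2$ stabilizer is present (the paper phrases this as ``otherwise they would have the same stabilizer''; your observation that $\psi_1,\psi_2,\psi_3$ agree on $\langle a,b\rangle$ is a clean way to justify it), and then exhibit, weight by weight, a $q$, $q^-$, $p$, $p^-$ or a product of two quadratics that survives. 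Your one worked example ($\chi=(-1,\omega)$ on $W_1\oplus W_2$ via $q^-_{xy}$ and $p^-_{xxy}=\mu^2p^-_{yyy}$) is correct, and you correctly locate the delicate weights, namely those with no quadratic representative, where only a $p^-$ and a single product $q^-q$ are available.

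The one thing to flag is that, as written, the decisive content is deferred: for eleven of the twelve (character, module) pairs the nonvanishing verification is only announced (``which I would display in a table''), and this verification is exactly where the paper's proof does its work (Cases 1.a--1.d and 2.a--2.b, each of which pins down $w_i,w_j$ up to scalars and names five relative invariants exhausting the five nontrivial weights). Since your two organizing facts do suffice to close every subcase -- for instance, in the worst case $(-1,\omega)$ on $W_2\oplus W_3$ the only candidates are $p^-_{yyz}$ and $q_{zz}q^-_{yz}$, and one checks that the normal forms forced by Lemma~\ref{lemma:S3xC3stabilizer} together with linear independence always make one of them nonzero -- this is a completeness issue rather than a conceptual gap, but the table must actually be produced for the argument to count as a proof. (Minor point: your remark that nontriviality of $\mathrm{Stab}_G(w_j)$ forces $p_{yyy}(w_j)\neq 0$ is true but unnecessary for $\chi=(1,1)$, where $f=1$ already works.)
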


\begin{proof} For the trivial character $(1,1)$ we may take $f=1\in \field[V]^G$. 

\emph{Case 1:}  $V=W_2\oplus W_3$. 
Then the assumptions on the stabilizers imply by Lemma~\ref{lemma:S3xC3stabilizer} 
that the stabilizers of $w_2$ and $w_3$ both have order $2$ or $3$, moreover, 
exactly one of $q_{yy}(w_2)$ and $p^-_{yyy}(w_2)$ is zero, 
and exactly one of $q_{zz}(w_3)$ and $p^-_{zzz}(w_3)$ is zero. 

\emph{Case 1.a:} $q_{yy}(w_2)=0=q_{zz}(w_3)$. 
Then both $w_2$ and $w_3$ are non-zero scalar multiples of $[1,0]^T$ or $[0,1]^T$. 
There is no harm to assume that $w_2=[1,0]^T$ (the case $w_2\in \field^\times [0,1]^T$ is similar).  Then 
$\mathrm{Stab}_G(w_2)=\langle ac^2\rangle$, and $w_3$ can not be a scalar multiple of 
$[0,1]^T$ (since the stabilizer of $[0,1]^T\in W_3$ is also  $\langle ac^2\rangle$). 
This means that $w_3$ is a non-zero scalar multiple of $[1,0]^T$ as well, 
and so none of  $\{p_{yyz}, p_{yzz}, p^-_{yyz}, p^-_{yzz},  p^-_{zzz}\}$ vanishes on 
$v=(w_2,w_3)$. These are relative invariants of degree $3$, 
and their weights represent all the five non-trivial characters. 
 
\emph{Case 1.b:} $q_{yy}(w_2)=0=p^-_{zzz}(w_3)$.  
Then $w_2$ is a non-zero scalar multiple of $[1,0]^T$ or $[0,1]^T$, and 
$w_3$ is a non-zero scalar multiple of $[1,1]^T$, $[1,\omega]^T$, $[1,\omega^2]^T$. 
It follows that none of  $\{p_{yyz}, p_{yzz}, p^-_{yyz}, p^-_{yzz},  p^-_{yyy}\}$ 
vanishes on $v$.
These are relative invariants of degree $3$, and their weights represent all the five non-trivial characters. 

\emph{Case 1.c:} $p^-_{yyy}(w_2)=0=q_{zz}(w_3)$: same as Case 1.b, we just need to interchange the roles of $W_2$ and $W_3$. 

\emph{Case 1.d:} $p^-_{yyy}(w_2)=0=p^-_{zzz}(w_3)$. 
Then both of $w_2,w_3$ are non-zero scalar multiples of $[1,1]^T$, $[1,\omega]^T$, 
or $[1,\omega^2]^T$. Moreover, $w_2$ and $w_3$ are linearly independent (otherwise they would have the same stabilizer). Thus none of 
$\{q_{yy},q_{zz},q^-_{yz},q_{yy}q^-_{yz},q_{zz}q^-_{yz}\}$ vanishes on $v$. 
These are relative invariants of degree at most $4$, 
and their weights represent all the five non-trivial characters.  

\emph{Case 2:} $V=W_1\oplus W_2$. 
Then the assumptions on the stabilizers imply by Lemma~\ref{lemma:S3xC3stabilizer} that 
exactly one of $q_{yy}(w_2)$ and $p^-_{yyy}(w_2)$ is zero. 

\emph{Case 2.a:} $q_{yy}(w_2)=0$ and $p^-_{yyy}(w_2)\neq 0$. Then $w_2$ is a non-zero scalar multiple of 
$[1,0]^T$ or $[0,1]^T$, 
say $w_2$ is a non-zero scalar multiple of $[1,0]^T$. 
If $w_1$ is a scalar multiple of $[0,1]^T$, then none of $q_{xy}$, $q^-_{xy}$, 
$q_{xy}q^-_{xy}$, $q_{xy}^2$, $p^-_{yyy}$ vanishes on $v=(w_1,w_2)$, and these relative invariants on $V$ of degree at most $4$ exhaust all the five non-trivial weights, so we are done. 
Otherwise the first coordinate of $w_1$ is non-zero, and thus $p_{xxy}$, $p_{xyy}$, 
$p^-_{xxy}$, $p^-_{xyy}$ are relative invariants on 
$V$ of degree $3$ not vanishing on $v$ (just like $p^-_{yyy}$).  
So we have exhausted all the five non-trivial weights, and we are done.

\emph{Case 2.b:} $p^-_{yyy}(w_2)=0$ and $q_{yy}(w_2)\neq 0$. 
Then $w_2$ is a non-zero scalar multiple of 
$[1,1]^T$, $[1,\omega]^T$, or $[1,\omega^2]^T$. Moreover, $w_1$ is not a scalar multiple of $w_2$ 
(since otherwise its stabilizer would contain the stabilizer of $w_2$). 
Then 
$q_{yy}$, $q^-_{xy}$, $q_{yy}^2$, $q^-_{xy}q_{yy}$, $p^-_{xyy}$ are non-zero on 
$v=(w_1,w_2)$,  and these relative invariants on $W_1\oplus W_2$ of degree at most $4$ 
exhaust all the five non-trivial weights. 

\emph{Case 3:} $V=W_1\oplus W_3$. 
The group $G$ has the automorphism $\alpha:a\mapsto a$, $b\mapsto b$, $c\mapsto c^2$. We have $\psi_3=\psi_2\circ \alpha$, so $\alpha$ interchanges 
$W_2$ and $W_3$, and permutes the $1$-dimensional representations.  So Case 3 
follows from Case 2 by Lemma~\ref{lemma:auto} (i). 
\end{proof} 

\begin{theorem}\label{thm:sepbeta(S3xC3)} 
Assume that $\field$ has an element of multiplicative order $6$. 
Then we have $\sepbeta^\field(\mathrm{S}_3\times \mathrm{C}_3)=6$. 
\end{theorem}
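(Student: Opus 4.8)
The plan is to prove $\sepbeta^\field(G)\ge 6$ and $\sepbeta^\field(G)\le 6$ separately. For the lower bound I would use the subgroup monotonicity \eqref{eq:sepbeta(H)}: since $c$ is central in $G$, the element $bc$ has order $6$, so $\langle bc\rangle\cong\mathrm{C}_6$ is a subgroup of $G$; as $\field$ contains an element of multiplicative order $6$ there is a faithful character $\chi$ of $\mathrm{C}_6$ with $\field[U_\chi]^{\mathrm{C}_6}=\field[t_\chi^6]$, so two points of $U_\chi$ with distinct sixth powers have distinct $\mathrm{C}_6$-orbits but are separated by no invariant of degree less than $6$. Thus $\sepbeta^\field(\mathrm{C}_6)=6$, and therefore $\sepbeta^\field(G)\ge 6$.

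For the upper bound, after passing to a large enough extension of $\field$ (legitimate by Lemma~\ref{lemma:spanning invariants}) so that Lemma~\ref{lemma:multfree} applies, it suffices to prove $\sepbeta(G,V)\le 6$ for the multiplicity-free module $V=W_1\oplus W_2\oplus W_3\oplus U$. Let $v=(w_1,w_2,w_3,u)$ and $v'=(w'_1,w'_2,w'_3,u')$ agree on all $f\in\field[V]^G$ of degree at most $6$. By Proposition~\ref{prop:S3xC3,V1+V2+V3} the triples $(w_1,w_2,w_3)$ and $(w'_1,w'_2,w'_3)$ have the same $G$-orbit, so after replacing $v'$ by a translate I may assume $w_i=w'_i$ for $i=1,2,3$, and it remains to show that $u,u'$ lie in one orbit of $H:=\mathrm{Stab}_G(w_1,w_2,w_3)$. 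The device, as in the proof of Theorem~\ref{thm:sepbeta(A4xC2)}, is this: if $T$ is an $H$-invariant monomial in the coordinate functions $t_\chi$ on $U$ and $f\in\field[W_1\oplus W_2\oplus W_3]^G$ is a relative invariant whose weight is inverse to that of $T$, with $\deg(f)+\deg(T)\le 6$ and $f(w_1,w_2,w_3)\ne 0$, then $fT\in\field[V]^G$ has degree $\le 6$, so the hypothesis forces $T(u)=T(u')$; applied to enough monomials of degree $\le\sepbeta^\field(H)=\beta^\field(H)$ this yields $H\cdot u=H\cdot u'$, and since $H$ fixes $(w_1,w_2,w_3)$ we obtain $G\cdot v=G\cdot v'$. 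The small-degree relative invariants required here come from Lemma~\ref{lemma:S3xC3stabilizer}, Lemma~\ref{lemma:S3xC3,stab-inv} and Lemma~\ref{lemma:S3xC3,V2+V1+U}.

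The argument then splits on which of $w_1,w_2,w_3$ vanish and on the cyclic group $H$, using that $\langle c\rangle\subseteq\mathrm{Stab}_G(w_1)$ always, whereas $c$ moves any nonzero $w_2$ or $w_3$. If $w_1=w_2=w_3=0$ then $v,v'\in U$ and we conclude from $\beta(G,U)\le 6$ (Proposition~\ref{prop:S3xC3,U}); if $w_2=w_3=0$ but $w_1\ne 0$ then $v,v'\in W_1\oplus U$ and Proposition~\ref{prop:S3xC3,V1+U} applies. In the remaining cases $H\cap\langle c\rangle=\{1_G\}$, so $|H|\le 3$ and by Lemma~\ref{lemma:S3xC3stabilizer} $H$ is $\{1_G\}$, one of $\langle b\rangle,\langle ab\rangle,\langle a^2b\rangle$, or one of $\langle ac\rangle,\langle ac^2\rangle$. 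When $|H|\in\{2,3\}$ one has $\sepbeta^\field(H)=|H|$, the $H$-invariant monomials of $\field[U]$ of degree $\le|H|$ have weight among $\{(1,1),(1,\omega),(1,\omega^2)\}$ (for $|H|=2$) or $\{(1,1),(-1,1)\}$ (for $|H|=3$), and each such weight's inverse is realised non-vanishingly at $(w_1,w_2,w_3)$ by one of $q_{xx},q_{yy},q_{zz},q_{xy},q_{xz},q_{yz}$ (degree $2$) or $p^-_{xxx},p^-_{yyy},p^-_{zzz}$ (degree $3$), the right one being pinpointed by Lemma~\ref{lemma:S3xC3stabilizer}; this settles these cases via the device above. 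When $H=\{1_G\}$ I would instead prove $u=u'$ directly, exhibiting for each nontrivial $\chi\in\widehat G$ a relative invariant of weight $\chi^{-1}$ and degree $\le 5$ in $\field[W_1\oplus W_2\oplus W_3]^G$ not vanishing at $(w_1,w_2,w_3)$, invoking Lemma~\ref{lemma:S3xC3,V2+V1+U} whenever two of the components have nontrivial stabilizers with trivial joint stabilizer.

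I expect the main obstacle to be the residual sub-cases of the regime $w_1\ne 0$, $H=\{1_G\}$, where Lemma~\ref{lemma:S3xC3,V2+V1+U} is not directly applicable — for instance when exactly one component is nonzero and lies off the coordinate axes, or when two components are linearly dependent. There one must, for each of the five nontrivial weights, produce by hand a relative invariant of degree $4$ or $5$ assembled from the $q$'s, $q^-$'s, $p$'s and $p^-$'s that is simultaneously of the correct weight and nonzero at $(w_1,w_2,w_3)$, and keeping exact track of which weights are already covered in each geometric configuration of $(w_1,w_2,w_3)$ is the calculation-heavy part of the proof.
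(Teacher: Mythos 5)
Your proposal takes essentially the same route as the paper: the same lower bound via $\mathrm{C}_6$ (the paper uses it as a quotient, you as the subgroup $\langle bc\rangle$ — immaterial), the same reduction through Lemmas~\ref{lemma:spanning invariants} and \ref{lemma:multfree} to $W_1\oplus W_2\oplus W_3\oplus U$, the normalization $(w_1,w_2,w_3)=(w'_1,w'_2,w'_3)$ via Proposition~\ref{prop:S3xC3,V1+V2+V3}, and then a stabilizer case analysis powered by Lemmas~\ref{lemma:S3xC3stabilizer}, \ref{lemma:S3xC3,stab-inv} and \ref{lemma:S3xC3,V2+V1+U}; your organization by the order of $H=\mathrm{Stab}_G(w_1,w_2,w_3)$ is a cosmetic repackaging of the paper's cases (a)--(c). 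One concrete caveat: in your $H=\{1_G\}$ case the stated plan — produce for \emph{every} nontrivial $\chi\in\widehat G$ a relative invariant of weight $\chi^{-1}$, degree $\le 5$, not vanishing at $(w_1,w_2,w_3)$ — is not achievable when only one of $w_2,w_3$ is nonzero (and $w_1=0$): for instance $\field[W_2]$ contains \emph{no} relative invariant of weight $(-1,\omega)$ of degree $\le 5$ whatsoever, since the $\langle a\rangle$-semiinvariance forces such a polynomial to be built from $y_1y_2,y_1^3,y_2^3$ and the degree and $b$-antisymmetry constraints are then incompatible. The paper's Lemma~\ref{lemma:S3xC3,stab-inv}, which you list among your tools, circumvents exactly this by using the invariants $ft_\chi^2$ and $f't_\chi^3$ to deduce $t_\chi(u)=t_\chi(u')$ rather than seeking a degree-$\le 5$ companion for $t_\chi$ itself; provided you route the single-nonzero-component subcase through that lemma instead of through your literal "degree $\le 5$" claim, the argument closes.
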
 

\begin{proof} 
The cyclic group $\mathrm{C}_6$ is a homomorphic image of $G$, 
therefore $\sepbeta^\field(G)\ge \sepbeta^\field(\mathrm{C}_6)=6$. 

Let us turn to the proof of the reverse inequality $\sepbeta^\field(G)\le 6$. 
By Lemma~\ref{lemma:spanning invariants} we may assume that $\field$ is large enough so that 
Lemma~\ref{lemma:multfree} applies and we can reduce to the study of multiplicity-free representations. 
Set $V:=W_1\oplus W_2\oplus W_3\oplus U$, 
take $v:=(w_1,w_2,w_3,u) \in V$ and  
$v':=(w_1',w_2',w_3',u') \in V$, and assume that $f(v)=f(v')$ for all $f\in \field[V]^G$ with 
$\deg(f)\le 6$. We need to show that $G\cdot v=G\cdot v'$. 
By Proposition~\ref{prop:S3xC3,V1+V2+V3}, $(w_1,w_2,w_3)$ and 
$(w_1',w_2',w_3')$ have the same $G$-orbit in $W_1\oplus W_2\oplus W_3$. 
So replacing $v'$ by an appropriate element in its $G$-orbit, we may assume that 
$w_1=w_1'$, $w_2=w_2'$, $w_3=w_3'$.  

If $w_2=w_3=0$, then $v$ and $v'$ are in the same $G$-orbit by 
Proposition~\ref{prop:S3xC3,V1+U}. 
Otherwise there exists a $k\in \{2,3\}$ such that $w_k\neq 0$, and thus 
$|\mathrm{Stab}_G(w_k)|\in\{1,2,3\}$.
If $\mathrm{Stab}_G(w_2)=\{1_G\}$ or $\mathrm{Stab}_G(w_3)=\{1_G\}$, then 
$u=u'$ (and hence $v=v'$) by Lemma~\ref{lemma:S3xC3,stab-inv}. 
Similarly, if $\mathrm{Stab}_G(w_l)\supseteq \mathrm{Stab}_G(w_k)$ for both 
$l\in  \{1,2,3\}\setminus \{ k\}$, then $\mathrm{Stab}_G(w_k)\subseteq 
\mathrm{Stab}_G(w_1,w_2,w_3)$. By Lemma~\ref{lemma:S3xC3,stab-inv} we have 
a $g\in \mathrm{Stab}_G(w_k)$ with $g\cdot u=u'$. Then $g\cdot v=v'$, and we are done. 

It remains to deal with the case when there exists a 
pair $(i,j)\in \{(1,2),(1,3),(2,3)\}$ such that $(w_i,w_j)$ 
satisfies the assumptions of Lemma~\ref{lemma:S3xC3,V2+V1+U}. 
Therefore for any $\chi\in \widehat G$ there exists a 
relative invariant $f\in \field[W_1\oplus W_2\oplus W_3]^{G,\chi^{-1}}$ 
with $f(w_1,w_2,w_3)\neq 0$ and $\deg(f)\le 4$. 
Then $ft_\chi\in \field[V]^G$ has degree at most $5$, thus 
$(ft_\chi)(v)=(ft_\chi)(v')$, implying in turn that 
\[t_\chi(u)=\frac{(ft)(v)}{f(w_1,w_2,w_3)}=
\frac{(ft)(v)}{f(w_1,w_2,w_3)}=t_\chi(u').\] 
This holds for all $\chi\in \widehat G$, so $u=u'$ and hence 
$v=v'$. 
\end{proof} 

\section{The group $\mathrm{C}_5\rtimes \mathrm{C}_4$}
\label{sec:C5rtimesC4}
In this section 
\[G=\mathrm{C}_5\rtimes \mathrm{C}_4=\langle a,b\mid a^5=b^4=1,\ bab^{-1}=a^2 \rangle.\]  
Assume that $\field$ has an element $\xi$ of multiplicative order $20$. 
Then $\omega:=\xi^4$ has multiplicative order $5$, and $\mathrm{i}:=\xi^5$ 
has multiplicative order $4$. 
Consider the following irreducible $4$-dimensional representation of $G$:  
\[ \psi: a\mapsto 
\begin{bmatrix} 
      \omega & 0 & 0 & 0 \\ 0 & \omega^2 & 0 & 0 
      \\ 0 & 0 & \omega^4 & 0 \\ 0 & 0 & 0 & \omega^3  
   \end{bmatrix}, \quad 
   b\mapsto \begin{bmatrix} 
     0 & 1 & 0 & 0 \\ 0 & 0 & 1 & 0 \\ 0 & 0 & 0 & 1  
     \\ 1 & 0 & 0 & 0
   \end{bmatrix}.\]
The other irreducible representations of $G$ are $1$-dimensional, and can be labelled by 
the group $\widehat G=\{\pm\mathrm{i}, \pm 1\}\le \field^\times$, where  $\chi\in \widehat G$ is identified with the representation 
\[\chi:a\mapsto 1,\ b\mapsto \chi\] 
(note that $\langle a \rangle$ is the commutator subgroup of $G$). 
Write $W$ for the vector space $\field^4$ endowed with the  representation $\psi$, 
and for $\chi\in \widehat G$ denote by $U_\chi$ the vector space $\field$ 
endowed with the  representation $\chi$, and set 
$U:=\bigoplus_{\chi\in \widehat G}U_\chi$. 

Set 
\begin{align*} 
h_0:=x_1x_3-x_2x_4,\qquad h_1:=x_1x_2^2+x_3x_4^2,\qquad 
h_2:=x_2x_3^2+x_4x_1^2, 
\\ h_3:=x_1^3x_2+x_3^3x_4,\qquad h_4:=x_2^3x_3+x_4^3x_1, 
\end{align*}
and consider the following elements in $\field[W]^G$: 
\begin{align*}
& f_1:=x_1x_3+x_2x_4,\qquad  &f_2:=x_1x_2x_3x_4, \qquad &f_3:=h_1+h_2,
\\ & f_4:=h_1h_2,  \qquad &f_5:=(h_1-h_2)h_0, \qquad  &f_6:=h_3+h_4,
\\ & f_7:=x_1^5+x_2^5+x_3^5+x_4^5,\qquad &f_8:=x_1^6x_3+x_1x_3^6+x_2^6x_4+x_2x_4^6,   
\qquad &f_9:=(h_3-h_4)h_0. &    
\end{align*}

The following result was obtained using the CoCalc platform \cite{CoCalc}: 
\begin{proposition}\label{prop:C5rtimesC4,mingen}
Suppose additionally that $\field$ has characteristic zero. 
Then the elements $f_1,\dots,f_8$ form a minimal homogeneous generating system of the $\field$-algebra 
$\field[W]^G$. 
\end{proposition}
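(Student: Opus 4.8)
The plan is to descend to the cyclic quotient $G/G'\cong \mathrm{C}_4$, use the already known value $\beta^\field(G)=8$ to make the generation problem finite, and then carry out a bounded enumeration; the minimality assertion will drop out of the same dimension bookkeeping.

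First I would describe $\field[W]^{\langle a\rangle}$. Since $G'=\langle a\rangle$ acts diagonally, with $a$ scaling $x_j$ by $\omega^{w_j}$ for $(w_1,w_2,w_3,w_4)=(1,2,4,3)$, the ring $\field[W]^{\langle a\rangle}$ is the semigroup algebra of the monoid $S=\{(e_1,e_2,e_3,e_4)\in\mathbb{N}_0^4\mid \sum_j w_je_j\equiv 0 \pmod 5\}$. Its minimal generators (the Hilbert basis of $S$) are exactly the monomials attached to the minimal zero-sum sequences over $\mathbb{Z}/5$ formed from the values $1,2,4,3$; these have length at most $\mathsf{D}(\mathbb{Z}/5)=5$, and a direct listing produces the $14$ monomials $x_1x_3,\ x_2x_4$ (degree $2$); $x_1^2x_4,\ x_1x_2^2,\ x_2x_3^2,\ x_3x_4^2$ (degree $3$); $x_1^3x_2,\ x_2^3x_3,\ x_3^3x_4,\ x_4^3x_1$ (degree $4$); and $x_1^5,\ x_2^5,\ x_3^5,\ x_4^5$ (degree $5$). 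In particular $\field[W]^{\langle a\rangle}$ is generated in degree $\le 5$. Now $\field[W]^G=(\field[W]^{\langle a\rangle})^{\langle b\rangle}$, where $\langle b\rangle\cong\mathrm{C}_4$ cyclically permutes the variables by $x_1\mapsto x_4\mapsto x_3\mapsto x_2\mapsto x_1$, and hence permutes the $14$ basis monomials in one orbit of size $2$ and three orbits of size $4$. Passing to $b$-eigenspaces (legitimate because $\mathrm{char}(\field)=0$), equivalently splitting $\field[W]^{\langle a\rangle}$ into relative invariants for the four weights $\widehat G=\{1,\mathrm{i},-1,-\mathrm{i}\}$, one checks that $h_0$, $h_1-h_2$ and $h_3-h_4$ are relative invariants of weight $-1$, that $f_1,f_3,f_6$ are the weight-$1$ eigenvectors arising from the orbits of sizes $2,3,4$ in their respective degrees, and that $f_2,f_4,f_5,f_7,f_8$ are the remaining $G$-invariants assembled from these.

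Next, by the third column of Table~\ref{table:main} (value from \cite{cziszter-domokos-szollosi}) we have $\beta^\field(G)=8$, so $\field[W]^G$ is generated in degree $\le 8$; it therefore suffices to show that every homogeneous $G$-invariant of degree at most $8$ lies in the subalgebra $R':=\field[f_1,\dots,f_8]$. Since $\field[W]^G$ is spanned by the $\langle b\rangle$-orbit sums of the $\langle a\rangle$-invariant monomials, and every such monomial of degree $\le 8$ is a product of at most $\mathsf{D}(\mathrm{C}_4)=4$ of the $14$ Hilbert basis monomials (each of degree $\ge 2$; here I use the Davenport-constant principle of Section~\ref{sec:Davenport} applied to $G/G'\cong\mathrm{C}_4$), the problem reduces to a finite check, which I would organize by degree and by the weight pattern of the factors. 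For each $d=2,\dots,8$ one lists the $\langle a\rangle$-invariant monomials of degree $d$, reads off the spanning set of $\field[W]^G_d$, and rewrites each member in terms of $f_1,\dots,f_8$. The low degrees are routine: $\field[W]^G_2=\field f_1$, $\field[W]^G_3=\field f_3$, $\field[W]^G_4=\field f_1^2\oplus\field f_2\oplus\field f_6$, $\field[W]^G_5=\field f_1f_3\oplus\field f_5\oplus\field f_7$, and in degree $8$ every orbit sum is already a product of lower generators. All the content sits in the weight-$1$ parts of degrees $6$ and $7$: in degree $6$ one must show that $f_4=h_1h_2$ (equivalently, via $4f_4=f_3^2-(h_1-h_2)^2$, the relative-invariant square $(h_1-h_2)^2$) together with the degree-$6$ products $f_1^3$, $f_3^2$, $f_1f_2$, $f_1f_6$ spans $\field[W]^G_6$; and in degree $7$ one must show that the orbit sum $f_8$, together with the products $f_1f_5$, $f_1f_7$, $f_2f_3$, $f_3f_6$ and $f_1^2f_3$, spans $\field[W]^G_7$. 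Recording $\dim_\field\!\big(\field[W]^G_d/(\field[W]^G_+)^2\big)$ along the way and checking that it equals the number of $f_i$ of degree $d$ — namely $1,1,2,2,1,1$ for $d=2,3,4,5,6,7$ — establishes minimality, i.e.\ that none of $f_1,f_3,f_4,f_8$ is decomposable and that $\{f_2,f_6\}$ and $\{f_5,f_7\}$ are each linearly independent modulo decomposables.

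The main obstacle is precisely the bookkeeping in the weight-$1$ pieces of degrees $6$ and $7$: one has to pin down the exact syzygies among $h_0,\dots,h_4,f_1,\dots,f_8$ that collapse each new orbit sum into $R'$, and simultaneously be certain that nothing is overlooked, since the minimality claim rests on the dimension counts being exact. The cleanest implementation — and the one I would actually use — is to compute, weight by weight, a minimal set of relative invariants of $\field[W]$ of each weight $\chi\in\widehat G$ generating $\field[W]^{G,\chi}$ as a module over $\field[W]^G$ modulo the Hilbert ideal $\mathcal{H}(G,W)$, in the spirit of Lemma~\ref{lemma:V+U} (with Lemma~\ref{lemma:common zero locus} controlling where these relative invariants can vanish), and then assemble the pieces; a single Gr\"obner basis computation in $\mathcal{H}(G,W)$ pins the module generators down. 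This is exactly the step the authors delegate to \cite{CoCalc}, and it explains why the characteristic-zero hypothesis, rather than merely $\mathrm{char}(\field)\nmid|G|$, is imposed here.
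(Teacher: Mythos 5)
The paper offers no human-readable argument for this proposition: it is stated as the output of a CoCalc computation, full stop. Your proposal therefore takes a genuinely different (and more informative) route: you first compute $\field[W]^{\langle a\rangle}$ via the Hilbert basis of the weight monoid for $a\mapsto\mathrm{diag}(\omega,\omega^2,\omega^4,\omega^3)$ (your list of $14$ monomials and the $b$-orbit structure are correct, as are the eigenvector identifications of $h_0$, $h_1-h_2$, $h_3-h_4$ and the low-degree spans $\field[W]^G_d$ for $d\le 5$), then use $\beta^\field(G)=8$ to truncate, and reduce everything to a finite spanning-and-dimension check in degrees $6$, $7$, $8$. What this buys is a verifiable skeleton showing exactly where the content sits; what it does not buy is a complete proof, since the decisive degree-$6$ and degree-$7$ verifications (that $f_4$ and $f_8$ each contribute one new generator and nothing else does, and the dimension counts $1,1,2,2,1,1$ underpinning minimality) are described but not executed — you yourself defer them to the same Gr\"obner/CoCalc computation the authors cite, so in the end your argument is exactly as complete as the paper's. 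Two small corrections: the bound of at most four Hilbert-basis factors for a degree-$\le 8$ monomial follows simply from each factor having degree $\ge 2$, and the Davenport-constant principle of Section~\ref{sec:Davenport} does not directly apply here anyway, because a $G$-invariant is a $b$-orbit sum of $\langle a\rangle$-invariant monomials rather than a product of relative invariants drawn from the Hilbert basis (your actual monomial-by-monomial enumeration is unaffected by this). Also note $\beta(G,W)=7<8=\beta^\field(G)$, so the absence of a degree-$8$ generator is a conclusion of the check, not something guaranteed in advance.
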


It turns out that the elements of degree at most $6$ from the above generating system are sufficient to separate 
$G$-orbits in $W$: 

\begin{proposition}\label{prop:C5rtimesC4,V} 
If for $v,v'\in W$ we have that $f_i(v)=f_i(v')$ for all $i=1,\dots,7$, 
then $f_8(v)=f_8(v')$. 
In particular, if $\field$ has characteristic zero, 
then the $G$-invariants $f_1,\dots,f_7$ separate the $G$-orbits in $W$.     
\end{proposition}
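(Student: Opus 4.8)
The plan is to reduce the whole statement to the single assertion that the invariant $2f_8-f_1f_7$ is a function of $f_1,\dots,f_7$ on all of $W$, and to prove this by a ``multiplier'' computation inside $\field[W]^G$, with one residual degenerate locus treated by hand.

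First, the \emph{in particular} clause is formal: by Proposition~\ref{prop:C5rtimesC4,mingen} we have $\field[W]^G=\field[f_1,\dots,f_8]$ in characteristic zero, and for a finite group the ring of invariants separates $G$-orbits, so once $f_1,\dots,f_7$ determine $f_8$ as a function on $W$ they already form a separating set. For the first (characteristic-free) assertion, set $\alpha:=x_1x_3$, $\beta:=x_2x_4$, $P:=x_1^5+x_3^5$, $Q:=x_2^5+x_4^5$. One checks immediately that $f_1=\alpha+\beta$, $f_2=\alpha\beta$ (so $h_0=\alpha-\beta$ and $h_0^2=f_1^2-4f_2$), $f_7=P+Q$, and $f_8=\alpha P+\beta Q$, whence
\[2f_8-f_1f_7=(\alpha-\beta)(P-Q)=h_0(P-Q)=:m .\]
So the claim is equivalent to: $m(v)=m(v')$ whenever $f_1,\dots,f_7$ take equal values at $v$ and $v'$.

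The main step uses the auxiliary invariant $K:=(h_1-h_2)(P-Q)$. The two factors are each $\langle a\rangle$-invariant and are negated by $b$, hence $K\in\field[W]^G$ and is homogeneous of degree $3+5=8$; since $W$ is irreducible there is no nonzero invariant linear form, so (as $8-7=1$ is not a sum of degrees of generators) no degree-$8$ invariant involves $f_8$, i.e.\ $K\in\field[f_1,\dots,f_7]$. From the definitions one has the polynomial identities
\[m\cdot f_5=h_0(P-Q)\cdot(h_1-h_2)h_0=(f_1^2-4f_2)\,K,\qquad f_5^2=(f_1^2-4f_2)(f_3^2-4f_4).\]
Thus if $f_5(v)\neq 0$ then $m(v)=\bigl(f_1(v)^2-4f_2(v)\bigr)K(v)/f_5(v)$ depends only on $f_1(v),\dots,f_7(v)$, so $m(v)=m(v')$ and $f_8(v)=f_8(v')$. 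If $f_5(v)=0$, then $f_5(v')=0$ as well; the subcase $h_0(v)=0$ forces $h_0(v')=0$ (from $h_0^2=f_1^2-4f_2$), whence $m(v)=0=m(v')$; the only remaining subcase is $(h_1-h_2)(v)=0$ with $h_0(v)\neq0$ (and likewise at $v'$).

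The genuinely delicate situation is therefore that of points on the hypersurface $\mathcal Z=\{h_1=h_2\}$, on which both $f_5$ and $K$ vanish identically, with $h_0\neq0$. Here I would determine $m$ on $\mathcal Z$ directly: discarding the easily handled sub-loci where some $x_i$ vanishes, write $x_3=\alpha/x_1$, $x_4=\beta/x_2$, so that $h_1=h_2$, $f_6=h_3+h_4$ and $f_7=P+Q$ become explicit Laurent-polynomial relations in $x_1,x_2$ over $\field[\alpha,\beta]$; note $\alpha,\beta$ are recovered from $f_1,f_2$, $h_1=f_3/2$, and (again by the degree argument) $h_3h_4\in\field[f_1,\dots,f_7]$, so $f_6$ pins down the pair $\{h_3,h_4\}$. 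Eliminating $x_1,x_2$ should express $m=h_0(P-Q)$ through $f_1,\dots,f_7$ on $\mathcal Z$; equivalently, one looks for an invariant relation $(f_3^2-4f_4)\,A(\mathbf f)+(2f_8-f_1f_7)\,B(\mathbf f)=C(\mathbf f)$ with $A,B,C\in\field[f_1,\dots,f_7]$ and $B$ nonvanishing on $\mathcal Z\cap\{h_0\neq0\}$, which can be searched for using the computer algebra already employed for this group in the paper. I expect this elimination on $\mathcal Z$ to be the main obstacle; everything else is bookkeeping.
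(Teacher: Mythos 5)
Your reduction is clean and the algebra in it is correct: indeed $2f_8-f_1f_7=h_0(P-Q)$, the element $K=(h_1-h_2)(P-Q)$ is a degree-$8$ invariant, and the identities $m f_5=(f_1^2-4f_2)K$ and $h_0^2=f_1^2-4f_2$ dispose of the cases $f_5(v)\neq 0$ and $h_0(v)=0$ more slickly than the paper does (the paper instead runs an explicit orbit-reconstruction argument, building a chain of sets $S^{(j)}$ and recovering coordinates of $v'$ up to the $G$-action). But your proof is not complete: the entire case $h_1(v)=h_2(v)$, $h_0(v)\neq 0$ is left as a plan ("should express", "can be searched for", "I expect this to be the main obstacle"). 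This is not a marginal residue — $\{h_1=h_2\}$ is a hypersurface in $W$, and in the paper's organization it is absorbed into Cases I and III, which constitute the bulk of the actual work (recovering $x_1x_2^2,\dots,x_1^3x_2,\dots$ and eventually the individual coordinates of $v'$ up to elements of $\langle a,b\rangle$). Your proposed polynomial relation $(f_3^2-4f_4)A+mB=C$ with $B$ nonvanishing on all of $\{h_1=h_2,\ h_0\neq 0\}$ is not known to exist; the truth of the proposition only guarantees that $m$ is set-theoretically determined by $f_1,\dots,f_7$, which does not by itself produce such an identity, and you have neither exhibited it nor carried out the elimination. Likewise "the easily handled sub-loci where some $x_i$ vanishes" are exactly the paper's Case III, which takes real effort. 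So the key remaining step is asserted, not proved.

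A secondary issue: you deduce $K\in\field[f_1,\dots,f_7]$ (and $h_3h_4\in\field[f_1,\dots,f_7]$) from Proposition~\ref{prop:C5rtimesC4,mingen}, which is only established for $\mathrm{char}(\field)=0$, whereas the first assertion of the Proposition is stated without that hypothesis and the paper's proof of it is characteristic-free. To keep your argument at the same level of generality you would need to write $K$ explicitly as a polynomial in $f_1,\dots,f_7$ rather than invoke the generation result. In summary: a genuinely different and partially more economical strategy for the generic and $h_0=0$ strata, but with the hardest stratum unresolved.
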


\begin{proof}
Let $v,v'\in W$ such that $f_j(v)=f_j(v')$ for $j=1,\dots,7$. 
In order to prove $f_8(v)=f_8(v')$, 
we will prove the existence of a chain of subsets 
\[\{f_1,\dots,f_7\}=S^{(0)}\subset S^{(1)}\subset \dots\subset S^{(k)}\subset \field[V]\] 
and 
elements $v'=w^{(0)},\dots,w^{(k)}$ in the $G$-orbit of $v'$ such that 
\begin{itemize} 
\item  $f(v)=f(w^{(j)})$ for all $f\in S^{(j)}$ and all $j=0,\dots,k$;
\item  $f_8\in S^{(k)}.$  
\end{itemize} 
Then $f_8(v)=f_8(w^{(k)})$, and since $f_8$ is a $G$-invariant and $G\cdot v'=G\cdot w^{(k)}$, 
we have $f_8(w^{(k)})=f_8(v')$ 
(and hence $G\cdot v=G\cdot v'$ by Proposition~\ref{prop:C5rtimesC4,mingen} when $\mathrm{char}(\field)=0$).
We shall denote by $T^{(j)}$ the subalgebra of $\field[V]$ generated by $S^{(j)}$; 
obviously, $f(v)=f(w^{(j)})$ holds for all $f\in T^{(j)}$ as well, so any element of $T^{(j)}$ can be added to $S^{(j)}$. 

From $f_1(v)=f_1(v')$ and $f_2(v)=f_2(v')$ we deduce 
\[\{(x_1x_3)(v),(x_2x_4)(v)\}=\{(x_1x_3)(v'),(x_2x_4)(v')\}.\]
So either $(x_1x_3)(v)=(x_1x_3)(v')$ and 
$(x_2x_4)(v)=(x_2x_4)(v')$, or 
$(x_1x_3)(v)=(x_1x_3)(b\cdot v')$ 
and 
$(x_2x_4)(v)=(x_2x_4)(b\cdot v')$.
Therefore we can take $w^{(1)}=v'$ or $w^{(1)}=b\cdot v'$, 
and 
\[S^{(1)}=S^{(0)}\cup \{x_1x_3,x_2x_4\}.\] 
In particular, this implies $h_0\in T^{(1)}$. 

\emph{Case I.:} $f_2(v)\neq 0$ and $h_0(v)\neq 0$. 
From $f_3(v)=f_3(w^{(1)})$, $f_5(v)=f_5(w^{(1)})$, and $h_0(v)=h_0(w^{(1)})\neq 0$ we infer that $h_1(v)=h_1(w^{(1)})$ and $h_2(v)=h_2(w^{(1)})$. 
The product of the two summands of $h_1$ (respectively $h_2$) 
is $(x_1x_3)(x_2x_4)^2$ (respectively $(x_1x_3)^2(x_2x_4)$), 
which belongs to the subalgebra $T^{(1)}$ of $\field[V]$. 
It follows that 
\begin{align*}\{(x_1x_2^2)(v),(x_3x_4^2)(v)\}=\{(x_1x_2^2)(w^{(1)}),(x_3x_4^2)(w^{(1)})\}\qquad \text{ and }
\\ \{(x_2x_3^2)(v),(x_4x_1^2)(v)\}=\{(x_2x_3^2)(w^{(1)}),(x_4x_1^2)(w^{(1)})\}.\end{align*}
Similarly, from $f_6(v)=f_6(w^{(1)})$ and $f_9(v)=f_9(w^{(1)})$ 
(note that $f_9=f_1f_6-2f_4$, hence 
$f_9(v)=f_9(v')=f_9(w^{(1)})$) we get 
\begin{align*}\{(x_1^3x_2)(v),(x_3^3x_4)(v)\}=\{(x_1^3x_2)(w^{(1)}),(x_3^3x_4)(w^{(1)})\}
\end{align*}
(and also $\{(x_2^3x_3)(v),(x_4^3x_1)(v)\}=\{(x_2^3x_3)(w^{(1)}),(x_4^3x_1)(w^{(1)})\}$, but we do not use it below). 
Note that the elements of $S^{(1)}$ are $b^2$-invariant, whereas 
$b^2$ interchanges $x_1x_2^2$ and $x_3x_4^2$, $x_2x_3^2$ and $x_4x_1^2$, 
$x_1^3x_2$ and $x_3^3x_4$. It follows that with $w^{(2)}=w^{(1)}$ or 
$w^{(2)}=b^2\cdot w^{(1)}$ one of the following sets can be taken as 
$S^{(2)}$: 
\begin{enumerate} 
\item[(i)] $S^{(2)}=S^{(1)}\cup \{x_1x_2^2,x_3x_4^2,x_2x_3^2,x_4x_1^2\}$ 
\item[(ii)] $S^{(2)}=S^{(1)}\cup \{x_1x_2^2,x_3x_4^2,x_1^3x_2,x_3^3x_4\}$
\item[(iii)] $S^{(2)}=S^{(1)}\cup \{x_2x_3^2,x_4x_1^2,x_1^3x_2,x_3^3x_4\}$
\end{enumerate}
In case (i), we have $(x_1x_2^2)^2(x_2x_3^2)=x_2^5(x_1x_3)^2\in T^{(2)}$. 
Since $0\neq (x_1x_3)(v)=(x_1x_3)(w^{(2)})$, we conclude that 
$x_2(v)^5=x_2(w^{(2)})^5$. For an appropriate $s\in \{0,1,2,3,4\}$, we have 
$x_2(v)=x_2(a^s\cdot w^{(2)})$. Set $w^{(3)}:=a^s\cdot w^{(2)}$. 
Since the elements of $S^{(2)}$ are $\langle a\rangle$-invariant, we may take 
\[S^{(3)}:=S^{(2)}\cup \{x_2\}.\] 
Then from $(x_1x_2^2)(v)=(x_1x_2^2)(w^{(3)})$ and $x_2(v)=x_2(w^{(3)})\neq 0$ we get 
$x_1(v)=x_1(w^{(3)})$. Given that, from $(x_1x_3)(v)=(x_1x_3)(w^{(3)})$ 
and $x_1(v)=x_1(w^{(3)})\neq 0$ we infer $x_3(v)=x_3(w^{(3)})$. 
Finally, from $(x_2x_4)(v)=(x_2x_4)(w^{(3)})$ and $x_2(v)=x_2(w^{(3)})\neq 0$
we get $x_4(v)=x_4(w^{(3)})$. So keeping $w^{(4)}:=w^{(3)}$ 
we can take 
\[S^{(4)}:=S^{(3)}\cup \{x_1,x_3,x_4\}.\] 
Then $f_8\in T^{(4)}$, hence 
with $w^{(5)}=w^{(4)}$ we can take $S^{(5)}=S^{(4)}\cup\{f_8\}$, and we are done. 
The cases when $S^{(2)}$ is as in (ii) or (iii) can be dealt with similarly. 

\emph{Case II.:} $h_0(v)=0$.  
Then we have 
\begin{equation}\label{eq:C5rtimesC4,x1x3=x2x4}
    (x_1x_3)(w^{(1)})=(x_1x_3)(v)=(x_2x_4)(v)=(x_2x_4)(w^{(1)}). 
    \end{equation}
Observe that 
\begin{equation}\label{eq:f8=x1x3f7} 
f_8=(x_1x_3)(x_1^5+x_3^5)+(x_2x_4)(x_2^5+x_4^5).\end{equation}
Therefore by \eqref{eq:C5rtimesC4,x1x3=x2x4} we obtain 
\[f_8(v)=(x_1x_3)(v)f_7(v)=(x_1x_3)(w^{(1)})f_7(w^{(1)})=f_8(w^{(1)}),\]
so with $w^{(2)}=w^{(1)}$ we can take 
$S^{(2)}=S^{(1)}\cup \{f_8\}$. 

\emph{Case III.:} $h_0(v)\neq 0$ and $f_2(v)=0$. By symmetry, we may assume that $x_1(v)=0$. 
From $(x_1x_3)(v)=(x_1x_3)(w^{(1)})$ we deduce that $x_1(w^{(1)})=0$ or 
$x_3(w^{(3)})=0$. Since $b^2$ interchanges $x_1$ and $x_3$, 
and fixes the elements of $S^{(1)}$, we may 
take $w^{(2)}=w^{(1)}$ or $w^{(2)}=b^2\cdot w^{(1)}$ 
and 
\[S^{(2)}=S^{(1)}\cup \{x_1\}.\] 
Moreover, we have 
\begin{equation} \label{eq:C5rtimesC4,x1neq0}
x_1(v)=0=x_1(w^{(2)})   
\end{equation}
and hence 
\begin{equation}\label{eq:C5rtimesC4,x2x4neq0}
0\neq h_0(v)=-(x_2x_4)(v)=-(x_2x_4)(w^{(2)})=h_0(w^{(2)}).
\end{equation}
We have   
\[h_1(v)-h_2(v)=\frac{f_5(v)}{h_0(v)}=
\frac{f_5(w^{(2)})}{h_0(w^{(2)})}=h_1(w^{(2)})-h_2(w^{(2)}).\]
Together with 
$h_1(v)+h_2(v)=f_3(v)=f_3(w^{(2)})
=h_1(w^{(2)})+h_2(w^{(2)})$ 
and with \eqref{eq:C5rtimesC4,x1neq0}
this implies 
\begin{align*}
(x_3x_4^2)(v)=h_1(v)=h_1(w^{(2)})
=(x_3x_4^2)(w^{(2)}) 
\text{ and } 
\\ (x_2x_3^2)(v)=h_2(v)=h_2(w^{(2)})=(x_2x_3^2)(w^{(2)}).
\end{align*}
Therefore with $w^{(3)}=w^{(2)}$ we can take 
\[S^{(3)}=S^{(2)}\cup \{x_2x_3^2,x_3x_4^2\}.\]
The equality 
\[(x_2x_3^2)^2(x_3x_4^2)=(x_3^5)(x_2x_4)^2\in T^{(3)}\]
with $(x_2x_4)(v)=(x_2x_4)(w^{(3)})\neq 0$ (see \eqref{eq:C5rtimesC4,x2x4neq0})
imply that 
\begin{equation}\label{eq:C5rtimesC4,x3^5}
x_3^5(v)=x_3^5(w^{(3)}). 
\end{equation}
So we may take $w^{(4)}=w^{(3)}$ and 
\[S^{(4)}:=S^{(3)}\cup\{x_3^5\}.\] 
Then $f_7$, $x_1$, $x_3^5$ all belong to $S^{(4)}$, implying that 
$x_2^5+x_4^5=f_7-x_1^5-x_3^5\in T^{(4)}$. 
So we may take $w^{(5)}=w^{(4)}$ and 
\[S^{(5)}=S^{(4)}\cup \{x_2^5+x_4^5\}\] 
Thus $x_1x_3$, $x_2x_4$, $x_1^5+x_3^5$, $x_2^5+x_4^5$ all belong to $T^{(5)}$, hence equality \eqref{eq:f8=x1x3f7} shows that 
$f_8\in T^{(5)}$. Therefore $f_8(v)=f_8(w^{(5)})$, and we are done. 
\end{proof}

\begin{theorem}\label{thm:sepbeta(C5rtimesC4)}
Assume that $\field$ has characteristic $0$, and contains an element of multiplicative order $20$. 
Then we have the equality $\sepbeta^\field(\mathrm{C}_5\rtimes \mathrm{C}_4)=6$. 
\end{theorem}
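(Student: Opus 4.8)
The plan is to establish the two inequalities separately. For the lower bound $\sepbeta^\field(\mathrm{C}_5\rtimes\mathrm{C}_4)\ge 6$, I would exhibit a representation and a pair of points with distinct $G$-orbits that are not separated by invariants of degree less than $6$. A natural candidate is a module of the form $W\oplus U_\chi$ (or just a summand involving the $1$-dimensional representations built on top of $W$): since the commutator subgroup is $\langle a\rangle$ and $G/G'\cong\mathrm{C}_4$ has Davenport constant $4$, one can look for an indecomposable invariant of the shape $h\,t_\chi^k$ where $h$ is a relative invariant on $W$ of weight $\chi^{-1}$ and $k\le 3$, of total degree exactly $6$; for instance pairing a degree-$3$ relative invariant of weight $\mathrm i^{-1}$ (such as a suitable combination of the $h_i$) with $t_{\mathrm i}^3$, or using $f_5$ or $f_6$-type relative invariants. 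Then I would pick $v,v'$ differing only in the $U_\chi$-coordinate by a primitive $4$th root of unity factor, check that all invariants of degree $\le 5$ (which by Lemma~\ref{lemma:V+U} are built from $\field[W]^G$ in degree $\le 5$ together with lower-weight products) agree on $v$ and $v'$, while the degree-$6$ invariant distinguishes them. The orbit-distinctness is confirmed because that degree-$6$ invariant takes different values.

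For the upper bound $\sepbeta^\field(\mathrm{C}_5\rtimes\mathrm{C}_4)\le 6$, I would first invoke Lemma~\ref{lemma:spanning invariants} to pass to a large field and Lemma~\ref{lemma:multfree} to reduce to the single multiplicity-free module $V:=W\oplus U$ where $U=\bigoplus_{\chi\in\widehat G}U_\chi$. Given $v=(w,u)$, $v'=(w',u')$ with $f(v)=f(v')$ for all $f\in\field[V]^G$ of degree $\le 6$, I must show $G\cdot v=G\cdot v'$. By Proposition~\ref{prop:C5rtimesC4,V} the invariants $f_1,\dots,f_7$ (all of degree $\le 6$, except $f_7$ of degree $5$ — all $\le 6$) already separate the $G$-orbits in $W$, so $G\cdot w=G\cdot w'$; replacing $v'$ by a translate I may assume $w=w'$. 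It then remains to show $u$ and $u'$ lie in the same $\mathrm{Stab}_G(w)$-orbit. This is the stabilizer analysis: I would compute $\mathrm{Stab}_G(w)$ for $w\in W$ (it is trivial, or $\langle a\rangle$, or a conjugate of $\langle b\rangle$ or $\langle b^2\rangle$, or possibly larger for special $w$ such as $w=0$), and in each case produce, for every character $\chi\in\widehat G$, a relative invariant $f\in\field[W]^{G,\chi^{-1}}$ of degree $\le 5$ with $f(w)\ne 0$ whenever $\mathrm{Stab}_G(w)\subseteq\ker\chi$; multiplying by $t_\chi$ gives a degree $\le 6$ invariant pinning down $t_\chi(u)$. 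When $\mathrm{Stab}_G(w)$ is nontrivial, the characters it does not kill correspond exactly to coordinates of $u$ that are forced to zero on the common zero locus by Lemma~\ref{lemma:common zero locus}, and the residual ambiguity in $u$ versus $u'$ is absorbed by an element of $\mathrm{Stab}_G(w)$ using that $\sepbeta^\field(H)$ is small for the relevant subgroups $H$ (here $H$ is cyclic of order dividing $5$, $2$, or $4$).

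The main obstacle I anticipate is the case analysis for $\mathrm{Stab}_G(w)$ together with the construction of enough low-degree relative invariants on $W$ of each weight that do not vanish at $w$: the $4$-dimensional representation $W$ is more intricate than the $2$- and $3$-dimensional cases handled earlier, and one must carefully exhibit relative invariants of weights $\pm\mathrm i$ of degree $\le 5$ — likely by taking suitable "signed" variants of the generators $h_0,\dots,h_4$, e.g. $h_0$ itself (weight $-1$) and degree-$3$ or degree-$5$ combinations of the $x_j$ that transform under $b$ by $\mathrm i$ — and checking their simultaneous non-vanishing is governed exactly by the stabilizer condition. A subsidiary subtlety is that the clean generating statement Proposition~\ref{prop:C5rtimesC4,mingen} (hence the "$f_1,\dots,f_7$ generate up to degree $6$" shortcut) is only verified in characteristic $0$, which is why the theorem is stated under $\mathrm{char}(\field)=0$; the orbit-separation argument of Proposition~\ref{prop:C5rtimesC4,V} is characteristic-free but to conclude $\sepbeta\le 6$ from separation one also needs that nothing of degree $>6$ is needed, and I would rely on Proposition~\ref{prop:C5rtimesC4,mingen} together with Lemma~\ref{lemma:V+U} to organize the $U$-part exactly as above.
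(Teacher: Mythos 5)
Your upper-bound skeleton coincides with the paper's: reduce via Lemma~\ref{lemma:spanning invariants} and Lemma~\ref{lemma:multfree} to $V=W\oplus U$, use Proposition~\ref{prop:C5rtimesC4,V} to normalize $w=w'$, note that $\mathrm{Stab}_G(w)$ meets $\langle a\rangle$ trivially (so it is a subgroup of a conjugate of $\langle b\rangle$), and then handle the $U$-part by relative invariants and the Davenport constants of the small stabilizers. However, what you present is a plan whose decisive steps are exactly the ones you defer. Two genuine gaps remain. First, for $\chi=-1$ the paper needs that every generator of $\field[W\oplus U_{-1}]^G$ involving $t_{-1}$ has degree at most $6$; this is not a formal consequence of Proposition~\ref{prop:C5rtimesC4,mingen} plus Lemma~\ref{lemma:V+U} (one must control a complement of $\mathcal{H}(G,W)\cap\field[W]^{G,-1}$), and the paper resorts to a separate machine computation for it. Second, and more seriously, in the trivial-stabilizer case you assert that for each $\chi$ one can find $f\in\field[W]^{G,\chi^{-1}}$ of degree $\le 5$ with $f(w)\ne0$; the paper does not prove this and instead uses the weaker disjunction: since $t_{-\mathrm i}^4(u)=t_{-\mathrm i}^4(u')$ is already known from $\mathsf D(\mathrm{C}_4)=4$, it suffices to control $t_{-\mathrm i}$ \emph{or} $t_{-\mathrm i}^3$, i.e.\ to find either a weight-$\mathrm i$ relative invariant of degree $\le5$ or a weight-$(-\mathrm i)$ relative invariant of degree $\le3$ not vanishing at $w$. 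Proving that simultaneous vanishing of all of these at $w$ forces $\mathrm{Stab}_G(w)\ne\{1_G\}$ is the computational heart of the argument (the analysis with $k_1,\dots,k_6$ and the resulting congruences $x_3=\omega^jx_1$, $x_4=\omega^kx_2$), and it cannot be taken for granted; your stronger uniform claim may well fail for some $w$ with trivial stabilizer, which is presumably why the paper argues the disjunction instead.

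For the lower bound your route (an explicit module $W\oplus U_\chi$ and a degree-$6$ indecomposable invariant of the form $ht_\chi^k$) is unexecuted and would need its own verification that no invariant of degree $\le5$ separates the chosen points. The paper gets the bound for free: $\langle a,b^2\rangle\cong\mathrm{Dih}_{10}$, and $\sepbeta^\field(\mathrm{Dih}_{10})=6$ by \cite[Theorem 2.1]{domokos-schefler:16} together with the subgroup monotonicity \eqref{eq:sepbeta(H)}. You should use that shortcut. Finally, a small inaccuracy: $\langle a\rangle$ never occurs as (part of) the stabilizer of a nonzero $w\in W$, since no $\psi(a^k)$ with $k\not\equiv0$ has eigenvalue $1$.
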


\begin{proof}
The subgroup $\langle a,b^2\rangle$ of $G$ is isomorphic to 
$\mathrm{D}_{10}$, the dihedral group of order $10$, hence 
by inequality \eqref{eq:sepbeta(H)} and \cite[Theorem 2.1]{domokos-schefler:16} we have the inequality $\sepbeta^\field(G)\ge 6$. 

In view of Lemma~\ref{lemma:multfree}, to prove the reverse inequality take 
$v=(w,u),v'=(w',u')\in V=W\oplus U$ such that 
\begin{equation}\label{eq:proofC5rtimesC4assumption}
f(v)=f(v') \text{ holds for all homogeneous }f\in \field[V]^G \text{ with }
\deg(f)\le 6.
\end{equation}
We have to show that $G\cdot v=G\cdot v'$. 
By Proposition~\ref{prop:C5rtimesC4,V} 
we have $G\cdot w=G\cdot w'$, so replacing $v'$ by an appropriate element in its $G$-orbit we may assume that 
$w=w'$. Moreover, 
$G/G'=G/\langle a\rangle\cong \langle b\rangle\cong \mathrm{C}_4$, and $\mathsf{D}(\mathrm{C}_4)=4$, 
so $G\cdot u=G\cdot u'$ as well, implying that 
$u'_{\mathrm{i}}\in \{\pm u_{\mathrm{i}}, \pm\mathrm{i}u_{\mathrm{i}}\}$, 
$u'_{-1}=\pm u_{-1}$, and $u'_1=u_1$. 
So it is sufficient to deal with the case when $w\neq 0$ 
and $u_\chi\neq 0$ for some 
$\chi\in \widehat G\setminus \{1\}$. 

The eigenvalues of $\psi(g)$ for a non-identity element $g\in \langle a\rangle$ differ from $1$, therefore 
$\mathrm{Stab}_G(w)\cap \langle a\rangle=\{1_G\}$, and hence $\mathrm{Stab}_G(w)$ is isomorphic to a subgroup of $\mathrm{C}_4$. 

\emph{Case I.:} $\mathrm{Stab}_G(w)\cong \mathrm{C}_4$. 
Then $\mathrm{Stab}_G(w)G'=G$, hence from 
$G\cdot u=G\cdot u'$ it follows that there exists an element 
$h\in \mathrm{Stab}_G(w)$ with $h\cdot u=u'$. Thus we have 
$h\cdot (w,u)=(w,u')$, and we are done. 

\emph{Case II.:} $\mathrm{Stab}_G(w)\cong \mathrm{C}_2$. 
Then $\mathrm{Stab}_G(w)$ is conjugate in $G$ to $\langle b^2\rangle$, 
so $\mathrm{Stab}_G(w)=g\langle b^2 \rangle g^{-1}$ for some $g\in G$.  
Replacing $(w,u)$ and $(w,u')$ by 
$g^{-1}\cdot (w,u)$ and $g^{-1}\cdot (w,u')$ we may assume 
that $\mathrm{Stab}_G(w)=\langle b^2\rangle$. 
It follows that $w=[\nu,\eta,\nu,\eta]^T\in \field^4=V$, where 
$\nu\neq \eta$. 
Note that if $m$ is  a $b^2$-invariant monomial in $\field[U]$ 
of degree at most $2$,  
then either it is $b$-invariant, or $b\cdot m=-m$. 
So if $m$ is not $b$-invariant, then both $h_0m$ and 
$(h_1-h_2)m$ are $G$-invariants of degree at  most $5$. 
Note that $h_0(v)=\nu^2-\eta^2$ and 
$(h_1-h_2)(v)=2\nu\eta(\nu-\eta)$, so 
$\nu\neq\eta$ implies that $h_0(v)$ and $(h_1-h_2)(v)$ 
can not be simultaneously zero. We infer by 
\eqref{eq:proofC5rtimesC4assumption} that 
$m(u)=m(u')$ holds for all $\langle b^2\rangle$-invariant 
monomials in $\field[U]$ of degree at most $2$. 
As $\mathsf{D}(\mathrm{C}_2)=2$, we deduce that 
$u$ and $u'$ belong to the same orbit under $\langle b^2\rangle=\mathrm{Stab}_G(w)$, implying in turn that $v=(w,u)$ and $v'=(w,u')$ belong to the same $G$-orbit. 

\emph{Case III.:} $\mathrm{Stab}_G(w)=\{1_G\}$. 
Clearly it is sufficient to show that $u_\chi=u'_\chi$ for all $\chi\in\widehat G\setminus \{1\}=\{-1,\pm\mathrm{i}\}$. 
First we deal with $\chi=-1$. 
Using the CoCalc platform \cite{CoCalc} we obtained that 
a minimal homogeneous generating system of $\field[W\oplus U_{-1}]^G$ consists of the generators $f_1,\dots,f_8$ 
of $\field[V]^G$ from Proposition~\ref{prop:C5rtimesC4,mingen}, 
together with $t_{-1}^2$, $h_0t_{-1}$, $(h_1-h_2)t_{-1}$, 
$(h_3-h_4)t_{-1}$, $(x_1^5-x_2^5+x_3^5-x_4^5)t_{-1}$. 
So all the generators involving $t_{-1}$ have degree at most 
$6$. This implies that $G\cdot (w,u_{-1})=G\cdot (w,u'_{-1})$. 
Taking into account that the stabilizer of $w$ is trivial, this means that $u_{-1}=u'_{-1}$. 

Next we deal with $\chi=-\mathrm{i}$ and show that that 
$u_{-\mathrm{i}}=u'_{-\mathrm{i}}$ 
(the argument for $u_\mathrm{i}=u'_\mathrm{i}$ is 
obtained by obvious modification). 
Since we know already 
$G\cdot u_{-\mathrm{i}}=G\cdot u'_{-\mathrm{i}}$, i.e. 
$t_{-\mathrm{i}}^4(u)=t_{-\mathrm{i}}^4(u')$, our claim is equivalent to 
\begin{equation}\label{eq:C5rtimesC4,t or t^3}
    t_{-\mathrm{i}}(u)=t_{-\mathrm{i}}(u') 
    \text{ or }t_{-\mathrm{i}}^3(u)=t_{-\mathrm{i}}^3(u'). 
\end{equation}
The latter holds if there exists a relative invariant 
$f\in \field[W]^{G,\mathrm{i}}$ (respectively, $f\in \field[W]^{G,-\mathrm{i}}$) 
with $\deg(f)\le 5$ (respectively, $\deg(f)\le 3$) with $f(w)\neq 0$, 
because then $ft_{-\mathrm{i}}$ (respectively, $ft_{-\mathrm{i}}^3$) 
is a $G$-invariant of degree a most $6$, 
and therefore $f(w)t_{-\mathrm{i}}(u)=f(w)t_{-\mathrm{i}}(u')$ 
(respectively, $f(w)t_\mathrm{i}^3(u)=f(w)t_\mathrm{i}^3(u')$), 
so \eqref{eq:C5rtimesC4,t or t^3} holds.  
Suppose for contradiction that $w$ belongs 
to the common zero locus of 
$\field[W]^{G,\mathrm{i}}_{\le 5}$ 
and $\field[W]^{G,-\mathrm{i}}_{\le 3}$. 
Set 
\begin{align*}
k_1:=x_1x_2^2-x_3x_4^2, \qquad &k_2:=x_2x_3^2-x_4x_1^2 
\\
k_3:=x_1^3x_2-x_3^3x_4,\qquad &k_4:=x_2^3x_3-x_4^3x_1 
\\ 
k_5:=x_1^5-x_3^5, \qquad &k_6:=x_2^5-x_4^5. 
\end{align*}
Then 
\begin{align*}
    k_1-\mathrm{i}k_2,\quad k_3-\mathrm{i}k_4,\quad k_5-\mathrm{i}k_6\in 
    \field[V]^{G,\mathrm{i}}_{\le 5} 
    \\ 
    k_1+\mathrm{i}k_2\in \field[V]^{G,-\mathrm{i}}_{\le 3}.
\end{align*}
So the above four relative invariants all vanish at $w$. 
In particular, it follows that $k_1(w)=0$ and $k_2(w)=0$, i.e. 
\begin{equation}\label{eq:C5rtimesC4,x1x2^2}
(x_1x_2^2)(w)=(x_3x_4^2)(w)
\text{ and } (x_2x_3^2)(w)=(x_4x_1^2)(w).\end{equation}

\emph{Case III.a:} $x_j(w)=0$ for some $j\in \{1,2,3,4\}$; 
by symmetry we may assume that $x_1(w)=0$. 
By \eqref{eq:C5rtimesC4,x1x2^2} we conclude 
$x_3(w)=0$ or $x_4(w)=0=x_2(w)$. In the latter case 
by $(k_5-\mathrm{i}k_6)(w)=0$ we deduce $x_3(w)=0$, leading to the contradiction that $w=0$. 
So $x_1(w)=0=x_3(w)$. Then $(k_5-\mathrm{i}k_6)(w)=0$ imply 
$x_2^5(w)=x_4^5(w)$, so (as $w\neq 0$) we have that $x_4(w)=\omega^jx_2(w)$ for some 
$j\in \{0,1,2,3,4\}$. Then one can easily check that 
the stabilizer of $w$ is a conjugate of the subgroup 
$\langle b^2\rangle$ of $G$, a contradiction. 

\emph{Case III.b:} $(x_1x_2x_3x_4)(w)\neq 0$. From 
\eqref{eq:C5rtimesC4,x1x2^2}
we deduce 
\[x_3(w)=x_1(w)x_2(w)^2x_4(w)^{-2}\text{ and then }
x_4(w)=x_2(w)(x_1(w)x_2(w)^2x_4(w)^{-2})^2x_1(w)^{-2}.\]
The latter equality implies 
\[x_2(w)^5=x_4(w)^5,\] 
which together with $(k_5-\mathrm{i}k_6)(w)=0$ yields 
\[x_1(w)^5=x_3(w)^5.\]
So there exist unique $j,k\in\{0,1,2,3,4\}$ with 
\[x_3(w)=\omega^jx_1(w)\text{ and }x_4(w)=\omega^kx_2(w).\]
From \eqref{eq:C5rtimesC4,x1x2^2} it follows that 
\[(j,k)\in \{(0,0),(1,2),(2,4),(3,1),(4,3)\}.\]
If $(j,k)=(0,0)$, then $b^2\in \mathrm{Stab}_G(w)$, a contradiction. If $(j,k)=(1,2)$, then $a^4b^2\in \mathrm{Stab}_G(w)$, a contradiction. Similarly we get to a contradiction for all other possible $(j,k)$.  
\end{proof}

\section{The group $\mathrm{M}_{27}$}\label{sec:m27}
In this section
\[G=\mathrm{M}_{27}=\langle a,b \mid a^9=b^3=1,\ bab^{-1}=a^4 \rangle\cong \mathrm{C}_9\rtimes \mathrm{C}_3\] 
is the non-abelian group of order $27$ with an index $3$ cyclic subgroup. 
Assume that $\field$ contains an element $\omega$ of multiplicative order $9$, and consider the following 
two non-isomorphic irreducible $3$-dimensional representations of $G$:  
\[ \psi_1: a\mapsto 
\begin{bmatrix} 
      \omega & 0 & 0 \\ 0 & \omega^4 & 0  
      \\ 0 & 0 & \omega^7   
   \end{bmatrix}, \quad 
   b\mapsto \begin{bmatrix} 
     0 & 1 & 0 \\ 0 & 0 & 1 \\ 1 & 0 & 0   
       \end{bmatrix}.\]
 \[ \psi_2: a\mapsto 
\begin{bmatrix} 
      \omega^2 & 0 & 0 \\ 0 & \omega^8 & 0  
      \\ 0 & 0 & \omega^5   
   \end{bmatrix}, \quad 
   b\mapsto \begin{bmatrix} 
     0 & 1 & 0 \\ 0 & 0 & 1 \\ 1 & 0 & 0   
       \end{bmatrix}.\]
The commutator subgroup of $G$ is $G'=\langle a^3\rangle$, and    
$G/G'\cong \mathrm{C}_3\times \mathrm{C}_3$. So      
the remaining irreducible representations of $G$ are $1$-dimensional and can be labeled 
by  $\widehat G
=\{\varepsilon,\varepsilon^2,1\}\times \{\varepsilon,\varepsilon^2,1\}\le \field^\times \times\field^\times$, 
where $\varepsilon:=\omega^3$ has multiplicative order $3$. 
Identify $\chi=(\chi_1,\chi_2)\in \widehat G$ with the representation 
\[\chi:a\mapsto\chi_1,\qquad b\mapsto \chi_2.\]
Write $W_j$ for the vector space $\field^3$ endowed with the  representation $\psi_j$, and write $U_\chi$ for the $1$-dimensional vector space $\field$ endowed by 
the  representation $\chi$, and set 
$U:=\bigoplus_{\chi\in \widehat G}U_\chi$.  

The following result was obtained using the CoCalc platform 
\cite{CoCalc}:

\begin{proposition}\label{prop:M27,V1+V2}
Assume in addition that $\mathrm{char}(\field)=0$. Then we have $\beta(G,W_1\oplus W_2)=9$. 
\end{proposition}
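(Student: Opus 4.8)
Write $x_1,x_2,x_3$ (resp. $y_1,y_2,y_3$) for the coordinate functions on $W_1$ (resp. $W_2$), and set $H:=\langle a\rangle\cong\mathrm{C}_9$, a normal subgroup of $G$ with $G/H\cong\mathrm{C}_3=\langle\bar b\rangle$. The restriction of $W_1\oplus W_2$ to $H$ is diagonal, with each $x_j$ of weight in $\{1,4,7\}$ and each $y_j$ of weight in $\{2,8,5\}$ modulo $9$; hence $\field[W_1\oplus W_2]^H$ is a monomial algebra, whose minimal monomial generators (``atoms'') correspond to the minimal zero-sum sequences over $\mathbb{Z}/9\mathbb{Z}$ assembled from these six residues. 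Since $1\in\mathbb{Z}/9\mathbb{Z}$ occurs (as the weight of $x_1$), the monomial $x_1^9$ is an atom, and likewise $x_2^9$ and $x_3^9$. The element $f:=x_1^9+x_2^9+x_3^9$ is $H$-invariant and fixed by $\bar b$ (which permutes $x_1,x_2,x_3$ cyclically), so $f\in\field[W_1\oplus W_2]^G$ and $\deg f=9$. Every monomial in the support of a product of two positive-degree elements of the monomial algebra $\field[W_1\oplus W_2]^H$ is a non-atom, whereas all three monomials of $f$ are atoms; hence $f\notin(\field[W_1\oplus W_2]^{H}_{+})^{2}\supseteq(\field[W_1\oplus W_2]^{G}_{+})^{2}$, so $f$ is indecomposable and $\beta(G,W_1\oplus W_2)\ge 9$.

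\textbf{Upper bound.} By \cite{cziszter-domokos-szollosi} we have $\beta^\field(G)=11$, so $\beta(G,W_1\oplus W_2)\le 11$, and it remains to show that $\field[W_1\oplus W_2]^G$ has no indecomposable element of degree $10$ or $11$. Since $\mathrm{char}(\field)=0$ we have $\field[W_1\oplus W_2]^G=(\field[W_1\oplus W_2]^H)^{\langle\bar b\rangle}$. As noted, $\field[W_1\oplus W_2]^H$ is a monomial algebra, and by $\mathsf D(\mathrm{C}_9)=9$ its (finitely many) minimal monomial generators all have degree at most $9$. The automorphism $\bar b$ permutes this generating set; a $\bar b$-fixed generator is already a $G$-invariant of degree $\le 9$, while a size-$3$ orbit $\{M,\bar bM,\bar b^2M\}$ contributes, inside the subalgebra $\field[M,\bar bM,\bar b^2M]^{\langle\bar b\rangle}$, the three elementary symmetric functions of $M,\bar bM,\bar b^2M$ together with the single extra generator $M^2\bar bM+(\bar bM)^2\bar b^2M+(\bar b^2M)^2M$, plus invariants mixing several orbits. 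The plan is then to list the orbits explicitly and verify that each resulting invariant of degree $10$ or $11$ is congruent, modulo $(\field[W_1\oplus W_2]^{G}_{+})^{2}$, to one of degree $\le 9$. This is the finite check that the paper performs on CoCalc \cite{CoCalc}.

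\textbf{The main obstacle.} This last reduction is the only genuinely hard step: the bound handed over for free by the tower $H\trianglelefteq G$ is merely $\mathsf D(\mathrm{C}_3)\cdot\mathsf D(\mathrm{C}_9)=27$, so one must exploit the special arithmetic of the weights. Concretely, $\{1,4,7\}$ and $\{2,8,5\}$ are exactly the two nonzero cosets of $\langle 3\rangle$ in $\mathbb{Z}/9\mathbb{Z}$, which sharply limits which monomials can be $H$-invariant and hence which size-$3$ orbits of high-degree generators occur; moreover $W_2\cong W_1^{\ast}$ as $\field G$-modules (a conjugating isomorphism is $\psi_1(b)^2$), which supplies the degree-$2$ invariant $x_1y_2+x_2y_3+x_3y_1$ and a stock of low-degree ``mixed'' invariants through which the degree-$10$ and degree-$11$ candidates factor. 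Done by hand, the argument would consist of writing down, for each offending candidate, an explicit polynomial identity reducing it to invariants of degree $\le 9$ — the same style of bookkeeping as in the proof of Proposition~\ref{prop:S3xC3,V1+V2+V3mingen} — after which the lower bound gives $\beta(G,W_1\oplus W_2)=9$.
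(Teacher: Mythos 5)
The paper offers no hand proof of this proposition: it is stated as the output of a CoCalc computation of a minimal homogeneous generating system of $\field[W_1\oplus W_2]^G$, so your write-up is necessarily a different route. Your lower bound is correct and complete: $x_1^9$ is indeed a minimal generator of the monomial algebra $\field[W_1\oplus W_2]^{\langle a\rangle}$ (no proper power of $x_1$ is $\langle a\rangle$-invariant, since the weight of $x_1$ is a unit mod $9$), the sum $x_1^9+x_2^9+x_3^9$ is $G$-invariant, and your support argument correctly places it outside $(\field[V]^{\langle a\rangle}_+)^2\supseteq(\field[V]^G_+)^2$; this yields $\beta(G,W_1\oplus W_2)\ge 9$ by hand, something the paper leaves buried in the computation. (Minor slip: with the convention $g\cdot x_j=\sum_i\psi(g^{-1})_{ji}x_i$ the weights of the $x_j$ are $\{8,5,2\}$ and of the $y_j$ are $\{7,1,4\}$, the negatives of what you wrote; nothing downstream is affected.) The upper bound, however, is a reduction plus a plan rather than a proof: citing $\beta^\field(G)=11$ legitimately cuts the problem to excluding indecomposable invariants in degrees $10$ and $11$, but you then defer that finite verification to exactly the computation the paper runs on CoCalc, and your structural description of $(\field[V]^{\langle a\rangle})^{\mathrm{C}_3}$ (elementary symmetric functions of each size-$3$ orbit plus one extra generator per orbit plus mixing terms) is the answer for a polynomial ring on freely permuted variables, whereas $\field[V]^{\langle a\rangle}$ is a monomial algebra with relations, so that list is only a heuristic for organizing the check. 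In sum: your proposal is on an equal footing with the paper for the upper bound (both ultimately rest on the unexhibited machine computation) and strictly more informative for the lower bound, but as a self-contained proof it is incomplete in the same place the paper's is.
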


\begin{lemma}\label{lemma:M27,stabilizers} 
If $\mathrm{Stab}_G(v)$ is non-trivial for some $v\in W_1\oplus W_2$, then either $v=0$ (and then $\mathrm{Stab}_G(v)=G$), or $v\neq 0$ and then 
$\mathrm{Stab}_G(v)\in \{\langle b\rangle, 
\langle a^3b\rangle, \langle a^6b\rangle\}$ (in particular, then 
$|\mathrm{Stab}_G(v)|=3$). 
Moreover, 
for a non-zero $w_1\in W_1$ and $w_2\in W_2$ we have that  
\begin{itemize}
    \item $\mathrm{Stab}_G(w_1)=\langle b\rangle 
    \iff w_1\in\field [1,1,1]^T$; 
    \item $\mathrm{Stab}_G(w_2)=\langle b\rangle 
    \iff w_2\in\field [1,1,1]^T$; 
    \item $\mathrm{Stab}_G(w_1)=\langle a^3b\rangle 
    \iff w_1\in\field [\varepsilon^2,\varepsilon,1]^T$; 
    \item $\mathrm{Stab}_G(w_2)=\langle a^3b\rangle 
    \iff w_2\in \field [\varepsilon,\varepsilon^2,1]^T$; 
    \item $\mathrm{Stab}_G(w_1)=\langle a^6b\rangle 
    \iff w_1\in\field [\varepsilon,\varepsilon^2,1]^T$;
    \item $\mathrm{Stab}_G(w_2)=\langle a^6b\rangle \iff w_2\in \field [\varepsilon^2,\varepsilon,1]^T$. 
\end{itemize}
\end{lemma}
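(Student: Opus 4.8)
The statement is a direct computation about which group elements fix nonzero vectors in the two specific $3$-dimensional representations $\psi_1$ and $\psi_2$. I would organize the proof around the observation that $\psi_j(a)$ has eigenvalues that are distinct nontrivial powers of $\omega$ (order $9$), so no nonidentity element of the cyclic subgroup $\langle a\rangle$ can have $1$ as an eigenvalue; hence $\mathrm{Stab}_G(v)\cap\langle a\rangle=\{1_G\}$ for every $v\in W_1\oplus W_2$, which already forces every nontrivial stabilizer to be a subgroup of order $3$ meeting $\langle a\rangle$ trivially, i.e.\ a conjugate of $\langle b\rangle$. The conjugates of $\langle b\rangle$ in $G$ are exactly $\langle b\rangle$, $\langle a^3b\rangle$, $\langle a^6b\rangle$ (since $a^k b a^{-k}=a^{k-4k}b=a^{-3k}b$, and $\{-3k\bmod 9\}=\{0,3,6\}$), which pins down the list in the first assertion.

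Next I would treat the two representations one at a time. For $W_1$: the element $b$ acts by the cyclic permutation matrix, so its $1$-eigenspace on $\field^3$ is exactly $\field[1,1,1]^T$; this gives the first bullet. For $a^3b$ one computes $\psi_1(a^3b)$ explicitly — it is the permutation matrix times $\mathrm{diag}(\omega^3,\omega^{12},\omega^{21})=\mathrm{diag}(\varepsilon,\varepsilon,\varepsilon^{-1}\cdot?)$; more carefully $\omega^3=\varepsilon$, $\omega^{12}=\varepsilon$, $\omega^{21}=\omega^3=\varepsilon$, wait — I would just diagonalize directly: solving $\psi_1(a^3b)v=v$ yields a one-dimensional eigenspace, and a short check identifies it as $\field[\varepsilon^2,\varepsilon,1]^T$; similarly $\psi_1(a^6b)$ fixes $\field[\varepsilon,\varepsilon^2,1]^T$. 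The same three computations for $\psi_2$ (with $a$ acting by $\mathrm{diag}(\omega^2,\omega^8,\omega^5)$ instead) give the $W_2$ bullets, with the roles of $[\varepsilon,\varepsilon^2,1]^T$ and $[\varepsilon^2,\varepsilon,1]^T$ swapped for $a^3b$ versus $a^6b$. Finally, for a vector $v=(w_1,w_2)\in W_1\oplus W_2$ to have nontrivial stabilizer, the \emph{same} order-$3$ subgroup must fix both components, so one just intersects the conditions coming from $W_1$ and $W_2$; the bullet list shows these are compatible (each of $\langle b\rangle,\langle a^3b\rangle,\langle a^6b\rangle$ does fix a nonzero vector in each $W_j$), which re-confirms the first assertion.

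**Main obstacle.** There is no conceptual obstacle; the only risk is bookkeeping errors in the explicit eigenvector computations — getting the exponents of $\omega$ right in $\psi_j(a^3b)$ and $\psi_j(a^6b)$, and correctly matching eigenvectors to the listed vectors $[\varepsilon^2,\varepsilon,1]^T$ and $[\varepsilon,\varepsilon^2,1]^T$ (it is easy to transpose these two in one of the representations). I would double-check by verifying directly that, e.g., $\psi_1(a^3b)[\varepsilon^2,\varepsilon,1]^T=[\varepsilon^2,\varepsilon,1]^T$: the permutation part sends $[\varepsilon^2,\varepsilon,1]^T$ to $[\varepsilon,1,\varepsilon^2]^T$, and then the diagonal factor must rescale this back, which fixes the required diagonal entries and hence the exponents — a clean consistency check one can run for all six cases. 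Given the paper's phrasing (``can be verified by straightforward direct computation''), I would simply write: \emph{Since the eigenvalues of $\psi_j(a^k)$ are $\omega^{\epsilon_j k},\omega^{4\epsilon_j k},\omega^{7\epsilon_j k}$ (with $\epsilon_1=1,\epsilon_2=2$), none of which equals $1$ for $a^k\neq 1$, any nontrivial stabilizer meets $\langle a\rangle$ trivially and thus equals a conjugate of $\langle b\rangle$, namely one of $\langle b\rangle,\langle a^3b\rangle,\langle a^6b\rangle$; solving $\psi_j(g)v=v$ for $g$ in this list and $j=1,2$ gives the six displayed fixed lines, and a common nonzero fixed vector of $(w_1,w_2)$ requires the same such subgroup to appear in both lists, proving the claim.}
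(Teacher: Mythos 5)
Your proposal is correct and follows essentially the same route as the paper, which likewise just observes (by linear algebra) that $\psi_j(g)$ has eigenvalue $1$ exactly for $g$ in one of the three order-$3$ subgroups $\langle b\rangle,\langle a^3b\rangle,\langle a^6b\rangle$ and then computes the corresponding eigenvectors; your preliminary reduction (no nonidentity power of $a$ has eigenvalue $1$, so any nontrivial stabilizer of a nonzero vector is one of the three conjugates of $\langle b\rangle$) is a clean way to organize that check, and your eigenvector identifications, e.g.\ $\psi_1(a^3b)=\varepsilon P$ fixing $\field[\varepsilon^2,\varepsilon,1]^T$, are accurate.
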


\begin{proof}
One can check by basic linear algebra that 
$\psi_j(g)$ has the eigenvalue $1$ for some $g\in G$ and $j\in \{1,2\}$ if and only if $g$ belongs to one of the order $3$ subgroups $\langle b\rangle$, $\langle a^3b\rangle$, 
$\langle a^6b\rangle$ of $G$. By computing the corresponding eigenvectors we obtain the result. 
\end{proof}

Recall that given a set $S\subset \field[V]$ of polynomials,  
$\mathcal{V}(S)$ 
stands 
for the common zero locus in $V$ of the elements of $S$. 

\begin{lemma}\label{lemma:M27,common zero locus} 
\begin{itemize}
\item[(i)] For $\chi\in \{(\varepsilon,1),(\varepsilon^2,1)\}$ 
we have 
\[\mathcal{V}(f\in \field[W_1\oplus W_2]^{G,\chi}\mid \deg(f)\le 6)=\{0\}.\]
\item[(ii)] For $\chi\in \{(\varepsilon,\varepsilon),(\varepsilon,\varepsilon^2),(\varepsilon^2,\varepsilon),(\varepsilon^2,\varepsilon^2),(1,\varepsilon),(1,\varepsilon^2)\}$ 
we have  
\[\mathcal{V}(f\in \field[W_1\oplus W_2]^{G,\chi}\mid \deg(f)\le 9)=\{v\in W_1\oplus W_2\mid \mathrm{Stab}_G(v)\neq \{1_G\}\}.\]    \end{itemize}
\end{lemma}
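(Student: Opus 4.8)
The plan is to prove both equalities in two moves: identify the common zero locus of the \emph{full} module $\field[W_1\oplus W_2]^{G,\chi}$ using Lemma~\ref{lemma:common zero locus}, and then check that the relative invariants of bounded degree already cut out that same set. One inclusion is automatic, since enlarging a set of polynomials only shrinks its zero locus: in (i) and (ii) the left-hand side always contains the right-hand side. For the full zero locus I would first record $\ker(\chi)$. As $\widehat G=\widehat{G/G'}$ with $G'=\langle a^3\rangle$, for $\chi=(\chi_1,\chi_2)$ one has $\chi(b)=\chi(a^3b)=\chi(a^6b)=\chi_2$. Hence for $\chi\in\{(\varepsilon,1),(\varepsilon^2,1)\}$ the three order-$3$ subgroups $\langle b\rangle,\langle a^3b\rangle,\langle a^6b\rangle$ all lie in $\ker(\chi)$, whereas $\mathrm{Stab}_G(0)=G\nsubseteq\ker(\chi)$; by Lemma~\ref{lemma:M27,stabilizers} these are the only stabilizers that occur, so Lemma~\ref{lemma:common zero locus} gives $\mathcal V(\field[W_1\oplus W_2]^{G,\chi})=\{0\}$. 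For the six $\chi$ in (ii) one has $\chi_2\neq1$, so none of $\langle b\rangle,\langle a^3b\rangle,\langle a^6b\rangle$ lies in $\ker(\chi)$, and Lemma~\ref{lemma:common zero locus} (with Lemma~\ref{lemma:M27,stabilizers}) yields $\mathcal V(\field[W_1\oplus W_2]^{G,\chi})=\{v:\mathrm{Stab}_G(v)\neq\{1_G\}\}$.

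For (i) it then suffices to exhibit enough relative invariants of weight $\chi$ and degree $\le6$ to force their common zero locus to be $\{0\}$ (it always contains $0$, so ``contained in $\{0\}$'' suffices). For $\chi=(\varepsilon,1)$ I would use, on $W_1$, the weight-$(\varepsilon,1)$ relative invariants $x_1x_2x_3$, $x_1^3+x_2^3+x_3^3$, $x_1^4x_2^2+x_2^4x_3^2+x_3^4x_1^2$ of degrees $3,3,6$: a short case check on how many coordinates of $w_1$ vanish shows their common zero locus in $W_1$ is $\{0\}$. Together with the weight-$(\varepsilon,1)$ relative invariants $y_1^6+y_2^6+y_3^6$, $y_1^3y_2^3+y_2^3y_3^3+y_3^3y_1^3$, $(y_1y_2y_3)^2$ on $W_2$ (all of degree $6$, common zero locus $\{0\}$ in $W_2$), one gets common zero locus $\{0\}$ in $W_1\oplus W_2$. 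The weight $(\varepsilon^2,1)$ is symmetric, e.g. by transporting the above through the automorphism $\alpha\colon a\mapsto a^2,\ b\mapsto b$ (which satisfies $\psi_2=\psi_1\circ\alpha$) via Lemma~\ref{lemma:auto}.

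For (ii) the target set is bigger, so instead of listing invariants I would show that $M:=\field[W_1\oplus W_2]^{G,\chi}$ is generated over $R:=\field[W_1\oplus W_2]^G$ in degree $\le9$; then every relative invariant of weight $\chi$ nonvanishing at $v$ is an $R$-combination of generators of degree $\le9$, one of which must be nonzero at $v$, giving the missing inclusion. Here $M$ is a finitely generated graded $R$-module, so by graded Nakayama a homogeneous subset generates $M$ once its image spans $M/R_+M=M/(\mathcal H(G,W_1\oplus W_2)\cap M)$; such a subset is precisely a choice of $A_\chi$ as in Lemma~\ref{lemma:V+U}. Applying Lemma~\ref{lemma:V+U} to $V':=W_1\oplus W_2\oplus U_{\chi^{-1}}$, one takes $A_\chi$ among the $h$ with $h\,t_{\chi^{-1}}\in\field[V']^G$, and a multihomogeneity argument shows that if $\deg(h\,t_{\chi^{-1}})>\beta(G,V')$ then $h\in R_+M$, hence $h=0$; so $\deg h\le\beta(G,V')-1$. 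Thus $M$ is generated in degree $\le9$ provided $\beta(G,V')\le10$.

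I expect the genuine obstacle to be exactly this bound $\beta(G,W_1\oplus W_2\oplus U_{\chi^{-1}})\le10$ for the six weights $\chi$ in (ii): since $\beta^{\field}(\mathrm M_{27})=11$ and already $\beta(G,W_1\oplus W_2)=9$, it is not formal, and must be obtained either by the same computer-algebra computation behind Proposition~\ref{prop:M27,V1+V2} (which is why the treatment of $\mathrm{M}_{27}$ runs under $\mathrm{char}(\field)=0$) or by a hands-on analysis, through Lemma~\ref{lemma:V+U}, of the degrees of the indecomposable-modulo-Hilbert-ideal relative invariants of weights $\chi$ and $\chi^2$ on $W_1\oplus W_2$. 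A less efficient alternative that avoids the bound would be to treat (ii) like (i): for each of the six $\chi$, display weight-$\chi$ relative invariants of degree $\le9$ whose common zero locus is contained in $\{v:\mathrm{Stab}_G(v)\neq\{1_G\}\}$ --- elementary, but requiring a longer stabilizer-driven case analysis.
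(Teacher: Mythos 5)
Your overall framework is the same as the paper's: the inclusion $\supseteq$ comes from Lemma~\ref{lemma:common zero locus} combined with Lemma~\ref{lemma:M27,stabilizers} and the observation that $\chi(b)=\chi(a^3b)=\chi(a^6b)=\chi_2$, and part (i) is then settled by exhibiting explicit low-degree relative invariants. Your part (i) is complete and correct: $x_1x_2x_3$, $x_1^3+x_2^3+x_3^3$, $x_1^4x_2^2+x_2^4x_3^2+x_3^4x_1^2$ and $(y_1y_2y_3)^2$, $y_1^6+y_2^6+y_3^6$, $y_1^3y_2^3+y_2^3y_3^3+y_3^3y_1^3$ are indeed weight-$(\varepsilon,1)$ relative invariants of degree at most $6$ whose common zero locus is $\{0\}$, and the transport to $(\varepsilon^2,1)$ via the automorphism $a\mapsto a^2$, $b\mapsto b$ (or, as the paper does, by swapping the two variable sets) is legitimate. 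The paper's list differs only cosmetically (it uses $x_1^5x_3+x_2^5x_1+x_3^5x_2$ and $y_1^2y_2+y_2^2y_3+y_3^2y_1$ in place of two of your choices).

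The genuine gap is in part (ii), which is the substance of the lemma. Your reduction --- $\mathcal V(M_{\le 9})=\mathcal V(M)$ once $M=\field[W_1\oplus W_2]^{G,\chi}$ is generated over $R=\field[W_1\oplus W_2]^G$ in degrees $\le 9$, and such generation follows from $\beta(G,W_1\oplus W_2\oplus U_{\chi^{-1}})\le 10$ via the indecomposability of $h\,t_{\chi^{-1}}$ for $h\notin R_+M$ --- is logically sound, but the quantitative input is never established. You yourself note it is not formal (indeed $\beta^\field(\mathrm M_{27})=11$, and the paper exhibits an indecomposable invariant of degree $10$ on $W_1\oplus U_{(1,\varepsilon^2)}$, so the required bound is tight at best and must be checked for all six weights); you defer it to an unperformed computer calculation or to a ``longer stabilizer-driven case analysis.'' That case analysis is precisely what the paper's proof consists of: for each weight it writes down explicit relative invariants of degree $\le 9$ (e.g.\ $f^{(1)}_{(\varepsilon,\varepsilon)},\dots,k^{(2)}_{(\varepsilon,\varepsilon)}$ and $f^{(1)}_{(1,\varepsilon)},\dots,k^{(2)}_{(1,\varepsilon)}$), and shows by determinant identities and elimination that their simultaneous vanishing forces $(w_1,w_2)$ into the lines $\field[1,1,1]^T$, $\field[\varepsilon,\varepsilon^2,1]^T$, $\field[\varepsilon^2,\varepsilon,1]^T$ with matching stabilizers, the remaining weights being handled by the automorphisms $a\mapsto a^2$ and $\varepsilon\mapsto\varepsilon^2$. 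Without either that explicit analysis or a verified generation-degree bound, the inclusion $\subseteq$ in (ii) --- and hence the lemma --- is not proved.
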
 

\begin{proof}
(i) For $\chi=(\varepsilon,1)$, consider the following relative  invariants in 
$\field[W_1\oplus W_2]^{G,(\varepsilon,1)}$: 
\begin{align*}
    f_{(\varepsilon,1)}^{(1)}:=x_1x_2x_3, 
    &\qquad h_{(\varepsilon,1)}^{(1)}:=(y_1y_2y_3)^2, 
    \\ 
    f_{(\varepsilon,1)}^{(2)}:=x_1^3+x_2^3+x_3^3, 
    &\qquad h_{(\varepsilon,1)}^{(2)}:=y_1^6+y_2^6+y_3^6,
    \\ 
    f_{(\varepsilon,1)}^{(3)}:=x_1^5x_3+x_2^5x_1+x_3^5x_2, 
    &\qquad h_{(\varepsilon,1)}^{(3)}:=y_1^2y_2+y_2^2y_3+y_3^2y_1. 
\end{align*}
It is easy to see that 
$0=f_{(\varepsilon,1)}^{(1)}(v)
=f_{(\varepsilon,1)}^{(2)}(v)
=f_{(\varepsilon,1)}^{(3)}(v)$ 
implies $0=x_1(v)=x_2(v)=x_3(v)$, 
and similarly 
$0=h_{(\varepsilon,1)}^{(1)}(v)
=h_{(\varepsilon,1)}^{(2)}(v)
=h_{(\varepsilon,1)}^{(3)}(v)$ 
implies $0=y_1(v)=y_2(v)=y_3(v)$. 
For $\chi=(\varepsilon^2,1)$ we just need to interchange the roles of the variable sets $\{x_1,x_2,x_3\}$ and 
$\{y_1,y_2,y_3\}$ in the relative invariants constructed above. 

(ii) Take $\chi\in \{(\varepsilon,\varepsilon),(\varepsilon,\varepsilon^2),(\varepsilon^2,\varepsilon),(\varepsilon^2,\varepsilon^2),(1,\varepsilon),(1,\varepsilon^2)\}$. None of $b,a^3b,a^6b$ belongs to $\ker(\chi)$, hence by 
Lemma~\ref{lemma:M27,stabilizers}, $\mathrm{Stab}_G(v)\neq \{1_G\}$ if and only if 
$\mathrm{Stab}_G(v)\nsubseteq \ker(\chi)$. Therefore 
the inclusion "$\supseteq$" holds by 
Lemma~\ref{lemma:common zero locus}. 
We turn to the proof of the reverse inclusion "$\subseteq$". 
So assume that all elements of $\field[W_1\oplus W_2]^{G,\chi}_{\le 9}$ vanish at $(w_1,w_2)$. 
We have to show that $\mathrm{Stab}_G(w_1,w_2)\neq\{1_G\}$. 

\emph{Case I.:} $\chi=(\varepsilon,\varepsilon)$. 
Consider the following relative invariants in 
$\field[W_1\oplus W_2]^{G,(\varepsilon,\varepsilon)}$: 
\begin{align*}
    f_{(\varepsilon,\varepsilon)}^{(1)}&:=x_1^3+\varepsilon^2 x_2^3+\varepsilon x_3^3 
    \qquad &h_{(\varepsilon,\varepsilon)}^{(1)}:=y_1^6+\varepsilon^2 y_2^6+\varepsilon y_3^6 
    \\ f_{(\varepsilon,\varepsilon)}^{(2)}&:=x_1^5x_3+\varepsilon^2 x_2^5x_1+\varepsilon x_3^5x_2 
    \qquad &h_{(\varepsilon,\varepsilon)}^{(2)}:=y_1^2y_2+\varepsilon^2 y_2^2y_3+\varepsilon y_3^2y_1 
    \\ 
    f_{(\varepsilon,\varepsilon)}^{(3)}&:=
    x_1^7x_3^2+\varepsilon^2 x_2^7x_1^2+\varepsilon x_3^7x_2^2
    \qquad &h_{(\varepsilon,\varepsilon)}^{(3)}:=
    y_1^5y_3^4+\varepsilon^2 y_2^5y_1^4+\varepsilon y_3^5y_2^4
 \\
 k_{(\varepsilon,\varepsilon)}^{(1)}&:=
x_1y_1+\varepsilon^2 x_2y_2+\varepsilon x_3y_3 \qquad 
 &h_{(\varepsilon,\varepsilon)}^{(4)}:=
 y_1y_2y_3(y_1^3+\varepsilon^2 y_2^3+\varepsilon y_3^3) 
 \\
k_{(\varepsilon,\varepsilon)}^{(2)}&:=x_1^2y_1^5+\varepsilon^2 x_2^2y_2^5+\varepsilon x_3^2y_3^5 
\qquad & 
\end{align*}
The above relative invariants have degree at most $9$, so 
each of them vanishes at $(w_1,w_2)\in W_1\oplus W_2$. 
In particular, 
$0=f_{(\varepsilon,\varepsilon)}^{(1)}(w_1)=
f_{(\varepsilon,\varepsilon)}^{(2)}(w_1)
=f_{(\varepsilon,\varepsilon)}^{(3)}(w_1)$. 
Then 
\[0=\det\begin{bmatrix} 
x_1^3 & x_2^3 & x_3^3\\ x_1^5x_3 & x_2^5x_1 
& x_3^5x_2 \\ x_1^7x_3^2 & x_2^7x_1^2 & x_3^7x_2^2
\end{bmatrix}(w_1)
=(x_1x_2x_3)^4(x_1x_2-x_3^2)(x_1x_3-x_2^2)(x_2x_3-x_1^2))(w_1).\]
If $x_j(w_1)=0$ for some $j\in\{1,2,3\}$, then 
$0=f_{(\varepsilon,\varepsilon)}^{(1)}(w_1)=
f_{(\varepsilon,\varepsilon)}^{(2)}(w_1)
=f_{(\varepsilon,\varepsilon)}^{(3)}(w_1)$ implies 
$0=x_1(w_1)=x_2(w_1)=x_3(w_1)$, and so 
$\mathrm{Stab}_G(w_1,w_2)=\mathrm{Stab}_G(w_2)$. 

Suppose next that $(x_1x_2x_3)(w_1)\neq 0$. Then   
one of $x_1x_2-x_3^2$, $x_1x_3-x_2^2$, 
$x_2x_3-x_1^2$ vanishes at $w_1$. Assume 
that 
\begin{equation}\label{eq:M27,x1x2-x3^2}
(x_1x_2-x_3^2)(w_1)=0\end{equation}  
(the other cases follow by cyclic symmetry). 
From \eqref{eq:M27,x1x2-x3^2} and $f_{(\varepsilon,\varepsilon)}^{(1)}(w_1)=0$ we deduce 
\begin{equation}\label{eq:M27,x1x2x3}
\varepsilon (x_1x_2x_3)(w_1)=-x_1^3(w_1)-\varepsilon^2 x_2^3(w_1). 
\end{equation}
From \eqref{eq:M27,x1x2-x3^2}, \eqref{eq:M27,x1x2x3} and $f_{(\varepsilon,\varepsilon)}^{(2)}(w_1)=0$ we deduce 
\begin{align*}0&=(x_1^5x_3+\varepsilon^2 x_2^5x_1+\varepsilon (x_1x_2)^2x_3x_2)(w_1)
\\ &=(x_1^5x_3+\varepsilon^2 x_2^5x_1-(x_1^3+\varepsilon^2 x_2^3)x_1x_2^2)(w_1)
\\ &=x_1^4(w_1)(x_1x_3-x_2^2)(w_1), 
\end{align*}
so we have 
\begin{equation}\label{eq:M27,x1x3-x2^2} 
(x_1x_3)(w_1)=(x_2^2)(w_1)=0. 
\end{equation}
From $0=f_{(\varepsilon,\varepsilon)}^{(3)}(w_1)$, 
\eqref{eq:M27,x1x2-x3^2} and \eqref{eq:M27,x1x3-x2^2} we deduce 
\begin{align*} 
0&=(x_1^5(x_1x_3)^2+\varepsilon^2 x_2^7x_1^2+\varepsilon  
(x_1x_2)^3x_2^2x_3)(w_1)
\\ &=(x_1^5x_2^4+\varepsilon^2 x_2^7x_1^2+\varepsilon  
x_2^2x_1^2x_2^5)(w_1)
\\ &=(x_1^2x_2^4)(w_1)(x_1^3-x_2^3)(w_1), 
\end{align*} 
so we have 
\begin{equation}\label{eq:M27,x1^3-x_2^3}
x_1^3(w_1)=x_2^3(w_1).
\end{equation} 
So $x_1(w_1)=x_2(w_1)$ or $x_1(w_1)=\varepsilon x_2(w_1)$ 
or $x_1(w_1)=\varepsilon^2 x_2(w_1)$. 
Taking into account \eqref{eq:M27,x1x2-x3^2} and 
\eqref{eq:M27,x1x3-x2^2} we conclude that 
$w_1$ is a non-zero scalar multiple of $[1,1,1]^T$,$[\varepsilon^2,\varepsilon,1]^T$, or 
$[\varepsilon, \varepsilon^2,1]^T\}$. 
By Lemma~\ref{lemma:M27,stabilizers} it follows that 
$\mathrm{Stab}_G(w_1)\in \{\langle b\rangle,\langle a^3b\rangle, \langle a^6b\rangle\}$. In particular, we are done if $w_2=0$, since then $\mathrm{Stab}_G(w_1,w_2)=\mathrm{Stab}_G(w_1)$. 

Assume next that $w_2\neq 0$. 
Let us take into account that 
$0=h_{(\varepsilon,\varepsilon)}^{(1)}(w_2)=h_{(\varepsilon,\varepsilon)}^{(2)}(w_2)=
h_{(\varepsilon,\varepsilon)}^{(3)}(w_2)=h_{(\varepsilon,\varepsilon)}^{(4)}(w_2)$. 
Since $w_2\neq 0$, the above equalities imply 
that $y_j(w_2)\neq 0$ for $j=1,2,3$. 
Therefore $0=h_{(\varepsilon,\varepsilon)}^{(4)}(w_2)$ 
implies 
\begin{equation}\label{eq:M27,y1^3+...}
(y_1^3+\varepsilon^2 y_2^3+\varepsilon y_3^3)(w_2)=0.     
\end{equation}
It follows that 
\[0=\det\begin{bmatrix}1&1&1\\y_1^3&y_2^3&y_3^3
\\y_1^6&y_2^6&y_3^6\end{bmatrix}(w_2)
=(y_2^3-y_1^3)(w_2)(y_3^3-y_1^3)(w_2)(y_3^3-y_2^3)(w_2).\]
Assume that 
\begin{equation}\label{eq:M27,y1^3=y2^3}
y_1^3(w_2)=y_2^3(w_2)     
\end{equation}
(the cases when $y_1^3(w_2)=y_3^3(w_2)$ or $y_2^3(w_2)=y_3^3(w_2)$ follow by cyclic symmetry). 
From \eqref{eq:M27,y1^3+...}, \eqref{eq:M27,y1^3=y2^3} and 
$0=h_{(\varepsilon,\varepsilon)}^{(2)}(w_2)$  
we deduce 
\[0=\det\begin{bmatrix} 1&1&1\\ y_1^2y_2&y_2^2y_3&y_3^2y_1\\ y_1^3&y_2^3&y_3^3\end{bmatrix}(w_2)=
\det\begin{bmatrix}1&0&1\\y_1^2y_2&y_2(y_2y_3-y_1^2)&y_3^2y_1\\y_1^3&0&y_3^3\end{bmatrix}(w_2),\]
hence 
\begin{equation}\label{eq:M27,y1^2=y2y3}
y_1^2(w_2)=(y_2y_3)(w_2)
\end{equation}
or 
\begin{equation}\label{eq:M27,y1^3-y3^3}
y_1^3(w_2)=y_3^3(w_2).
\end{equation}

Now \eqref{eq:M27,y1^3=y2^3}, \eqref{eq:M27,y1^2=y2y3} 
clearly imply that 
$w_2$ is a non-zero scalar multiple of $[1,1,1]^T$, $[\varepsilon,\varepsilon^2,1]^T$, or 
$[\varepsilon^2,\varepsilon,1]^T$, implying in turn by 
Lemma~\ref{lemma:M27,stabilizers} that 
$\mathrm{Stab}_G(w_2)\in \{\langle b\rangle,
\langle a^3b\rangle, \langle a^6b\rangle\}$. 

If \eqref{eq:M27,y1^3-y3^3} holds, then together with \eqref{eq:M27,y1^3=y2^3} 
it implies that $y_2(w_2)=\nu_2y_1(w_2)$ and  
$y_3(w_2)=\nu_3y_1(w_2)$, 
where $\{\nu_2,\nu_3\}\in\{1,\varepsilon,\varepsilon^2\}$. Recall that $y_1(w_2)\neq 0$. 
From $h_{(\varepsilon,\varepsilon)}^{(2)}(w_2)=0$ we 
infer $\nu_2+\varepsilon^2 \nu_2^2\nu_3+\varepsilon \nu_3^2=0$, 
and hence $\nu_2\nu_3=1$ or $\nu_2\nu_3=\varepsilon^2$. 
On the other hand, 
$h_{(\varepsilon,\varepsilon)}^{(3)}(w_2)=0$ yields 
$\nu_3+\varepsilon^2 \nu_2^2+\varepsilon \nu_3^2\nu_2=0$, 
and hence $\nu_2\nu_3=1$ or $\nu_2\nu_3=\varepsilon$. 
Thus necessarily we have $\nu_2\nu_3=1$. So  
$\nu_2=\nu_3=1$ (i.e. $w_2\in \field[1,1,1]^T$) or 
$(\nu_2,\nu_3)=(\varepsilon,\varepsilon^2)$ (i.e. $w_2\in 
\field[\varepsilon,\varepsilon^2,1]^T$)  or 
$(\nu_2,\nu_3)=(\varepsilon^2,\varepsilon)$ (i.e. $w_2\in 
\field[\varepsilon^2,\varepsilon,1]^T$). It follows by Lemma~\ref{lemma:M27,stabilizers} 
that $\mathrm{Stab}_G(w_2)\in \{\langle b\rangle, 
\langle a^3b\rangle,\langle a^6b\rangle\}$. 

Thus we showed  that both $w_1$ and $w_2$ are 
scalar multiples of $[1,1,1]^T$, $[\varepsilon^2,\varepsilon,1]^T$, 
or $[\varepsilon,\varepsilon^2,1]^T$. 
If $w_1=0$ or $w_2=0$, then we have the desired 
$\mathrm{Stab}_G(w_1,w_2)\neq \{1_G\}$ by 
Lemma~\ref{lemma:M27,stabilizers}. 
If both $w_1$ and $w_2$ are non-zero 
scalar multiples of $[1,1,1]^T$, $[\varepsilon^2,\varepsilon,1]^T$, 
or $[\varepsilon,\varepsilon^2,1]^T$, then again 
using Lemma~\ref{lemma:M27,stabilizers} one can verify by direct computation that 
for such a pair $(w_1,w_2)$, we have 
\begin{equation*}\label{eq:M27,k}
k_{(\varepsilon,\varepsilon)}^{(1)}(w_1,w_2)=0=k_{(\varepsilon,\varepsilon)}^{(2)}(w_1,w_2)
\iff \mathrm{Stab}_G(w_1)=\mathrm{Stab}_G(w_2).
\end{equation*}
Therefore $\mathrm{Stab}_G(w_1,w_2)=\mathrm{Stab}_G(w_1)\neq \{1_G\}$. 

\emph{Case II.:} $\chi=(1,\varepsilon)$. 
Consider the following relative invariants in 
$\field[W_1\oplus W_2]^{G,(1,\varepsilon)}$: 
\begin{align*} 
f_{(1,\varepsilon)}^{(1)}&:=(x_1x_2x_3)^2(x_1^3+\varepsilon^2 x_2^3+\varepsilon x_3^3) 
\qquad & h_{(1,\varepsilon)}^{(1)}:=
y_1^2y_3+\varepsilon^2 y_2^2y_1+\varepsilon y_3^2y_2
\\f_{(1,\varepsilon)}^{(2)}&:=x_1x_2x_3(x_1^6+\varepsilon^2 x_2^6+\varepsilon x_3^6) 
\qquad &h_{(1,\varepsilon)}^{(2)}:=
y_1^4y_3^2+\varepsilon^2 y_2^4y_1^2+\varepsilon y_3^4y_2^2
\\ f_{(1,\varepsilon)}^{(3)}&:=x_1x_2^2+\varepsilon^2 x_2x_3^2+\varepsilon x_3x_1^2 \qquad 
&h_{(1,\varepsilon)}^{(3)}:=y_1^9+\varepsilon^2 y_2^9+
\varepsilon y_3^9
\\ f_{(1,\varepsilon)}^{(4)}&:=x_1^9+\varepsilon^2 x_2^9+\varepsilon x_3^9 
\qquad &k_{(1,\varepsilon)}^{(1)}:=x_1y_2+\varepsilon^2 x_2y_3+\varepsilon x_3y_1 
\\ & \qquad & k_{(1,\varepsilon)}^{(2)}:=x_1^2y_2^2+\varepsilon^2 x_2^2y_3^2+\varepsilon x_3^2y_1^2.
\end{align*}
The above relative invariants have degree at most $9$,  
so each of them vanishes at $(w_1,w_2)\in W_1\oplus W_2$. 
If $x_j(w_1)=0$ for some $j\in \{1,2,3\}$, then 
$0=f_{(1,\varepsilon)}^{(3)}(w_1)=f_{(1,\varepsilon)}^{(4)}(w_1)$ implies $w_1=0$, and thus $\mathrm{Stab}_G(w_1,w_2)=
\mathrm{Stab}_G(w_2)$. 

Otherwise $(x_1x_2x_3)(w_1)\neq 0$, and 
$0=f_{(1,\varepsilon)}^{(1)}(w_1)=f_{(1,\varepsilon)}^{(2)}(w_1)$ implies 
\[0=\det\begin{bmatrix} 1&1&1 \\ x_1^3(w_1) & x_2^3(w_1) &
x_3^3(w_1) \\ x_1^6(w_1) & x_2^6(w_1) &
x_3^6(w_1)\end{bmatrix}=(x_2^3-x_1^3)(x_3^3-x_1^3)(x_3^3-x_2^3)(w_1).\]
By symmetry it is sufficient to deal with the case when 
\begin{equation*}\label{eq:M27,x1^3=x2^3,2}
x_1^3(w_1)=x_2^3(w_1). 
\end{equation*}
By $0=f_{(1,\varepsilon)}^{(1)}(w_1)$, $0=f_{(1,\varepsilon)}^{(3)}$ and 
$(x_1x_2x_3)^2(w_1)\neq 0$ we have  
\[0=\det\begin{bmatrix} 1 & 1 & 1\\
x_1^3 & x_2^3 & x_3^3 \\
x_1x_2^2 & x_2x_3^2 & x_3x_1^2\end{bmatrix}(w_1).\]
Taking into account $x_1^3(w_1)=x_2^3(w_1)$
we end up with 
\begin{equation}\label{eq:oct4} 
0=x_2(x_1x_2-x_3^2)(w_1)(x_3^3-x_1^3)(w_1).\end{equation}
If $(x_1x_2)(w_1)=x_3^2(w_1)$, 
then we have 
\[0=f_{(1,\varepsilon)}^{(3)}(w_1)=
(x_1x_2^2+\varepsilon^2 x_2(x_1x_2)
+\varepsilon x_3x_1^2)(w_1)
=\varepsilon x_1(w_1)(x_1x_3-x_2^2)(w_1).\]
Thus $(x_1x_3)(w_1)=x_2^2(w_1)$, and this together with $(x_1x_2)(w_1)=x_3^2(w_1)$ implies that $w_1$ is a non-zero 
scalar multiple of $[1,1,1]^T$, 
$[\varepsilon,\varepsilon^2,1]^T$, 
or $[\varepsilon^2,\varepsilon,1]^T$. 

If $x_3^3(w_1)=x_1^3(w_1)$ (the other alternative from \eqref{eq:oct4}, then 
(recall that $x_2^3(w_1)=x_1^3(w_1)$) 
there exist some cubic roots $\nu_2,\nu_3$ of $1$ 
such that $w_1$ is a non-zero scalar multiple of $[1,\nu_2,\nu_3]$. 
Then $0=f_{(1,\varepsilon)}^{(3)}(w_1)$ reduces 
to 
\[\nu_2^2+\varepsilon^2 \nu_2\nu_3^2
+\varepsilon \nu_3=0.\] 
The sum of three cubic roots of $1$ is zero only if the three summands are $1,\varepsilon,\varepsilon^2$ in some order. 
We conclude that $w_1$ is a non-zero scalar multiple of 
$[1,1,1]^T$, 
$[\varepsilon,\varepsilon^2,1]^T$, 
or $[\varepsilon^2,\varepsilon,1]^T$. 

In particular, if $w_2=0$, then $\mathrm{Stab}_G(w_1,w_2)=\mathrm{Stab}_G(w_1)\neq \{1_G\}$ 
by Lemma~\ref{lemma:M27,stabilizers}, and we are done. 

Assume next that $w_2\neq 0$. 
Then by $0=h_{(1,\varepsilon)}^{(1)}(w_2)=
h_{(1,\varepsilon)}^{(3)}(w_2)$ we deduce 
that none of $y_1(w_2)$, $y_2(w_2)$, $y_3(w_2)$ 
is zero. From 
$0=h_{(1,\varepsilon)}^{(1)}(w_2)=
h_{(1,\varepsilon)}^{(2)}(w_2)$ we get 
\[0=\det\begin{bmatrix} 
1 & 1 & 1 \\
y_1y_2^2 & y_2y_3^2 & y_3y_1^2 
\\ y_1^2y_2^4 & y_2^2y_3^4 & y_3^2y_1^4 
\end{bmatrix}(w_2)
=(y_1y_2y_3)(y_1y_3-y_2^2)(y_3^2-y_1y_2)(y_1^2-y_2y_3)(w_2).\]
By symmetry we may assume that 
$y_2^2(w_2)=(y_1y_3)(w_2)$. 
Then we have 
\[0=h_{(1,\varepsilon)}^{(1)}(w_2)=
(y_1^2y_3+\varepsilon^2 (y_1y_3)y_1+\varepsilon  
y_3^2y_2)(w_2)=
\varepsilon y_3(w_2)(y_2y_3-y_1^2)(w_2).\]
Thus we have 
\[\frac{y_2^2(w_2)}{y_1(w_2)}=y_3(w_2)=
\frac{y_1^2(w_2)}{y_2(w_2)},\] 
and so $y_1^3(w_2)=y_2^3(w_2)$. 
Obviously, this together with $y_2^2(w_2)=y_1(w_2)y_3(w_2)$ means that 
$w_2$ is a non-zero scalar multiple of 
$[1,1,1]^T$, 
$[\varepsilon,\varepsilon^2,1]^T$, 
or $[\varepsilon^2,\varepsilon,1]^T$. 

So both $w_1$ and $w_2$ belong to $\field[1,1,1]^T\cup 
\field[\varepsilon,\varepsilon^2,1]^T\cup \field [\varepsilon^2,\varepsilon,1]^T$.  
If $w_1=0$ or $w_2=0$, then $\mathrm{Stab}_G(w_1,w_2)\neq \{1_G\}$ by Lemma~\ref{lemma:M27,stabilizers}. 
If both $w_1$ and $w_2$ are non-zero, then again 
using Lemma~\ref{lemma:M27,stabilizers} one can verify by direct computation that 
\begin{equation*}\label{eq:M27,k,2}
k_{(1,\varepsilon)}^{(1)}(w_1,w_2)=0=k_{(1,\varepsilon)}^{(2)}(w_1,w_2)\iff \mathrm{Stab}_G(w_1)=\mathrm{Stab}_G(w_2).
\end{equation*}
Therefore $\mathrm{Stab}_G(w_1,w_2)=\mathrm{Stab}_G(w_1)\neq \{1_G\}$. 
This finishes the proof for Case II. 

The map $a\mapsto a^2$, $b\mapsto b$ extends to an automorphism 
$\alpha$ of $G$, and for $\chi=(\varepsilon,\varepsilon)$ we have 
$\chi\circ\alpha=(\varepsilon^2,\varepsilon)$. 
So by Lemma~\ref{lemma:auto} (iii), 
the conclusion of statement (ii) holds also for the weight $\chi=(\varepsilon^2,\varepsilon)$.  
The only information on $\varepsilon$ used in the constructions of relative invariants in Cases I and II was that it 
has multiplicative order $3$. Therefore we can replace it by 
the other element of multiplicative order $3$, 
namely by $\varepsilon^2$, and we 
get that (ii) holds also for the weights 
$\chi\in\{\varepsilon,\varepsilon^2),
(\varepsilon^2,\varepsilon^2),(1,\varepsilon^2)\}$. 
\end{proof}

\begin{theorem}\label{thm:sepbeta(M27)}
Assume that $\field$ has characteristic zero, and it contains an element of multiplicative order 
$9$. Then we have the equality 
$\sepbeta^\field(\mathrm{M}_{27})=10$. 
\end{theorem}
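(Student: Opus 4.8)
The plan is to combine a lower bound coming from the known Noether number with an upper bound proved via a reduction to a small multiplicity-free representation, exactly in the spirit of the proof of Theorem~\ref{thm:sepbeta(Dic12xC2)} and Theorem~\ref{thm:sepbeta(A4xC2)}. For the lower bound I would first invoke the classical fact (from \cite{cziszter-domokos-szollosi}, and indeed recorded in Table~\ref{table:main}) that $\beta^\field(\mathrm{M}_{27})=11$ together with \eqref{eq:sepbeta<beta}; but since the claimed separating Noether number is $10<11$, this only gives $\sepbeta^\field(G)\le 11$, not the value. The genuine lower bound $\sepbeta^\field(G)\ge 10$ must come from exhibiting an explicit pair of points in some $\field G$-module that cannot be separated in degree $\le 9$: here I would use the fact that $G/G'\cong \mathrm{C}_3\times \mathrm{C}_3$ is a homomorphic image of $G$, so $\sepbeta^\field(G)\ge \sepbeta^\field(\mathrm{C}_3\times \mathrm{C}_3)$; however $\sepbeta^\field(\mathrm{C}_3\times\mathrm{C}_3)=6$ by Table~\ref{table:main}, which is not enough. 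So instead I expect the lower bound to be witnessed on $W_1$ (or $W_2$): one analyzes $\field[W_1]^G$ (a polynomial-plus-relations ring generated in degrees $\le 9$, with one or more essential degree-$9$ or degree-$10$ generators analogous to the $f_8$ of the $\mathrm{C}_5\rtimes\mathrm{C}_4$ case — actually degree $10$, since here a generator like $x_1^7x_3^2+\cdots$ has degree $9$ but a ``mixed'' generator of degree $10$ cannot be omitted), and produces $v,v'\in W_1$ with all invariants of degree $\le 9$ agreeing but an invariant of degree $10$ separating; concretely one uses that $\mathrm{M}_{27}$, as a subgroup of $\mathrm{GL}_3$, is not generated by pseudoreflections, so by \cite{dufresne} its invariants admit no algebraically independent separating set, forcing $\sepbeta(G,W_1)\ge 10$.

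For the upper bound $\sepbeta^\field(G)\le 10$ I would argue as follows. By Lemma~\ref{lemma:spanning invariants} and Lemma~\ref{lemma:multfree} it suffices to prove $\sepbeta(G,V)\le 10$ for $V:=W_1\oplus W_2\oplus U$. Take $v=(w_1,w_2,u)$, $v'=(w_1',w_2',u')$ with $f(v)=f(v')$ for all $f\in\field[V]^G$ of degree $\le 10$. First, by Proposition~\ref{prop:M27,V1+V2}, $\beta(G,W_1\oplus W_2)=9\le 10$, so $G\cdot(w_1,w_2)=G\cdot(w_1',w_2')$; replacing $v'$ by a $G$-translate we may assume $w_1=w_1'$, $w_2=w_2'$. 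Since $\mathsf{D}(G/G')=\mathsf{D}(\mathrm{C}_3\times\mathrm{C}_3)=5$ we also get $G\cdot u=G\cdot u'$, so it remains to treat the case $u\neq u'$ with $u_\chi\neq 0$ for some $\chi\in\widehat G\setminus\{1\}$, and to show such $u,u'$ cannot in fact both satisfy the hypothesis unless $\mathrm{Stab}_G(w_1,w_2)$ moves $u$ to $u'$. This is where Lemma~\ref{lemma:M27,stabilizers}, Lemma~\ref{lemma:M27,common zero locus}, and Lemma~\ref{lemma:common zero locus} do the work: if $\mathrm{Stab}_G(w_1,w_2)=\{1_G\}$, then by Lemma~\ref{lemma:M27,common zero locus} both (i) and (ii), for every non-trivial $\chi$ there is a relative invariant $f\in\field[W_1\oplus W_2]^{G,\chi^{-1}}$ of degree $\le 9$ (degree $\le 6$ for $\chi\in\{(\varepsilon,1),(\varepsilon^2,1)\}$) not vanishing at $(w_1,w_2)$; then $ft_\chi\in\field[V]^G$ has degree $\le 10$, so $(ft_\chi)(v)=(ft_\chi)(v')$ forces $t_\chi(u)=t_\chi(u')$, giving $u=u'$, hence $v=v'$.

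It remains to handle the case $\mathrm{Stab}_G(w_1,w_2)\neq\{1_G\}$; by Lemma~\ref{lemma:M27,stabilizers} this stabilizer $H$ is one of $\langle b\rangle$, $\langle a^3b\rangle$, $\langle a^6b\rangle$, so $H\cong HG'/G'\cong\mathrm{C}_3$. Here I would argue as in Case~II.b of Theorem~\ref{thm:sepbeta(A4xC2)}: for each $H$-invariant monomial $m\in\field[U]$ of degree $\le \mathsf{D}(H)=3$, the weight of $m$ is some $\chi^{-1}$ with $\chi\in\{1\}\cup(\text{two non-trivial characters killed by }H)$, and by Lemma~\ref{lemma:M27,common zero locus} there is a relative invariant $f\in\field[W_1\oplus W_2]^{G,\chi}$ of degree $\le 6$ (for the weights in $\{(\varepsilon,1),(\varepsilon^2,1)\}$, which are the ones with kernel containing $H$) — or more precisely, since $H$ acts trivially on $w_1,w_2$ one checks the relevant degree $\le 6$ relative invariant does not vanish at $(w_1,w_2)$ — so that $fm$ is a $G$-invariant of degree $\le 9$; thus $m(u)=m(u')$ for all such $m$, whence $H\cdot u=H\cdot u'$ by $\sepbeta^\field(\mathrm{C}_3)=\mathsf{D}(\mathrm{C}_3)=3$, and since $H$ fixes $(w_1,w_2)$ we conclude $G\cdot v=G\cdot v'$. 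The main obstacle I anticipate is bookkeeping in the non-trivial-stabilizer case: one must be sure that for the three possible $H$ and the precise weights $\chi$ with $H\subseteq\ker\chi$, the degree-$\le 6$ relative invariants supplied by Lemma~\ref{lemma:M27,common zero locus}(i) genuinely do not vanish at the relevant $(w_1,w_2)$ — which uses that $\mathrm{Stab}_G(w_1,w_2)=H$ forces $w_1,w_2$ to be scalar multiples of the specific eigenvectors listed in Lemma~\ref{lemma:M27,stabilizers}, at which points $x_1x_2x_3$ (resp.\ $y_1y_2y_3$) is nonzero — together with the subtlety that $t_\chi^2,t_\chi^3\in$ invariants of degree $\le 10$ suffices to pin down $t_\chi$ itself.
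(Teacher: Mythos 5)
Your upper bound argument is essentially the paper's proof: reduce via Lemma~\ref{lemma:multfree} to $W_1\oplus W_2\oplus U$, use Proposition~\ref{prop:M27,V1+V2} to normalize $(w_1,w_2)=(w_1',w_2')$, and then split on whether $\mathrm{Stab}_G(w_1,w_2)$ is trivial, using Lemma~\ref{lemma:M27,common zero locus} to supply relative invariants of degree at most $6$ (weights $(\varepsilon^{\pm1},1)$) or at most $9$ (the remaining non-trivial weights), and the Davenport constant of $\mathrm{C}_3$ in the non-trivial stabilizer case. That part is correct and is exactly Cases I and II of the paper's argument.

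The lower bound, however, has a genuine gap, and the specific mechanism you propose cannot work. You suggest witnessing $\sepbeta(G,W_1)\ge 10$ on $W_1$ alone via a degree-$10$ generator and Dufresne's reflection-group criterion. But $G'=\langle a^3\rangle$ is the center of $G$ and $\psi_1(a^3)=\varepsilon\cdot I$, so every homogeneous element of $\field[W_1]^G$ has degree divisible by $3$; there is no degree-$10$ invariant on $W_1$ at all, and since $\beta^\field(G)=11$ is odd to $3$-divisibility one gets $\beta(G,W_1)\le 9$, hence $\sepbeta(G,W_1)\le 9$. Moreover the Dufresne argument does not transfer from the $\widetilde{\mathrm{A}}_4$ case: there it worked because the invariants of degree $<12$ were generated by exactly $\dim V=2$ elements, so a separating set in low degree would be an algebraically independent separating set; here the degree-$\le 9$ part of $\field[W_1]^G$ has more than $3$ generators (e.g.\ already several in degrees $3$ and $6$), so no contradiction with Dufresne's theorem arises, and in any case that theorem yields no degree bound. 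The actual witness must mix $W_1$ with a one-dimensional summand: the paper takes $W_1\oplus U_{(1,\varepsilon^2)}$ and the points $v=([1,0,0]^T,\varepsilon)$, $v'=([1,0,0]^T,\varepsilon^2)$, separated by the degree-$10$ invariant $(x_1^9+\varepsilon^2x_2^9+\varepsilon x_3^9)\,t_{(1,\varepsilon^2)}$; the key point is that any relative invariant $h\in\field[W_1]^{G,(1,\varepsilon)}$ or $\field[W_1]^{G,(1,\varepsilon^2)}$ not vanishing at $[1,0,0]^T$ must contain an $\langle a\rangle$-invariant power $x_1^d$, forcing $9\mid d$, so no invariant of degree $\le 9$ involving $t_{(1,\varepsilon^2)}$ can separate $v$ from $v'$. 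Without some such construction your proof establishes only $\sepbeta^\field(\mathrm{M}_{27})\le 10$, not equality.
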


\begin{proof} 
In view of Lemma~\ref{lemma:multfree},  
take $v=(w_1,w_2,u),v'=(w'_1,w'_2,u')\in W_1\oplus W_2\oplus U$ with 
\begin{equation}\label{eq:proofM27assumption}
f(v)=f(v') \text{ for all homogeneous }f\in \field[W_1\oplus W_2\oplus U]^G \text{ where }
\deg(f)\le 10.
\end{equation}
We need to show that $G\cdot v=G\cdot v'$. 
Replacing $v'$ by an appropriate element in its $G$-orbit, we may assume by Proposition~\ref{prop:M27,V1+V2} that 
$(w_1,w_2)=(w'_1,w'_2)$. Moreover, $G\cdot u=G\cdot u'$, since 
$\mathsf{D}(G/G')=\mathsf{D}(\mathrm{C}_3\times\mathrm{\mathrm{C}_3})=5$. Therefore it is sufficient to deal with the 
case when $(w_1,w_2)\in W_1\oplus W_2$ is non-zero. 

\emph{Case I.:} $\mathrm{Stab}_G(w_1,w_2)\neq \{1_G\}$. 
Take any $\langle b\rangle$-invariant monomial $m\in \field[U]^{\langle b\rangle}$ with $\deg(m)\le 3$. It belongs to $\field[W_1\oplus W_2\oplus U]^{G,\chi}$ 
for some $\chi \in \{\varepsilon,1),(\varepsilon^2,1),(1,1)\}$. By Lemma~\ref{lemma:M27,common zero locus} (i) there exists an $f\in \field[W_1\oplus W_2\oplus U]^{G,\chi^{-1}}$ 
of degree at most $6$ with $f(w_1,w_2)\neq 0$. Note that 
$fm\in \field[W_1\oplus W_2\oplus U]^G$ has degree at most $9$. 
It follows by \eqref{eq:proofM27assumption} that 
$(fm)(v)=(fm)(v')$, implying in turn that 
$m(u)=m(u')$. This holds for all monomials 
$m\in \field[U]^{\langle b\rangle}$ with $\deg(m)\le 3$. 
Since $\mathsf{D}(\langle b\rangle)=3$, we conclude that 
$u$ and $u'$ belong to the same $\langle b\rangle$-orbit. 
Note that $\mathrm{Stab}_G(w_1,w_2)G'=\langle b\rangle G'$ by Lemma~\ref{lemma:M27,stabilizers}. So the $\langle b\rangle$-orbits of $u$ and $u'$ coincide with their 
$\mathrm{Stab}_G(w_1,w_2)$-orbits. Thus 
$\mathrm{Stab}_G(w_1,w_2)\cdot u=\mathrm{Stab}_G(w_1,w_2)\cdot u'$, and therefore $G\cdot (w_1,w_2,u)=G\cdot (w'_1,w'_2,u')$. 

\emph{Case II.:} $\mathrm{Stab}_G(w_1,w_2)=\{1_G\}$. 
We claim that $u=u'$; that is, we claim that 
$u_\chi=u'_\chi$ for all $\chi\in \widehat G$. 
This is obvious for $\chi=(1,1)\in \widehat G$, since $u$ and $u'$ 
have the same $G$-orbit. For $\chi \in  \widehat G\setminus \{(1,1)\}$, by Lemma~\ref{lemma:M27,common zero locus} there exists an $f\in \field[W_1\oplus W_2\oplus U]^{G,\chi^{-1}}$ 
with $\deg(f)\le 9$ and $f(w_1,w_2)\neq 0$. 
Then $ft_\chi$ is a $G$-invariant of degree at most $10$. 
Thus by \eqref{eq:proofM27assumption} we have 
$(ft_\chi)(w_1,w_2,u)=(ft_\chi)(w_1,w_2,u')$, implying in turn that $t_\chi(u)=t_\chi(u')$, i.e. $u_\chi=u'_\chi$. 
This finishes the proof of the inequality 
$\sepbeta^\field(G)\le 10$. 

To see the reverse inequality, consider the $\field G$-module 
$W_1\oplus U_{(1,\varepsilon^2)}$. 
Consider the vectors 
$v:=([1,0,0]^T,\varepsilon)$ and 
$v ':=([1,0,0]^T,\varepsilon^2)$. 
We have 
$(f_{(1,\varepsilon)}^{(4)}t_{(1,\varepsilon^2)})(v)=\varepsilon$, whereas  
$(f_{(1,\varepsilon)}^{(4)}t_{(1,\varepsilon^2)})(v')=\varepsilon^2$, so the invariant 
$(f_{(1,\varepsilon)}^{(4)}
t_{(1,\varepsilon^2)}$) 
separates $v$ and $v'$. We claim that no $G$-invariant of degree at most $9$ separates $v$ and $v'$. 
The elements of 
$\field[W_1]$ and $\field[U_{(1,\varepsilon^2)}]^G=
\field[t_{(1,\varepsilon)}^3]$ agree on 
$v$ and $v'$. 
Suppose for contradiction that there 
exists a multihomogeneous invariant 
$f=ht_{(1,\varepsilon^2)}$ (respectively $f=ht_{(1,\varepsilon^2)}^2$) 
of degree at most $9$ with $f(v)\neq f(v')$, where 
$h\in \field[W_1]^{G,(1,\varepsilon)}$ 
(respectively 
$h\in \field[W_1]^{G,(1,\varepsilon^2)}$). 
Then $h$ has a monomial of the form $x_1^d$ with non-zero coefficients (since $0=x_2(v)=x_3(v)=
x_2(v')=x_3(v')$). Moreover, $x_1^d$ must be an 
$\langle a\rangle$-invariant monomial. 
However, the smallest $d$ for which $x_1^d$ 
is $\langle a\rangle$-invariant is $9$. 
This is a contradiction, because the degree of 
$h$ is strictly less than $9$. 
The inequality 
$\sepbeta(G,W_1\oplus U_{(1,\varepsilon)})\ge 10$ is proved. 
\end{proof}




\end{document}